\documentclass[preprint]{imsart}
\usepackage[utf8]{inputenc}
\usepackage{amsmath}
\usepackage{amssymb}
\usepackage{graphicx}
\RequirePackage{natbib} %
\usepackage{bbm}
\usepackage{amsthm}
\usepackage{xcolor}
\usepackage{tabularx,multicol,multirow,booktabs,csquotes,comment,mathtools}
\usepackage[margin=1.25in]{geometry}
\usepackage{verbatim}
\usepackage{hyperref}
\hypersetup{
	colorlinks,
	linkcolor={red!50!black},
	citecolor={blue},
	urlcolor={blue!80!black}
}

\RequirePackage{hypernat}
\usepackage{bm}
\usepackage[shortlabels]{enumitem}
\usepackage{subfigure}
\usepackage[noabbrev,capitalize]{cleveref}
\usepackage{appendix}

\allowdisplaybreaks

\newtheorem{thm}{Theorem}[section]
\newtheorem{cor}[thm]{Corollary}
\newtheorem{defi}{Definition}[section]
\newtheorem{assume}{Assumption}[section]
\newtheorem{prop}[thm]{Proposition}

\newtheorem{lemma}[thm]{Lemma}
\newtheorem{ex}{Example}[section]
\newtheorem{remark}{Remark}[section]

\numberwithin{equation}{section}

\DeclareMathOperator{\ct}{\tilde{\mathnormal c}}

\DeclareMathOperator{\Cov}{Cov}
\DeclareMathOperator{\Var}{Var}

\newcommand{\diag}{\text{\normalfont{Diag}}}

\newcommand{\pQ}{Q^{\mathrm{prod}}}
\newcommand{\R}{\mathbb{R}}
\newcommand{\I}{\mathbf{I}}

\DeclareMathOperator{\pt}{\mathnormal{p}/2}
\DeclareMathOperator{\an}{\alpha_{\mathnormal p}}

\DeclareMathOperator{\otb}{B_{slab}}

\DeclareMathOperator{\EE}{\mathbb{E}}
\DeclareMathOperator{\EEX}{\mathbb{E}_{\bx}}
\DeclareMathOperator{\PP}{\mathbb P}

\DeclareMathOperator*{\xp}{\xrightarrow{\mathnormal P}}
\DeclareMathOperator*{\xd}{\xrightarrow{\mathnormal d}}

\DeclareMathOperator*{\argmax}{arg\,max}
\DeclareMathOperator*{\argmin}{arg\,min}
\newcommand{\bgamma}{\bm{\gamma}}

\DeclareMathOperator{\Off}{\mathrm{Off}}

\DeclareMathOperator{\mcn}{N}

\newcommand{\A}{\mathbf A}

\newcommand{\bc}{\mathbf c}
\newcommand{\bq}{\mathbf q}
\newcommand{\bu}{\mathbf u}

\newcommand{\bv}{\mathbf v}
\newcommand{\bw}{\mathbf w}
\newcommand{\bx}{\mathbf X}
\newcommand{\by}{\mathbf y}

\newcommand{\C}{\mathbf C}
\newcommand{\M}{\mathbf M}
\newcommand{\bbst}{\bbeta^{\star}}

\newcommand{\Sigman}{{\boldsymbol{\Sigma}}_{\mathnormal p}}
\newcommand{\bbeta}{\mathnormal{\boldsymbol \beta}}

\newcommand{\boldc}{\mathnormal{\mathbf{c}}}
\newcommand{\sumin}{\sum_{\mathnormal i=1}^{\mathnormal p}}
\newcommand{\maxin}{\max_{\mathnormal i=1}^{\mathnormal p}}
\newcommand{\maxjn}{\max_{\mathnormal j=1}^{\mathnormal p}}
\newcommand{\sumjn}{\sum_{\mathnormal j=1}^{\mathnormal p}}
\newcommand{\sumij}{\sum_{\mathnormal{ i,j}=1}^{\mathnormal p}}
\newcommand{\sumik}{\sum_{\mathnormal{ i,k}=1}^{\mathnormal p}}
\newcommand{\sumkn}{\sum_{\mathnormal k=1}^{\mathnormal p}}
\newcommand{\vep}{\varepsilon}
\newcommand{\EEB}{\mathbb{E}_{\bbeta^\star}}
\newcommand{\OPX}{o_{P,\bx}}
\newcommand{\BOPX}{O_{P,\bx}}
\newcommand{\OPB}{o_{P,\bbeta^\star}}
\newcommand{\BOPB}{O_{P,\bbeta^\star}}

\usepackage{tikz}
\usetikzlibrary{shapes, calc, shapes, arrows, positioning, bayesnet}
\usetikzlibrary{decorations.pathreplacing}

\tikzstyle{qedge}=[->,thick,black]
\tikzstyle{pre}=[->,thick,dotted]
\tikzstyle{pres}=[-,dotted]

\definecolor{myblue}{rgb}{0.0265,    0.6137,    0.8135}
\definecolor{myyellow}{rgb}{0.9290,    0.6940,    0.1250}

\usepackage{bbm}
\tikzstyle{neuron}=[draw, circle,minimum size=25pt,inner sep=0pt, fill=black!20]

\tikzstyle{hidden}=[draw, circle,minimum size=25pt,inner sep=0pt, fill=white]

\tikzstyle{hiddens}=[draw,circle,minimum size=17pt,inner sep=0pt, fill=white]

\usetikzlibrary{arrows.meta}
\tikzset{>={Latex[width=2mm,length=2mm]}}

\tikzstyle{arr}=[->, thick, black]

\usetikzlibrary{decorations.text}

\begin{document}
	\begin{frontmatter}
		\title{CLT in high-dimensional Bayesian linear regression with low SNR}
		\runtitle{CLT in high-dimensional Bayesian linear regression}
        \runauthor{Lee et al.}

		 \begin{aug}
			\author{\fnms{Seunghyun} \snm{Lee}\ead[label=e1]{ sl4963@columbia.edu}},
			\author{\fnms{Nabarun} \snm{Deb}\ead[label=e2]{nabarun.deb@chicagobooth.edu}},
		 	\and
		 	\author{\fnms{Sumit} \snm{Mukherjee}\ead[label=e3]{sm3949@columbia.edu}}
		 \end{aug}
        \address{Department of Statistics, Columbia University\printead[presep={,\ }]{e1,e3}}
        \address{Chicago Booth School of Business\printead[presep={,\ }]{e2}}
  
		\begin{abstract}
  We study central limit theorems for linear statistics in high-dimensional Bayesian linear regression with product priors. Unlike the existing literature where the focus is on posterior contraction, we work under a non-contracting regime where neither the likelihood nor the prior dominates the other. This is motivated by modern high-dimensional datasets characterized by a bounded signal-to-noise ratio. This work takes a first step towards understanding limit distributions for  one-dimensional projections of the posterior, as well as the posterior mean, in such regimes. Analogous to 
  contractive settings, the resulting limiting distributions are Gaussian, but they heavily depend on the chosen prior and center around the Mean-Field approximation of the posterior. 
  We study two concrete models of interest to illustrate this phenomenon --- the white noise design, and the (misspecified) Bayesian  model. As an application, we construct credible intervals and compute their coverage probability under any misspecified prior. Our proofs rely on a combination of recent developments in Berry-Esseen type bounds for Random Field Ising models and both first and second order Poincar\'{e} inequalities. Notably, our results do not require any sparsity assumptions on the prior.
  \end{abstract}

\begin{keyword}[class=MSC]
\kwd{62F12, 62F15, 62F25, 62J05, 60F05}
\end{keyword}

		\begin{keyword}
                \kwd{High-dimensional Bayesian linear regression}
			\kwd{Central Limit Theorem}
                \kwd{Mean-Field approximation}
                \kwd{Bernstein-von Mises}
                \kwd{Uncertainty quantification}
		\end{keyword}
		
	\end{frontmatter}
	
	\maketitle
	
	\section{Introduction}
We consider the linear regression model:
\begin{equation}\label{eq:model}  
    \mathbf{y} = \mathbf{X}\boldsymbol{\beta} + \boldsymbol{\vep},
\end{equation}
where $\mathbf{y} \in \mathbb{R}^n$ is the response vector, $\mathbf{X}$ is an $n \times p$ design matrix, $\boldsymbol{\beta} \in \mathbb{R}^p$ is the vector of regression coefficients, and $\boldsymbol{\vep} \sim \mcn(\mathbf{0}_n, \sigma^2 \mathbf{I}_n)$ represents unobserved Gaussian noise with known variance $\sigma^2 > 0$. Within a Bayesian framework, we further assume a prior distribution on the coefficient vector $\boldsymbol{\beta}$. A natural starting point is to assume that
\begin{equation}\label{eq:Bayeslin}
    \beta_1, \ldots, \beta_p \overset{\text{i.i.d}}{\sim} \mu,
\end{equation}
where $\mu$ is the prior distribution for each component of $\boldsymbol{\beta}$. A popular  application of the above model is for estimating effect size distributions or genetic architectures of complex physiological characteristics; see e.g.,~\cite{zhou2021fast,zhang2018estimation,O'Hagan2013,morgante2023flexible}.  %
Under this Bayesian model specification, the posterior distribution $\nu_{\mathbf{y},\mathbf{X}}(\bbeta) = \nu_{\mathbf{y},\mathbf{X}}(\bbeta \mid \by, \bx)$ is given by
    \begin{align}\label{eq:posterior_intro}
        \frac{d \nu_{\mathbf{y},\mathbf{X}}}{d \mu^{\otimes p}}(\boldsymbol{\beta}) \propto &\exp\left(-\frac{\|\by - \bx \bbeta\|^2}{2 \sigma^2} \right)  \propto \exp\left( - \frac{1}{2} \bbeta^\top \Sigman \bbeta + \Big(\frac{\bx^\top \by}{\sigma^2}\Big)^\top \bbeta \right).
    \end{align}
    Here, the $p \times p$ matrix $\Sigman$ is defined as $\Sigman := \sigma^{-2} (\bx^\top \bx)$ and it encodes the dependence structure of the posterior.
Without strong assumptions, such as assuming $\mu$ is Gaussian or the design is exactly orthogonal, the above posterior involves an intractable normalizing constant. Consequently, performing statistical inference under $\nu_{\by, \bx}$ is a challenging problem, especially for high-dimensional posteriors with $p \to \infty$. We note that $\nu_{\by,\bx}\equiv \nu_{\by,\bx,\mu}$ also depends on the prior $\mu$, but we suppress this dependence for notational convenience.

Bayesian inference methods for such high-dimensional linear models have been applied across various scientific disciplines, including genomics \citep{Guan2011}, marketing \citep{lenk1990new}, and finance \citep{rachev2008bayesian}, among others \citep{O'Hagan2013, george1997approaches}. In a broad sense, existing methods can be divided into two directions: sampling and variational inference.
The first relies on Markov Chain Monte Carlo (MCMC) methods \citep{metropolis1953equation, Hastings1970, george1993variable, Ishwaran2005} to sample from the posterior. While supporting theory guarantees asymptotically exact samples from the posterior, this becomes computationally intensive as $p$ increases. In contrast, variational inference avoids sampling from the posterior by approximating it with a more tractable family of distributions \citep{wainwright2008graphical}. Despite being computationally attractive, the theoretical understanding of variational inference in high-dimensions has been relatively limited. Furthermore, empirical studies often report problems such as variance underestimation when the variational posterior is used for uncertainty quantification \citep{minka2005divergence, zhang2018advances,blei2017variational,margossian2023shrinkage}.

In terms of theoretical studies for high-dimensional Bayesian linear models, most existing studies address the classical high signal-to-noise ratio (SNR) setting, where the operator norm of the quadratic interaction $\Sigman$ in \eqref{eq:posterior_intro} diverges, and consequently the data likelihood dominates the prior asymptotically. In such scenarios, the Bernstein–von Mises (BvM) theorem typically applies \citep{van2000asymptotic,ghosal1999asymptotic,Castillo2015, Katsevich2023}, and the posterior distribution contracts around the ordinary least squares estimator. In contrast, modern high-dimensional datasets often exhibit bounded signal strength, characterized by a bounded operator norm of $\Sigman$. This \emph{bounded SNR regime} implies that the prior maintains a non-vanishing influence on the posterior, rendering the BvM contraction around the least squares estimator inapplicable and introducing novel technical challenges. The bounded SNR regime has garnered significant attention in recent literature for Bayesian statistics \citep{mukherjee2022variational, barbier2020mutual, qiu2024sub, mukherjee2024naive, celentano2023mean} as well as high-dimensional sampling \citep{montanari2024provably,chewi2025book}. It also arises naturally in economics when dealing with partially identified models where point estimation is no longer possible \citep[see][]{tamer2010partial,moon2012bayesian}. 

To motivate our scaling choice for the bounded SNR regime (see \cref{assn:ht} and \cref{rem:bdsnr} below), let us focus on the ultra simplistic case $p=1$ and compare two simple models:
$$ \textrm{Model 1 : }Y_1,\ldots ,Y_n \overset{i.i.d.}{\sim} N(\theta,1) \quad \mbox{vs} \quad \textrm{Model 2 : }Y_1,\ldots ,Y_n\overset{i.i.d.}{\sim} N(h/\sqrt{n},1),$$ for parameters $\theta,h\in\R$. Here both models have the same noise/variance level. In Model 1, the frequentist MLE
$\bar{Y}_n=n^{-1}\sum_{i=1}^n Y_i$ is consistent for $\theta$, whereas in Model 2 the MLE $\sqrt{n}\bar{Y}$ is not consistent for $h$. In fact, there are no consistent estimators for $h$ in Model 2. From a Bayesian perspective, suppose we put priors on $\theta\sim\mu_1$ and $h\sim\mu_2$. In Model 1, the posterior is close in total variation to $N(\bar{Y}_n,1/n)$ with high probability, by the classical Bernstein-von Mises theorem. Thus the posterior contracts around the MLE $\bar{Y}$ at rate $n^{-1/2}$. Also, the approximate posterior distribution is free of the prior $\mu_1$. On the other hand,  the posterior in Model 2 is proportional to $$ \mu_2(h) \exp\left(-\frac{1}{2}\sum_{i=1}^n (Y_i-n^{-1/2}h)^2\right)\propto\mu_2(h)\exp(h\sqrt{n}\bar{Y}_n-h^2/2).$$
Here the approximate posterior is a linear and quadratic tilt of the prior, which does not contract to the MLE, or any other point. In fact, the posterior is heavily influenced by the prior $\mu_2$, even asymptotically. Overall, estimation and inference is harder in the context of Model 2, which makes it a more reasonable model for weak signals. Theoretically, the crucial difference between the above models is that, in view of \eqref{eq:model}, Model 1 has a design matrix ($n\times 1$) $\bx=\mathbf{1}_n$ whereas Model 2 has $\bx=n^{-1/2}\mathbf{1}_n$. So $\lVert \bx\rVert=\sqrt{n}\to\infty$ for Model 1 (hence high SNR) whereas $\lVert \bx\rVert=O(1)$ for Model 2 (hence low/bounded SNR). This motivates us to study the posterior \eqref{eq:posterior_intro} under the bounded SNR condition $\lVert \bx\rVert=O(1)$ in the high-dimensional setting. 

The existing line of work on statistical guarantees for bounded SNR Bayesian linear regression \citep[see][]{fan2023gradient,mukherjee2022variational,mukherjee2023mean,fan2025dynamical} focuses on characterizing Bayes optimal risk or average coverage guarantees under the high-dimensional posterior in \eqref{eq:posterior_intro}. By taking a one-dimensional projection of the posterior, our paper takes a step beyond such consistency-type results by understanding \emph{exact limiting distributions} with an explicit characterization of the mean and variance. To the best of our knowledge, ours is the first paper to derive limit distribution theory in such non-contracting regimes. Our results allow precise posterior uncertainty quantification such as constructing credible intervals with explicit expressions (as opposed to that based on quantiles of posterior samples), and evaluating its coverage probability. Additionally, by comparing the true posterior variance with the variance under the mean-field variational approximation, we precisely characterize when the variance is underestimated. Surprisingly, in contrast to the folklore, the mean-field posterior can overestimate the variance in  certain scenarios (see \cref{rem:varoverunder} for details).

\subsection{Main Contributions}\label{sec:contrib}

The primary objective of this paper is to derive limiting distributions of the following:
\begin{enumerate}
    \item[(i)] the parameter $\bq^{\top}\bbeta$ under the posterior distribution \eqref{eq:posterior} for some $\bq\in\R^p$, given $\mathbf{y}$,
    \item[(ii)] the posterior mean $\EE_{\nu_{\mathbf{y},\mathbf{X}}}[\bq^{\top}\bbeta]$ under additional data generating assumptions for $\by$.
\end{enumerate}
Here, $\bq$ is a deterministic vector normalized to have length one, i.e. $\|\bq\|=1$. The linear statistic $\bq^\top \bbeta$ is the mean response of a new observation with feature vector $\bq$. Assuming that $\bbeta \sim \nu_{\by,\bx}$ is a posterior sample, $\bq^\top \bbeta$ can also be understood as the one-dimensional projection of the posterior. The posterior mean $\EE_{\nu_{\by,\bx}}[\mathbf{q}^{\top}\bbeta]$ is the Bayes optimal point estimator for $\bq^\top \bbeta$ under a squared error loss (see \eqref{eq:Bayesopt} below). Therefore, these quantities have been a central focus in linear regression studies 
\citep{zhu2018linear, azriel2020non-sparse, zhao2023estimation, Bellec2021, chang2023inference}. More generally, understanding limiting distributions of the posterior as well as Bayes estimators is a classical topic in Bayesian asymptotics \citep{bickel1969some,le1953some, lehmann2006theory}. 

Our main results in \cref{sec:bayeslinhigh} address the above goals (i) and (ii), under popular assumptions on the design matrix and the regression coefficients. To overcome the absence of posterior contraction, we assume that the projection direction $\bq$ is ``delocalized'' in the sense that $\|\bq\|_\infty \to 0$, and prove a \emph{CLT} for $\bq^\top \bbeta = \sumin q_i \beta_i$. Note that in our asymptotic regime, CLT does not hold without this delocalization assumption, similar to CLTs for linear statistics of independent random variables, where this condition is referred to as the uniform asymptotic negligibility (UAN) condition.
    To the best of our knowledge, we provide the first Bernstein-von Mises type limiting distributions for \emph{non-contracting posteriors}. Unlike classical Bernstein-von Mises results, our limiting distributions are non-universal in that they heavily depend on the chosen prior $\mu$ (see \eqref{eq:Bayeslin}). In \cref{sec:Bvmanalog}, we use these results to build asymptotic credible posterior intervals as well as quantify their uncertainty, under potential model misspecification. As a by-product of our results, we establish conditions under which the asymptotic behavior of $\bq^{\top}\bbeta$ under the Mean-Field variational approximation matches that under the posterior $\nu_{\by,\bx}$. We provide a more detailed description of our contributions below.
    \begin{itemize} 
 \item In \cref{prop:regret}, we show that the natural point estimator of $\bq^{\top}\bbeta$ under the Mean-Field approximation of the posterior is in fact, asymptotically Bayes optimal. We also provide a finite sample regret bound comparing the risk of the Mean-Field estimator to the Bayes optimal risk. 
 \item In \cref{cor:regression i.i.d design}, we prove CLTs for $\bq^\top \bbeta$ and $\EE_{\nu_{\mathbf{y},\mathbf{X}}}[\bq^{\top}\bbeta]$ under white noise design when $p\ll n^{2/3}$. Our result accommodates both cases where the true regression coefficients are deterministic or random. In this case, the limit laws under the Mean-Field approximation and the posterior match exactly. As an application, when the coefficients are sampled from the popular spike-and-slab prior, we characterize the precise threshold on sparsity up to which the limit law matches that under a global null (see \cref{cor:sparse}).
 \item In \cref{cor:regression Bayesian truth}, we prove CLTs with a general design when the regression coefficient vector ${\bm \beta}$ are sampled i.i.d from some distribution, say $\mu^{\star}$ (which may not be equal to $\mu$). In this case, our results apply to both fixed and random design matrices, including the white noise (random) design, and the analysis of variance (ANOVA) design (see \cref{ex:anova design}). In the ANOVA design, our result highlights that depending on $\bq$ and the design $\bx$, the variance under the Mean-Field approximation can be larger, equal, or smaller than the posterior variance (see \cref{rem:varoverunder} for details). 
 \item In \cref{sec:Bvmanalog}, we propose a $100(1-\alpha)\%$-credible interval for $\bq^{\top}\bbeta$ under the posterior, based on \cref{cor:regression Bayesian truth}. In \cref{cor:coverage probability Bayes}, we establish a limiting formula for the coverage probability of the proposed credible interval  under any  \emph{misspecified prior}. The coverage probability matches the target level $1-\alpha$ when the prior is correctly specified. This provides a Bernstein-von Mises type coverage guarantee, under potential model misspecification, for non-contracting posteriors.
 \end{itemize}
 
Our results apply to any compactly supported prior $\mu$, and complement the limitations of the Laplace approximation \citep[see][]{Spokoiny2023,katsevich2024laplace,giordano2025good} as a technique for approximating the posterior. This approximation is typically useful when the SNR is high and the effect of the prior $\mu$ washes away in the asymptotic limit. Moreover it requires the posterior to have density with respect to the Lebesgue measure, and does not allow priors that incorporate exact sparsity (i.e. positive mass at zero). Our results do not impose such restrictions, and are readily able to generalize to fractional posteriors (see \cref{sec:fractional posterior}).

\subsection{Notation}
	For two measures $Q_1,Q_2$ on the same probability space, define the Kullback-Leibler divergence between $Q_1$ and $Q_2$ as
 $$ \mathrm{KL}(Q_1|Q_2) := \begin{cases} \EE_{Q_1}\log{\frac{dQ_1}{dQ_2}} & \mbox{if } Q_1 \ll Q_2, \\  \infty & \mbox{otherwise}.\end{cases} $$
 \noindent For any $p\times p$ real symmetric matrix $\mathbf{M}_p$, let $\lVert \mathbf{M}_p\rVert$, $\lVert \mathbf{M}_p\rVert_{r}$ denote the $(2,2)$-operator norm, the $(r,r)$-operator norm for $r\in (2,\infty]$, respectively. Let $\mathrm{Off}(\mathbf{M}_p)$ denote the off-diagonal operator that sets the diagonal entries in $\mathbf{M}_p$ to 0.
    For a vector $\mathbf{a}=(a_1,\ldots ,a_p)\in \R^p$, let $\lVert \mathbf{a}\rVert$ and $\lVert \mathbf{a}\rVert_{r}$ denote the  $\ell_2$ and  $\ell_r$-vector norms, for $r\in (2,\infty]$. Let $\mbox{Diag}(\mathbf{a})$ denote the $p\times p$ diagonal matrix with diagonal entries $a_1,\ldots ,a_p$. 
    For nonnegative sequences $\{a_p\}_{p\ge 1}$ and $\{b_p\}_{p\ge 1}$, we write $a_p \lesssim b_p$, if there exists a constant $C>0$ free of $p$ (and $n$) such that $a_p\le C b_p$. We use $a_p\ll b_p$ if $a_p/b_p\to 0$ as $p\to\infty$. The notation $\overset{w}{\longrightarrow}, \overset{d}{\longrightarrow}, \overset{P}{\longrightarrow}$ will denote weak convergence, convergence in distribution, and convergence in probability, respectively. $\delta_{x}$ denotes the point mass at some $x\in\R$.
   We denote the $p$ dimensional all-zero vector, all-one vector, and $p\times p$ identity matrix by $\mathbf{0}_p$, $\mathbf{1}_p$, and $\I_p$  respectively. Finally $\mathcal{C}^2_b$ denotes the space of functions from $\R\to [-1,1]$ with uniformly bounded first and second derivatives.

   Throughout the paper, we work on an asymptotic setting where both $p$ and $n$ diverge to infinity. We will always view $n \equiv n(p)$ as a function of $p$ which satisfies $n(p)\to \infty$ as $p \to \infty$. All limits will therefore be written in terms of $p$.

\section{Setup and preliminary results}\label{sec:mainres}

This section formulates the problem setting as well as necessary concepts and notations. Throughout, we will assume that the prior $\mu$ in the Bayesian model formulation \eqref{eq:Bayeslin} is compactly supported and nondegenerate. This is a common assumption in the literature \citep{mukherjee2022variational, celentano2023mean, mukherjee2023mean, fan2023gradient}, imposed for various technical reasons such as ensuring that the normalizing constant in the posterior \eqref{eq:posterior_intro} is well-defined. For the sake of definiteness, in this work we assume that the compact support of $\mu$ is the interval $[-1, 1]$.

Next, we further simplify the posterior $\nu_{\mathbf{y},\mathbf{X}}(\bbeta)$ (see \eqref{eq:posterior_intro}) by separating out the diagonal elements of the $p \times p$ quadratic interaction matrix $\Sigman$:
\begin{align}\label{eq:anbayes}
    \Sigman = \text{Diag}(\mathbf{d}^{(p)}) - \A_p\quad \text{ where }\quad \A_p := -\Off(\Sigman), \quad \mathbf{d}^{(p)} = (d_1, d_2, \ldots , d_p).
\end{align}
Absorbing the diagonal elements $d_i:= \sigma^{-2} (\mathbf{X}^{\top}\mathbf{X})(i,i)$ into the base measure $\mu$ has been useful for understanding similar quadratic interaction models \citep{mukherjee2022variational, lee2025rfim}. Under these notations, we can re-write the posterior $\nu_{\mathbf{y},\mathbf{X}}(\bbeta)$ as follows:
\begin{align}
        \nu_{\mathbf{y},\mathbf{X}}(d\boldsymbol{\beta}) \propto & \exp\left( \frac{1}{2} \boldsymbol{\beta}^{\top} \A_p \boldsymbol{\beta}+\mathbf{c}^{\top}\boldsymbol{\beta}\right)\prod_{i=1}^p \mu_i(d\beta_i), \quad \mbox{where} \label{eq:posterior} \\
        \mathbf{c} &:=\frac{\mathbf{X}^{\top}\mathbf{y}}{\sigma^2}, \;\; \;\;\frac{d\mu_i}{d\mu}(\beta_i) \propto \exp\left(-\frac{d_i\beta_i^2}{2}\right), \label{eq:posterior_2}
    \end{align}
for $i = 1, 2, \ldots, p$. Here, we caution that the sign of the coupling matrix $\A_p$ has the opposite sign compared to the related works \citep{mukherjee2022variational, mukherjee2023mean}. %
Throughout this section (except for \cref{subsec:dgp}), $(\by, \bx)$ is considered deterministic, and the randomness only arises from the posterior. %

\subsection{Design matrix assumptions}\label{sec:assumptions}

We make some assumptions on the $n \times p$ design matrix $\bx$. Using the decomposition \eqref{eq:anbayes}, we separate the assumptions for the off-diagonal and diagonal parts. These are satisfied for a various examples of interest, covering both deterministic and random designs as illustrated at the end of the section.

Our first assumption is the so-called ``high-temperature'' assumption in the literature \citep{Reza2018, maillard2019high, lee2025rfim}. This condition has been often imposed to guarantee uniqueness of the Mean-Field approximation of the posterior $\nu_{\by,\bx}$ (see \cref{lem:uniqueness of optimizers} for a precise statement).
	
	\begin{assume}[high-temperature]\label{assn:ht}
        For some constant $\rho\in (0,1)$, assume that              \begin{align}\label{eq:mht}
				\lVert \A_p\rVert_{4}\le \rho.
			\end{align}
	\end{assume}
\noindent  Noting that $\|\A_p\|_2 \le \|\A_p\|_4 \le \|\A_p\|_\infty$ (e.g. see Lemma 2.1 in \cite{lee2025rfim}), this condition can sometimes be verified by computing the more tractable $(\infty,\infty)$ norm, given by $\|\A_p\|_\infty=\max_{1\le i\le p}\sum_{j=1}^p|\A_p(i,j)|$. %
However, it is not always advisable to use the $(\infty,\infty)$ norm as a bound, as it can give a sub-optimal result. As an illustration, for the special case when each entry of $\bx$ is i.i.d. sub-Gaussians (see \cref{ex:iid gaussian design}), we can explicitly compute the different norms (see \cref{lem:L_p norm}) as: 
$$\|\A_p\|_2 = O_{P,\bx}(\sqrt{p}/\sqrt{n}), \quad \|\A_p\|_4 = O_{P,\bx}(p^{3/4}/\sqrt{n}), \quad \|\A_p\|_\infty = O_{P,\bx}(p/\sqrt{n}).$$
Thus, using the $(4,4)$ norm the high-temperature condition holds as soon as $p\ll n^{2/3}$, whereas using the $(\infty,\infty)$ norm we require $p\ll \sqrt{n}$. This illustrates the benefits of the $(4,4)$ norm as opposed to the more commonly used $(\infty, \infty)$ norm in this example, since the resulting high-temperature condition allows a wider range of $p$.

Our second assumption is a ``Strong Mean-Field'' assumption, which guarantees the approximation accuracy of the Mean-Field approximation (see \cref{lem:mf accuracy} for a precise statement). 

	\begin{assume}[Strong Mean-Field regime (SMF)]\label{assn:mf}
     With $\A_p$ as in \eqref{eq:model}, define 
    \begin{align}\label{eq:rowcontrol}
        \alpha_p := \max_{1\leq i\leq p} \sum_{j=1}^p \A_p(i,j)^2,
    \end{align}
    and assume that $\alpha_p = o(p^{-1/2}).$
	\end{assume}
    We refer to the requirement $\alpha_p=o(p^{-1/2})$ as a \eqref{eq:rowcontrol} as a Strong Mean-Field assumption, as it implies the condition
    ${\rm Tr}(\A_p^2)=o(p)$, which has been used in the literature as a Mean-Field assumption for deriving Law of Large numbers (see Def. 1.3 in \cite{basak2017universality}, eq. (1.5) in \cite{lacker2024mean}, or Thm. 3 in \cite{mukherjee2023mean}).
    \\

Our final assumption involves $\mathbf{d}^{(p)}$, the diagonal elements of $\Sigman$. This assumption implies that all column vectors have similar length close to $d_0$, and can be understood as a homogeneity assumption on $\bx$. This assumption will not be imposed on most results in this section, but will be required for the CLTs in \cref{sec:bayeslinhigh}.

\begin{assume}[homogeneous design]\label{assn:homogeneous diagonals}
  Suppose there exists fixed constants $d_{\text{min}}, d_0 \in (0,\infty)$ such that $d_i \ge d_{\text{min}}$ for all $i$ and
  $$\sumin (d_i-d_0)^2=O(1).$$
\end{assume}

\cref{assn:homogeneous diagonals} essentially assumes that $\{\mu_i\}_{1\le i\le p}$ (see \eqref{eq:posterior}) are all close to some common measure $\mu_0$. This will be the case, for example, when the columns of $\bx$ are all centered to have mean $0$ and scaled to have variance $1$. In that case, one can choose $d_0=\sigma^{-2}$. 

\vspace{2mm}
\noindent \emph{Example design matrices.}\quad
Our assumptions can accommodate both deterministic and random design matrices $\bx$ that commonly appear in statistics.

\begin{ex}[Gaussian sequence model]\label{ex:Gausseq}
    As a toy example, suppose a statistician fits the Gaussian sequence model $y_i=\beta_i+\epsilon_i$, which corresponds to taking $n=p$ and $\bx=\mathbf{I}_p$ in \eqref{eq:Bayeslin}. In this case the posterior \eqref{eq:posterior} simplifies to 
    $$\nu_{\by,\bx}(d\bbeta)\propto \exp\left(-\frac{1}{\sigma^2}\left(\langle \by,\bbeta\rangle + \frac{1}{2}\sumin \beta_i^2\right)\right)\prod_{i=1}^p \mu(d\beta_i).$$
    Therefore, conditioned on the data $\by$, the posterior is a product measure which is trivially a product distribution, and also non-contracting for fixed (non-sparse) $\mu$. In this setting, $\A_p = 0$ and Assumptions \ref{assn:ht} and \ref{assn:mf} hold. \cref{assn:homogeneous diagonals} holds for any $\sigma^2>0$ for the choice $d_0=\sigma^{-2}$.  
\end{ex}   
  
\begin{ex}[Deterministic design]\label{ex:anova design}
       Next, we consider the popular two-way ANOVA model with $p/2$ levels (assuming $p$ is even) and no repeated measurements:
        \begin{equation*}
            y_{i,j} = \frac{1}{\sqrt{p}} (\delta_i + \gamma_j) + \epsilon_{i,j}, \quad 1 \le i,j \le p/2
        \end{equation*}
      Here, $(\delta_1, \ldots, \delta_{p/2}) \in [-1, 1]^{p/2}$ and $(\gamma_1, \ldots, \gamma_{p/2}) \in [-1, 1]^{p/2}$ denote the levels of the two factors, respectively. The errors $\epsilon_{i,j}$ are i.i.d $\mcn(0, \sigma^2)$. We define $n := (p/2)^2$, and let $\by \in \mathbb{R}^n$ denote the vectorized expression of the matrix $(y_{i,j})$. Similarly, let $\bbeta = (\delta_1, \ldots, \delta_{p/2}, \gamma_1, \ldots, \gamma_{p/2})^\top \in [-1,1]^p$ denote the coefficient vector, and let $\bx$ denote the corresponding $n \times p$ design matrix. 
      Then, it follows that $$\A_p = - \frac{1}{p \sigma^2} \begin{pmatrix}
          \mathbf{0}_{\pt \times \pt} & \mathbf{1}_{\pt \times \pt} \\
          \mathbf{1}_{\pt \times \pt} & \mathbf{0}_{\pt \times \pt}
      \end{pmatrix}$$ and $d_i = 1/(2\sigma^2)$ for all $i$. Consequently, \cref{assn:ht} holds when $\sigma^2 > \frac{1}{2}$, \cref{assn:mf} always holds, and \cref{assn:homogeneous diagonals} holds with $d_0 = 1/(2\sigma^2)$. 
\end{ex}

\begin{ex}[White noise design]\label{ex:iid gaussian design}

      We consider a random design with iid sub-Gaussian rows, i.e. suppose that $X_{k,i} \overset{i.i.d}{\sim} \frac{1}{\sqrt{n}} F$, $1\le k\le n$, $1\le i\le p$, where $F$ is a sub-Gaussian distribution (see \cite[Section 2.5]{vershynin2018high}) with mean zero and variance one. 
      Then in the asymptotic regime $p \ll n^{2/3}$, all three assumptions hold by taking $d_0 = \sigma^{-2}$. Checking the first two assumptions are somewhat non-trivial, but follows from Lemmas 4.1 and 4.2 in \cite{lee2025rfim}.
  \end{ex}

\begin{remark}[Bounded signal-to-noise ratio]\label{rem:bdsnr}
Under Assumptions \ref{assn:ht} and \ref{assn:homogeneous diagonals}, $\|\Sigman\|$ is bounded by a constant. This puts us in a \emph{bounded SNR} regime where the posterior distribution $\nu_{\by,\bx}$ \emph{does not} contract around the least squares estimator $\hat{\bbeta}^{\text{LSE}}:=(\mathbf{X}^{\top}\mathbf{X})^{-1}\mathbf{X}^{\top}\mathbf{y}$ or any ``true parameter'' (see e.g., \cite[Corollary 4]{mukherjee2022variational}). This is in sharp contrast to the more classical setting considered in \cite{ghosal2000asymptotic,Katsevich2023}, where the eigenvalues of $\Sigman$ are assumed to be large,  the posterior contracts around $\hat{\bbeta}^{\text{LSE}}$, and the limiting distribution does not depend on the prior $\mu$ (i.e.~Bernstein-von Mises type theorems hold).
\end{remark}

In the absence of posterior contraction, it is unclear how to provide a tractable centering (as opposed to the posterior mean, which is hard to compute) for the statistic of interest $\bq^{\top}\bbeta$, to get analogous results. %
Below we show that a natural centering based on the \emph{Mean-Field} approximation to the posterior, serves the purpose.

\subsection{Mean-Field variational approximation}
The Mean-Field approximation of the posterior $\nu_{\by,\bx}$ arises by approximating $\nu_{\by,\bx}$ with a product distribution $Q = \prod_{i=1}^p Q_i$ on $[-1,1]^p$ \citep[see][]{bishop2006pattern,blei2017variational}. In the existing literature, point estimation under the Mean-Field approximation is known to exhibit desirable properties; see  \cite{Yang2020,wang2018frequentist,zhang2020convergence,bhattacharya2023gibbs,bhattacharya2024ldp}. Commonly, the ``best'' approximation $ \pQ$ is defined as the product distribution that minimizes the KL divergence between $Q$ and $\nu_{\by,\bx}$, obtained by solving the optimization:
\begin{align}\label{eq:mean-field}
    \argmin_{Q=\prod_{i=1}^p Q_i} \mathrm{KL}\big(Q| \nu_{\by,\bx} \big) = \argmax_{Q=\prod_{i=1}^p Q_i}\left(\EE_Q\left[\frac{1}{2}\bbeta^{\top}\A_p\bbeta+\boldc^{\top}\bbeta\right]-\mathrm{KL}\big(Q|\prod_{i=1}^p \mu_i\big)\right).
\end{align}
Here, the expression being maximized on the last term, up to additive constants, is often denoted as the Evidence Lower Bound (ELBO) (see eq. (13) in \cite{blei2017variational} or eq. (7) in \cite{mukherjee2023mean}). The last equality follows by expanding the KL divergence while noting that the posterior normalizing constant (the hidden denominator in \eqref{eq:posterior})
does not depend on $Q$. 
Interestingly, %
the above maximization over measures reduces to an optimization over real numbers. Under our high-temperature \cref{assn:ht}, this maximization has a unique maximizer, and can be written as the unique root of a fixed point equation. We summarize these facts in the following Lemma, which requires notion of \emph{exponential tilting}.
    
\begin{defi}[Exponential tilting]\label{def:expfam}
        Recalling the definition of measures $\mu_i$ from \eqref{eq:posterior_2}, let $\psi_i(\cdot)$ denote the log moment generating function of $\mu_i$, defined by
		$\psi_i(\theta):=\log \int e^{\theta z}d\mu_i(z)$
		for $\theta \in \R$. Define the \emph{exponential tilts} of $\mu_i$ as $$\frac{d\mu_{i,\theta}}{d \mu_i}(z) := \exp(\theta z - \psi_i(\theta)).$$
		By standard exponential family calculations, we have
		$$\psi_i'(\theta)=\EE_{\mu_{i,\theta}}[Z],\quad \psi_i''(\theta)={\rm Var}_{\mu_{i,\theta}}[Z].$$
		In particular, note that both $\psi_i', \psi_i''$ are uniformly bounded by $1$.
        As $\psi_i''(\cdot)$ is strictly positive, $\psi_i':\mathbb{R} \mapsto (-1,1)$ is strictly increasing, and is invertible.
	\end{defi}

\begin{lemma}[Lemma 2.2 in \cite{lee2025rfim}]\label{lem:uniqueness of optimizers}
    Under \cref{assn:ht}, the optimization in \eqref{eq:mean-field} has a unique maximizer $\pQ.$ In particular, by defining $u_i \in (-1, 1)$ as the mean of the $i$th marginal of $\pQ$ as
    $u_i := \EE_{\beta_i \sim Q_i^{\textnormal{prod}}} [\beta_i]$, the vector
    $\bu = (u_1, \ldots, u_p) \in (-1,1)^p$ satisfies the following fixed point equation:
		\begin{align}\label{eq:fpeq}
			u_i &= \psi_i'(s_i + c_i), \quad s_i := \sumjn \A_p(i,j)u_j, \,\,\,\,\forall i = 1, \ldots, p.
		\end{align}
   Also, for each $i$,  the marginal $Q_i^{\textnormal{prod}}$ is uniquely characterized by $u_i$  via the relationship $Q_i^{\textnormal{prod}} = \mu_{i,(\psi_i')^{-1}(u_i)}$, where  $\mu_{i,{\mathbf \cdot}}$ is the exponential tilt of $\mu_i$ defined as in \cref{def:expfam}.
\end{lemma}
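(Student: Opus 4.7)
The plan is to reduce the infinite-dimensional variational problem in \eqref{eq:mean-field} to a finite-dimensional concave optimization over the vector of marginal means $\bu=(u_1,\ldots,u_p)$. Since $\A_p$ has zero diagonal by construction in \eqref{eq:anbayes} and $Q=\prod_i Q_i$ is a product, the quadratic term becomes $\EE_Q[\tfrac{1}{2}\bbeta^{\top}\A_p \bbeta]=\tfrac{1}{2}\sumij \A_p(i,j)u_iu_j$, and the KL decomposes as $\mathrm{KL}(Q|\prod_i\mu_i)=\sumin \mathrm{KL}(Q_i|\mu_i)$. Holding each $u_i$ fixed and minimizing over $Q_i$ with $\EE_{Q_i}[\beta_i]=u_i$, standard exponential-family duality identifies the minimizer as the exponential tilt $\mu_{i,\theta_i}$ with $\theta_i=(\psi_i')^{-1}(u_i)$, achieving the Legendre value $\psi_i^*(u_i):=\theta_i u_i-\psi_i(\theta_i)$. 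This profiling step immediately delivers the marginal characterization $Q_i^{\mathrm{prod}}=\mu_{i,(\psi_i')^{-1}(u_i)}$ claimed in the lemma (once the optimal $\bu$ is known), and reduces \eqref{eq:mean-field} to maximizing
\begin{equation*}
G(\bu):=\tfrac{1}{2}\bu^{\top}\A_p\bu+\bc^{\top}\bu-\sumin \psi_i^*(u_i)
\end{equation*}
over the relative interior of its effective domain in $\R^p$.

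The fixed point equation \eqref{eq:fpeq} then follows from first-order optimality: using $(\psi_i^*)'=(\psi_i')^{-1}$, the condition $\nabla G(\bu)=0$ reads $(\psi_i')^{-1}(u_i)=s_i+c_i$ with $s_i=\sumjn \A_p(i,j)u_j$, which inverts to $u_i=\psi_i'(s_i+c_i)$. Uniqueness of the critical point reduces to strict concavity of $G$. The Hessian equals $\nabla^2 G(\bu)=\A_p-\diag\!\big(1/\psi_i''(\theta_i)\big)$. Since every tilt $\mu_{i,\theta}$ lives on $[-1,1]$, we have $\psi_i''(\theta)=\Var_{\mu_{i,\theta}}(\beta_i)\le 1$, so $1/\psi_i''\ge 1$ pointwise. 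Combined with the operator-norm bound $\|\A_p\|_2\le \|\A_p\|_4\le \rho<1$ (which follows from Riesz--Thorin interpolation plus symmetry of $\A_p$), this yields $-\nabla^2 G(\bu)\succeq (1-\rho)\I_p\succ 0$, hence strict concavity. Existence then follows from a standard compactness-plus-lower-semicontinuity argument applied to \eqref{eq:mean-field}: product measures on $[-1,1]^p$ are weakly compact, the linear and quadratic terms depend only on $\bu$ and are weakly continuous, and the KL is weakly lower-semicontinuous, so the supremum is attained. Combined with strict concavity this gives unique existence of $\pQ$, and the corresponding $\bu$ is the unique solution of \eqref{eq:fpeq}.

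The main technical obstacle is the operator-norm comparison $\|\A_p\|_2\le \|\A_p\|_4$. For symmetric $\A_p$, duality gives $\|\A_p\|_r=\|\A_p\|_{r/(r-1)}$, and Riesz--Thorin interpolation makes $r\mapsto \log\|\A_p\|_r$ convex and symmetric about $r=2$, so it attains its minimum there, which delivers the required inequality. A secondary subtlety is that the effective domain of $\psi_i^*$ may be a proper subinterval of $[-1,1]$ when $\mu$ has atoms at $\pm 1$; this is handled transparently by extending $\psi_i^*$ to $+\infty$ off its natural domain and observing that strict concavity, together with the blow-up of $\psi_i^*$ at the boundary of its domain, forces the optimum into the open interior where $G$ is smooth. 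Everything else is routine convex-analysis and exponential-family bookkeeping.
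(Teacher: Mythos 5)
Your proof is correct and uses the standard profiling-plus-Legendre-duality route that is the canonical way to derive this result: reduce to a finite-dimensional problem over the mean vector $\bu$ via the Donsker–Varadhan identity $\min_{Q_i:\,\EE_{Q_i}\beta_i=u_i}\mathrm{KL}(Q_i|\mu_i)=\psi_i^*(u_i)$, check that the reduced objective $G$ is strictly concave because $-\nabla^2G=\diag(1/\psi_i'')-\A_p\succeq (1-\rho)\I_p$ using $\psi_i''\le 1$ (compact support in $[-1,1]$) and $\|\A_p\|_2\le\|\A_p\|_4\le\rho$, and read off the fixed-point equation from stationarity. The paper itself does not prove this statement — it cites Lemma 2.2 of \cite{lee2025rfim} directly — so there is no in-paper argument to compare against, but your argument is sound and essentially the one a reader would expect that companion paper to give.

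Two small points of phrasing worth tightening. First, the parenthetical ``(which follows from Riesz--Thorin interpolation plus symmetry of $\A_p$)'' should modify only the inequality $\|\A_p\|_2\le\|\A_p\|_4$; the bound $\|\A_p\|_4\le\rho$ is \cref{assn:ht} itself, not a consequence of interpolation. Second, the remark that ``$\psi_i^*$ blows up at the boundary of its domain'' is not always literally true: if $\mu_i$ has an atom at $\pm 1$ then $\psi_i^*(\pm 1)=-\log\mu_i(\{\pm 1\})$ is finite. The cleaner way to rule out boundary optimizers is to note that $(\psi_i^*)'=(\psi_i')^{-1}\to\pm\infty$ at the endpoints of its domain, so the stationarity condition $(\psi_i')^{-1}(u_i)=s_i+c_i$ (with finite right-hand side, since $\|\A_p\|$ and $\bc$ are bounded) can only be satisfied strictly in the interior; equivalently, $u_i=\psi_i'(s_i+c_i)\in(-1,1)$ automatically. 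With that adjustment the argument is airtight.
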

\begin{remark}\label{rem:depxy}
We note in passing that both the mean vector $\bu$ and the product measure $\pQ$ depend on $(\bx, \by, \mu)$, but we suppress this dependence for the sake of simplicity. In the remainder of the paper, we will refer the product measure $\pQ$ as the \emph{Mean-Field approximation} of $\nu_{\by,\bx}$ or the \emph{Mean-Field posterior} for short. 
\end{remark}

Now, given that $\pQ$ is well-defined, a fundamental question is whether it is a good approximation of the posterior. One way to address this is to see whether the KL divergence $\rm{KL}(\pQ \mid \nu_{\by, \bx})$ is small. The following Lemma illustrates that under our assumptions on the coupling matrix $\A_p$, this is indeed true.

\begin{lemma}[Corollary 9 in \cite{mukherjee2022variational}]\label{lem:mf accuracy}
    Under Assumptions \ref{assn:ht}, \ref{assn:mf}, \ref{assn:homogeneous diagonals}, we have ${\rm KL}(\pQ \mid \nu_{\by, \bx}) = o(p).$
\end{lemma}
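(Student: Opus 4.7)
The plan is to reduce the proof to bounding the so-called mean-field gap between the log-partition function of the posterior and the ELBO evaluated at its optimizer $\pQ$ from \cref{lem:uniqueness of optimizers}. Writing $Z$ for the normalizing constant of $\nu_{\by,\bx}$ in \eqref{eq:posterior}, a direct expansion of the KL divergence yields
$$\mathrm{KL}(\pQ \mid \nu_{\by,\bx}) \;=\; \log Z \;-\; \Big(\!-\mathrm{KL}\big(\pQ \,\big|\, {\textstyle\prod_{i=1}^p \mu_i}\big) + \EE_{\pQ}\big[\tfrac{1}{2}\bbeta^\top \A_p \bbeta + \bc^\top \bbeta\big]\Big) \;=\; \log Z - \mathrm{ELBO}(\pQ).$$
By the Gibbs variational principle, $\log Z = \sup_Q \mathrm{ELBO}(Q)$ where the supremum is over \emph{all} probability measures on $[-1,1]^p$, whereas $\mathrm{ELBO}(\pQ) = \sup_{Q=\prod_i Q_i}\mathrm{ELBO}(Q)$ restricts to product measures. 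Since the linear term $\bc^\top \bbeta$ factorizes across coordinates and is therefore reproduced exactly by any product measure with matching first moments, the entire mean-field gap is driven by the quadratic interaction $\tfrac{1}{2}\bbeta^\top \A_p \bbeta$.

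The next step is to invoke a ``naive mean-field'' estimate for this gap. Following the strategy behind Corollary 9 in \cite{mukherjee2022variational}, one establishes a bound of the form
$$\log Z - \mathrm{ELBO}(\pQ) \;\lesssim\; \sqrt{p \cdot \mathrm{Tr}(\A_p^2)} \;+\; (\text{lower order corrections}),$$
with implicit constants depending only on $\rho$ from \cref{assn:ht} and on $d_{\min}, d_0$ from \cref{assn:homogeneous diagonals}. The key combinatorial input is that \cref{assn:mf} forces $\mathrm{Tr}(\A_p^2) = \sum_{i,j}\A_p(i,j)^2 \le p\,\alpha_p = o(p^{1/2})$, so the right-hand side is $o(p^{3/4}) = o(p)$, as desired. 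The role of \cref{assn:ht} is twofold here: it both guarantees uniqueness and well-definedness of $\pQ$ via \cref{lem:uniqueness of optimizers}, and it ensures enough concentration of $\nu_{\by,\bx}$ around $\pQ$ for a first-order Taylor expansion of the interaction term to be valid. \cref{assn:homogeneous diagonals} serves to keep the tilted base measures $\mu_i$ uniformly non-degenerate so that constants do not blow up with $p$.

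The main obstacle is proving the naive mean-field bound itself, since its sharpness drives the overall rate. Three standard routes are available in the literature: Chatterjee-Dembo nonlinear large deviations via covering and gradient complexity, a smart-path / Gaussian interpolation argument as in \cite{basak2017universality}, and Eldan's stochastic localization scheme yielding an It\^{o}-type expansion of the Gibbs measure. Each reduces the task to controlling a Frobenius-type quantity of $\A_p$ against $p$, and \cref{assn:mf} is calibrated to deliver precisely the sub-polynomial decay needed. Since this bound appears essentially unchanged in \cite{mukherjee2022variational}, the honest write-up is to verify that our Assumptions \ref{assn:ht}--\ref{assn:homogeneous diagonals} imply the hypotheses required by Corollary 9 of that reference, and then invoke it directly; verifying the hypotheses is straightforward given the discussion following \cref{assn:ht}--\ref{assn:homogeneous diagonals}.
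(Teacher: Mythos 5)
Your proposal ultimately takes the same route as the paper: the lemma is stated with a citation to Corollary~9 of \cite{mukherjee2022variational} and no internal proof is supplied, so (as your closing paragraph correctly identifies) the write-up amounts to checking that Assumptions~\ref{assn:ht}--\ref{assn:homogeneous diagonals} imply the hypotheses of that corollary and invoking it. Your reduction $\mathrm{KL}(\pQ\mid\nu_{\by,\bx}) = \log Z - \mathrm{ELBO}(\pQ)$ and the observation that Assumption~\ref{assn:mf} gives $\mathrm{Tr}(\A_p^2)\le p\alpha_p = o(p^{1/2})$, and hence $o(p)$, are both correct --- the paper makes the latter observation itself right after Assumption~\ref{assn:mf}.
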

\noindent Note that the typical size of the KL divergence between two $p$-dimensional measures is $O(p)$, so \cref{lem:mf accuracy} indicates that the true posterior $\nu_{\by, \bx}$ and its Mean-Field approximation $\pQ$ is smaller than that. Also, the original claim in \cite{mukherjee2022variational} is stated under slightly weaker assumptions.

We end this short section on Mean-Field variational approximation by highlighting its computational benefits as opposed to sampling from the posterior.

 \begin{remark}[Computing the Mean-Field optimizer]\label{rem:compMField}
 Note that $\bu$ (and hence $\pQ$) can be computed from the data $(\by,\bx)$ and the prior $\mu$, without the need for any posterior sampling. It can be computed efficiently using message passing algorithms (see \cite[Theorem 1.10]{jain2018mean}). In fact, under the high-temperature condition, the Mean-Field solution can be computed exponentially fast (in the number of iterations) by recursively using the fixed point equation \eqref{eq:fpeq} or using gradient based optimization (see \cite[Chapter 2]{Nesterov2018} and  \cite[Chapter 5]{Nocedal1999}).
 \end{remark}

\subsection{Moment bounds with Mean-Field centering}\label{sec:moment bounds}
 As indicated above, the main goal of this paper is to understand the limiting distribution of $\bq^\top \bbeta$ under the posterior, and its posterior mean, under the normalization $\|\bq\|=1$. This section provides some preliminary moment bounds for $\bq^\top \bbeta$ centered using the Mean-Field approximation. In all results in this subsection, the implied constants in $\lesssim$ depends only on $\rho$ (see \eqref{eq:mht}).

 First, consider the problem of understanding the limiting behavior of $$T(\bbeta) := \bq^\top \bbeta\,,$$ where $\bbeta \sim \nu_{\by, \bx}$ is a posterior sample. The following result shows that $T(\bbeta)$ concentrates around the Mean-Field posterior mean $\EE_{\pQ}[\bq^\top \bbeta] = \bq^\top \bu$. 

 \begin{prop}[Theorem 2.3 in \cite{lee2025rfim}]\label{prop:lln}
    Under the high-temperature \cref{assn:ht}, we have
        \begin{align}\label{eq:firstbd}		
          \EE_{\nu_{\by,\bx}}\left[\bq^\top \bbeta - \EE_{\nu_{\by,\bx}}[\bq^\top \bbeta]  \right]^2\le \EE_{\nu_{\by,\bx}}\left[\bq^\top (\bbeta-\bu) \right]^2 \lesssim \max\big(1, p\alpha_p^2\big).
        \end{align}
 \end{prop}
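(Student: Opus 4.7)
The first inequality is a standard variance-minimization fact: for any square-integrable random variable $X$, the quantity $\EE[(X-a)^2]$ is minimized at $a=\EE[X]$. Applying this to $X=\bq^\top\bbeta$ under $\nu_{\by,\bx}$ with $a=\bq^\top\bu$ immediately yields $\Var_{\nu_{\by,\bx}}(\bq^\top\bbeta)\le \EE_{\nu_{\by,\bx}}[(\bq^\top(\bbeta-\bu))^2]$.

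For the nontrivial second inequality, the plan is a bias-variance decomposition: writing $\bar{\bbeta}:=\EE_{\nu_{\by,\bx}}[\bbeta]$,
\[
\EE_{\nu_{\by,\bx}}[(\bq^\top(\bbeta-\bu))^2]=\Var_{\nu_{\by,\bx}}(\bq^\top\bbeta)+(\bq^\top(\bar\bbeta-\bu))^2,
\]
and separately controlling the two pieces. For the variance term, I would exploit that the conditional law of $\beta_i$ given $\beta_{-i}$ is the tilted measure $\mu_{i,\,c_i+\tilde s_i}$ with $\tilde s_i=\sum_j \A_p(i,j)\beta_j$, so that $\EE_{\nu_{\by,\bx}}[\beta_i\mid\beta_{-i}]=\psi_i'(c_i+\tilde s_i)$ is a $1$-Lipschitz function of $\beta_{-i}$ with Lipschitz row $(\A_p(i,j))_j$ and conditional variance $\psi_i''(c_i+\tilde s_i)\le 1$. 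Combining this with $\|\A_p\|_2\le\|\A_p\|_4\le\rho<1$ (from \cref{assn:ht}) and a Poincar\'e / Glauber-dynamics / Dobrushin-type contraction gives a spectral-gap bound depending only on $\rho$, so that $\Var_{\nu_{\by,\bx}}(\bq^\top\bbeta)\lesssim \|\bq\|_2^2=1$.

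For the bias term, I would use the two fixed-point identities $u_i=\psi_i'(c_i+s_i)$ (\cref{lem:uniqueness of optimizers}) and $\bar\beta_i=\EE_{\nu_{\by,\bx}}[\psi_i'(c_i+\tilde s_i)]$. Taylor expanding the latter around $c_i+s_i$ and using $|\psi_i''|\le 1$, $|\psi_i'''|\le 1$ gives
\[
\bar\beta_i-u_i=\psi_i''(c_i+s_i)\sum_j\A_p(i,j)(\bar\beta_j-u_j)+r_i,\qquad |r_i|\lesssim \EE_{\nu_{\by,\bx}}[(\tilde s_i-s_i)^2].
\]
The variance control from the previous paragraph implies $\EE_{\nu_{\by,\bx}}[(\tilde s_i-s_i)^2]\lesssim \sum_j \A_p(i,j)^2\cdot O(1)\lesssim \alpha_p$ (after absorbing the off-diagonal posterior correlations via the same spectral-gap bound), so $\|\br\|_2^2\lesssim p\alpha_p^2$. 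Writing $\bw=\bar\bbeta-\bu$ and $\bm D=\diag(\psi_i''(c_i+s_i))$, we get $\bw=\bm D\A_p\bw+\br$; since $\|\bm D\A_p\|_2\le\rho<1$, the operator $I-\bm D\A_p$ is invertible with bounded inverse, giving $\bq^\top\bw=((I-\A_p\bm D)^{-1}\bq)^\top\br$ and hence $(\bq^\top\bw)^2\lesssim \|\bq\|_2^2\|\br\|_2^2\lesssim p\alpha_p^2$.

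The main obstacle is the Poincar\'e-type control on the conditional-variance term used in both pieces: it both provides the $O(1)$ bound on $\Var_{\nu_{\by,\bx}}(\bq^\top\bbeta)$ and is what allows one to bound the Taylor remainder $r_i$ by $O(\alpha_p)$ rather than a quantity merely involving the \emph{diagonal} $\sum_j \A_p(i,j)^2 \Var_{\nu_{\by,\bx}}(\beta_j)$. Making this quantitative with a constant depending only on $\rho$ requires either (i) an Efron-Stein recursion that reintroduces $\EE_{\nu_{\by,\bx}}[(\bq^\top(\bbeta-\bu))^2]$ on the right-hand side with a prefactor $\le\rho^2<1$, allowing one to absorb it, or (ii) a Stein/second-order Poincar\'e bound tailored to the Gibbs form of $\nu_{\by,\bx}$; the former seems most natural given the $(4,4)$-norm hypothesis already in the paper.
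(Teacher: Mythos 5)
A preliminary remark: the paper does not prove this statement — it is reproduced verbatim as Theorem 2.3 of \cite{lee2025rfim}, so there is no internal proof to compare against. Your first inequality (variance minimization) and the bias-variance split are both correct, and the use of the fixed-point identity $u_i = \psi_i'(c_i+s_i)$, the conditional-mean identity $\bar\beta_i = \EE_{\nu_{\by,\bx}}[\psi_i'(c_i+\tilde s_i)]$, and the contraction $\|\bm D\A_p\|_2\le\rho<1$ are exactly the right ingredients.

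The gap is in the claim $\EE_{\nu_{\by,\bx}}[(\tilde s_i-s_i)^2]\lesssim\alpha_p$, which you use to get $\|\br\|^2\lesssim p\alpha_p^2$. Writing $\bv_i$ for the $i$th row of $\A_p$, one has $\EE[(\tilde s_i-s_i)^2]=\bv_i^\top M\bv_i$ with $M:=\EE[(\bbeta-\bu)(\bbeta-\bu)^\top]$, so this expectation is bounded by $\alpha_p\|M\|$, and $\|M\|=\sup_{\|\bq\|=1}\EE[(\bq^\top(\bbeta-\bu))^2]$ is precisely the quantity the proposition bounds. Feeding this circularly into your recursion gives a self-consistency inequality of the form $L\lesssim 1 + p\alpha_p^2 L^2$ (with $L=\|M\|$), which does \emph{not} yield $L\lesssim\max(1,p\alpha_p^2)$: in the regime $p^{-1/2}\lesssim\alpha_p\lesssim 1$ (where $p\alpha_p^2\ge 1$ but the trivial bound $L\le 4p$ is too weak), the inequality $L\le C+Cp\alpha_p^2L^2$ is consistent with $L$ arbitrarily large. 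Your proposed fix (i), the ``Efron--Stein recursion with prefactor $\rho^2$,'' has the same obstruction because the vector $\bq^{(1)}$ in the iterated identity $\bq^\top\zeta=\sum_i q_i\eta_i+\bq^{(1)\top}\zeta$ is itself $\bbeta$-dependent, so $\EE[(\bq^{(1)\top}\zeta)^2]\not\le\rho^2 L$ without further work.

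What breaks the circularity is a \emph{fourth}-moment control: $\EE_{\nu_{\by,\bx}}\bigl[\sum_i(\tilde s_i-s_i)^4\bigr]\lesssim p\alpha_p^2$, which the paper records separately in \cref{lem:m-n contraction} (Lemma 3.2(b) of \cite{lee2025rfim}) and which has a self-contained contraction proof. Once this is available, Jensen's inequality gives $\|\br\|^2=\sum_i r_i^2\lesssim\sum_i\bigl(\EE[(\tilde s_i-s_i)^2]\bigr)^2\le\sum_i\EE[(\tilde s_i-s_i)^4]\lesssim p\alpha_p^2$, without any reference to $\|M\|$. You should also be aware that the spectral-gap bound $\Var_{\nu_{\by,\bx}}(\bq^\top\bbeta)\lesssim 1$ under a $(4,4)$-norm (rather than $\ell^\infty$/Dobrushin) condition is itself a non-trivial lemma that your sketch asserts but does not establish; it needs the same kind of resolvent/contraction machinery as the fourth-moment bound.
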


 To better understand the bound, consider a toy case with $\bq = \frac{1}{\sqrt{p}} \mathbf{1}_p$. Then, the problem boils down to understanding the behavior of the coordinate-wise average $\bar{\bbeta} := \frac{1}{p} \sumin \beta_i$ under the posterior. \cref{prop:lln} gives the bound 
 $$\EE_{\nu_{\by,\bx}}[\bar{\bbeta} -\bar{\bu}]^2 \lesssim \frac{1}{p} + \alpha_p^2.$$
This implies the law of large numbers $\bar{\bbeta} = \bar{\bu} + o_P(1)$, as long as $\alpha_p = o(1)$. We stress that the requirement $\alpha_p=o(1)$ is actually needed here, as there are counterexamples to the above law of large numbers when $\alpha_p$ is bounded away from $0$ \citep{qiu2024sub,celentano2023mean}. We draw the reader's attention to another conclusion from \eqref{eq:firstbd}. If the SMF \cref{assn:mf} holds, the RHS above simplifies to $1/p$. Note that this matches the bound one gets under the product approximation $\pQ$:
 $$\EE_{\pQ} [\bar{\bbeta} -\bar{\bu}]^2 \lesssim \frac{1}{p}.$$
Thus under the SMF assumption, the behavior of $\bar{\bbeta}$ is \enquote{very much like} the average of $p$ independent random variables. We will later build on this intuition to derive CLTs under the SMF assumption.

Next, we compare the two centerings in \eqref{eq:firstbd}: $\EE_{\nu_{\by,\bx}}[\bq^\top \bbeta]$ and $\bq^\top \bu$.
Here, the first centering is the Bayes estimator for $\bq^{\top}\bbeta$ %
with  a squared error loss function:
\begin{align}\label{eq:Bayesopt}
    \delta(\by,\bx):=\argmin_{a \in \mathbb{R}} \int \left(a-\bq^\top \bbeta \right)^2\nu_{\by,\bx}(d\bbeta) = \EE_{\nu_{\by,\bx}}[\bq^\top \bbeta].
\end{align}
The second, $\bq^\top \bu$ is a more tractable alternative to $\delta(\by, \bx)$, as illustrated in \cref{rem:compMField}. Given this, a natural question to ask would be how tight is the first inequality in \eqref{eq:firstbd}. 
The following Proposition provides a quantification of this bound.
    \begin{prop}[Regret bound]\label{prop:regret}
        Under the high-temperature \cref{assn:ht}, we have:
        \begin{align}\label{eq:optmf}
        |\delta(\by,\bx)- \bq^\top \bu |\lesssim \sqrt{p}\an,
        \end{align}
        which gives the regret bound
        $$\EE_{\nu_{\by,\bx}}\big(\bq^\top \bbeta - \bq^\top \bu\big)^2 - \EE_{\nu_{\by,\bx}}\big(\bq^\top \bbeta - \delta(\by,\bx) \big)^2 \lesssim p \alpha_p^2.$$
    \end{prop}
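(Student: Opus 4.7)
The second inequality is an immediate consequence of the first. Indeed, the bias-variance identity $\EE_{\nu_{\by,\bx}}(Y-c)^2 = \EE_{\nu_{\by,\bx}}(Y-\EE_{\nu_{\by,\bx}} Y)^2 + (c - \EE_{\nu_{\by,\bx}} Y)^2$, applied with $Y = \bq^\top\bbeta$ and $c = \bq^\top\bu$, reduces the left-hand side of the regret bound to $(\delta(\by,\bx) - \bq^\top\bu)^2$, which is $\lesssim p\alpha_p^2$ once \eqref{eq:optmf} has been established. Thus the whole statement reduces to proving $|\delta(\by,\bx) - \bq^\top\bu|\lesssim \sqrt{p}\,\alpha_p$.

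Let $\bmm := \EE_{\nu_{\by,\bx}}\bbeta$ and $\bv := \bmm - \bu$. Cauchy-Schwarz together with $\|\bq\|=1$ gives $|\delta(\by,\bx) - \bq^\top\bu| = |\bq^\top\bv| \le \|\bv\|$, so it suffices to control $\|\bv\|$. The plan is to derive a linear fixed-point equation for $\bv$ driven by a small remainder and invert it using the high-temperature assumption. Setting $\tilde s_i := \sum_{j}\A_p(i,j)\beta_j$ and $\tilde u_i := \sum_{j}\A_p(i,j)u_j$ (both sums effectively run over $j\neq i$ since $\A_p(i,i)=0$), the conditional law of $\beta_i$ under $\nu_{\by,\bx}$ given $\bbeta_{-i}$ is exactly the exponential tilt $\mu_{i,\tilde s_i + c_i}$. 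The tower property combined with \cref{def:expfam} therefore yields
$$m_i = \EE_{\nu_{\by,\bx}}\bigl[\psi_i'(\tilde s_i + c_i)\bigr], \qquad u_i = \psi_i'(\tilde u_i + c_i).$$
A second-order Taylor expansion of $\psi_i'$ around $\tilde u_i + c_i$, using the uniform boundedness of $\psi_i''$ and $\psi_i'''$ (cf.\ \cref{def:expfam}), produces
$$v_i = \psi_i''(\tilde u_i + c_i)\sum_{j}\A_p(i,j)v_j + R_i, \qquad |R_i|\lesssim \EE_{\nu_{\by,\bx}}\bigl[(\tilde s_i-\tilde u_i)^2\bigr].$$

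In matrix form $\bv = \mathbf{D}\A_p\bv + \br$, where $\mathbf{D} := \diag(\psi_i''(\tilde u_i+c_i))$ satisfies $\|\mathbf{D}\|\le 1$, and hence $\|\mathbf{D}\A_p\|_2 \le \|\A_p\|_2 \le \|\A_p\|_4 \le \rho$ by \cref{assn:ht}. It follows that $\I_p - \mathbf{D}\A_p$ is invertible with operator norm at most $(1-\rho)^{-1}$, giving $\|\bv\|\le (1-\rho)^{-1}\|\br\|$. To bound $\|\br\|$, I would apply \cref{prop:lln} to the unit vector $\A_p(i,\cdot)/\|\A_p(i,\cdot)\|$, which combined with $\|\A_p(i,\cdot)\|^2 \le \alpha_p$ yields $|R_i|\lesssim \alpha_p$; summing over $i$ gives $\|\br\|^2\lesssim p\alpha_p^2$, and therefore $\|\bv\|\lesssim \sqrt{p}\,\alpha_p$, which is the desired bound. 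The main subtlety is converting the pointwise second-moment bound on $\tilde s_i - \tilde u_i$ into an $\ell^2$-control on the remainder vector $\br$ at the sharp order $\sqrt p\,\alpha_p$, which is precisely what \cref{prop:lln} is tailored to deliver; once this is in place, the linear fixed-point inversion driven by the high-temperature contraction is routine.
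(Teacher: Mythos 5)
Your reduction of the regret bound to \eqref{eq:optmf} via the bias-variance identity is correct, and your fixed-point formulation of \eqref{eq:optmf} is a valid alternative to the paper's argument. The paper telescopes the sequence $S_{(\ell)}=\bigl(\bq^{(\ell)}\bigr)^\top(\bmm-\bu)$ with $\bq^{(\ell+1)}=\C_p\bq^{(\ell)}$ and $\C_p=\A_p\mathbf{D}$, which is precisely the Neumann-series unrolling of your resolvent bound $\|\bv\|\le(1-\rho)^{-1}\|\br\|$ applied to $\bv=\mathbf{D}\A_p\bv+\br$; since $\mathbf{D}\A_p$ and $\A_p\mathbf{D}$ have the same operator norm, the two presentations are equivalent, and your Taylor expansion producing $|R_i|\lesssim\EE_{\nu_{\by,\bx}}\bigl[(\tilde s_i-\tilde u_i)^2\bigr]$ matches the paper's.

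The gap is in how you bound $\|\br\|$. You invoke \cref{prop:lln} with the unit vector $\A_p(i,\cdot)/\|\A_p(i,\cdot)\|$, but that proposition gives
$\EE_{\nu_{\by,\bx}}\bigl[(\tilde s_i-\tilde u_i)^2\bigr]\lesssim \|\A_p(i,\cdot)\|^2\,\max(1,p\alpha_p^2)\le \alpha_p\max(1,p\alpha_p^2)$,
not $\lesssim\alpha_p$: the factor $\max(1,p\alpha_p^2)$ cannot be dropped, because Proposition~\ref{prop:regret} is asserted under \cref{assn:ht} alone, without the Strong Mean-Field condition that would force $p\alpha_p^2\le 1$. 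Propagating the correct bound yields $\|\br\|^2\lesssim p\alpha_p^2\,\max(1,p\alpha_p^2)^2$, hence $\|\bv\|\lesssim\sqrt p\,\alpha_p\max(1,p\alpha_p^2)$, which is strictly weaker than the claimed $\sqrt p\,\alpha_p$ whenever $p\alpha_p^2>1$. The paper closes this by using the \emph{fourth}-moment estimate of \cref{lem:m-n contraction}, $\EE_{\nu_{\by,\bx}}\bigl[\sum_i(\tilde s_i-\tilde u_i)^4\bigr]\lesssim p\alpha_p^2$ (with no $\max$ factor), from which $\sum_iR_i^2\lesssim\sum_i\bigl(\EE_{\nu_{\by,\bx}}\bigl[(\tilde s_i-\tilde u_i)^2\bigr]\bigr)^2\le\sum_i\EE_{\nu_{\by,\bx}}\bigl[(\tilde s_i-\tilde u_i)^4\bigr]\lesssim p\alpha_p^2$ by Jensen's inequality. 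Your argument goes through once you replace the appeal to \cref{prop:lln} by this fourth-moment lemma.
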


\begin{remark}[Bayes optimality of Mean-Field]\label{rem:optmf}
    From \cref{prop:regret}, it follows that the Mean-Field centering $\bq^\top \bu$ is asymptotically Bayes optimal under the SMF \cref{assn:mf}, in the sense that the RHS becomes $o(1)$.
    In comparison to parallel results on the optimality of the Mean-Field optimizer which are asymptotic in nature (see e.g.~\cite[Corollary 1]{mukherjee2023mean} and \cite[Theorem 1]{mukherjee2022variational}), our result provides explicit non-asymptotic rates of convergence.
\end{remark}

\subsection{Data generating assumption}\label{subsec:dgp}
This section introduces additional assumptions on how the true data $\by$ is generated. Note that the posterior $\nu_{\by,\bx}$ is constructed given \emph{any} observation $\by$, and that we have imposed no assumption on $\by$ so far. Consequently, all previous results hold for any deterministic vector $\by$. However, this is not enough to derive the precise limiting distributions that will follow, and we impose the following assumption on the data generating process (DGP) for $\by$. Mainly, we assume that $\by$ is generated from a regression model with covariates $\bx$ and ``true'' parameter $\bbst$. Here, $\bbst$ is not required to be supported in $[-1, 1]$ and may be unbounded. Given $(\mathbf{y},\mathbf{X},{\bm \beta}^\star)$, the statistician generates a sample ${\bm \beta}$ from the statistician's posterior $\nu_{{\bf y},{\bf X}}\equiv \nu_{{\bf y},{\bf X},\mu}$ (see \eqref{eq:posterior_intro}), where $\mu$ is a prior chosen by the statistician for the Bayesian model. This allows the 
``true/data generating'' model to be different from the ``Bayesian'' model \eqref{eq:model}, \eqref{eq:Bayeslin} that was used to build the posterior. In other words, the Bayesian model is allowed to be misspecified. We refer to \cref{fig:dgp} for a pictorial representation of the combined model. While our subsequent results can be extended to allow a misspecified regression variance $\sigma^2 > 0$, for the sake of simplicity of notation we will assume that $\sigma^2$ is correctly specified.

\begin{assume}[Data generating process]\label{assn:dgp}
  Assume that the response vector $\by$ is generated from the ``true/data generating'' model:
  \begin{equation}\label{eq:truemodel}
      \by = \bx\bbst + \boldsymbol{\vep}^\star.
  \end{equation}
  Here, $\mathbf{X}$ is the same $n \times p$ design matrix that was used to construct the posterior in \eqref{eq:model}. The vector $\bbst \in \mathbb{R}^p$ collects the true regression coefficients (which may be deterministic or random), and $\boldsymbol{\vep}^\star \sim \mcn(\mathbf{0}_n, \sigma^2 \mathbf{I}_n)$ represents unobserved Gaussian noise. %
\end{assume}

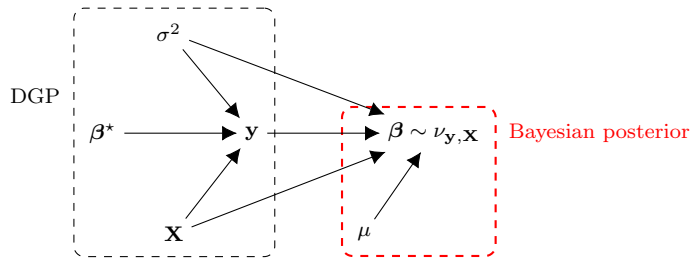
\begin{figure}
    \centering
\begin{tikzpicture}[node distance=1.5cm, auto]
    \centering
    \node[] (betast) {$\bbeta^\star$};
    \node[above=1cm of betast] (sigma_tmp) {};
    \node[right=0.5cm of sigma_tmp] (sigmast) {$\sigma^2$};
    \node[below=1cm of betast] (beta_tmp) {};
    \node[right=0.6cm of beta_tmp] (X) {$\bx$};
    \node[right=of betast] (y) {$\by$};
    \node[right=of y] (beta) {$\bbeta \sim \nu_{\by,\bx}$};
    \node[right=2.1cm of X] (mu) {$\mu$};

    \node[] (label) at (-0.9,0.5) {DGP}; 
    \node[] (label) at (6.6,0) {\color{red} {Bayesian posterior}}; 

    \node [draw=black,dashed,fit=(betast)(sigmast)(X)(y), inner sep=0.1cm, rounded corners] (GBbox) {};
    \node [draw=red,thick,dashed,fit=(mu)(beta), inner sep=0.1cm, rounded corners] (GBbox) {};

    \draw[->] (betast) -- (y);
    \draw[->] (sigmast) -- (y); \draw[->] (X) -- (y);
    \draw[->] (y) -- (beta); \draw[->] (X) -- (beta);
    \draw[->] (mu) -- (beta); \draw[->] (sigmast) -- (beta); 
\end{tikzpicture}
    \caption{
    Graphical illustration of the data generating process (DGP) and the Bayesian posterior. %
    }
    \label{fig:dgp}
\end{figure}

\section{Main result: limiting distributions for the linear statistic and posterior mean}\label{sec:bayeslinhigh}
  This section works under two specific data generating processes that result in asymptotic Gaussian fluctuations for $T(\bbeta) = \bq^{\top}\bbeta$ (when $\bbeta\sim \nu_{\by,\bx}$) and $\delta(\by, \bx) = \EE_{\nu_{\by,\bx}}[\bq^{\top}\bbeta]$. We separate our results in this Section into two parts where we consider (i) a white noise (random) design with general coefficients $\bbst$ (see \cref{sec:wnd}), (ii) a general design matrix with random potentially misspecified coefficients $\bbst$ (see \cref{sec:btm}). 
  Throughout the section, we will reserve the notation $\bbeta$ for the posterior samples.

 To present our main results, we require two additional notations. These notations are related to the limiting variance as well as the centering. First, we define $\psi_0$ as the limiting version of the log mgf $\psi_i$s. Here, recall that $d_0 = d_0(\bx)$ is the constant from \cref{assn:homogeneous diagonals} that approximates the diagonal elements of $\Sigman$.
 
\begin{defi}\label{def:psi_0}
    Define $\mu_0$ by setting $\frac{d \mu_0}{d \mu}(z) \propto \exp \left(- \frac{d_0 z^2}{2}\right)$,
    and define $\mu_{0,\theta}$ to be the exponential tilt of $\mu_0$ (similar to \cref{def:expfam}), given by
    $$\frac{d\mu_{0,\theta}}{d\mu_0}(z)=\exp(
    \theta z-\psi_0(\theta)), \text{ where } \psi_0(\theta):=\log \int e^{\theta z}d\mu_0(z)$$
    for $\theta\in\R$.
 \end{defi}

Next, define $\upsilon_p$ as the following data-dependent quantity. In both theorems that will follow, $\upsilon_p$ can be used to consistently estimate the limiting variance.
 \begin{defi}\label{def:upsilon_p}
     Define $\upsilon_p = \upsilon_p(\by,\bx) := \sumin q_i^2 \psi_i''(c_i) \in (0, 1)$ where $\psi_i(\cdot)$s are defined in \cref{def:expfam} and $c_i$s are defined in \eqref{eq:posterior_2}.
 \end{defi}

\subsection{White noise design}\label{sec:wnd}

In this Section, we assume that the design matrix $\bx$ is the white noise design from \cref{ex:iid gaussian design}. This is a popular choice in high-dimensional Bayesian linear regression \citep{qiu2024sub,barbier2020mutual,mukherjee2023mean}. Recall that each entry of $\bx$ is assumed to be i.i.d sub-Gaussian. In this case we will set $d_0 := \sigma^{-2}$.

\begin{thm}\label{cor:regression i.i.d design}
   Assume that the design matrix $\bx$ is the white noise design in \cref{ex:iid gaussian design}, and suppose $\by$ is generated according to \cref{assn:dgp} for some true $\bbst$ which satisfies
    $\sumin (\beta_i^\star)^4 = O(p)$. %
    Assume further that $\lVert \bq \rVert_\infty \to 0$, and %
    the following limit is well-defined for some $\upsilon > 0$:
  \begin{align}
      \upsilon &:= \lim_{p \to \infty} \sumin q_i^2 \EE \psi_0''(\beta_i^\star/\sigma^2+W), \label{eq:regression i.i.d convergence assumption upsilon} 
  \end{align}
  where $W\sim \mcn(0,\sigma^{-2})$. 
  Then, in the asymptotic regime $p\ll n^{2/3}$, the following statements hold.

  \begin{enumerate}[(a)]
        \item %
        We have $\upsilon_p \mid \bx, \bbeta^\star \xp \upsilon$.
        Further, with $\bbeta \sim \nu_{\by, \bx}$, we have
    \begin{align}\label{eq:iid clt 1}
        \left(T(\bbeta) - \bq^\top \bu\right) \mid \by,\bx \overset{d}{\longrightarrow} \mcn(0,\upsilon),   \quad \Big(T(\bbeta) - \sumin q_i \psi_i'(c_i)\Big) \mid \by,\bx\overset{d}{\longrightarrow} \mcn(0,\upsilon).
    \end{align}

    \item Additionally, suppose that the following limit is well-defined for some $\varsigma^2 > 0$
            \begin{align}
                \varsigma^2 := \lim_{p \to \infty} \sumin q_i^2 \Var \psi_0'(\beta_i^\star/\sigma^2+W).\label{eq:regression i.i.d convergence assumption sigma}
            \end{align}Then 
             \begin{align}\label{eq:iid clt 2}
                 \left(\delta(\by,\bx) - \sumin q_i\EE \psi_0'(\beta_i^\star/\sigma^2+W)\right)\mid\bx,\bbst \overset{d}{\longrightarrow} \mcn(0,\varsigma^2).
             \end{align}
    \end{enumerate}
\end{thm}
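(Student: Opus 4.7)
The plan is to reduce the claim to a CLT for linear statistics under the random-field Ising type posterior \eqref{eq:posterior}, together with a separate CLT for a smooth function of the Gaussian noise $\boldsymbol{\vep}^\star$. The key structural observation is the decomposition
\begin{equation*}
c_i = \frac{(\bx^\top \bx)_{i,i}}{\sigma^2}\beta_i^\star + \sum_{j \ne i}\frac{(\bx^\top \bx)_{i,j}}{\sigma^2}\beta_j^\star + W_i, \quad W_i := \frac{(\bx^\top \boldsymbol{\vep}^\star)_i}{\sigma^2}.
\end{equation*}
Under the white noise design in \cref{ex:iid gaussian design} and \cref{assn:homogeneous diagonals}, the first term equals $d_i\beta_i^\star$ with $d_i \to \sigma^{-2}$, the off-diagonal contribution is asymptotically negligible (using the independence of columns of $\bx$ together with $\sum_i(\beta_i^\star)^4 = O(p)$), and $W_i \mid \bx \sim \mcn(0,d_i)$ is approximately $\mcn(0,\sigma^{-2})$. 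This allows replacement of $\psi_i^{(k)}(c_i)$ by $\psi_0^{(k)}(\beta_i^\star/\sigma^2 + W_i)$ in every sum we encounter, up to error terms that vanish in $\ell^1$ weighted by $q_i^2$ or $|q_i|$.

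For Part (a) the argument has three steps. \emph{Step (a.1):} Establish $\upsilon_p \mid \bx,\bbst \xp \upsilon$ via a Lyapunov-type weak law of large numbers for $\sum_i q_i^2 \psi_0''(\beta_i^\star/\sigma^2 + W_i)$, using boundedness of $\psi_0''$, the delocalization $\|\bq\|_\infty \to 0$, the moment bound on $\bbst$, and the assumed limit \eqref{eq:regression i.i.d convergence assumption upsilon}. \emph{Step (a.2):} Apply the Berry-Esseen type bound for linear statistics of random-field Ising measures developed in \cite{lee2025rfim} (whose hypotheses \cref{assn:ht}, \cref{assn:mf}, \cref{assn:homogeneous diagonals} are all verified for $p \ll n^{2/3}$ in \cref{ex:iid gaussian design}) to conclude $(T(\bbeta) - \bq^\top \bu)/\sqrt{\upsilon_p} \mid \by,\bx \xd \mcn(0,1)$ in probability over $(\by,\bx)$, which combined with Step (a.1) gives the first convergence in \eqref{eq:iid clt 1}. \emph{Step (a.3):} Prove the equivalence of the two centerings,
\begin{equation*}
    \bq^\top \bu - \sum_{i=1}^p q_i \psi_i'(c_i) = o_P(1),
\end{equation*}
by Taylor expanding $u_i = \psi_i'(c_i + s_i)$ with $s_i = \sum_j \A_p(i,j)u_j$ around $c_i$ and carefully bounding the first-order piece $\sum_i q_i s_i \psi_i''(c_i)$ and the quadratic remainder $\sum_i |q_i| s_i^2$ using the strong mean-field scalings $\alpha_p = O_P(p/n)$ and the delocalized $\bq$.

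For Part (b), \cref{prop:regret} combined with $\alpha_p = O_P(p/n)$ yields $\delta(\by,\bx) = \bq^\top \bu + O_P(\sqrt{p}\,\alpha_p) = \bq^\top \bu + o_P(1)$ under $p \ll n^{2/3}$. Combining with Step (a.3), the claim reduces to a CLT for
\begin{equation*}
S := \sum_{i=1}^p q_i \psi_0'\!\left(\frac{\beta_i^\star}{\sigma^2} + W_i\right)
\end{equation*}
conditional on $(\bx, \bbst)$, where the only remaining randomness is the Gaussian noise $\boldsymbol{\vep}^\star$. Since $\boldsymbol{\vep}^\star \mapsto S$ is a smooth function of a Gaussian vector with $\ell^2$-gradient bounded by a constant multiple of $\|\bq\|_\infty \to 0$, a second-order Gaussian Poincar\'e type inequality produces a Gaussian limit for $S$; the mean $\sum_i q_i \EE \psi_0'(\beta_i^\star/\sigma^2 + W)$ and variance $\varsigma^2$ are identified via an analogue of Step (a.1) using \eqref{eq:regression i.i.d convergence assumption sigma}.

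The principal obstacle is Step (a.3). The naive Cauchy-Schwarz bound yields only $|\bq^\top \bu - \sum_i q_i \psi_i'(c_i)| \lesssim \|(s_i)_{i=1}^p\| = O_P(p/\sqrt n)$, which is divergent in the range $p \asymp n^{2/3}$. One must exploit cancellations from the delocalized weighting by $\bq$ and the centered i.i.d.\ structure of $\bx$. I expect the argument to proceed by iterating \eqref{eq:fpeq} once to write $s_i \approx \sum_j \A_p(i,j)\psi_j'(c_j)$, and then to control the resulting bilinear form $\sum_{i,j} q_i \A_p(i,j) \psi_i''(c_i) \psi_j'(c_j)$ via second-moment estimates that use $\EE \A_p(i,j) = 0$ and the weak dependence between $\A_p(i,j)$ and $(c_k)_{k \neq i,j}$ arising from the sub-Gaussian product design.
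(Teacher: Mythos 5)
Your high-level plan tracks the paper's actual proof fairly closely: both (i) reduce part (a) to the Berry–Esseen machinery of \cite{lee2025rfim} for random-field Ising models, (ii) replace $c_i$ by a surrogate in which the off-diagonal contamination $\mathfrak{F}_i := \sum_j \A_p(i,j)\beta_j^\star$ and the $d_i \ne d_0$ mismatch are averaged out by the delocalized weights $q_i$ or $q_i^2$, and (iii) treat the posterior mean via a second-order Gaussian Poincar\'e inequality. You also correctly identify that the naive $\|(s_i)\| = O_P(p/\sqrt{n})$ bound in Step (a.3) is useless for $p\asymp n^{2/3}$ and that the bilinear form $\sum_{i,j}q_i\A_p(i,j)\psi_i''(c_i)\psi_j'(c_j)$ (which is the $R_{4p}$ term in the paper's \cref{thm:CLT for u}) is the decisive object. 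So the architecture is right. That said, there are two issues worth flagging.

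The first is that Step (a.3) remains a genuine gap. You propose to iterate \eqref{eq:fpeq} once and then control the bilinear form ``via second-moment estimates that use $\EE\A_p(i,j) = 0$ and the weak dependence between $\A_p(i,j)$ and $(c_k)_{k\ne i,j}$.'' But the claimed independence/weak-dependence structure is not available: $c_k = \sigma^{-2}(\bx^\top\by)_k$ with $\by = \bx\bbst + \boldsymbol{\vep}^\star$, so $c_k$ depends on \emph{every} column of $\bx$, not only column $k$; in particular $\A_p(i,j)$ and $c_k$ are strongly entangled through the shared $\bx$. The paper gets around this by first Taylor-expanding $\psi_j'(c_j)$ (or rather the surrogate $\EE\psi_0'(c_j^\star) = \phi_1(d_0\beta_j^\star - \mathfrak{F}_j)$) about $d_0\beta_j^\star$, producing a hierarchy of terms whose moments over $\bx$ are controlled one at a time (see \cref{lem:random design properties}, parts (a)--(d), and \cref{lem:lincomb expectation random design}); the cancellation that gives you $p\ll n^{2/3}$ rather than $p\ll\sqrt{n}$ comes from these moment computations for the sub-Gaussian design, not from any assumed near-independence of $\A_p(i,j)$ and $c_k$. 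You would need to develop an analogue of this Taylor+moment-matching argument; what you have written does not yet close the loop on the bilinear form, and the mechanism you sketch (approximate independence) will not deliver it.

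The second is a more local error in Part (b). You write that the $\ell^2$-gradient of $S$ is ``bounded by a constant multiple of $\|\bq\|_\infty\to 0$.'' That is not correct, and if it were it would kill the non-degenerate limiting variance. The correct accounting, as in the paper's proof of \cref{lem:remove centering}, is that the gradient is $O(1)$ (since $\|\nabla F(\bc)\|^2 \lesssim \sum_i q_i^2 = 1$) while the \emph{Hessian operator norm} is $O(\|\bq\|_\infty)$ (since $F$ separates into a sum of $q_i\psi_i'(c_i)$ and the only nonzero second derivatives are the diagonal entries $q_i\psi_i'''(c_i)$). It is the smallness of the Hessian, combined with the $O(1)$ gradient and the bounded covariance operator $\Sigman$, that drives Chatterjee's second-order Poincar\'e bound. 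Be careful here: collapsing these two objects into a single ``gradient $\lesssim\|\bq\|_\infty$'' statement misrepresents the mechanism behind the Gaussian limit. With that corrected, the rest of your Part (b) reduction ($\delta(\by,\bx)\approx\bq^\top\bu\approx\sum_i q_i\psi_i'(c_i)$ followed by Poincar\'e) is the right skeleton.
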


We elaborate more on the conclusions of \cref{cor:regression i.i.d design}. Part (a) shows that the posterior $\nu_{\by, \bx}$ projected on $\bq$ is approximately Gaussian. The first limit in \eqref{eq:iid clt 1} considers $T(\bbeta)$ centered by the Mean-Field centering, as in the moment bound in \cref{prop:lln}. Recall that computing the Mean-Field centering requires solving the fixed point equation \eqref{eq:fpeq}. The second limit instead centers by an explicit quantity of $\sumin q_i \psi_i'(c_i)$, which avoids computing the Mean-Field approximation. Note that by using either limits in \eqref{eq:iid clt 1}, we can build credible intervals for $T(\bbeta)$ by approximating the limiting variance $\upsilon$ with the sample based quantity $\upsilon_p(\by, \bx)$.

Next, part (b) addresses the fluctuations of the posterior mean $\delta(\by,\bx)$. Here, the randomness arises from the DGP for $\by$, conditioned on the covariates $\bx$ and true parameter $\bbst$. Even for this result, the centering is a (somewhat) explicit function of $\bbst$, in that one doesn't have to solve a fixed point equation to compute the centering. Interestingly, the centering in part (b) is not $\bq^\top \bbst$, the centering for the Bayes estimator under classical asymptotics (e.g. see Theorem 8.3 in \cite{lehmann2006theory}). This is another consequence of the bounded SNR setting, where the non-vanishing effect of the prior makes the posterior mean asymptotically biased.

In addition to assuming the white noise design $\bx$, \cref{cor:regression i.i.d design} requires some extra assumptions, and we discuss them below. First, we assumed $\|\bq\|_\infty \to 0$, which is analogous to the uniform asymptotitc negibility (UAN) condition for the classical Lidbeberg-Feller CLT, and rules out localized vectors $\bq$. In other words, we assume that the summand $\sumin q_i \beta_i$ is not dominated by finitely many terms. This is a necessary condition to get a Normal limit, as each coordinate $\beta_i$ is generally non-Normal and does not concentrate around $u_i$ due to our bounded SNR scaling. Thus, we average out the coordinates so that the Central Limit Theorem can kick in to guarantee a Normal limit.

    Second, we require the existence of the two limits in \eqref{eq:regression i.i.d convergence assumption upsilon} and \eqref{eq:regression i.i.d convergence assumption sigma}, which define the limiting variances. This can be reformulated in terms of weighted empirical measures. To wit, define 
    $$\mathcal{L}_p^{\bq}:=\sumin q_i^2 \delta_{\beta^*_i}.$$
    If we assume that $\mathcal{L}_p^{\bq}\overset{w}{\longrightarrow} \mathnormal L$, then the limits can be written as $\upsilon=\EE\psi_0''(B/\sigma^2+W)$ and $\varsigma^2=\EE[\Var(\psi_0'(B/\sigma^2+W)|B)]$, where $B\sim \mathnormal L $ and $W\sim \mcn(0,\sigma^{-2})$ are independent. 

    Here, the value of the limiting variances $\upsilon$ and $\varsigma^2$ may appear abstract at first glance. Interestingly, these values can be understood by considering the limiting distributions \emph{under the Mean-Field approximation} $\pQ$. Recall that $\pQ$ is a product measure where the $i$th marginal $Q_i^{\text{prod}}$ has mean $u_i$. Proving a CLT for independent $\bbeta_{\mathrm{ind}} \sim \pQ$ is significantly simpler, and it can be checked that $\bq^\top \bbeta_{\mathrm{ind}}$ has mean $\bq^\top \bu$ and approximate variance $\upsilon_p$. The following proposition illustrates that the conclusions in \cref{cor:regression i.i.d design} still hold under the Mean-Field posterior. Thus, under the white noise design in \cref{ex:iid gaussian design}, the Mean-Field approximation produces valid limiting distributions. In particular, it does not underestimate %
    the variance, as opposed to what is empirically observed in the literature \citep{blei2017variational,zhang2018advances}. Consequently, credible intervals constructed under $\pQ$ is still valid under the true posterior $\nu_{\by,\bx}$.

\begin{prop}[Analog of \cref{cor:regression i.i.d design} under $\pQ$]\label{prop:iid mean-field clt}
Suppose $(\by,\bx)$ satisfy the conditions of \cref{cor:regression i.i.d design}. Let $\bbeta_{\rm ind}\sim \pQ\equiv \pQ(\by,\bx)$ be a random sample from the Mean-Field posterior (where $\pQ$ is as in \cref{lem:uniqueness of optimizers}). 
Then the following conclusions hold: 
\begin{enumerate}[(a)]
    \item The weak limits in \eqref{eq:iid clt 1} hold with $\bbeta$ replaced by $\bbeta_{\rm ind}$.
    \item Assuming the existence of the limit \eqref{eq:regression i.i.d convergence assumption sigma}, \eqref{eq:iid clt 2} holds when $\delta(\by, \bx)$ is replaced by $\bq^\top \bu = \EE_{\pQ} \left[\bq^\top \bbeta_{\mathrm{ind}}\right].$
\end{enumerate}
\end{prop}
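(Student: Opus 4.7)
The plan for part (a) starts by exploiting the product structure of $\pQ$. By \cref{lem:uniqueness of optimizers}, the fixed point equation $u_i = \psi_i'(s_i + c_i)$ identifies $Q_i^{\mathrm{prod}}$ with the exponential tilt $\mu_{i,s_i+c_i}$, so that conditional on $(\by,\bx)$ the coordinates $\beta_{\mathrm{ind},i}$ are independent with mean $u_i$ and variance $\psi_i''(s_i+c_i)$. Consequently, $T(\bbeta_{\rm ind}) = \bq^\top \bbeta_{\rm ind}$ is a sum of independent bounded random variables with conditional mean $\bq^\top \bu$ and conditional variance
\[
V_p := \sum_{i=1}^p q_i^2\, \psi_i''(s_i+c_i).
\]
This reduces the CLT for $T(\bbeta_{\rm ind})$ to classical limit theory for independent summands combined with convergence of the random variance $V_p$.

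Next, I would apply a Lyapunov CLT conditional on $(\by,\bx)$. Since $|\beta_{\mathrm{ind},i}|\le 1$, $|u_i|\le 1$, and $\|\bq\|_\infty\to 0$, the fourth moment Lyapunov ratio satisfies
\[
\frac{\sum_{i=1}^p \EE_{\pQ}\big|q_i(\beta_{\mathrm{ind},i}-u_i)\big|^4}{V_p^{2}} \;\lesssim\; \frac{\|\bq\|_\infty^2}{V_p^2},
\]
which vanishes whenever $V_p$ is bounded away from zero. This yields $(T(\bbeta_{\rm ind})-\bq^\top \bu)/\sqrt{V_p}\mid \by,\bx \xrightarrow{d} \mcn(0,1)$ in probability along the DGP randomness. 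To upgrade this to the first limit in \eqref{eq:iid clt 1}, the remaining task is to show $V_p \xp \upsilon$.

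The main obstacle is establishing $V_p \xp \upsilon$. A first attempt is to compare $V_p$ with $\upsilon_p = \sum_i q_i^2 \psi_i''(c_i)$ (which satisfies $\upsilon_p \xp \upsilon$ by \cref{cor:regression i.i.d design}), using Lipschitz continuity of $\psi_i''$ to bound $|V_p - \upsilon_p| \lesssim \sum_i q_i^2 |s_i|$; this may not directly suffice under only $\|\bq\|_\infty \to 0$, so I would instead argue directly that $V_p$ concentrates. Using $\Sigman = \diag(\mathbf{d}^{(p)}) - \A_p$ together with $\by = \bx\bbst + \boldsymbol{\vep}^\star$, one can write
\[
s_i + c_i \;=\; d_i \beta_i^\star + \sum_{j\neq i}\A_p(i,j)(u_j-\beta_j^\star) + W_i, \qquad W_i := \frac{(\bx^\top \boldsymbol{\vep}^\star)_i}{\sigma^2},
\]
where $W_i\mid \bx$ is Gaussian with variance $d_i \to \sigma^{-2}$. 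Mimicking the analysis that establishes $\upsilon_p \xp \upsilon$ in \cref{cor:regression i.i.d design}, I would combine (i) the concentration $\max_i|d_i-\sigma^{-2}|\to 0$ under the white noise design, (ii) the Lipschitz continuity of $\psi_i''$ and its uniform proximity to $\psi_0''$, and (iii) control of the cross term via $\|\A_p(\bu-\bbst)\|_2 \le \|\A_p\|_2 \sqrt{p}$ together with the SMF bound $\an = o(p^{-1/2})$ from \cref{assn:mf}. These reduce $V_p$ to $\sum_{i=1}^p q_i^2 \psi_0''(\beta_i^\star/\sigma^2 + W_i)$ modulo $o_P(1)$ error. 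A Gaussian Poincar\'e inequality in $\boldsymbol{\vep}^\star$ (conditional on $\bx,\bbst$) then gives concentration of this quantity around $\sum_{i=1}^p q_i^2 \EE[\psi_0''(\beta_i^\star/\sigma^2 + W)\mid \bbst]$, whose limit equals $\upsilon$ by \eqref{eq:regression i.i.d convergence assumption upsilon}. Slutsky's theorem then delivers the first limit in \eqref{eq:iid clt 1}.

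For the second limit in \eqref{eq:iid clt 1}, observe that \cref{cor:regression i.i.d design}(a) already establishes the same $\mcn(0,\upsilon)$ limit for $T(\bbeta)$ with both centerings $\bq^\top \bu$ and $\sum_i q_i \psi_i'(c_i)$; since their difference is non-random given $(\by,\bx)$, this forces $\bq^\top \bu - \sum_{i=1}^p q_i \psi_i'(c_i) \xp 0$, and Slutsky transfers the first limit for $T(\bbeta_{\rm ind})$ to the alternative centering. For part (b), \cref{prop:regret} yields the regret bound $|\delta(\by,\bx)-\bq^\top \bu| \lesssim \sqrt{p}\,\an$. Under the white noise design with $p \ll n^{2/3}$, \cref{assn:mf} holds with $\an = o(p^{-1/2})$, so this discrepancy is $o(1)$. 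Substituting into \eqref{eq:iid clt 2} of \cref{cor:regression i.i.d design}(b) yields the CLT for $\bq^\top \bu$ with the same centering and variance $\varsigma^2$.
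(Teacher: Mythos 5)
Your overall structure matches the paper's: condition on $(\by,\bx)$, use the product structure and boundedness to get a Lindeberg/Lyapunov CLT for $\bq^\top(\bbeta_{\rm ind}-\bu)$ with random variance $V_p = \sum_i q_i^2\psi_i''(s_i+c_i)$, and then reduce to showing $V_p\xp\upsilon$. Part (b) is identical to the paper's one-line argument via \cref{prop:regret}, and your argument for the second centering — inferring $\bq^\top\bu - \sum_i q_i\psi_i'(c_i)\xp 0$ from the fact that \cref{cor:regression i.i.d design}(a) gives the same Gaussian limit for both centerings — is a nice, correct shortcut that the paper does not use (the paper instead re-cites Lemma~\ref{thm:CLT for u}).

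The gap is in the key step $V_p\xp\upsilon$. You correctly observe that a crude Lipschitz bound $|V_p-\upsilon_p|\lesssim\sum_i q_i^2|s_i|$ is not obviously controllable under $\|\bq\|_\infty\to 0$ alone, but the direct decomposition you propose as a substitute has the same problem. Writing $s_i+c_i = d_i\beta_i^\star + (s_i-\mathfrak{F}_i) + W_i$ is fine, but the cross term you want to discard satisfies, via the route you sketch,
\[
\sum_{i=1}^p q_i^2\,|s_i-\mathfrak{F}_i|\;\le\;\sqrt{\textstyle\sum_i q_i^4}\;\|\A_p(\bu-\bbst)\|_2\;\lesssim\;\|\bq\|_\infty\,\|\A_p\|_2\,\sqrt{p}\;=\;O_{P,\bx}\!\left(\|\bq\|_\infty\,\frac{p}{\sqrt{n}}\right),
\]
and with $p$ near $n^{2/3}$ the factor $p/\sqrt{n}$ grows like $n^{1/6}$; since $\|\bq\|_\infty$ is only assumed to vanish, the product need not be $o(1)$. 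The alternative $\ell_\infty$-type bound $\max_i|s_i-\mathfrak{F}_i|\le\sqrt{\alpha_p}\|\bu-\bbst\|=O(\sqrt{p\alpha_p})$ also fails, since SMF only gives $\sqrt{p\alpha_p}=o(p^{1/4})$. This is not a small bookkeeping issue: taming $V_p-\upsilon_p$ requires exploiting cancellations in the Taylor expansion, which is precisely what the cited identity $\sum_i q_i^2\psi_i''(s_i+c_i)=\upsilon_p+O(\|\bq\|_\infty+\sqrt{R_{3p}})$ from \cite{lee2025rfim} encodes; $R_{3p}=o_P(1)$ was already verified in the proof of \cref{cor:regression i.i.d design}, so the paper gets $V_p=\upsilon_p+o_P(1)$ essentially for free. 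You should invoke that identity (or re-derive the Taylor cancellation it rests on) rather than attempt a termwise bound on $s_i-\mathfrak{F}_i$.
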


\begin{remark}[Tightness of the asymptotic scaling]\label{rmk:berry esseen}
    While we have omitted displaying the finite-sample Berry-Esseen bounds in the CLTs in \cref{cor:regression i.i.d design} for brevity, our proof additionally reveals explicit rates of convergence. For example, in part (a), the main error terms in the Berry-Esseen upper bound for the Kolmogorov-Smirnov distance (excluding the error resulting from the empirical limit in \eqref{eq:regression i.i.d convergence assumption upsilon}) can be written as
    $$\sqrt{\frac{p}{n}} + \frac{p^{3/2}}{n} + \|\bq\|_\infty,$$
    and we believe that our asymptotic regime $p\ll n^{2/3}$ is a hard threshold. Indeed, this is a bottleneck in terms of verifying both our design matrix assumptions \ref{assn:ht} and \ref{assn:mf}.
    Compared to existing literature on high-dimensional posterior limiting distributions %
    that consider $p\ll n^{1/3}$ \citep{spokoiny2013bernstein,ghosal1999asymptotic,ghosal2000asymptotic,lu2017bernstein} or $p\ll \sqrt{n}$ \citep{Katsevich2023,giordano2025good}, our result allows much higher dimensional covariates. 
\end{remark}

     Below, we give applications of~\cref{cor:regression i.i.d design}, by illustrating that the limits in \eqref{eq:regression i.i.d convergence assumption upsilon} and \eqref{eq:regression i.i.d convergence assumption sigma} exist under natural assumptions on $\bbeta^\star$.

    \begin{ex}[Misspecified Bayesian model for $\bbeta^\star$]\label{ex:misspecific}
            Suppose the true coefficient vector $\bbeta^\star$ is generated i.i.d from a distribution $\mu^\star$ on $[-1,1]$. In particular, when $\mu^\star$ is equal to the prior $\mu$, the model \eqref{eq:Bayeslin} is correctly specified.
            Note that for any $\bq$ with $\lVert \bq \rVert_\infty \to 0$, and writing $W\sim \mcn(0,\sigma^{-2})$ (independent of $\bbeta^\star$), the limits
            \begin{align*}
                \upsilon=&\lim_{p \to \infty} \sumin q_i^2 \EE \big[\psi_0''(\beta_i^\star/\sigma^2 + W) \mid \beta_i^\star \big] = \EE \psi_0''(\beta_1^\star/\sigma^2+W),   \\
               \varsigma^2=& \lim_{p \to \infty} \sumin q_i^2 \Var \big[\psi_0'(\beta_i^\star/\sigma^2 + W) \mid \beta_i^\star\big] = \EE \big[\Var(\psi_0'(\beta_1^\star/\sigma^2+W)|\beta_1^\star) \big] 
            \end{align*}
            exist by a standard second momemt argument.
    \end{ex}

\begin{ex}[Spike-and-slab \citep{Ishwaran2005,Rockova2015,Castillo2012}]\label{ex:sslab} 
Sample $r_p$ from a $\mbox{Beta}(1,p^u)$ distribution for $u>0$ and then generate $(\beta_1^\star,\ldots ,\beta_p^\star)$ as follows:
$$\beta_i^\star|r_p \overset{i.i.d}{\sim} \, (1-r_p)\delta_0 \, + \, r_p \otb,$$
for some fixed distribution $\otb$ supported on $[-1,1]$. Note that $(\beta_1^\star,\ldots ,\beta_p^{\star})$ are only conditionally independent given $r_p$, and not marginally independent as in the previous example.
\end{ex}

\noindent In the above example, it is natural to expect that if the \emph{sparsity} is high enough, then the limiting behavior of $\sumin q_i\EE_{\nu_{\by,\bx}}[\beta_i]$ would be the same as in the null case $\bbeta^\star = \mathbf{0}_p$. The following Corollary characterizes this threshold of sparsity in terms of $u$ and $\bq$. %

  \begin{cor}\label{cor:sparse}
    Consider the assumptions of \cref{cor:regression i.i.d design}. Define $q^{(p)}_{tot}:=\sumin q_i$ and sample $W\sim \mcn(0,\sigma^{-2})$ independent of $\otb$.  
   In the setting of \cref{ex:sslab}, the following conclusions hold: 
   \begin{enumerate}[(a)]
   \item Suppose $\EE\psi_0'(\otb/\sigma^2+W)=0$. Then 
        $\delta(\by,\bx) \mid \bx, \bbst \overset{d}{\longrightarrow} \mcn\big(0,  \Var \psi_0'(W)\big)$.
    \item Suppose $\EE\psi_0'(\otb/\sigma^2+W)\neq 0$ and $p^{-u}q^{(p)}_{tot}\to \zeta$ for some $\zeta\in\R$, we then have
    $\delta(\by,\bx) \mid \bx,\bbst \overset{d}{\longrightarrow} \mcn\big(\zeta \EE\psi_0'(\otb/\sigma^2+W), \Var \psi_0'(W)\big)\,.$
    \item Finally, if $\EE\psi_0'(\otb/\sigma^2+W)\neq 0$ and $p^{-u}q^{(p)}_{tot}\to \pm \infty$, then $\delta(\by,\bx) \, \mid \bx,\bbst \stackrel{P}{\to}\pm \infty.$
   \end{enumerate}
\end{cor}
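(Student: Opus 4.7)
The plan is to apply \cref{cor:regression i.i.d design}(b) to the spike-and-slab model conditional on $\bbst$, and then analyze the resulting random centering under the prior randomness on $\bbst$. Write $\xi_i := \mathbbm{1}[\beta_i^\star \ne 0]$ (so $\xi_i \mid r_p \overset{\text{i.i.d}}{\sim} \mbox{Ber}(r_p)$) and let $B_i \sim \otb$ denote the slab values, so that $\beta_i^\star = \xi_i B_i$. First I would verify the two hypotheses of \cref{cor:regression i.i.d design}(b): the moment bound $\sumin (\beta_i^\star)^4 = O(p)$ is trivial since $|\beta_i^\star| \le 1$, and the variance limit $\sumin q_i^2 \Var \psi_0'(\beta_i^\star/\sigma^2 + W) \xp \Var \psi_0'(W)$ follows by splitting the sum over $\xi_i = 0$ and $\xi_i = 1$ and using $\sumin q_i^2 = 1$ together with $\sumin q_i^2 \xi_i = O_P(r_p) = o_P(1)$.

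Next I would analyze the conditional centering $C_p(\bbst) := \sumin q_i f(\beta_i^\star)$, where $f(x) := \EE \psi_0'(x/\sigma^2 + W)$. Writing $\eta := \EE f(\otb)$ and decomposing
\[
C_p(\bbst) = f(0)\,q^{(p)}_{tot} + \sumin q_i \xi_i(f(B_i) - f(0)),
\]
I would compute the conditional mean and variance of the second sum given $r_p$: these are $r_p(\eta - f(0))q^{(p)}_{tot}$ and $O(r_p) = o_P(1)$ respectively (by Chebyshev, using $\|f\|_\infty \le 1$ and $\sumin q_i^2 = 1$). Under the implicit symmetry $f(0) = \EE \psi_0'(W) = 0$, this gives $C_p \xp 0$ in part (a) (when $\eta = 0$), $C_p \xp \zeta \eta$ in part (b) (combining $p^{-u} q^{(p)}_{tot} \to \zeta$ with the concentration of $p^u r_p$ near its mean), and $|C_p| \xp \infty$ in part (c). Slutsky's theorem combined with the conditional CLT $\delta(\by,\bx) - C_p(\bbst) \mid \bbst, \bx \xd \mcn(0, \Var \psi_0'(W))$ from \cref{cor:regression i.i.d design}(b) then yields the Normal limits in (a) and (b); part (c) follows since the $O_P(1)$ Gaussian fluctuations are dominated by the divergent centering.

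The main obstacle will be carefully handling the non-concentration of $r_p \sim \mbox{Beta}(1, p^u)$: its variance is of the same order as its mean squared, so $r_p$ does not concentrate sharply at $p^{-u}$, and a naive argument only shows that $r_p q^{(p)}_{tot}$ converges in distribution (to an $\mbox{Exp}(1)$ multiple of $\zeta$). Establishing the convergence in probability $C_p \xp \zeta \eta$ in part (b) therefore requires a more delicate argument, likely by first conditioning on $r_p$ (or on the active set size $N := \sumin \xi_i$), and then absorbing the residual randomness through a refined tail bound on the Beta distribution. A secondary subtlety is the implicit symmetry assumption $f(0) = 0$; without it, the $f(0)\, q^{(p)}_{tot}$ term would contribute an additional deterministic shift in the centering that must be tracked through the argument.
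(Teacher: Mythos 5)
Your plan follows the same route as the paper's: apply \cref{cor:regression i.i.d design}(b) conditional on $\bbst$, analyze the random centering $C_p(\bbst):=\sumin q_i\phi_1(d_0\beta_i^\star)$ and variance $\sumin q_i^2\Phi(d_0\beta_i^\star)$ via first and second moments under the prior law of $\bbst$, and close with Slutsky. Both concerns you raise are genuine, and the first one is sharper than you acknowledge: it is not a matter of refining a tail bound. Writing $\eta:=\EE\phi_1(d_0\otb)=\EE\psi_0'(\otb/\sigma^2+W)$ and assuming $\phi_1(0)=0$, your own decomposition yields $C_p=\eta\, q^{(p)}_{tot}\, r_p + o_P(1)$. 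Since $p^u r_p\xd\mathrm{Exp}(1)$ and does not concentrate, in case (b) with $\zeta\ne 0$ one gets $C_p\xd\zeta\eta\,\mathrm{Exp}(1)$, a genuine exponential mixture rather than a point mass. Consequently the conditional law of $\delta(\by,\bx)$ given $\bbst$ converges to a Gaussian with an $\mathrm{Exp}(1)$-distributed random location, not to the stated $\mcn(\zeta\eta,\Var\psi_0'(W))$. Parts (a), (b) with $\zeta=0$, and (c) survive your argument, but part (b) with $\zeta\ne 0$ does not, and no amount of delicacy in handling the Beta tail will change this.

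For comparison, the paper's proof establishes the concentration claim $\Var(C_p)=o(1)$ by conditioning on $r_p$ and recording the first term of the decomposition as $\Var_{r_p}(r_p(\eta-\phi_1(0)))=O(p^{-2u})$; however $\EE_{\bbst}[C_p\mid r_p]=q^{(p)}_{tot}\big[(1-r_p)\phi_1(0)+r_p\eta\big]$, so this term should carry the factor $(q^{(p)}_{tot})^2$, and restoring it makes it $\Theta\big((q^{(p)}_{tot})^2 p^{-2u}\big)=\Theta(\zeta^2\eta^2)$, which is exactly the non-vanishing contribution your plan detects. Your secondary concern about $\phi_1(0)=\EE\psi_0'(W)=0$ is also warranted: it holds only if $\mu$ is symmetric (so that $\psi_0'$ is odd), an assumption stated explicitly in \cref{sec:btm} but not in \cref{sec:wnd}, yet the paper's proof invokes it without comment.
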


Here, \cref{cor:sparse} shows that the sparsity threshold depends on the symmetry of $\otb$, and the size of $q_{tot}^{(p)}=\sumin q_i$. If $\otb$ is a symmetric distribution or $\bq$ is an approximate contrast satisfying $q_{tot}^{(p)}=o(p^u)$, then $\delta(\by,\bx)$ always exhibits null behavior. The latter is always satisfied when $u>1/2$, as the Cauchy-Schwartz inequality gives $|q_{tot}^{(p)}|\le \sqrt{p}$. Otherwise, the limiting distribution may exhibit a nonzero centering, or even diverge in the limit.

\subsection{Misspecified Bayesian model}\label{sec:btm}
In this Section, we weaken the random design assumption from \cref{sec:wnd}. We will stick to the data generating \cref{assn:dgp}, but additionally assume that (i) the vector $\bq$ is an approximate eigenvector of $\Sigman$, (ii) $\beta^{\star}_1,\ldots ,\beta^{\star}_p\overset{i.i.d}{\sim}\mu^{\star}$ for some probability distribution $\mu^{\star}$ on $\R$ which is symmetric around $0$, and (iii) the Bayesian prior $\mu$ used to construct the posterior \eqref{eq:posterior} is also symmetric. Note that $\mu^{\star}$ need not be equal to $\mu$.
While the prior $\mu$ used in \eqref{eq:posterior} is supported on $[-1, 1]$, $\mu^\star$ does not have to satisfy this. Consequently, we allow a wide range of $\mu^\star$ that ranges from degenerate distributions to measures with unbounded support. 

Below, we present the CLTs for $T(\bbeta)$ and $\delta(\by, \bx)$ under this setup.
\begin{thm}\label{cor:regression Bayesian truth}
    Suppose that the design matrix $\mathbf{X}$ satisfies Assumptions \ref{assn:ht}-\ref{assn:homogeneous diagonals}, and that $\by$ is generated according to \cref{assn:dgp}.
    Also, suppose that $\A_p = -\frac{\mathrm{Off}(\bx^\top \bx)}{\sigma^2}$ has an approximate delocalized eigenpair $(\lambda_p, \bq)$ in the following sense: 
    \begin{align}\label{eq:approximate eigenpair}
        \lVert \A_p \bq - \lambda_p \bq\rVert \to 0, \quad \lambda_p \to \lambda \in (-1, 1), \quad \|\bq\|_\infty \to 0.
    \end{align} %
    Suppose that the Bayesian prior $\mu$ in \eqref{eq:Bayeslin} is symmetric. 
    For a symmetric measure $\mu^\star$ on $\mathbb{R}$ with finite fourth moments, suppose that $\beta_i^\star \overset{i.i.d.}{\sim} \mu^\star$. 
    Let $W_0\sim \mcn(0,d_0)$ be a random variable that is independent of $\bbst$. 
          
    \begin{enumerate}[(a)]
        \item 
        \begin{enumerate}
            \item[(i)]
       Under the above conditions, we have $\upsilon:=\EE \psi_0''(d_0\beta_1^\star+W_0)>0$ (this depends on $\mu,\mu^\star,d_0)$, and $\upsilon_p \mid \bx, \bbeta^\star \xp \upsilon$. Further we have
    \begin{equation*}
        T(\bbeta) - \bq^\top \bu \mid \by,\bx \overset{d}{\longrightarrow} \mcn\left(0,\frac{\upsilon}{1-\lambda\upsilon}\right).
    \end{equation*}

   \item[(ii)]
    Additionally, assume that the eigenpair $(\lambda_p, \bq)$ satisfies 
    \begin{align}\label{eq:stronger eigenpair}
        (1+\sqrt{p\alpha_p})\lVert \A_p \bq - \lambda_p \bq \rVert = o(1).
    \end{align}
    Then we can modify the centering as follows:
    \begin{equation}\label{eq:regression CLT clean}
        T(\bbeta) - \frac{\sumin q_i \psi_i'(c_i)}{1-  \lambda_p \upsilon_p}  \mid \by,\bx \overset{d}{\longrightarrow} \mcn\left(0,\frac{\upsilon}{1-\lambda\upsilon}\right).
    \end{equation}
    \end{enumerate}

    \item %
    Assume \eqref{eq:stronger eigenpair}, and slightly strengthen \cref{assn:homogeneous diagonals} by assuming $\sumin (d_i-d_0)^2=o(1)$. Then, setting
    $$\varsigma^2 := \frac{\EE \left[\Var \Big( \psi_0'(d_0 \beta_1^\star + W_0) \mid \beta_1^\star\Big)\right]}{(1-\lambda \upsilon)^2} - \frac{\lambda \upsilon^2}{(1-\lambda \upsilon)^2}$$
    (which depends on $\mu, \mu^\star, d_0, \lambda$), we have $\varsigma^2 > 0$. Further, we have
  \begin{small}  \begin{align}\label{eq:regression CLT annealed}
        \left(\delta(\by,\bx) - \frac{1}{1-\lambda \upsilon} \sumin q_i \Big(\EE \big[\psi_0'(d_0 \beta_i^\star +W_0) \mid \beta_i^\star] -\lambda \upsilon \beta_i^\star \Big) \right)\mid\bx,\bbst \overset{d}{\longrightarrow} \mcn(0,\varsigma^2).
    \end{align}
    \end{small}
    \end{enumerate}
\end{thm}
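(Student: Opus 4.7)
The plan is to establish parts (a) and (b) in sequence. Part (a) proves a posterior CLT for $T(\bbeta)$ conditional on $(\by,\bx)$ by deriving a self-consistent linear equation whose solution involves the variance $\upsilon/(1-\lambda\upsilon)$. Part (b) then leverages part (a)(ii) to approximate $\delta(\by,\bx)$ by an explicit smooth functional of $(\by,\bx,\bbst)$, and applies a second-order Poincar\'{e} argument on the Gaussian noise to establish the CLT. Throughout, the key structural feature is the approximate eigenvector property \eqref{eq:approximate eigenpair}.

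For part (a), I first establish $\upsilon_p \xp \upsilon$. Writing $c_i = d_i \beta_i^\star - \sum_{j\ne i}\A_p(i,j)\beta_j^\star + W_i$ with $W_i = (\sigma^{-2}\bx^\top \boldsymbol{\vep}^\star)_i \sim \mcn(0,d_i)$, the cross-term has variance $O(\alpha_p)=o(1)$ by the SMF assumption and iid-ness of $\bbst$, uniformly in $i$. Hence $\psi_i''(c_i) \approx \psi_0''(d_0\beta_i^\star + W_i)$, and a weighted law of large numbers (enabled by $\|\bq\|_\infty\to 0$) yields $\upsilon_p \xp \EE\psi_0''(d_0\beta_1^\star + W_0) = \upsilon$, with positivity following from nondegeneracy of $\mu$. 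For the CLT itself, I invoke the conditional identity $\EE_{\nu_{\by,\bx}}[\beta_i \mid \bbeta_{-i}] = \psi_i'(c_i + \sum_j \A_p(i,j)\beta_j)$, Taylor-expand around $c_i + s_i$, multiply by $q_i$, and sum over $i$. Using the eigenvector property $\sum_i q_i \A_p(i,j) = \lambda_p q_j + o(1)$ together with the weighted replacement $\psi_i''(c_i + s_i) \to \upsilon$, I derive the self-consistent relation
\begin{equation*}
(1 - \lambda_p \upsilon_p)\,\bq^\top(\bbeta - \bu) \approx M := \sumin q_i\bigl(\beta_i - \EE_{\nu_{\by,\bx}}[\beta_i \mid \bbeta_{-i}]\bigr).
\end{equation*}
A Berry-Esseen type CLT for linear statistics of quadratic-interaction Gibbs measures from \cite{lee2025rfim} then yields $M \xd \mcn(0,\upsilon)$, and dividing through by $1-\lambda\upsilon$ gives the stated limit. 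For (a)(ii), an analogous Taylor expansion of the fixed-point $u_i = \psi_i'(c_i + s_i)$ gives $(1-\lambda_p\upsilon_p)\bq^\top \bu \approx \sumin q_i \psi_i'(c_i)$; the strengthened eigenvector hypothesis \eqref{eq:stronger eigenpair} is needed to absorb the remainder uniformly, since $\bq^\top \bu$ need not be small.

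For part (b), I replace $\delta(\by,\bx)$ by $\bq^\top\bu$ at cost $O(\sqrt{p}\alpha_p)=o(1)$ via \cref{prop:regret}, then by the proxy $N/(1-\lambda_p\upsilon_p)$ with $N := \sumin q_i \psi_i'(c_i)$ via part (a)(ii). The denominator concentrates at $1-\lambda\upsilon$, so conditional on $(\bx,\bbst)$ the fluctuations of $\delta(\by,\bx)$ are driven by $N$ viewed as a smooth functional of the Gaussian vector $(W_1,\ldots,W_p)$. Expanding $\psi_i'(c_i) \approx \psi_0'(d_0\beta_i^\star + W_i) - \psi_0''(d_0\beta_i^\star + W_i)\sum_{j\ne i}\A_p(i,j)\beta_j^\star$ and reusing the eigenvector property, the conditional mean of $N/(1-\lambda\upsilon)$ matches the theorem's centering $(1-\lambda\upsilon)^{-1}\sumin q_i(\EE[\psi_0'(d_0\beta_i^\star+W_0)\mid \beta_i^\star] - \lambda\upsilon\beta_i^\star)$ up to $o(1)$. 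For the variance, Stein's Gaussian integration-by-parts applied to $(W_i,W_j)$ with $\Cov(W_i,W_j)=-\A_p(i,j)$ yields
\begin{equation*}
\Cov\bigl(\psi_0'(d_0\beta_i^\star + W_i),\psi_0'(d_0\beta_j^\star + W_j)\mid \bbst\bigr) \approx -\A_p(i,j)\,\psi_0''(d_0\beta_i^\star+W_i)\,\psi_0''(d_0\beta_j^\star+W_j);
\end{equation*}
weighting by $q_iq_j$, summing, and using $\bq^\top \A_p \bq \to \lambda$, the off-diagonal contribution is $-\lambda\upsilon^2$. Combined with the diagonal term $\sumin q_i^2 \Var(\psi_0'(d_0\beta_i^\star+W_i)\mid \beta_i^\star) \to \EE[\Var(\psi_0'(d_0\beta^\star+W_0)\mid \beta^\star)]$ and divided by $(1-\lambda\upsilon)^2$, this recovers $\varsigma^2$; conditional Gaussianity of $N$ given $\bbst$ follows from Chatterjee's second-order Poincar\'{e} inequality for smooth Gaussian functionals, and positivity of $\varsigma^2$ is extracted by a Hermite-expansion (or direct Cauchy-Schwarz) argument.

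The principal difficulty is part (a)'s posterior CLT: because $\nu_{\by,\bx}$ is a non-product Gibbs measure, the object $M$ cannot be handled by a classical Lindeberg argument and instead requires the Berry-Esseen machinery of \cite{lee2025rfim}. Making the self-consistent identity rigorous additionally demands uniform control of the Taylor remainders of $\psi_i'$ and a weighted-sum replacement $\psi_i''(c_i + s_i) \to \upsilon$ aligned with $\bq$, which is where both the high-temperature \eqref{eq:mht} and SMF assumptions are indispensable. A secondary obstacle in part (b) is controlling the $O(p^2)$ accumulation of off-diagonal Stein corrections in the variance calculation: this is precisely where the strengthened homogeneity $\sumin(d_i-d_0)^2=o(1)$ and the refined eigenpair bound \eqref{eq:stronger eigenpair} together guarantee the error does not degrade the Gaussian approximation.
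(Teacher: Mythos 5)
Your proposal uses the same high-level machinery as the paper: the Berry-Esseen CLT for quadratic-interaction Gibbs measures from \cite{lee2025rfim} drives part (a), and the Mean-Field approximation $\delta(\by,\bx)\approx\bq^\top\bu$ (\cref{prop:regret}) combined with Chatterjee's second-order Poincar\'{e} inequality drives part (b). The Stein/integration-by-parts calculation for the off-diagonal covariance contribution in part (b), and the explanation for why \eqref{eq:stronger eigenpair} is needed, are both correct in spirit.

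However, the heuristic you offer for part (a) contains a genuine error that would lead to the wrong variance. You claim the self-consistent relation $(1-\lambda_p\upsilon_p)\bq^\top(\bbeta-\bu)\approx M$ together with $M\xd\mcn(0,\upsilon)$, then assert that ``dividing through by $1-\lambda\upsilon$ gives the stated limit.'' But a linear transformation squares on the variance scale: these two claims give $\Var(T(\bbeta)-\bq^\top\bu)\approx\upsilon/(1-\lambda\upsilon)^2$, not $\upsilon/(1-\lambda\upsilon)$. The resolution is that $\Var(M)\ne\upsilon$. Writing $e_i:=\beta_i-\EE_{\nu_{\by,\bx}}[\beta_i\mid\bbeta_{-i}]$, one has $\Var(M)=\sum_iq_i^2\EE[\psi_i''(c_i+m_i)]+\sum_{i\ne j}q_iq_j\Cov(e_i,e_j)$, and the cross-covariances do \emph{not} vanish; in a Gaussian toy model they contribute $-\lambda\upsilon^2$, so $\Var(M)\approx\upsilon(1-\lambda\upsilon)$. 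Your sketch drops these off-diagonal terms. Because you separately invoke \cite{lee2025rfim} (which carries the correct $\upsilon_p/(1-\upsilon_p\lambda_p)$ in its Kolmogorov bound), your final answer lands in the right place, but the derivation you supply for it is internally inconsistent.

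A second, subtler issue is that your sketch under-invests in the step that carries the actual weight of the paper's proof: verifying that the data-driven Berry-Esseen error terms $R_{1p},\dots,R_{4p},\alpha_pR_{2p},\|\boldsymbol{\epsilon}\|$, and $\upsilon_p-\upsilon$ are $o_P(1)$ when $\bbst\overset{i.i.d.}{\sim}\mu^\star$ and the field $\bc\mid\bx,\bbst\sim\mcn(\Sigman\bbst,\Sigman)$ has \emph{correlated} coordinates. You flag that ``uniform control of the Taylor remainders'' is indispensable, which is correct, but this is where the paper develops a full suite of moment and concentration lemmas (\cref{lem:lincombound}, \cref{lem:R_{1p} to R_{4p} bound}, \cref{lem:beta concentration}, \cref{lem:lincomb expectation Bayes}) exploiting the symmetry of $\mu^\star$ and fourth-moment bounds. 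Absent a plan for these estimates, the proposal's part (a) is an outline of the \emph{cited} lemma's proof rather than a proof that its hypotheses hold in the present setting.
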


We first understand the conclusion of the Theorem. While \cref{cor:regression Bayesian truth} presents CLTs analogous to that as in \cref{cor:regression i.i.d design}, the centerings and limiting distributions are more complicated. This is because we are allowing general design matrices beyond the white noise design, which modify the limiting variances. To compare the results, note that \eqref{eq:approximate eigenpair} holds under the white noise design by taking $\lambda = 0$ and an arbitrary unit vector ${\bf q}$, as long as $\|\bq\|_\infty \to 0$. In other words, an arbitrary unit vector ${\bf q}$ satisfying the delocalization condition $\|{\bf q}\|_\infty\to 0$ is an approximate eigenvector of $\A_p$ with eigenvalue $0$. Thus, plugging in $\lambda_p, \lambda = 0$ in \eqref{eq:regression CLT clean} and \eqref{eq:regression CLT annealed} recovers the claims in \cref{cor:regression i.i.d design}, as the values $\upsilon, \varsigma^2$ in both the results agree.

We also discuss the centering in \eqref{eq:regression CLT annealed} for $\delta({\bf y},{\bf X})$. By simple algebra, we can re-write the centering as 

\begin{align}\label{eq:bias}
    \bq^\top \bbst + \text{bias}_p(\bbst), \quad   \text{bias}_p(\bbst) := \frac{1}{1-\lambda \upsilon} \sumin q_i \Big( \EE \big[\psi_0'(d_0 \beta_i^\star +W_0) \mid \beta_i^\star] - \beta_i^\star \Big).
\end{align}
This can be viewed as the true parameter of interest plus some bias due to the non-vanishing effect of the prior. Under the true model assumption $\beta_i^\star \overset{i.i.d.}{\sim} \mu^\star$ for some symmetric $\mu^\star$, $\text{bias}_p(\bbst)$ is a random variable with mean $0$ and non-vanishing variance. This implies that the Bayes estimator will be asymptotically biased (with high probability) for each realization of $\bbeta^\star$, but the bias will have expectation $0$ when $\bbeta^\star$ is marginalized out.

Next, we discuss the approximate eigenpair assumptions  on the off-diagonal matrix $\A_p = -\Off(\Sigman)$ (see \eqref{eq:approximate eigenpair} and \eqref{eq:stronger eigenpair}). This is a somewhat technical requirement that results from \citep{lee2025rfim}, which is a large building block for our proofs. Under \cref{assn:homogeneous diagonals}, the diagonals of the gram matrix $\Sigman = \sigma^{-2} (\bx^\top \bx)$ are essentially constant, and so we can equivalently understand the vector $\bq$ as an approximate eigenvector of $\Sigman = \sigma^{-2} (\bx^\top \bx)$, due to the decomposition in \eqref{eq:anbayes}. Also, under the high-temperature assumption \cref{assn:ht}, the eigenvalues $\lambda_p$ must satisfy $$|\lambda_p| \le \|\A_p\| \le \|\A_p\|_4 \le \rho < 1, \quad \forall p.$$

The following Proposition re-addresses comparing the posterior limiting distribution to that under the Mean-Field approximation, but for general design matrices $\bx$ in the setting of Theorem \ref{cor:regression Bayesian truth}. 

\begin{prop}[Analog of \cref{cor:regression Bayesian truth} under $\pQ$]\label{prop:Bayes true model mean-field clt}
    Consider the setting of \cref{cor:regression Bayesian truth}, except for assuming that $\bbeta$ is a random sample from the Mean-Field approximation $\pQ$.
\begin{enumerate}[(a)]
    \item Recall the definition of $\upsilon$ from part (a) of \cref{cor:regression Bayesian truth}. Then we have 
    $$T(\bbeta)-\bq^{\top}\bu \mid \by, \bx \overset{d}{\longrightarrow} N(0,\upsilon).$$
    \item Under the setting of part (b) in \cref{cor:regression Bayesian truth}, \eqref{eq:regression CLT annealed} holds when $\delta(\by, \bx)$ is replaced by $\bq^\top \bu = \EE_{\pQ} \left[\bq^\top \bbeta_{\mathrm{ind}}\right].$
\end{enumerate}
\end{prop}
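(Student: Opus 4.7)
The argument splits into two parts of very different difficulty. Part (b) follows from \cref{cor:regression Bayesian truth}(b) via the regret bound of \cref{prop:regret}, while part (a) requires a direct Lindeberg--Feller analysis exploiting the coordinatewise independence structure of $\pQ$.

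For part (b), \cref{prop:regret} supplies the deterministic bound $|\delta(\by,\bx)-\bq^\top\bu|\lesssim \sqrt{p}\,\alpha_p$. Under the Strong Mean-Field \cref{assn:mf}, $\alpha_p=o(p^{-1/2})$ and hence $\sqrt{p}\,\alpha_p=o(1)$, so $\bq^\top\bu$ and $\delta(\by,\bx)$ differ by a deterministic $o(1)$. Slutsky's theorem then lifts the CLT \eqref{eq:regression CLT annealed} already established for $\delta(\by,\bx)$ in \cref{cor:regression Bayesian truth}(b) to $\bq^\top\bu$ with the same centering and variance $\varsigma^2$.

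For part (a), I would condition throughout on $(\by,\bx,\bbst)$. \cref{lem:uniqueness of optimizers} identifies $\pQ=\bigotimes_{i=1}^p \mu_{i,\,s_i+c_i}$, so under $\pQ$ the $\beta_i$ are independent with mean $u_i$ and variance $\psi''_i(s_i+c_i)$. Writing
\[T(\bbeta)-\bq^\top\bu=\sum_{i=1}^p q_i(\beta_i-u_i)\]
expresses the left-hand side as a sum of independent mean-zero random variables with conditional variance $V_p:=\sum_i q_i^2\,\psi''_i(s_i+c_i)$. Since $|\beta_i-u_i|\le 2$, every summand is bounded in absolute value by $2\|\bq\|_\infty=o(1)$, so the Lindeberg condition is automatic, and the entire problem reduces to showing $V_p\mid\bx,\bbst\xp\upsilon>0$; a conditional Lindeberg--Feller CLT then delivers $T(\bbeta)-\bq^\top\bu\mid\by,\bx\xd\mcn(0,\upsilon)$.

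The principal obstacle is this variance convergence. My plan is to compare $V_p$ with $\upsilon_p=\sum_i q_i^2\,\psi''_i(c_i)$, which \cref{cor:regression Bayesian truth}(a)(i) already certifies as converging in probability to $\upsilon$. A Taylor expansion $\psi''_i(c_i+s_i)=\psi''_i(c_i)+\psi'''_i(c_i)\,s_i+O(s_i^2)$ with $s_i=(\A_p\bu)_i$ reduces the task to controlling a bilinear form $\bu^\top\A_p\bw$, where $w_i:=q_i^2\,\psi'''_i(c_i)$ satisfies $\|\bw\|\le\|\bq\|_\infty$, plus a quadratic remainder $\sum_i q_i^2(\A_p\bu)_i^2$. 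Crude norm bounds using $\|\A_p\|\le\rho$ and $\|\bu\|\le\sqrt{p}$ only give an error of order $\|\bq\|_\infty\sqrt{p}$, which is insufficient under the mere assumption $\|\bq\|_\infty\to 0$; the refinement I anticipate needing is to exploit the approximate eigenvector identity $\A_p\bq\approx\lambda_p\bq$ together with the concentration of $\bu$ around the data-driven profile $\psi'_i(d_0\beta_i^\star+W_i)$, mirroring the argument used for $\upsilon_p\xp\upsilon$ in \cref{cor:regression Bayesian truth}(a)(i). Alternatively, one can rerun that argument verbatim with the shifted argument $s_i+c_i$ in place of $c_i$, and absorb the extra $s_i$ contribution via a second-order Gaussian Poincar\'e inequality applied to the smooth functional $V_p$ of the Gaussian noise vector $\boldsymbol{\vep}^\star$.
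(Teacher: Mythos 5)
Your overall architecture is the same as the paper's: for part (b) you invoke \cref{prop:regret} to swap $\delta(\by,\bx)$ for $\bq^\top\bu$ and conclude by Slutsky, which is exactly the published argument and is fine. For part (a) you correctly identify the Lindeberg--Feller route (bounded summands, $\|\bq\|_\infty\to 0$ kills the Lindeberg condition) and correctly reduce the problem to showing $V_p=\sumin q_i^2\psi_i''(s_i+c_i)\xp\upsilon$, with $\bq^\top\bu$ as the mean. This is all the same structure as the paper's proof (which reroutes through \cref{prop:iid mean-field clt}).

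The genuine gap is in how you close the variance convergence $V_p\xp\upsilon$. You correctly observe that the crude bound $|\bw^\top\A_p\bu|\le\|\bw\|\,\|\A_p\|\,\|\bu\|\lesssim\|\bq\|_\infty\sqrt{p}$ is not $o(1)$, but neither of your two escape routes actually works as stated. Exploiting $\A_p\bq\approx\lambda_p\bq$ does not help here because the bilinear form is $\bw^\top\A_p\bu$ with $w_i\propto q_i^2\psi_i'''(c_i)$ --- the near-eigenvector structure is in $\bq$, not in $\bw$, and there is no useful way to transfer it. Likewise, a second-order Poincar\'e inequality controls fluctuations of a Gaussian functional but gives you no leverage on the deterministic Taylor remainder you are trying to bound. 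What the paper actually uses is a pre-packaged estimate from Step 2 of the proof of Theorem 2.4 in \cite{lee2025rfim}, which gives the sharper bound
$\sumin q_i^2\psi_i''(s_i+c_i)=\upsilon_p+O\big(\|\bq\|_\infty+\sqrt{R_{3p}}\big)$.
The mechanism behind it is to pass from $s_i$ to $t_i=\sumjn\A_p(i,j)\psi_j'(c_j)$ using the contraction coming from the fixed-point equation \eqref{eq:fpeq} (so $\|\bs\|\lesssim\|\bt\|$ under \cref{assn:ht}), and then bound $\sumin q_i^2|t_i|$ by H\"older via $R_{3p}=\sumin t_i^4$; this is what produces the $\sqrt{R_{3p}}$ term rather than $\|\bq\|_\infty\sqrt{p}$. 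Since the proof of \cref{cor:regression Bayesian truth}(a) has already established $R_{3p}=o_{P}(1)$ and $\upsilon_p\xp\upsilon$ in this setting, the conclusion follows. Your proof sketch needs this specific intermediate estimate (or a reconstruction of it through the $t_i$ variables); without it, the argument does not close.
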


\begin{remark}[Variance under/overestimation]\label{rem:varoverunder}
First, note that the limiting distributions of the posterior mean $\delta(\by, \bx)$ in \cref{cor:regression Bayesian truth}, part (b),  and \cref{prop:Bayes true model mean-field clt}, part (b), are the same. This is not surprising, since we have already established that they are close under milder assumptions in \eqref{eq:optmf}. However, the limiting distributions of $T(\bbeta) - \bq^\top \bu$ in \cref{cor:regression Bayesian truth}, part (a),  and \cref{prop:Bayes true model mean-field clt}, part (a), are different. The limiting variance under the true posterior $\nu_{\by,\bx}$ is $(1-\upsilon\lambda)^{-1}\upsilon$ whereas the variance under $\pQ$ is $\upsilon$. This is not equal unless $\lambda = 0$, which was the case under the white noise design in the previous section. In general, depending on whether $\lambda<0$, $=0$, or $>0$, the limiting variance under $\nu_{\by,\bx}$ is smaller, equal, or larger, than that of the corresponding Mean-Field approximation. 

For a concrete example of this phenomenon, recall the high-dimensional ANOVA design in \cref{ex:anova design}. There, we have two exact eigenpairs with nonzero eigenvalues: (i) $\lambda_p^{(1)} = - \frac{1}{2\sigma^2} < 0$, $\bq^{(1)} = \frac{1}{\sqrt{p}} \mathbf{1}_p$ and (ii) $\lambda_p^{(2)} = \frac{1}{2\sigma^2}$, $\bq^{(2)} = \frac{1}{\sqrt{p}}(\mathbf{1}_{p/2}^\top, -\mathbf{1}_{p/2}^\top)^\top$. Therefore, in case (i), the Mean-Field variance is higher than the posterior variance, whereas it is lower in case (ii). All other eigenvectors (any delocalized vector $\bq$ orthogonal to $\bq^{(1)}, \bq^{(2)}$) correspond to $\lambda = 0$, so the Mean-Field approximation captures the correct posterior variance. We refer the reader to \cref{fig:true coverage probability} for a visual illustration. %
\end{remark}

\subsection{Detour: Extension to fractional posteriors}\label{sec:fractional posterior}
An alternative to the posterior in \eqref{eq:posterior} is the popular notion of $\gamma$-fractional posteriors (see \cite{Grunwald2012,Yang2019,Miller2019,Avella2022,ray2023asymptotics}; also known as tempered, power, or $\alpha$-posteriors in the literature). Here, $\gamma \in (0, \infty)$ is the exponent for the likelihood when defining the posterior.
In the context of the linear model \eqref{eq:model}, the $\gamma$-fractional posterior is given by 
\begin{small}
\begin{equation}\label{eq:alph-posterior}
		\begin{aligned}	&\;\;\;\;\nu^{(\gamma)}_{\mathbf{y},\mathbf{X}}(d\boldsymbol{\beta})\propto \exp\left( \frac{1}{2} \boldsymbol{\beta}^{\top} \A_p^{(\gamma)} \boldsymbol{\beta}+(\mathbf{c}^{(\gamma)})^{\top}\boldsymbol{\beta}\right)\prod_{i=1}^p \mu^{(\gamma)}_i(d\beta_i), \qquad \mbox{where}\\
        \mathbf{c}^{(\gamma)}:=\frac{\mathbf{X}^{\top}\mathbf{y}}{\sigma^2/\gamma}, \;\; &\A_p^{(\gamma)}(i,j)= - \frac{(\mathbf{X}^{\top}\mathbf{X})(i,j)\mathbf{1}_{i\neq j}}{\sigma^2/\gamma}, \;\;\frac{d\mu_i^{(\gamma)}}{d\mu}(\beta_i)\propto \exp\left(-\frac{d^{(\gamma)}_i\beta_i^2}{2}\right),  
        \end{aligned}
\end{equation}
\end{small}

\noindent and $ d^{(\gamma)}_i:=\frac{(\mathbf{X}^{\top}\mathbf{X})(i,i)}{\sigma^2/\gamma}$, for $i,j = 1, 2, \ldots, p$. Note that the above display can be viewed as the usual posterior \eqref{eq:posterior}, but with a re-scaled standard error $\sigma^{(\gamma)} := \sigma/\sqrt{\gamma}$.
Consequently, both Theorems \ref{cor:regression i.i.d design} and \ref{cor:regression Bayesian truth} directly apply to $\gamma$-fractional posteriors, after this simple change of variance. 

In particular, the $\gamma <1$ case has received a lot of attention, as the resulting posterior exhibits robustness against model misspecification \citep{Avella2022} and contracts with respect to the associated R\'{e}nyi divergence under mild requirements \citep[Theorem 3.2]{Yang2019}. Our results do not require any hard constraints on $\gamma$, and the dependence of $\gamma$ is more delicately reflected on the high-temperature \cref{assn:ht}. Note that \cref{assn:ht} becomes weaker as $\gamma$ decreases, allowing a wider range of design matrices.

\section{Application: Bernstein-von Mises analogs}\label{sec:Bvmanalog} 

An important result in the literature is the Bernstein-von Mises theorem \citep{van2000asymptotic, Kleijn2012}. This theorem implies that the Bayesian level $100(1-\alpha)\%$ credible intervals are also asymptotically $100(1-\alpha)\%$ confidence intervals, under a contracting posterior that satisfies some regularity conditions. An immediate question arises: do similar conclusions apply to bounded SNR settings with a non-contracting posterior? We answer this question in an \emph{average sense}, under the setting of \cref{sec:btm}.
\\

\noindent Given $\alpha\in (0,1)$, let $c_{\alpha/2}$ be a constant such that $\PP(|\mcn(0,1)|\ge c_{\alpha/2})=\alpha$. Note that  \cref{cor:regression Bayesian truth}, part (a), yields the following level $(1-\alpha)$ posterior credible set for $T(\bbeta) = \bq^\top \bbeta$: 
$$\mathcal{I} \equiv \mathcal{I}(\by, \bx, \mu) :=  \Big[\bq^\top \bu\pm c_{\alpha/2} \sqrt{\frac{\upsilon_p}{1-\lambda_p \upsilon_p}} \Big].$$
Here, we have replaced the limiting variance $\upsilon/(1-\lambda \upsilon)$ by its sample-based approximation $\upsilon_p/(1-\lambda_p \upsilon_p)$, so all terms in $\mathcal{I}$ depend only on the data $\by, \bx$ and the prior $\mu$. As in \cref{sec:btm}, we assume $\beta_1^\star,\ldots ,\beta_p^\star\overset{i.i.d.}{\sim}\mu^\star$, which may be different from $\mu$. Given a credible interval $\mathcal{I}$ and distribution $\mu^\star$, define its average coverage probability as:
$$\text{AC}(\mathcal{I}; \mu^\star) := \PP \Big(\bq^\top \bbst \in \mathcal{I} \mid \bx\Big).$$

\begin{thm}\label{cor:coverage probability Bayes}
    Consider the setting and notation from part (b) of \cref{cor:regression Bayesian truth}. %
    Recall that we denote the Bayesian prior as $\mu$ and data generating prior as $\mu^\star$. For random variables $\beta_1^\star \sim \mu^\star \perp W_0 \sim \mcn(0,d_0)$, set 
    $$\tau^2 := \frac{\Var\big(\beta_1^{\star}-\psi_{0}'(d_0\beta_1^{\star} + W_0)\big) - \lambda \upsilon^2 }{(1-\lambda \upsilon)^2},$$
    where $\upsilon = \EE \psi_0''(d_0 \beta_1^\star + W_0)$ is the same constant from \cref{cor:regression Bayesian truth}, and $\psi_0$ is defined in \cref{def:psi_0}. Note that the constants $\upsilon, \tau^2$ depend both on $\mu$ (implicitly via the function $\psi_0$) and $\mu^\star$.

\begin{enumerate}[(a)]
    \item Let $u_{i}$ be the Mean-Field optimizer in \eqref{eq:fpeq}. %
    Then, the following unconditional limit (on $\bbst$) holds:
    \begin{align}\label{eq:coverage clt modified}
        \sumin q_i(u_{i} - \beta_i^\star) \mid \bx \xd \mcn\big(0, \tau^2\big).
    \end{align}

    \item The average $100(1-\alpha)\%$ coverage probability of the credible interval $\mathcal{I}(\by,\bx,\mu)$ satisfies
    \begin{align}\label{eq:coverage}
        \textnormal{AC}(\mathcal{I}(\by, \bx, \mu); \mu^\star) \to 1 - 2 \Phi \left(c_{\alpha/2} \sqrt{\frac{\upsilon}{(1-\lambda \upsilon)\tau^2}}\right).
    \end{align}
    In particular, when the nondegenerate prior is well-specified in the sense that $\mu = \mu^\star$, the RHS of \eqref{eq:coverage} is equal to $1 - \alpha$.

\end{enumerate}
\end{thm}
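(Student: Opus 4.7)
For part (a), decompose $\bq^\top(\bu - \bbst) = A_p + B_p + R_p$, with $h(b) := \EE \psi_0'(d_0 b + W_0)$ and
\begin{align*}
    A_p &:= \delta(\by,\bx) - \frac{1}{1-\lambda \upsilon}\sum_{i=1}^p q_i\bigl(h(\beta_i^\star) - \lambda\upsilon\beta_i^\star\bigr), \\
    B_p &:= \frac{1}{1-\lambda\upsilon}\sum_{i=1}^p q_i\bigl(h(\beta_i^\star) - \beta_i^\star\bigr), \\
    R_p &:= \bq^\top\bu - \delta(\by,\bx).
\end{align*}
The plan is to treat $A_p$ using \cref{cor:regression Bayesian truth}(b), handle $B_p$ by classical Lindeberg-Feller, combine them into a joint CLT, and then derive the coverage limit in part (b) via Slutsky's theorem. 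The regret bound in \cref{prop:regret} together with \cref{assn:mf} gives $|R_p| \lesssim \sqrt{p}\,\alpha_p = o(1)$. \cref{cor:regression Bayesian truth}(b) yields $A_p \mid \bx,\bbst \xd \mcn(0,\varsigma^2)$, and crucially the limit is free of $\bbst$. Because $\mu$ is symmetric $\psi_0'$ is odd, and with $\mu^\star$ also symmetric we obtain $\EE[h(\beta_1^\star) - \beta_1^\star] = 0$; the normalization $\|\bq\| = 1$ combined with $\|\bq\|_\infty \to 0$ supplies the UAN condition, so Lindeberg-Feller gives $B_p \mid \bx \xd \mcn(0, \sigma_2^2)$ with $\sigma_2^2 := \Var(h(\beta_1^\star) - \beta_1^\star)/(1-\lambda\upsilon)^2$.

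The main obstacle is promoting these two marginal CLTs to \emph{joint} convergence of $(A_p, B_p)$. Since $B_p$ is $\bbst$-measurable given $\bx$, conditioning on $\bbst$ yields
\begin{equation*}
    \EE[e^{itA_p + isB_p}\mid \bx] = \EE[\,e^{isB_p}\,\EE[e^{itA_p}\mid \bx, \bbst]\mid \bx\,];
\end{equation*}
since the inner factor is bounded by $1$ and converges (almost surely over $\bbst$) to the deterministic constant $e^{-t^2\varsigma^2/2}$, dominated convergence produces the product limit $e^{-t^2\varsigma^2/2 - s^2\sigma_2^2/2}$. Hence $A_p + B_p \xd \mcn(0,\varsigma^2 + \sigma_2^2)$, and the conditional variance decomposition of $\beta_1^\star - \psi_0'(d_0\beta_1^\star + W_0)$ given $\beta_1^\star$ (using $\EE[\beta_1^\star - \psi_0'(d_0\beta_1^\star+W_0)\mid \beta_1^\star] = \beta_1^\star - h(\beta_1^\star)$ and $\Var[\beta_1^\star - \psi_0'(d_0\beta_1^\star+W_0)\mid\beta_1^\star] = \Var[\psi_0'(d_0\beta_1^\star+W_0)\mid\beta_1^\star]$) identifies $\varsigma^2 + \sigma_2^2$ with $\tau^2$, finishing \eqref{eq:coverage clt modified}.

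For part (b), rewrite the coverage event as $|\bq^\top(\bu - \bbst)| \le c_{\alpha/2}\sqrt{\upsilon_p/(1-\lambda_p\upsilon_p)}$. Combining part (a) with $\upsilon_p \xp \upsilon$ from \cref{cor:regression Bayesian truth}(a)(i) and $\lambda_p \to \lambda$ from \eqref{eq:approximate eigenpair}, Slutsky's theorem and continuity of the Gaussian CDF yield \eqref{eq:coverage}. For the well-specified case, the task reduces to showing $\Var(\beta^\star - \psi_0'(d_0\beta^\star + W_0)) = \upsilon$ when $\beta^\star \sim \mu$; treating $(\beta^\star, Y := d_0\beta^\star + W_0)$ as a signal-plus-Gaussian-noise pair with prior $\mu$, a direct Bayes computation using \cref{def:psi_0} gives $\beta^\star \mid Y \sim \mu_{0,Y}$, hence $\EE[\beta^\star\mid Y] = \psi_0'(Y)$ and $\Var[\beta^\star\mid Y] = \psi_0''(Y)$. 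The law-of-total-variance identity $\Var(\beta^\star - \EE[\beta^\star\mid Y]) = \EE[\Var(\beta^\star\mid Y)] = \EE \psi_0''(Y) = \upsilon$ then collapses $\tau^2$ to $\upsilon/(1-\lambda\upsilon)$, so $\sqrt{\upsilon/((1-\lambda\upsilon)\tau^2)} = 1$ and the coverage limit reduces to $1-\alpha$.
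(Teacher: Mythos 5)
Your proof takes essentially the same route as the paper: the same three-term decomposition of $\bq^\top(\bu - \bbst)$ into a conditional-CLT piece (the paper's $G_p$, your $A_p$) handled by \cref{cor:regression Bayesian truth}(b), a bias piece (the paper's $\mathrm{bias}_p$, your $B_p$) handled by Lindeberg--Feller under symmetry, and a remainder controlled by \cref{prop:regret}; the same characteristic-function/dominated-convergence argument to combine the two CLTs (this is exactly the paper's \cref{lem:clt marginalize}); the same law-of-total-variance identity to show $\varsigma^2 + \sigma_2^2 = \tau^2$; and the same Bayes/exponential-tilt computation to collapse $\tau^2$ to $\upsilon/(1-\lambda\upsilon)$ in the well-specified case. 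One small imprecision: \cref{cor:regression Bayesian truth}(b) gives the conditional CLT in probability over $\bbst$, not almost surely as you state, but since the conditional characteristic function is bounded by one this still yields the needed $L^1$ convergence and the argument goes through unchanged.
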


We elaborate on the results. 
Part (a) provides an unconditional limit (i.e., averaged over the distribution of $\beta_1^\star,\ldots ,\beta_p^\star$) for $\sumin q_i u_{i}$. Recalling from \cref{prop:regret} that $\delta(\by,\bx) = \bq^\top \bu + o(1),$ part (a) can be viewed as the unconditional analog of the limiting distribution of the posterior mean $\delta(\by,\bx)$ (see part (b) in \cref{cor:regression Bayesian truth}) around $\bq^\top \bbeta^\star$, the ``correct'' centering. The proof additionally reveals that the unconditional variance $\tau^2$ is strictly larger than the conditional variance $\varsigma^2$ in \cref{cor:regression Bayesian truth}, which is a natural consequence of marginalizing out the randomness of the centering bias (recall the decomposition \eqref{eq:bias}).

Part (b) computes the asymptotic coverage probability of the Bayesian credible interval $\mathcal{I}$ under potentially misspecified priors. In particular, when the prior is well-specified with $\mu = \mu^\star$, the theorem shows that the credible interval $\mathcal{I}$ also functions as an asymptotic confidence set with an \emph{average true coverage} of level $(1-\alpha)$. This is analogous to the usual consequence of the Bernstein-von Mises theorem in the high SNR regime \citep{van2000asymptotic}. To elaborate, when $\mu = \mu^\star$, the proof of part (b) shows that $\tau^2 = (1-\upsilon\lambda)^{-1}\upsilon$. This value is identical to the limiting \emph{posterior variance} of $\bq^\top \bbeta$ in part (a) of \cref{cor:regression Bayesian truth}. The fact that the two variances (under randomness of the true data and posterior, respectively) are the same implies the same level of uncertainty, which is again analogous to standard results for contracting posteriors with high SNR. Additionally, the averaging out the randomness of $\bbst$ in \eqref{eq:coverage} is necessary, as the conditional coverage probability would have $O(1)$ fluctuations otherwise (due to the random bias in \eqref{eq:bias}).

In Figure \ref{fig:true coverage probability} below, we take $\alpha = 0.05$ and visualize the limiting true coverage probability (in the RHS of \eqref{eq:coverage}) with respect to $\lambda \in (-1, 1)$ under varying assumptions on the true prior $\mu^\star$. We assume that the Bayesian specifies the prior $\mu = \text{Unif}[-1,1]$ to build the posterior, whereas true prior is (left) $\mu^\star = \mu$, (middle) $\mu^\star = \frac{1}{2}\delta_0 + \frac{1}{2} \mcn(0,1)$, (right) $\mu^\star = \frac{1}{3}\delta_{-1} + \frac{1}{3} \delta_0 + \frac{1}{3} \delta_1$. Here we set the diagonal elements of $\Sigman$ as $d_0 = 1$.
In addition to the posterior credible intervals considered in \cref{cor:coverage probability Bayes} (solid blue lines), we also consider the credible intervals based on the Naive Mean-Field (NMF) approximation of the posterior (dashed red lines):
$$\mathcal{I}^{\text{NMF}}(\by, \bx, \mu) := \Big[\sumin q_i u_i \pm c_{\alpha/2} \sqrt{\upsilon_p} \Big]$$
which is a natural consequence of the CLT under $\pQ$ in \cref{prop:Bayes true model mean-field clt}.
The solid line in the left panel is constant at $0.95$, which implies that the Bayesian credible intervals have coverage guarantees under a correct model specification. Also, by comparing the solid and dashed lines, the credible intervals based on the Mean-Field approximation have higher coverage if and only if $\lambda < 0$. This is a consequence of the inconsistency of the variance under the Mean-Field approximation, as discussed in \cref{rem:varoverunder}.

\begin{figure}[h!]
    \centering
    \includegraphics[width=\linewidth]{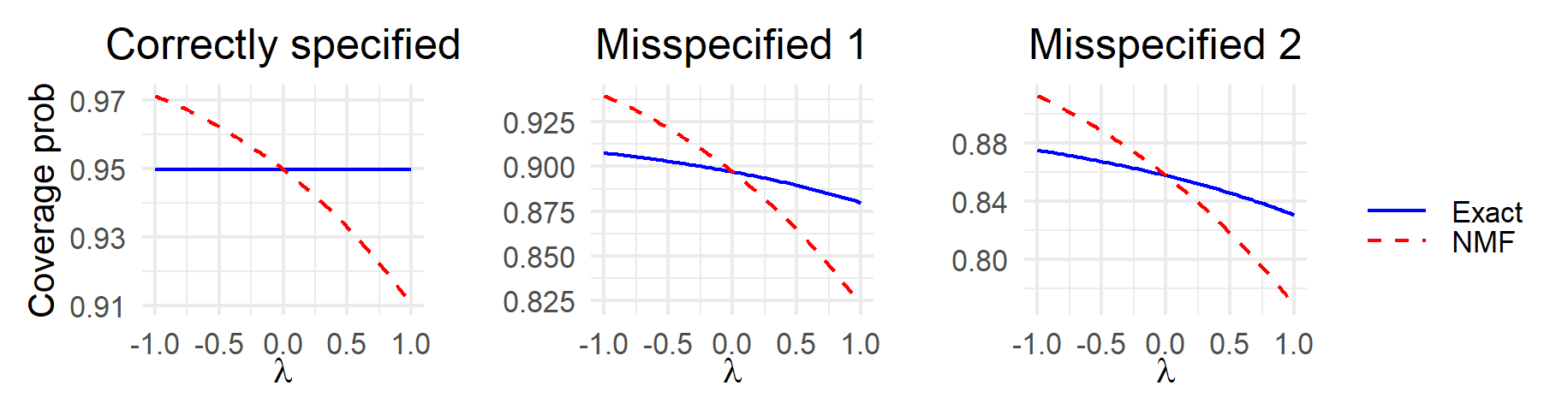}
    \caption{True coverage under a (left) correctly specified prior $\mu^\star = \mu = \text{Unif}[-1,1]$, and misspecified priors: (middle) $\mu^\star = \frac{1}{2}\delta_0 + \frac{1}{2} \mcn(0,1)$, (right) $\mu^\star = \frac{1}{3}\delta_{-1} + \frac{1}{3} \delta_0 + \frac{1}{3} \delta_1$. ``Exact'' denotes the coverage of the $95\%$ credible intervals in \cref{cor:coverage probability Bayes}, and ``NMF'' denotes the coverage of the $95\%$ credible intervals based on the mean-field posterior approximation $\pQ$.}
    \label{fig:true coverage probability}
\end{figure}

  %
  %
  %
  %
  %
  %
  %
  %
  %
  %
  %
  
  %
  %
  %
  %
  %
  %
  %
  %
  %
  %
  %
  %
  %

  %

\section{Proof overview and technical challenges}
The main results of this paper are Theorems \ref{cor:regression i.i.d design}, \ref{cor:regression Bayesian truth}, and  \ref{cor:coverage probability Bayes}. Theorems  \ref{cor:regression i.i.d design} and \ref{cor:regression Bayesian truth} both derive a CLT for linear statistics of the form $\mathbf{q}^\top {\bm \beta}$, and of the Bayes estimator $\delta(\mathbf{y},\mathbf{X})=\EE[{\bf q}^\top {\beta}]$,  under two different sets of assumptions. The main idea behind these results is that the log-posterior of ${\bm \beta}$ in a linear regression model has a quadratic form, which essentially reduces it to a \enquote{Random Field Ising model} \citep[see][]{chatterjee2023features,de1991fluctuations,lowe2023propagation}. In a recent paper \cite{lee2025rfim}, we studied Berry-Esseen type CLTs in the setting of Random Field Ising models, where the \enquote{external magnetic field} consists of i.i.d random variables. 
The main subtlety arises from the entrywise dependence of the \enquote{external magnetic field} vector $\bc = \sigma^{-2} \bx^\top \by$ as  
    $$\bc \mid \bx, \bbst \sim \mcn (\Sigman \bbst, \Sigman)$$
    under the data generating assumption (see \cref{assn:dgp}), where $\Sigman$ is defined after \eqref{eq:posterior_intro}. Moreover, the high-dimensional nature of the problem where both $p,n\to\infty$ also makes it challenging to deal with the entrywise dependence.   Consequently, bounding the individual error terms in the CLT from \cite{lee2025rfim}  requires a much more delicate analysis.

    To further highlight the technical challenges, let us consider the random design setting of \cref{cor:regression i.i.d design}, and also assume $\sigma^2=1$. Here it might be tempting to use the approximation $\Sigman\approx\mathbf{I}_p$ to directly approximate the interaction term in the posterior (see \eqref{eq:posterior_intro}) with a coordinatewise separable term. To wit, note that 
    $$\sup_{\bbeta\in [-1,1]^p}\big|\bbeta^{\top}\Sigman\bbeta-\lVert \bbeta\rVert^2\big|= \sup_{\bbeta\in [-1,1]^p} \big|\bbeta^{\top}(\Sigman-\mathbf{I}_p)\bbeta \big| =  O_P\left(p\sqrt{\frac{p}{n}}\right)=O_P\left(\sqrt{\frac{p^3}{n}}\right).$$
    Here we have used the approximation $\lVert \Sigman-\mathbf{I}_p\rVert= O_p(\sqrt{p/n})$ (see \cite[Theorem 5.6.1]{vershynin2018high}). The above display shows that a direct product measure approximation of the entire posterior in total variation distance would require the stringent condition $p=o(n^{1/3})$. However, leveraging significant cancellations in the analysis of $\bq^{\top}\bbeta$ (see Lemmas \ref{lem:random design properties} and \ref{lem:lincomb expectation random design}), we show that for linear projections our CLT holds under the much weaker condition $p=o(n^{2/3})$. We note that even in the high SNR setting in the random design case, to the best of our knowledge, Bernstein-von Mises type results are known only up to $p=o(\sqrt{n})$ \citep[see][]{spokoiny2013bernstein,Katsevich2023}.
Possibly more challenging is the proof of \cref{cor:regression Bayesian truth}, where the design matrix $\mathbf{X}$ is assumed to be non random. In this case we assume that in the data generating model $\mathbf{y}=\bx \bbst +\boldsymbol{\vep}^\star$ the true coefficient vector ${\bm \beta}^\star$ is generated as an i.i.d vector from some prior $\mu^\star$ (potentially different from $\mu$). In this case, Lemmas \ref{lem:beta concentration} and \ref{lem:lincomb expectation Bayes} play the analogues of Lemmas \ref{lem:random design properties} and \ref{lem:lincomb expectation random design} respectively.

The above discussion applies to the CLT of ${\bf q}^\top \bbeta$ under the posterior. To analyze the Bayes estimator $\delta({\bf y},{\bf X})=\EE_{\nu_{{\by},{\bx}}}{\bf q}^\top {\bm \beta}$, we use Proposition 2.4 to first get the approximation $\delta({\bf y},{\bf X})\approx {\bf q}^\top {\bf u}$ in terms of ${\bf u}$, the solution to the mean field variational problem (see \eqref{eq:fpeq}). Noting that ${\bf u}$ is a function of ${\bf y}$ which is multivariate Gaussian (given $\mathbf{X},{\bm \beta}^\star)$, we then derive a CLT for ${\bf q}^\top {\bf u}$. For this goal, we first use the fixed point equation \eqref{eq:fpeq} to approximate ${\bf q}^\top {\bf u}$ by a more tractable object, and then use a CLT for non-linear functions of dependent Gaussians (the so-called ``second-order'' Poincar\'{e} inequality, see \cite[Theorem 2.2]{chatterjee2009fluctuations}). Along the way, we also use the usual Gaussian Poincar\'{e} inequality (see \cite[Corollary 2.27]{van2014probability}) to control various error terms.

    We move on to \cref{cor:coverage probability Bayes}, for which we need to study the limit distribution of ${\bf q}^\top ({\bf u}-{\bm \beta}^\star)$, under a potentially misspecifed prior. By again leveraging the approximation $\delta(\by,\bx)\approx \bq^{\top}\bu$, we express 
    ${\bf q}^\top ({\bf u}-{\bm \beta}^\star)$ as a sum of two terms, a correctly centralized version of the Bayes estimator $\delta({\bf y},{\bf X})$, and a bias term which is (approximately) a separable function of ${\bm \beta}^\star$. The first term has a Gaussian limit, using the CLT for Bayes estimators in \cref{cor:regression Bayesian truth}, part (b), and the second term has a CLT as ${\bm \beta}^\star$  has marginally i.i.d coordinates (coupled with the Lindeberg-Feller CLT). Combining the asymptotics for the two terms, the CLT for ${\bf q}^\top ({\bf u}-{\bm \beta}^\star)$ follows, from which we can readily compute the limiting coverage of the proposed credible interval explicitly.

\section{Discussion}\label{sec:conclusion}
This work provides limiting distributions of (i) one-dimensional projections of the posterior $T(\bbeta)=\bq^{\top}\bbeta$ for some $\bq\in\R^p$, $\lVert \bq\rVert=1$, and (ii) the posterior mean $\delta(\by, \bx)$. Our results are novel as we establish limiting distributions under a challenging high-dimensional asymptotic regime without posterior contraction. We also de-mystify uncertainty quantification of Mean-Field variational inference, by providing explicit conditions for its validity as well as illustrating the possibility of both over/under estimating the variance. Finally, we construct credible intervals for linear projections and characterize their asymptotic coverage under model misspecifications. Our result demonstrates that Bayesian procedures can enjoy true coverage under the low SNR scaling in an average sense, under the correct prior specification. 

We believe this work creates many interesting questions for future research. First, it would be important to improve on our technical conditions --- (a) extending beyond the high-temperature regime (see \cref{assn:ht}), (b) accommodating non-compactly supported priors $\mu$, (c) studying CLTs for linear projections in the white noise design setting in the $n^{2/3}\ll p\ll n$ regime, etc. We believe that it would be meaningful to go beyond these assumptions and derive limiting distributions under a more flexible setup.

Another intriguing direction is to derive limiting distributions and credible intervals in generalized linear models in a low SNR regime. Currently, our proof technique relies on the quadratic nature of the posterior $\nu_{\by,\bx}$. While there have been some recent developments in establishing consistency-type results in such settings \citep[see][]{mukherjee2024naive}, studying asymptotic distributions requires considerably finer analysis. 

Finally, it would be interesting to think beyond the subjective Bayesian framework considered in this paper (see \eqref{eq:Bayeslin}), and consider an empirical Bayes setting where the prior $\mu$ is estimated from the data $({\bf y},{\bf X})$. In particular, it would be interesting to extend the coverage guarantees in \cref{cor:coverage probability Bayes} to analogous statements for credible intervals built using an estimated prior $\hat{\mu}(\by,\bx)$. While consistent estimators for the data generating prior $\mu$ in regression problems has been recently established \citep{mukherjee2023mean,fan2023gradient}, our current proof does not immediately generalize due to double-dipping of $\by$ (once for sampling from the posterior and again for constructing $\hat{\mu}(\by,\bx)$), and we leave this question for future research.

\begin{acks}[Acknowledgments]
    The authors would like to thank Jaeyong Lee and Debdeep Pati for helpful discussions. 
\end{acks}

\begin{funding}
S.M.~was partially supported by NSF Grant DMS-2113414.
\end{funding}

\bibliographystyle{imsart-nameyear.bst} %
\bibliography{reference} 

\newpage
\section*{Appendix}

\begin{appendices}

We prove our main results from Sections \ref{sec:bayeslinhigh} and \ref{sec:Bvmanalog} in \cref{sec:pfbayeslinhigh}. The proofs in \cref{sec:pfbayeslinhigh} require some lemmas regarding concentration and limiting distributions for dependent random variables, and we prove them in \cref{sec:pfpfblh}. 
\cref{prop:regret} and some general weak convergence and calculus related lemmas are proved in \cref{sec:pfauxlem}.

\section{Proof of results in Sections \ref{sec:bayeslinhigh} and \ref{sec:Bvmanalog}}\label{sec:pfbayeslinhigh}
We separate this appendix into five parts. \cref{sec:clt} presents three CLTs that we will use to for our main results. \cref{sec:notation} introduces notations and common lemmas. Now, \cref{sec:pfwnd} proves all theoretical results from \cref{sec:wnd}, and \cref{sec:pftrueBayes} proves all claims from \cref{sec:btm}. Finally, \cref{cor:coverage probability Bayes} is proved in \cref{sec:pfBvmanalog}.

\subsection{Preliminaries: Central Limit Theorems}\label{sec:clt}
Asymptotic distribution theory for linear projections under quadratic interaction models with bounded SNR is relatively unexplored in the literature; with a few exceptions such as \cite{deb2020fluctuations,chatterjee2011nonnormal,Deb2024detecting,Kabluchko2022}. However these results neither allow a general compactly supported $\mu$, nor a non-constant field vector $\bc$ (see \eqref{eq:posterior_2}). 

Our proof arguments build upon recently developed finite-sample CLTs for (i) Gibbs measures with quadratic interactions \citep{lee2025rfim}, (ii) nonlinear transforms of Gaussian random variables \citep{chatterjee2009fluctuations}. Mainly, we use two results from \cite{lee2025rfim} to understand the fluctuations of $T(\bbeta)$, and another result from \cite{chatterjee2009fluctuations} to understand the fluctuations of $\delta(\by, \bx)$. These bounds are stated in terms of the following distances for random variables.

\begin{defi}[KS and BL distances]\label{def:KS distance}
    For random variables $X$ and $Y$ supported on $\R$, the Kolmogorov-Smirnov (KS) distance between their distributions is defined as:
    \begin{align}\label{eq:ksdist}
    d_{KS}(X,Y)=\sup_{t\in\R} \big| P(X\le t) - P(Y\le t)\big|.
    \end{align}
    On the other hand, the bounded-Lipschitz (BL) distance (see \cite[Section 8.3]{Bogachev2007}) between them is defined as: 
    \begin{align}\label{eq:bldist}
    d_{BL}(X,Y)=\sup\left\{\EE f(X)-\EE f(Y):\, \lVert f\rVert_{\infty}\le 1, \sup_{z_1\neq z_2}\frac{|f(z_1)-f(z_2)|}{|z_1-z_2|}\le 1\right\}.
    \end{align}
\end{defi}

We first state a finite-sample Berry-Esseen bound for the Gaussian approximation of $T(\bbeta)$. Recall that $\bu$ denotes the Mean-Field optimizers, and the notation $\upsilon_p$ from \cref{def:upsilon_p}.

\begin{lemma}[Theorem 2.4 in \cite{lee2025rfim}]\label{thm:clt}
    Suppose that $\bbeta\sim \nu_{\by,\bx}$ and assume the high-temperature \cref{assn:ht}. 
    Define $t_i := \sumjn \A_p(i,j) \psi_j'(c_j)$ for all $i= 1,\ldots,p$,
    and define the following:
    \begin{equation}\label{eq:random field}
       R_{1p} := \sumin \left(\sumjn \A_p(i,j) q_j (\psi_j''(c_j) - \upsilon_p)\right)^2,\qquad  %
        R_{2p} := \sumin t_i^2,\qquad   R_{3p}:=\sumin t_i^4.
    \end{equation}
    Fix $\lambda_p\in \R$, and set $\boldsymbol{\epsilon} := \A_p \bq - \lambda_p \bq$. Suppose $W_p \sim \mcn \left(0, \frac{\upsilon_p}{1 - \upsilon_p  \lambda_p } \right)$. We then have
    \begin{align}\label{eq:ksbd}
    &\;\;\;d_{KS}\left(T(\bbeta) -\bu^\top \bbeta \, ,\, W_p \mid \by, \bx \right)\lesssim \sqrt{R_{1p}} + \sqrt{\an R_{2p}} +  \frac{\sqrt{R_{3p}} + \sqrt{p} \an + \lVert \bq\rVert_{\infty}}{\upsilon_p} + \lVert \boldsymbol{\epsilon} \rVert_2 .
    \end{align}
    The implicit constant only depends on $\rho$ from \eqref{eq:mht}.
\end{lemma}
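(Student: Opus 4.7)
The plan is to apply Stein's method via an exchangeable-pair construction tailored to the Gibbs structure of the posterior $\nu_{\by,\bx}$. Given $\bbeta \sim \nu_{\by,\bx}$, sample an index $I$ uniformly from $\{1,\ldots,p\}$ and draw $\tilde{\beta}_I$ afresh from the conditional distribution of $\beta_I \mid \bbeta_{-I}$, which under $\nu_{\by,\bx}$ is exactly the exponential tilt $\mu_{I, h_I}$ with $h_i := \sum_{j=1}^p \A_p(i,j)\beta_j + c_i$, having mean $\psi_i'(h_i)$ and variance $\psi_i''(h_i)$. Let $\bbeta'$ agree with $\bbeta$ except that its $I$-th coordinate equals $\tilde{\beta}_I$; then $(\bbeta, \bbeta')$ is exchangeable under $\nu_{\by,\bx}$. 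Writing $W := \bq^\top(\bbeta - \bu)$ and $W' := \bq^\top(\bbeta' - \bu)$, direct computation gives
\begin{align*}
    \EE[W - W' \mid \bbeta] &= \tfrac{1}{p}\sumin q_i\big(\beta_i - \psi_i'(h_i)\big), \\
    \EE[(W - W')^2 \mid \bbeta] &= \tfrac{1}{p}\sumin q_i^2\Big[\big(\beta_i - \psi_i'(h_i)\big)^2 + \psi_i''(h_i)\Big].
\end{align*}

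Next I would Taylor-expand $\psi_i'(h_i)$ around $s_i + c_i$ in the displacement $(\A_p(\bbeta - \bu))_i$, use the mean-field fixed point $u_i = \psi_i'(s_i + c_i)$, and combine this with the approximate eigenrelation $\A_p \bq = \lambda_p \bq + \boldsymbol{\epsilon}$ together with the identification $\psi_j''(c_j) \approx \upsilon_p$ on $q_j^2$-weighted average. At leading order this yields
$$ \EE[W - W' \mid \bbeta] \approx \tfrac{1 - \upsilon_p\lambda_p}{p}\,W, \qquad \EE[(W - W')^2 \mid \bbeta] \approx \tfrac{2\upsilon_p}{p}, $$
where the factor of $2$ in the second display comes from the fact that $\tilde{\beta}_i$ and $\beta_i$ are conditionally i.i.d.\ given $\bbeta_{-i}$, so both summands contribute equally in expectation. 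The ratio of the second-moment coefficient to twice the first-moment coefficient is exactly $\upsilon_p/(1 - \upsilon_p\lambda_p)$, identifying the target variance of $W_p$.

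I would then invoke the standard Berry-Esseen theorem for exchangeable pairs, which bounds $d_{KS}(W, W_p)$ by a combination of (i) the $L^2$ fluctuation of $\EE[(W - W')^2 \mid \bbeta]$ around its mean, (ii) the third absolute moment $\EE|W - W'|^3$, and (iii) the $L^2$ residual from the linearization $\EE[W - W' \mid \bbeta] - \tfrac{1 - \upsilon_p\lambda_p}{p}W$. Matching each against the stated bound: term (i) generates $\sqrt{R_{1p}}$, since the fluctuation of $\EE[(W-W')^2 \mid \bbeta]$ comes from the deviation $\psi_j''(c_j) - \upsilon_p$ propagated through $\A_p$ against $\beta_j - u_j$ (the vector whose norm is exactly the definition of $R_{1p}$); term (ii) gives $\|\bq\|_\infty/\upsilon_p$ via the crude bound $|W - W'| \le 2\|\bq\|_\infty$; and term (iii) decomposes into the quadratic Taylor remainder in $\psi_i'$ (yielding $\sqrt{\an R_{2p}}$ through \cref{prop:lln} applied to projections onto $\A_p\bbeta$-type directions), the cubic Taylor remainder (yielding $\sqrt{R_{3p}}/\upsilon_p$), the averaging error from replacing $\psi_j''(c_j)$ by $\upsilon_p$ inside the Neumann-series-like expression (yielding $\sqrt{p}\an/\upsilon_p$), and the eigenpair defect $\|\boldsymbol{\epsilon}\|_2$ entering both as a first-moment residual and through the perturbation of the limiting variance $\upsilon_p/(1-\upsilon_p\lambda_p)$.

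The main obstacle is the delicate interplay between the approximate eigenvector property of $\bq$, the only-on-average validity of $\psi_j''(c_j)\approx\upsilon_p$, and the Taylor remainders: none of these approximations is accurate pointwise, but the particular bilinear combinations that arise in the Stein identities are controllable when paired against $\bbeta - \bu$, thanks to the first-order Poincar\'e-type moment bound in \cref{prop:lln}. Ensuring that every such cross term is absorbed cleanly into one of the five terms in the stated Berry-Esseen bound --- under only the high-temperature \cref{assn:ht} and without any strong mean-field assumption --- is what constitutes the bulk of the technical work.
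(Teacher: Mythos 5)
This lemma is imported verbatim from \cite{lee2025rfim} (it is stated as ``Theorem 2.4 in \cite{lee2025rfim}''), so the present paper contains no proof of it to compare against; the manuscript simply cites the companion work. Your exchangeable-pair construction via single-site (Glauber) resampling and the identification of the target variance through the ratio of Stein coefficients --- $\EE[(W-W')^2]/\bigl(2\,\EE[\EE[W-W'\mid\bbeta]/W]\bigr)$, giving $\upsilon_p/(1-\upsilon_p\lambda_p)$ --- is plausibly the route taken there, and the algebra of your conditional-moment identities is correct (note, though, that the display in the statement has an obvious typo, $\bu^\top\bbeta$ for $\bq^\top\bu$, which you silently and correctly fixed).

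The substantive gap is in the term-matching step, which you yourself flag as ``the bulk of the technical work.'' Concretely, $\sqrt{R_{1p}}$ does \emph{not} arise as the $L^2$ fluctuation of $\EE[(W-W')^2\mid\bbeta]$ about its mean; that fluctuation is governed by $\sumin q_i^2(\psi_i''(h_i)-\upsilon_p)$, which has no $\A_p$ inside. Rather, $R_{1p}$ appears in the linearization of the conditional \emph{mean}: after Taylor-expanding $\psi_i'(h_i)$ about $s_i+c_i$ and interchanging sums, one is left with $\tfrac{1}{p}\sum_j(\beta_j-u_j)\sum_i\A_p(i,j)q_i\bigl(\psi_i''(c_i)-\upsilon_p\bigr)$, a linear projection of $\bbeta-\bu$ onto a vector whose squared norm is exactly $R_{1p}$, and the bound $\EE_{\nu}|\bw^\top(\bbeta-\bu)|\lesssim\|\bw\|$ from \cref{prop:lln} is what makes this $\sqrt{R_{1p}}$. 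Symmetrically, the ``averaging error from replacing $\psi_j''(c_j)$ by $\upsilon_p$'' that you assign to $\sqrt{p}\alpha_p$ is actually $\sqrt{R_{1p}}$; the $\sqrt{p}\alpha_p$ piece most plausibly comes from the $\max(1,p\alpha_p^2)$ on the right-hand side of \cref{prop:lln} and from the secondary replacement $\psi_i''(s_i+c_i)\to\psi_i''(c_i)$. As written, your decomposition double-counts the $\psi''\to\upsilon_p$ error and leaves the conditional-variance fluctuation uncontrolled, so the proposal would need to be reorganized before it could deliver the stated bound.
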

Note that the error terms in the above Berry-Esseen type bound are entirely \emph{data-driven}, in the sense that any choice of $\by, \bx$ are allowed. As stated in the Remarks 2.1-2.3 of \cite{lee2025rfim}, the following conditions are required for the RHS to be $o(1)$, and the bound to imply convergence in distribution:
\begin{itemize}
    \item (Delocalized eigenvector) $(\lambda_p, \bq)$ must satisfy $\|\bq\|\to 0$ and $\|\boldsymbol{\epsilon}\| = \|\A_p \bq - \lambda_p \bq\| \to 0$.
    \item (Strong Mean-Field) \cref{assn:mf} must hold.
    \item (Additional error terms) $R_{1p}, \an R_{2p}, R_{3p} = o(1),$ and $\upsilon_p$ is bounded away from $0$.
\end{itemize}

Under mildly stronger assumptions, the following result further simplifies the Mean-Field centering $\bq^\top \bu$ to a more explicit function.
   
\begin{lemma}[Theorem 2.5 in \cite{lee2025rfim}]\label{thm:CLT for u}
    Suppose $\bbeta\sim \nu_{\by,\bx}$ and consider the same notation as in \cref{thm:clt}. 
    Suppose that the high-temperature \cref{assn:ht} holds, and recall that $\boldsymbol{\epsilon} = \A_p \bq - \lambda_p \bq$. Set $\Psi'(\bc) := \left(\psi_1'(c_1), \ldots, \psi_p'(c_p) \right)$ and 
    $$R_{4p} := \left|\sum_{1 \le i,j \le p} \A_p(i,j) q_i(\psi_i''(c_i) - \upsilon_p) \psi_j'(c_j) \right|.$$
    \begin{enumerate}
        \item[(a)]
    Under the above notations, We have
    \begin{align}\label{eq:CLT for u}
         \bigg|\bq^\top \left(\bu - \frac{\Psi'(\bc)}{1 - \lambda_p \upsilon_p} \right)\bigg| \lesssim \sqrt{R_{1p} R_{2p}} + \sqrt{R_{3p}} + R_{4p} + \sqrt{R_{2p}}\lVert \boldsymbol{\epsilon} \rVert + \big|\boldsymbol{\epsilon}^{\top}\Psi'(\bc)\big|.
    \end{align}

 \item[(b)] Additionally, we have
    \begin{align}
    &\;\;\;\;d_{KS}\left(\sumin q_i\left(\beta_i-\frac{\psi_i'(c_i)}{1-\lambda_p\upsilon_p} \right)\, , \, W_p \mid \by, \bx  \right)\nonumber\\ &\lesssim \sqrt{R_{2p}}\left(\sqrt{R_{1p}} + \sqrt{\alpha_p} + \lVert \boldsymbol{\epsilon}\rVert\right) + \sqrt{R_{1p}} +  \frac{\sqrt{R_{3p}}  + \sqrt{p} \an + \lVert \bq\rVert_{\infty}}{\upsilon_p}+ R_{4p} +\big|\boldsymbol{\epsilon}^{\top}\Psi'(\bc)\big| .\label{eq:erbdcltforu}
    \end{align}
    In both cases, the implicit constant only depends on $\rho$.
    \end{enumerate}
\end{lemma}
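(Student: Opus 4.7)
For part (a), the strategy is to manipulate the mean-field fixed-point equation $u_i = \psi_i'(c_i + s_i)$ with $s_i := \sum_j \A_p(i,j) u_j$ to derive a self-consistency relation for $\bq^\top \bu$. A second-order Taylor expansion gives $u_i = \psi_i'(c_i) + \psi_i''(c_i) s_i + \tfrac{1}{2}\psi_i'''(\xi_i) s_i^2$, where $\psi_i'''$ is uniformly bounded by a universal constant since $\mu_i$ is compactly supported. Multiplying by $q_i$, summing, and using symmetry of $\A_p$ together with $\A_p \bq = \lambda_p \bq + \boldsymbol{\epsilon}$, I find $\sum_i q_i s_i = \lambda_p \bq^\top \bu + \boldsymbol{\epsilon}^\top \bu$. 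Splitting $\sum_i q_i s_i \psi_i''(c_i) = \upsilon_p \sum_i q_i s_i + \sum_i q_i s_i (\psi_i''(c_i)-\upsilon_p)$ and rearranging, I obtain the identity
\begin{equation*}
    (1 - \lambda_p \upsilon_p)\,\bq^\top \bu = \bq^\top \Psi'(\bc) + \upsilon_p \boldsymbol{\epsilon}^\top \bu + \sum_i q_i s_i (\psi_i''(c_i) - \upsilon_p) + \tfrac{1}{2}\sum_i q_i \psi_i'''(\xi_i) s_i^2,
\end{equation*}
where $1 - \lambda_p \upsilon_p \ge 1 - \rho > 0$ by \cref{assn:ht} (as $|\lambda_p| \le \|\A_p\| \le \rho$ and $\upsilon_p \le 1$).

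Next I bound each correction term. A contraction argument, using $\mathbf{s} - \mathbf{t} = \A_p(\bu - \Psi'(\bc))$ together with the Lipschitz bound $|u_j - \psi_j'(c_j)| \le |s_j|$ (since $|\psi_j''|\le 1$) and $\|\A_p\|_r \le \rho$ for $r\in\{2,4\}$, yields $\|\mathbf{s}\|_r \le \|\mathbf{t}\|_r/(1-\rho)$, and in particular $\|\mathbf{s}\| \lesssim \sqrt{R_{2p}}$ and $\|\mathbf{s}\|_4 \lesssim R_{3p}^{1/4}$. With these: (i) the Taylor remainder is controlled by $\|\bq\|\,\|\mathbf{s}\|_4^2 \lesssim \sqrt{R_{3p}}$; (ii) writing $u_j = \psi_j'(c_j) + O(|s_j|)$ inside $\sum_{i,j} \A_p(i,j) q_i (\psi_i''(c_i)-\upsilon_p) u_j$ produces $R_{4p}$ plus a Cauchy--Schwarz error of size $\|\mathbf{s}\|\sqrt{R_{1p}} \lesssim \sqrt{R_{1p} R_{2p}}$; and (iii) the same substitution for $\boldsymbol{\epsilon}^\top \bu$ gives $\boldsymbol{\epsilon}^\top \Psi'(\bc)$ plus $O(\|\boldsymbol{\epsilon}\|\sqrt{R_{2p}})$. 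Dividing through by $1-\lambda_p \upsilon_p$ completes \eqref{eq:CLT for u}.

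For part (b), I combine part (a) with \cref{thm:clt}. The naive route is a smoothing argument: since the density of $W_p$ is bounded by $O(1/\sqrt{\upsilon_p})$, a deterministic translation by $|\bq^\top \bu - \bq^\top \Psi'(\bc)/(1-\lambda_p \upsilon_p)|$ alters the KS distance by at most $O(1/\sqrt{\upsilon_p})$ times the RHS of (a), which gives a valid (if slightly loose) bound. To instead obtain the sharper form \eqref{eq:erbdcltforu}, in which no extra $1/\sqrt{\upsilon_p}$ is attached to the terms inherited from part (a), one may re-run the Stein's-method computation underlying \cref{thm:clt} directly with the modified centering $\bq^\top \Psi'(\bc)/(1-\lambda_p\upsilon_p)$, using the identity established in part (a) to absorb the correction terms into the Berry--Esseen bound at source.

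The main technical obstacle is the contraction argument controlling $\|\mathbf{s}\|$ and $\|\mathbf{s}\|_4$: without these, terms like $\sqrt{R_{1p}R_{2p}}$ and $\sqrt{R_{3p}}$ cannot be expressed purely through the data-dependent quantities $R_{2p}, R_{3p}$. The use of the $(4,4)$-operator norm in \cref{assn:ht}, rather than only the $(2,2)$-norm, is what makes this step go through since it controls both $\ell_2$ and $\ell_4$ norms of $\mathbf{s}$ simultaneously. A secondary subtlety in part (b) is the $1/\upsilon_p$ bookkeeping alluded to above: only the terms already carrying $1/\upsilon_p$ in \cref{thm:clt} should retain that factor in the final bound, which requires a slightly more careful argument than pure smoothing.
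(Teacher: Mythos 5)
The lemma is stated as Theorem 2.5 of \cite{lee2025rfim}; the present paper simply cites it and offers no proof, so there is no in-paper argument to compare against. Evaluated on its own terms, your sketch of part (a) is a correct and essentially complete proof. The self-consistency identity obtained by Taylor-expanding the fixed-point equation and using $\A_p$-symmetry to write $\sum_i q_i s_i = \lambda_p\bq^\top\bu + \boldsymbol{\epsilon}^\top\bu$ is exactly right; the contraction bound $\|\mathbf{s}\|_r\le\|\mathbf{t}\|_r/(1-\rho)$ for $r\in\{2,4\}$ correctly converts $\|\mathbf{s}\|$ and $\|\mathbf{s}\|_4^2$ into $\sqrt{R_{2p}}$ and $\sqrt{R_{3p}}$; the substitution $u_j=\psi_j'(c_j)+(u_j-\psi_j'(c_j))$ inside $\sum_i q_i s_i(\psi_i''(c_i)-\upsilon_p)$ yields $R_{4p}$ plus the Cauchy--Schwarz remainder $\sqrt{R_{1p}R_{2p}}$; and the treatment of $\upsilon_p\boldsymbol{\epsilon}^\top\bu$ gives $|\boldsymbol{\epsilon}^\top\Psi'(\bc)|+\|\boldsymbol{\epsilon}\|\sqrt{R_{2p}}$. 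Dividing by $1-\lambda_p\upsilon_p\ge 1-\rho$ closes the argument. Your remark that the $(4,4)$-norm hypothesis of \cref{assn:ht} is precisely what makes the $\ell_4$ contraction available is a good observation.

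For part (b), however, you only prove a \emph{weaker} statement. The naive translation/smoothing argument starting from \cref{thm:clt} does give a valid KS bound, but with two losses relative to \eqref{eq:erbdcltforu}: (i) every new error term carried over from part (a) picks up a factor $1/\sqrt{\upsilon_p}$, since the density of $W_p$ is of order $1/\sqrt{\upsilon_p}$ and $\upsilon_p\le 1$, so this is a genuine degradation; and (ii) the bound inherits the standalone $\|\boldsymbol{\epsilon}\|$ term from \eqref{eq:ksbd}, which is absent from \eqref{eq:erbdcltforu} and is not dominated by $\sqrt{R_{2p}}\|\boldsymbol{\epsilon}\|$ or $|\boldsymbol{\epsilon}^\top\Psi'(\bc)|$ in general. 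You correctly diagnose that a direct re-run of the Stein's-method computation with the modified centering is the right fix, but you assert this without carrying it out, so the derivation of the exact form \eqref{eq:erbdcltforu} — in particular why $\|\boldsymbol{\epsilon}\|$ disappears and why none of $\sqrt{R_{1p}R_{2p}},\,R_{4p},\,|\boldsymbol{\epsilon}^\top\Psi'(\bc)|$ acquire a $1/\upsilon_p$ factor — remains a gap in the proposal.
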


Part (a) of the above theorem allows us to approximate $\bq^\top \bu$ by a more explicit linear term involving $\bq^\top \Psi'(\bc)$. Our final general Lemma provides finite sample error bounds for the Normal approximation of $\bq^\top \Psi'(\bc)$.

\begin{lemma}\label{lem:remove centering}
    Suppose that the data generating \cref{assn:dgp} holds for $\by$. Recall the definitions of $R_{1p},\ldots ,R_{4p}, \upsilon_p$ in lemmas \ref{thm:clt} and \ref{thm:CLT for u} (note that these terms are all functions of $\by,\bx$). Assume that  
    \begin{align}\label{eq:varlbd}
    \Var\left(\sumin q_i \psi_i'(c_i)|\bx,\bbst\right)\ge \kappa, 
    \end{align}
    for some $\kappa>0$. Then, under the SMF \cref{assn:mf}, we have 
    \begin{align*}
    &\;\;\;\;d_{BL}\left((1-\lambda_p\upsilon_p)\delta(\by,\bx)-\sumin q_i\EE [\psi_i'(c_i)|\bx,\bbst],\widetilde{W}_p \mid \bx, \bbst \right)\nonumber \\ &\lesssim \sqrt{p}\an+\lVert \bq\rVert_{\infty} + \EE\left[\left(\sqrt{R_{1p} R_{2p}} + \sqrt{R_{3p}} + R_{4p} + \sqrt{R_{2p}}\lVert \boldsymbol{\epsilon} \rVert + \big|\boldsymbol{\epsilon}^{\top}\Psi'(\bc)\big|\right) \mid \bx,\bbst\right],
    \end{align*}
    where $\widetilde{W}_p\sim \mcn(0,\Var(\sumin q_i\psi_i'(c_i)|\bx,\bbst))$. In the above display, the law of $\delta(\by,\bx)$ is conditional on $\bx,\bbst$ and the implied constant depends on $\rho$ from \eqref{eq:mht} and $\kappa$ from \eqref{eq:varlbd}. 
\end{lemma}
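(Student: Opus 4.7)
The plan is to approximate the random variable $(1-\lambda_p\upsilon_p)\delta(\by,\bx)$ pathwise by the Gaussian functional $\bq^\top \Psi'(\bc) = \sumin q_i \psi_i'(c_i)$, and then obtain a normal approximation for the latter via Chatterjee's second-order Poincar\'{e} inequality, exploiting the fact that $\bc \mid \bx, \bbst \sim \mcn(\Sigman \bbst, \Sigman)$ under \cref{assn:dgp}.

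For the first step, \cref{prop:regret} gives $|\delta(\by,\bx) - \bq^\top \bu| \lesssim \sqrt{p}\an$, while \cref{thm:CLT for u}(a) yields
$$\left|(1-\lambda_p\upsilon_p)\bq^\top \bu - \bq^\top \Psi'(\bc)\right| \lesssim \sqrt{R_{1p}R_{2p}} + \sqrt{R_{3p}} + R_{4p} + \sqrt{R_{2p}}\lVert \boldsymbol{\epsilon}\rVert + \left|\boldsymbol{\epsilon}^{\top}\Psi'(\bc)\right|.$$
Since $|1-\lambda_p\upsilon_p|\le 1+\rho$ under \cref{assn:ht}, the triangle inequality produces a pathwise bound on $(1-\lambda_p\upsilon_p)\delta(\by,\bx) - \bq^\top \Psi'(\bc)$. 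Taking $\EE[\cdot \mid \bx, \bbst]$ of this error, together with the elementary estimate $d_{BL}(X+E, X) \le \EE|E|$, reproduces the ``error expectation'' part of the claimed bound.

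For the second step, reparametrize $\bc = \Sigman\bbst + \Sigman^{1/2}\bz$ with $\bz \sim \mcn(\mathbf{0}_p, \I_p)$, and set $\tilde{h}(\bz) := \bq^\top \Psi'(\bc(\bz))$. A direct calculation gives $\nabla \tilde{h}(\bz) = \Sigman^{1/2}\bv$ with $v_i = q_i\psi_i''(c_i)$, and $\nabla^2 \tilde{h}(\bz) = \Sigman^{1/2}\,\diag(q_i\psi_i'''(c_i))\,\Sigman^{1/2}$. Using the uniform bounds $|\psi_i''|, |\psi_i'''| \lesssim 1$ (as the tilts $\mu_i$ are supported in $[-1,1]$, so these are cumulants of compactly supported laws), $\lVert \Sigman \rVert = O(1)$ (from Assumptions \ref{assn:ht} and \ref{assn:homogeneous diagonals}), and $\lVert\bq\rVert = 1$, one obtains $\lVert\nabla \tilde h\rVert \lesssim 1$ and $\lVert\nabla^2 \tilde h\rVert_{\mathrm{op}} \lesssim \lVert\bq\rVert_\infty$ pointwise in $\bz$. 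Chatterjee's second-order Poincar\'{e} inequality (Theorem 2.2 in \cite{chatterjee2009fluctuations}), combined with the variance lower bound \eqref{eq:varlbd}, then yields
$$d_{TV}\left(\bq^\top \Psi'(\bc) - \sumin q_i \EE[\psi_i'(c_i) \mid \bx, \bbst],\ \widetilde{W}_p \mid \bx, \bbst \right) \lesssim \frac{\lVert \bq\rVert_\infty}{\kappa},$$
which dominates the corresponding BL distance. A final triangle inequality combines the two steps.

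The main subtlety is tracking how the small sup-norm of $\bq$ propagates to a small Hessian of $\tilde h$ through the $\Sigman^{1/2}$ conjugation: this is enabled by the fact that $\diag(q_i\psi_i'''(c_i))$ has operator norm $\lesssim \lVert\bq\rVert_\infty$, together with the uniform bound $\lVert\Sigman\rVert = O(1)$ from the high-temperature and homogeneity assumptions. The $\sqrt{p}\an$ and $\lVert\bq\rVert_\infty$ terms in the final estimate originate from the pathwise approximation and the second-order Poincar\'{e} step respectively.
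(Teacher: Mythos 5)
Your proposal is correct and follows essentially the same two-step strategy as the paper: pathwise approximation of $(1-\lambda_p\upsilon_p)\delta(\by,\bx)$ by $\bq^\top\Psi'(\bc)$ via \cref{prop:regret} and \cref{thm:CLT for u}(a), followed by a second-order Poincar\'{e} argument on $\bq^\top\Psi'(\bc)$. The only cosmetic difference is that you reparametrize $\bc = \Sigman\bbst + \Sigman^{1/2}\bz$ with standard Gaussian $\bz$ before invoking \cref{prop:Chatterjee clt}, whereas the paper applies that proposition directly with covariance $\Sigman$ (absorbing $\|\Sigman\| = O(1)$ into the constant); the two routes produce the same $\|\bq\|_\infty$ contribution from the Hessian bound.
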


\subsection{Notations and common lemmas}\label{sec:notation}
 For the rest of this paper, $\EE$ will denote expectation conditioned on $\bx$, $\bbst$. We will work under \cref{assn:homogeneous diagonals} throughout this Section. We will prove our main results from \cref{sec:bayeslinhigh} by applying Lemmas \ref{thm:clt}, \ref{thm:CLT for u}, and \ref{lem:remove centering}. 
 To achieve this, we need to show that the corresponding error bounds converge to $0$ in probability. First, we provide some general bounds that are valid under \eqref{eq:truemodel} and will be used to cover both of our main examples from \cref{sec:bayeslinhigh}. The main technical challenge arises from the dependent nature of the random fields $\bc = \frac{\bx^\top \by}{\sigma^2}$ in \eqref{eq:posterior}, as 
\begin{align}\label{eq:truec}
    \bc \mid \bx, \bbeta^\star \sim \mcn \Big(\Sigman \bbeta^\star, \Sigman\Big),\quad \text{ where }\quad \Sigman=\frac{\mathbf{X}^\top\mathbf{X}}{\sigma^2},
\end{align}
 under model \eqref{eq:truemodel}. Our strategy to deal with this dependence is to define a surrogate sequence of independent random variables $\bc^\star$ as follows: 
\begin{defi}\label{def:c_approx}
Recall the definition of $d_i$ from \eqref{eq:posterior} and the definition of $d_0$ from \cref{assn:homogeneous diagonals}.
Set $$\mathfrak{E}_1 := \sumin (d_i - d_0)^2, \quad \mathfrak{E}_2 := d_{\text{min}}^{-1}.$$
For $1\le j \le p$, let $\bc^{\star}:=(c^{\star}_1,\ldots ,c^{\star}_p)$ be independent random variables, each with distribution \begin{align}\label{eq:c_approx}
c^{\star}_j \mid \bx, \bbeta^\star \sim \mcn\Big(d_0 \beta_j^\star - \mathfrak{F}_j, d_0 \Big), \quad \text{where} \quad \mathfrak{F}_j:=\sumin \A_p(j,i) \beta_i^\star.
\end{align}
\end{defi}

We will provide bounds for moments of appropriate functions under $\bc$ in terms of moments for the same functions under $\bc^{\star}$. The following lemma controls the second moment of linear combinations of $\{\psi_j'(c_j)\}_{1\le j\le p}$ and $\{\psi_j''(c_j)\}_{1\le j\le p}$ (recall $\psi_j$ from \cref{def:expfam}). For all lemmas that will follow in this subsection, the implicit constants that arise in the $\lesssim$ symbol will depend only on $d_0$ and upper bound on $\mathfrak{E}_2$. 

\begin{lemma}\label{lem:lincombound}
    Consider the regression setting from \cref{sec:bayeslinhigh}, and let $\boldsymbol{\gamma} := (\gamma_1, \ldots, \gamma_p)$ be a deterministic vector (potentially depending on $\bx$ and $\bbeta^\star$). Recall the definition of $\psi_0(\cdot)$ from \cref{assn:homogeneous diagonals}. For $a\in \{1,2\}$ we have
    \begin{align}\label{eq:lincombound}
    \Big| \EE\Big[\sumjn \gamma_j \psi_j^{(a)}(c_j)\Big] - \EE\Big[\sumjn \gamma_j\psi_0^{(a)}(c^{\star}_j)\Big]\Big| &\lesssim \|\boldsymbol{\gamma}\| \sqrt{\mathfrak{E}_1}, \quad
    \Var\Big[\sumjn \gamma_j \psi_j^{(a)}(c_j)\Big] \lesssim \lVert \bgamma\rVert^2 \|\Sigman\|,
    \end{align}
    where the superscript $(a)$ denotes the $a$-th derivative.
    Consequently, the following holds:
    \begin{enumerate}[(a)]
        \item $\bigg|\EE\Big[\sumjn \gamma_j \psi_j'(c_j) \Big]^2 - \Big[ \sumjn \gamma_j \EE\psi_0'(c^{\star}_j)\Big]^2\bigg| \lesssim \|\boldsymbol{\gamma}\|^2 (\mathfrak{E}_1 + \|\Sigman\|).$
        \item $\bigg|\EE\Big[\sumjn \gamma_j \big(\psi_j''(c_j) - U\big) \Big]^2 - \Big[\sumjn \gamma_j \big( \EE\psi_0''(c^{\star}_j) - U\big) \Big]^2\bigg| \lesssim  \|\boldsymbol{\gamma}\|^2 (\mathfrak{E}_1 + \|\Sigman\|).$
    \end{enumerate}
    Here, $U$ is an arbitrary constant, and all expectations are conditional on $(\mathbf{X},{\bm \beta}^*)$. 
\end{lemma}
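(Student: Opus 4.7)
The plan is to prove the two displays of \eqref{eq:lincombound} separately and then derive (a), (b) as algebraic consequences. For the first-moment bound, I would establish a pointwise estimate
\[ \big|\EE \psi_j^{(a)}(c_j) - \EE \psi_0^{(a)}(c_j^{\star})\big| \lesssim |d_j - d_0| \]
for each $j$, and then combine via Cauchy-Schwarz to obtain $\|\bgamma\| \sqrt{\mathfrak{E}_1}$. To establish the pointwise bound, I would view each expectation as $H(d)$, where $H(d) := \EE_{c \sim \mcn(d\beta_j^\star - \mathfrak{F}_j,\, d)} \psi_d^{(a)}(c)$, so that $H(d_j) = \EE \psi_j^{(a)}(c_j)$ and $H(d_0) = \EE \psi_0^{(a)}(c_j^{\star})$. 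Bounding $|H'(d)|$ via Gaussian integration by parts (Stein's lemma) splits the $d$-derivative into three uniformly bounded pieces: the change in the log-MGF $\partial_d \psi_d^{(a)}$ (a difference of bounded cumulants of the tilted measure), and the contributions from the mean and variance of $c$ depending on $d$, which reduce to $\EE\psi_d^{(a+1)}(c)$ and $\tfrac{1}{2}\EE\psi_d^{(a+2)}(c)$. All of these are bounded since $\psi_d$ is the log-MGF of a measure supported on $[-1,1]$.

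For the variance bound $\Var[\sumjn \gamma_j \psi_j^{(a)}(c_j)] \lesssim \|\bgamma\|^2 \|\Sigman\|$, I would invoke the Gaussian Poincar\'e inequality applied to the Gaussian vector $\bc \mid \bx, \bbst \sim \mcn(\Sigman \bbst, \Sigman)$. For the test function $f(\bc) := \sumjn \gamma_j \psi_j^{(a)}(c_j)$, the gradient has coordinates $\gamma_j \psi_j^{(a+1)}(c_j)$, each bounded by $\lesssim |\gamma_j|$ uniformly. Hence $\Var(f) \le \EE[(\nabla f)^\top \Sigman (\nabla f)] \le \|\Sigman\| \, \EE\|\nabla f\|^2 \lesssim \|\Sigman\| \, \|\bgamma\|^2$.

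For consequences (a) and (b), I would use $\EE[X]^2 = \Var X + (\EE X)^2$ combined with the factorization $|A^2 - B^2| \le |A - B|(|A| + |B|)$. With $A := \EE[\sumjn \gamma_j \psi_j'(c_j)]$ and $B := \sumjn \gamma_j \EE \psi_0'(c_j^{\star})$, the first display gives $|A - B| \lesssim \|\bgamma\|\sqrt{\mathfrak{E}_1}$, while $\|\psi_j'\|_\infty \le 1$ together with Cauchy-Schwarz yields $|A|, |B| \lesssim \|\bgamma\|$. The variance part is handled by the second display, and the residual $\|\bgamma\|^2 \sqrt{\mathfrak{E}_1}$ is absorbable into $\|\bgamma\|^2 (\mathfrak{E}_1 + \|\Sigman\|)$ since $\sqrt{x} \le 1 + x$ and $\|\Sigman\| \gtrsim 1$. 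Part (b) is analogous, with $\psi'$ replaced by $\psi'' - U$; the constant shift $U$ drops out of the difference-of-squares computation.

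The principal obstacle is making the pointwise first-moment bound uniform in $\bbst$, since the Gaussian mean of $c_j$ scales as $d_j \beta_j^\star$. A naive Lipschitz coupling of $c_j$ and $c_j^\star$ through a common standard normal would carry an extra $(1+|\beta_j^\star|)$ factor. Stein's lemma applied to $\EE[Z \psi_d^{(a+1)}(c)]$ cleanly consolidates the variance dependence, but still leaves a $\beta_j^\star \EE \psi_d^{(a+1)}(c)$ remainder from the mean dependence, which must be absorbed into the implicit constant --- consistent with the paper's convention for $\bbst$-dependence in its main applications, where $\bbst$ has bounded fourth moments.
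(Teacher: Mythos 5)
Your variance bound via the Gaussian Poincar\'e inequality applied to $\bc \mid \bx, \bbst \sim \mcn(\Sigman\bbst, \Sigman)$ is exactly the paper's argument. For the first-moment bound, however, you take a genuinely different route: you interpolate simultaneously in the mean, variance, and tilting parameter through a single function $H(d) := \EE_{c\sim\mcn(d\beta_j^\star - \mathfrak{F}_j,\, d)}\psi_d^{(a)}(c)$ and control $H'(d)$ directly by Stein's lemma and cumulant bounds. The paper instead factors the comparison into two sequential auxiliary lemmas: one (\cref{lem:exponential tilt expectation}) that changes only the Gaussian law of $c_j$ while keeping the function $\psi_j^{(a)}$ fixed, and a second (\cref{lem:quadratic tilt moment}) that changes the function from $\psi_j^{(a)}$ to $\psi_0^{(a)}$ at the fixed argument $c_j^\star$. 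Both decompositions give a pointwise bound of the same size, but the paper's modularity pays dividends later --- those two lemmas are reused verbatim in the proof of \cref{lem:remove centering two}, whereas your unified $H$-interpolation would have to be re-derived there. Your algebraic derivation of (a) and (b), using $\EE X^2 = \Var X + (\EE X)^2$, the factorization $|A^2-B^2| \le |A-B|(|A|+|B|)$, and the observation that $\|\Sigman\| \ge d_{\min}$ absorbs the residual $\sqrt{\mathfrak{E}_1}$, is correct and is the natural way to finish; the paper states this step as ``consequently'' without spelling it out. Your closing remark about the extra $\beta_j^\star$ factor from the mean shift $d \mapsto d\beta_j^\star - \mathfrak{F}_j$ is sharp: the paper's own application of \cref{lem:exponential tilt expectation} encounters exactly the same $(1+|\beta_j^\star|)$ factor (made explicit in the proof of \cref{lem:remove centering two}), so you are not introducing a new gap but correctly flagging one the paper handles the same way.
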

\noindent The choice of $U$ in part (b) of \cref{lem:lincombound} is flexible and will be typically chosen as $U = \upsilon$. The above lemma is useful because our main error terms $R_{1p},\ldots ,R_{4p}$ are all functions of $\psi_j'(c_j)$s. Note that $R_{1p}$ involves the second moment of $\psi_j''(c_j)$s and $R_{2p}$ involves the second moment of $\psi_j'(c_j)$s. So \cref{lem:lincombound} is directly applicable to bound them. On the other hand $R_{3p}$ and $R_{4p}$ are more complicated. General upper bounds for them are provided in the following lemma.

\begin{lemma}\label{lem:R_{1p} to R_{4p} bound}
     Consider the linear regression model from \cref{sec:bayeslinhigh} and suppose Assumptions \ref{assn:ht} and \ref{assn:homogeneous diagonals} hold. Then we have
    \begin{enumerate}[(a)]
        \item $\EE R_{3p} \lesssim p \alpha_p^2 (\mathfrak{E}_1^2+\|\Sigman\|^2) + \sumin \Big[\sumjn \A_p(i,j) \EE \psi_0'(c^{\star}_j)\Big]^4$,
        \item For any constant $U$, we then have
        \begin{align*}
            \big[ \EE &  R_{4p}\big]^2 \lesssim \Big[\sumij \A_p(i,j) q_i (\EE \psi_0''(c_i^\star) - U) \EE \psi_0'(c_j^\star)\Big]^2 \\
            &+ \alpha_p^2 \Big((1+\mathfrak{E}_1) \|\bbst\|^2 + p(1+\mathfrak{E}_1) \Big) + \|\Sigman\| \EE \Big[{R}_{1p} + \sqrt{R_{3p}} \Big] \\
            &+ \mathfrak{E}_1\Bigg(\sumin q_i^2 \Big[\sumjn \A_p(i,j) \EE \psi_0'(c^{\star}_j) \Big]^2 + \sumjn \Big[\sumin \A_p(i,j)q_i(\EE \psi_0''(c^{\star}_i) -U)\Big]^2 + \an \Bigg)\\
            &+ \EE(\upsilon_p - U)^2 \Bigg(\Big[\sumij \A_p(i,j) q_i \EE \psi_0'(c^{\star}_j) \Big]^2 + \mathfrak{E}_1 + \|\Sigman\|\Bigg).
        \end{align*}
    \end{enumerate}
\end{lemma}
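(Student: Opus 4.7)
\emph{Part (a).} Decompose $\EE t_i^4 \lesssim (\EE t_i)^4 + \EE(t_i - \EE t_i)^4$. Since $\psi_j'$ is $1$-Lipschitz (because $\psi_j'' \in [0,1]$), $t_i = \sumjn \A_p(i,j)\psi_j'(c_j)$ is a Lipschitz function of $\bc \sim \mcn(\Sigman\bbst, \Sigman)$ with constant at most $\big(\sumjn \A_p(i,j)^2\big)^{1/2} \le \sqrt{\alpha_p}$, so Gaussian concentration (or the moment form of Gaussian Poincar\'e) yields $\EE(t_i - \EE t_i)^4 \lesssim \alpha_p^2 \|\Sigman\|^2$. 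For the mean, apply Lemma~\ref{lem:lincombound} with $\bgamma := \A_p(i,\cdot)$ (so $\|\bgamma\|^2 \le \alpha_p$) to get $|\EE t_i - \sumjn \A_p(i,j)\EE\psi_0'(c_j^\star)| \lesssim \sqrt{\alpha_p \mathfrak{E}_1}$, hence $(\EE t_i)^4 \lesssim \big(\sumjn \A_p(i,j)\EE\psi_0'(c_j^\star)\big)^4 + \alpha_p^2 \mathfrak{E}_1^2$. Summing over $i$ produces the claim.

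\emph{Part (b), setup.} Introduce the arbitrary constant $U$ and decompose
$$S := \sumij \A_p(i,j) q_i(\psi_i''(c_i)-\upsilon_p)\psi_j'(c_j) = S_1 - (\upsilon_p - U)\, S_2,$$
where $S_1 := \sumij \A_p(i,j) q_i(\psi_i''(c_i)-U)\psi_j'(c_j)$ and $S_2 := \sumjn (\A_p \bq)_j\, \psi_j'(c_j)$. Since $R_{4p} = |S|$, Cauchy--Schwarz gives $(\EE R_{4p})^2 \lesssim \EE S_1^2 + \EE(\upsilon_p - U)^2 \cdot \EE S_2^2$. The bound $\EE S_2^2 = (\EE S_2)^2 + \Var(S_2)$ follows immediately from Lemma~\ref{lem:lincombound} applied with $\bgamma := \A_p\bq$ (noting $\|\A_p\bq\| \le \|\A_p\| \le 1$), and this produces the final $\EE(\upsilon_p - U)^2$--term of the stated bound.

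\emph{Bounding $\EE S_1^2$.} For $\Var(S_1)$, the Gaussian Poincar\'e inequality gives $\Var(S_1) \le \|\Sigman\|\, \EE\|\nabla_{\bc} S_1\|^2$. Direct differentiation (using symmetry of $\A_p$) yields $(\nabla S_1)_k = q_k \psi_k'''(c_k)\, t_k + \psi_k''(c_k) \sumin \A_p(i,k) q_i (\psi_i''(c_i) - U)$; boundedness of $\psi_k'', \psi_k'''$ together with the Cauchy--Schwarz bound $\sum_k q_k^2 t_k^2 \le \sqrt{R_{3p}}$ (since $\sum_k q_k^4 \le \|\bq\|_\infty^2 \le 1$) and the comparison $\sum_k \big(\sum_i \A_p(i,k)q_i(\psi_i''-U)\big)^2 \le 2R_{1p} + 2(\upsilon_p - U)^2 \|\A_p\bq\|^2$ produces $\Var(S_1) \lesssim \|\Sigman\| \big(\EE\sqrt{R_{3p}} + \EE R_{1p} + \EE(\upsilon_p - U)^2\big)$. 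For $(\EE S_1)^2$, I use the Gaussian identity $\EE[(\psi_i''(c_i)-U)\psi_j'(c_j)] = (\EE\psi_i''(c_i)-U)\EE\psi_j'(c_j) + \Cov(\psi_i''(c_i),\psi_j'(c_j))$. The product-of-marginals part is treated by writing $\EE\psi_i''(c_i) = \EE\psi_0''(c_i^\star) + a_i$, $\EE\psi_j'(c_j) = \EE\psi_0'(c_j^\star) + b_j$, expanding $(x+a_i)(y+b_j)$, and controlling the three cross terms via the dual form $\|a\|_2, \|b\|_2 \lesssim \sqrt{\mathfrak{E}_1}$ of Lemma~\ref{lem:lincombound} (obtained by taking $\bgamma = a$ or $b$). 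This isolates the leading term $\big(\sumij \A_p(i,j) q_i (\EE\psi_0''(c_i^\star) - U)\EE\psi_0'(c_j^\star)\big)^2$ and produces the $\mathfrak{E}_1$--scaled contributions $\mathfrak{E}_1 \sumin q_i^2 A_i^2$ and $\mathfrak{E}_1 \sumjn B_j^2$, where $A_i := \sumjn \A_p(i,j) \EE\psi_0'(c_j^\star)$ and $B_j := \sumin \A_p(i,j) q_i (\EE\psi_0''(c_i^\star) - U)$. For the covariance part, Gaussian interpolation (differentiating in the off-diagonal covariance) gives $|\Cov(f(c_i),g(c_j))| \le \|f'\|_\infty \|g'\|_\infty |\Sigman(i,j)|$; since $\diag(\A_p)=0$ and $\Sigman(i,j) = -\A_p(i,j)$ for $i \ne j$, we get $\big|\sumij \A_p(i,j) q_i \Cov(\psi_i''(c_i),\psi_j'(c_j))\big| \lesssim \sum_{i\neq j} \A_p(i,j)^2 |q_i| \le \alpha_p \|\bq\|_1 \le \alpha_p\sqrt{p}$, whose square gives the $\alpha_p^2 p$ contribution.

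\emph{Main obstacle.} The principal difficulty is the careful accounting of cross terms in $(\EE S_1)^2$, in particular ensuring that the pure product term $\sumij \A_p(i,j)q_i a_i b_j$ (whose natural bound is $\mathfrak{E}_1^2$) is absorbable via $\mathfrak{E}_1 = O(1)$ under Assumption~\ref{assn:homogeneous diagonals}, and that the $\alpha_p^2 (1+\mathfrak{E}_1)\|\bbst\|^2$ slot in the stated bound is populated correctly. The latter emerges from a sharper pointwise estimate $|a_i|, |b_i| \lesssim |d_i - d_0|(1 + |\beta_i^\star|)$ derived by coupling $c_i$ and $c_i^\star$ through their Gaussian means/variances (which differ by $(d_i - d_0)\beta_i^\star$ and $(d_i - d_0)$ respectively) and from the pointwise perturbation of $\psi_i$ relative to $\psi_0$, rather than from the dual bound alone; this refinement has to be inserted where the $\|\bbst\|$--dependence enters.
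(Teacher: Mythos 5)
Your proof is correct and reaches the stated bound, but it departs from the paper in the one genuinely delicate step of part (b): how to decouple $\psi_i''(c_i)$ from $\psi_j'(c_j)$ in $\EE S_1$. The paper replaces $c_j$ by the orthogonalized variable $\ct_j^i := c_j + \tfrac{\A_p(i,j)}{d_i}c_i$, which is independent of $c_i$, factorizes the resulting expectation, and Taylor-expands back; the price is a residual of order $|\A_p(i,j)|\,\EE|c_i|$, and it is precisely in bounding $\sum_{i,j}\A_p(i,j)^2|q_i|\EE|c_i|$ (with $\EE|c_i| \lesssim |\beta_i^\star| + |\mathfrak{F}_i| + 1$) that the $\|\bbst\|^2$ term enters. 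You instead write $\EE[(\psi_i''-U)\psi_j'] = (\EE\psi_i''-U)\EE\psi_j' + \Cov(\psi_i''(c_i),\psi_j'(c_j))$ and control the covariance directly by Gaussian interpolation, $|\Cov(f(c_i),g(c_j))| \le \|f'\|_\infty\|g'\|_\infty|\Sigman(i,j)|$, yielding the clean bound $\alpha_p^2 p$ for this piece with no $\|\bbst\|$ dependence at all. Both routes are valid; yours is arguably cleaner and in fact gives a strictly sharper intermediate estimate for the covariance contribution. Consequently, the worry you flag under ``main obstacle''---that the $\alpha_p^2(1+\mathfrak{E}_1)\|\bbst\|^2$ slot needs to be ``populated'' via a pointwise estimate $|a_i| \lesssim |d_i-d_0|(1+|\beta_i^\star|)$---is a non-issue: a sharper upper bound trivially implies the weaker one stated in the lemma, and your $\alpha_p^2 p$ already sits under the $\alpha_p^2 p(1+\mathfrak{E}_1)$ slot. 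You do not need the pointwise refinement. (The $\|\bbst\|^2$ term is an artifact of the paper's Taylor route, not a structural feature of the bound.) Your treatment of the product-of-marginals part---expanding $(x+a_i)(y+b_j)$ and using the dual form $\|a\|,\|b\| \lesssim \sqrt{\mathfrak{E}_1}$ of Lemma~\ref{lem:lincombound}, with the cross-cross term absorbable since $\mathfrak{E}_1 = O(1)$---lands on the same $\mathfrak{E}_1$-scaled error terms the paper obtains via the sequential $\Gamma^\star$, $\tilde\Gamma^\star$ substitutions. Part (a) is likewise a mild variant: you bound $\EE t_i^4$ through a sub-Gaussian fourth-moment estimate on $t_i - \EE t_i$ plus a Lemma~\ref{lem:lincombound} bound on $\EE t_i$, whereas the paper applies the Poincar\'e inequality to $S_{3i}=t_i^2$; both give the same result.
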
 
Recall that $\EE$ denotes expectation condition on $\bx,\bbeta^\star$. Our final preparatory lemma provides a simplified expression for $\Var(\sumin q_i\psi_i'(c_i))$ which in turn helps obtain more tractable asymptotic distributions from \cref{lem:remove centering}.

\begin{lemma}\label{lem:remove centering two}
    Consider the linear regression model from \cref{sec:bayeslinhigh}, and suppose that \cref{assn:homogeneous diagonals} holds. 
    Define 
    \begin{align}\label{eq:vthetan}
    \varsigma^2_p := \sumin q_i^2 \Var(\psi_0'(c^{\star}_i)) - \sum_{i \neq j} \A_p(i,j) q_i q_j \EE \psi_0''(c^{\star}_i) \EE \psi_0''(c^{\star}_j),
    \end{align}
    Then, we have 
 
    \begin{align*}
        &\;\;\;\;\bigg|\Var\left(\sumin q_i\psi_i'(c_i)|\bx,\bbst\right)-\varsigma^2_p\bigg| \\ & \lesssim \lVert \bq\rVert_{\infty}\bigg(\sqrt{\mathfrak{E}_1}+\an(1+\sqrt{\mathfrak{E}_1}) \lVert \bbst\rVert + \sqrt{\sumin (d_i-d_0)^2(\beta_i^\star)^2}+\an(1+\mathfrak{E}_1)\sqrt{\sumin (\beta_i^\star)^4}\bigg) \\ & \qquad +\an\big(1+\sqrt{\mathfrak{E}_1}+\mathfrak{E}_1+\an\lVert\bbst\rVert^2\big).
    \end{align*}

\end{lemma}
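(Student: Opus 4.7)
The plan is to decompose the conditional variance via the identity
\begin{equation*}
\Var\Big(\sumin q_i \psi_i'(c_i) \mid \bx, \bbst\Big) = \sumin q_i^2 \Var(\psi_i'(c_i)) + \sum_{i \neq j} q_i q_j \Cov(\psi_i'(c_i), \psi_j'(c_j)),
\end{equation*}
and compare the diagonal and off-diagonal sums separately against the two pieces defining $\varsigma_p^2$. Throughout, I will exploit two features: (i) $(c_1,\ldots,c_p)\mid \bx,\bbst$ is jointly Gaussian with covariance $\Sigman$, so $\Cov(c_i,c_j) = -\A_p(i,j)$ for $i\ne j$; and (ii) the uniform smoothness bounds $|\psi_i^{(k)}(x)|\lesssim 1$ for $k=1,2,3$ together with the pointwise comparison $|\psi_i^{(k)}(x)-\psi_0^{(k)}(x)|\lesssim |d_i-d_0|$ uniformly in $x$, both of which follow from the exponential family calculus of \cref{def:expfam} and from $\mathrm{supp}(\mu)\subseteq[-1,1]$.

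For the diagonal sum, I couple $c_i$ and $c_i^\star$ as Gaussians sharing the offset $-\mathfrak{F}_i$ but differing in the parameter $d_i$ versus $d_0$; this yields the Wasserstein bound $W_1(c_i,c_i^\star)\lesssim |d_i-d_0|(1+|\beta_i^\star|)$. Combining this with $\Var = \EE(\cdot)^2 - (\EE\cdot)^2$ and the uniform bounds on $\psi_i'$ above gives the pointwise estimate
\begin{equation*}
|\Var(\psi_i'(c_i)) - \Var(\psi_0'(c_i^\star))| \lesssim |d_i - d_0|(1 + |\beta_i^\star|).
\end{equation*}
Weighting by $q_i^2$, summing, and applying Cauchy--Schwarz with $\sumin q_i^4 \le \lVert\bq\rVert_\infty^2$ produces the $\lVert\bq\rVert_\infty\bigl(\sqrt{\mathfrak{E}_1} + \sqrt{\sumin(d_i-d_0)^2(\beta_i^\star)^2}\bigr)$ piece of the stated bound.

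For the off-diagonal sum, I will apply the Gaussian Price identity to the joint law of $(c_i,c_j)$: expanding to first order in $\tau = -\A_p(i,j)$ gives
\begin{equation*}
\Cov(\psi_i'(c_i), \psi_j'(c_j)) = -\A_p(i,j)\,\EE\psi_i''(c_i)\,\EE\psi_j''(c_j) + R_{ij},\qquad |R_{ij}|\lesssim \A_p(i,j)^2,
\end{equation*}
the remainder coming from $|\psi_i'''|,|\psi_j'''|\lesssim 1$. Summing the remainders, $\sum_{i\ne j}|q_iq_j|\,\A_p(i,j)^2 \le \sumin q_i^2\sumjn\A_p(i,j)^2 \le \an$, furnishes the leading $\an$ contribution. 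In the main term I replace $\EE\psi_i''(c_i)$ by $\EE\psi_0''(c_i^\star)$ using the same pointwise estimate together with the bilinear bound $|\bu^\top \A_p \bv|\le \lVert\A_p\rVert \lVert\bu\rVert \lVert\bv\rVert \le \rho \lVert\bu\rVert \lVert\bv\rVert$ applied to $u_i = q_i(\EE\psi_i''(c_i)-\EE\psi_0''(c_i^\star))$ and $v_j = q_j\EE\psi_j''(c_j)$; since $\lVert\bv\rVert\le 1$ and $\lVert\bu\rVert\lesssim \lVert\bq\rVert_\infty(\sqrt{\mathfrak{E}_1}+\sqrt{\sumin(d_i-d_0)^2(\beta_i^\star)^2})$, this absorbs the replacement error into another $\lVert\bq\rVert_\infty$-type contribution.

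The main obstacle will be to recover the remaining higher-order error terms, namely $\lVert\bq\rVert_\infty \an(1+\sqrt{\mathfrak{E}_1})\lVert\bbst\rVert$, $\lVert\bq\rVert_\infty \an(1+\mathfrak{E}_1)\sqrt{\sumin(\beta_i^\star)^4}$, $\an\sqrt{\mathfrak{E}_1}$, $\an\mathfrak{E}_1$, and $\an^2\lVert\bbst\rVert^2$. These arise when the first-order pointwise comparisons above are refined to second-order Taylor expansions around the shared mean $d_0\beta_i^\star - \mathfrak{F}_i$, producing cross-terms involving $\mathfrak{F}_i = \sumjn\A_p(i,j)\beta_j^\star$, $(d_i-d_0)\beta_i^\star$ and their products. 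The crude bounds $\sumin\mathfrak{F}_i^2\le \lVert\A_p\rVert^2\lVert\bbst\rVert^2$ and $\sumin\mathfrak{F}_i^4 \lesssim \an\lVert\bbst\rVert^4$, together with repeated application of \cref{lem:lincombound} to $\psi$-dependent linear combinations, will route each cross-term into exactly one of the stated summands. Assembling the diagonal bound, the off-diagonal Price expansion, and these higher-order corrections via the triangle inequality completes the proof.
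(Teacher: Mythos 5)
Your Plackett--Price interpolation for the off-diagonal block is valid and is a genuinely different, and in fact cleaner, route than the paper's. The paper's proof orthogonalizes first: it defines $c_j+\A_p(i,j)c_i/d_i$ so as to be independent of $c_i$, Taylor-expands $\psi_j'(c_j)$ around this variable, and applies Stein's lemma to the first-order term; its second-order remainder carries an extra factor $\EE c_i^2$, and it is precisely that moment which generates the terms involving $\lVert\bbst\rVert$, $\sum_i(\beta_i^\star)^4$, $\mathfrak{E}_1$, and $\an^2\lVert\bbst\rVert^2$ in the stated bound. Your interpolation in the correlation parameter instead has remainder $|R_{ij}|\lesssim\A_p(i,j)^2\lVert\psi_i'''\rVert_\infty\lVert\psi_j'''\rVert_\infty\lesssim\A_p(i,j)^2$ with no moment of $c_i$ attached, so combining your diagonal estimate, the summed remainders $\lesssim\an$, and the bilinear replacement of $\EE\psi_i''(c_i)$ by $\EE\psi_0''(c_i^\star)$ yields a bound of the form $\lVert\bq\rVert_\infty\bigl(\sqrt{\mathfrak{E}_1}+\sqrt{\sum_i(d_i-d_0)^2(\beta_i^\star)^2}\bigr)+\an$, which is strictly dominated by the lemma's right-hand side. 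Your closing paragraph therefore chases a phantom: there are no higher-order error terms to recover, because your route never generates them; a sharper bound trivially implies the stated one, so drop that paragraph and simply note that the remaining terms in the statement are nonnegative slack inherited from the paper's particular expansion. Minor cleanup: the chain $\sum_{i\ne j}|q_iq_j|\A_p(i,j)^2\le\sum_i q_i^2\sum_j\A_p(i,j)^2$ needs the intermediate step $|q_iq_j|\le(q_i^2+q_j^2)/2$ plus symmetry of $\A_p$, or else pass through the operator-norm bound on the entrywise-squared matrix, to land on $\an$.
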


\noindent We also introduce some additional notation that will help simplify proofs.
\begin{defi}\label{def:allnot}
For $m\in\R$, define functions
$$\phi_1(m) := \EE \psi_0'(\mcn(m,d_0)), \qquad \phi_2(m) := \EE \psi_0''(\mcn(m,d_0)),$$ 
where $d_0$ is defined in \cref{def:c_approx}. Recalling that $c^{\star}_j \mid \bx, \bbeta^\star \sim \mcn\Big(d_0 \beta_j^\star - \mathfrak{F}_j, d_0 \Big), \text{ where }\mathfrak{F}_j=\sumin \A_p(j,i) \beta_i^\star$ (see \eqref{eq:c_approx}), we note the following equalities: 
\begin{equation}\label{eq:tocite}
\EE \psi_0'(c_j^\star)=\phi_1(d_0\beta_j^\star-\mathfrak{F}_j), \quad \EE \psi_0''(c_j^\star)=\phi_2(d_0\beta_j^\star-\mathfrak{F}_j).
\end{equation}
\end{defi}

\subsection{CLTs under random design}\label{sec:pfwnd}
In this Section, we prove the CLTs under random design. We also set $d_0:=\sigma^{-2}$ and recall the definition of $\mathfrak{E}_1$ from \cref{def:c_approx}. In addition to the aforementioned general results, we will need some properties of random matrices as stated below. In this Section, all hidden constants under the $\lesssim$ symbol depend only on the sub-Gaussian norm of the design matrix entries, and $\sigma$. To distinguish from the expectations over the random fields $\bc \mid \bx, \bbst$, we write $\EE_{\bx}$ as the expectation over $\bx$. We also write $\BOPX(1)$ and $\OPX(1)$ to capture the randomness of $\bx$.

\begin{lemma}\label{lem:L_p norm}
    Let $\mathbf{Z}$ be a $n \times p$ random matrix whose entries are iid sub-Gaussians with mean zero and variance $1$, with $p\le n$. Then, the following bound holds for $\M_p := \frac{1}{n} \Off(\mathbf{Z}^\top \mathbf{Z})$:
    \begin{equation}\label{eq:l_p norm bound}
        \|\M_p\| = O_{P,\bx}\Big(\sqrt{\frac{p}{n}}\Big), \quad \|\M_p\|_{4} = O_{P,\bx}\Big(\frac{p^{3/4}}{\sqrt{n}}\Big).
    \end{equation}
\end{lemma}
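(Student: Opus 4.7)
The plan is to prove the two bounds separately, handling the $(2,2)$ norm via standard sub-Gaussian random matrix theory and then obtaining the $(4,4)$ norm through Riesz-Thorin interpolation between the $(2,2)$ and $(\infty,\infty)$ operator norms.

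For the operator norm bound, I would decompose
\[
\M_p = \Bigl(\tfrac{1}{n}\mathbf{Z}^\top \mathbf{Z} - \mathbf{I}_p\Bigr) - \mathrm{Diag}\Bigl(\tfrac{1}{n}\mathbf{Z}^\top \mathbf{Z} - \mathbf{I}_p\Bigr).
\]
The first summand has operator norm $\BOPX(\sqrt{p/n})$ by classical sub-Gaussian sample covariance bounds (e.g.\ Theorem 4.6.1 in \cite{vershynin2018high}) when $p \le n$. The diagonal piece has entries $\tfrac{1}{n}\sum_k Z_{k,i}^2 - 1$, each a normalized centered sub-exponential variable; Bernstein plus a union bound gives uniform concentration of order $\sqrt{\log p/n}$, which is dominated by $\sqrt{p/n}$ as $p\to\infty$. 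Triangle inequality then yields $\|\M_p\| = \BOPX(\sqrt{p/n})$.

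For the $(4,4)$-norm bound, I would apply the Riesz-Thorin interpolation theorem in its finite-dimensional form: since $1/4 = \tfrac{1}{2}\cdot\tfrac{1}{2} + \tfrac{1}{2}\cdot 0$, one has
\[
\|\M_p\|_4 \le \|\M_p\|_2^{1/2}\,\|\M_p\|_\infty^{1/2}.
\]
Given the $(2,2)$ bound, it suffices to show $\|\M_p\|_\infty = \BOPX(p/\sqrt{n})$. Writing $\|\M_p\|_\infty = \max_i \sum_{j\ne i} |\M_p(i,j)|$ and using Cauchy-Schwarz, this reduces to bounding $\max_i \sum_{j\ne i}\M_p(i,j)^2$. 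For $i\ne j$, $\M_p(i,j) = \tfrac{1}{n}\langle \mathbf{Z}_{\cdot,i}, \mathbf{Z}_{\cdot,j}\rangle$ has mean zero and variance $1/n$; conditional on the column $\mathbf{Z}_{\cdot,i}$, the family $\{\M_p(i,j)\}_{j\ne i}$ is independent and centered with conditional variance $\|\mathbf{Z}_{\cdot,i}\|^2/n^2$. Combined with the uniform concentration $\|\mathbf{Z}_{\cdot,i}\|^2/n = 1 + \BOPX(\sqrt{\log p/n})$ and a Hanson-Wright/Bernstein-type tail bound on the quadratic form $\sum_{j\ne i}\M_p(i,j)^2$ around its conditional mean $(p-1)\|\mathbf{Z}_{\cdot,i}\|^2/n^2$, followed by a union bound over $i$, this yields $\max_i \sum_{j\ne i}\M_p(i,j)^2 = \BOPX(p/n)$. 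Cauchy-Schwarz then gives $\|\M_p\|_\infty = \BOPX(\sqrt{p}\cdot\sqrt{p/n}) = \BOPX(p/\sqrt{n})$, and plugging into the interpolation bound yields $\|\M_p\|_4 = \BOPX((p/n)^{1/4}\cdot(p/\sqrt{n})^{1/2}) = \BOPX(p^{3/4}/\sqrt{n})$.

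The main technical subtlety is the uniform control of the quadratic sums $\sum_{j\ne i}\M_p(i,j)^2$ across $i$: the rows of $\mathbf{Z}$ appear in every entry of $\M_p$, so the entries of $\M_p$ within a single row are not independent. The conditioning on $\mathbf{Z}_{\cdot,i}$ isolates the dependence, but one must check that the sub-exponential deviation bound combined with the union bound over $1 \le i \le p$ preserves the $p/n$ scaling without introducing stray logarithmic factors that would spoil the final $p^{3/4}/\sqrt{n}$ rate.
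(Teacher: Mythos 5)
Your proposal is correct, and it is self-contained where the paper is not: the paper's own ``proof'' of this lemma is just a pointer to Lemmas 4.1 and 4.2 of \cite{lee2025rfim} (specializing the dilution matrix there to $\mathbf{G}_n(i,j)=\mathbf{1}\{i\ne j\}$), so there is no in-paper argument to compare against line by line. Your route---the Vershynin sample-covariance bound plus Bernstein on the diagonal for the $(2,2)$ norm, the Cauchy--Schwarz reduction of $\|\M_p\|_\infty$ to $\max_i\sum_{j\ne i}\M_p(i,j)^2$, conditional-on-a-column Bernstein with a union bound for that quantity, and then Riesz--Thorin interpolation $\|\M_p\|_4\le\|\M_p\|_2^{1/2}\|\M_p\|_\infty^{1/2}$---is sound, and the arithmetic gives exactly the stated rates: $(p/n)^{1/4}(p/\sqrt n)^{1/2}=p^{3/4}/\sqrt n$. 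One clean feature of your argument worth noting is that the intermediate bound $\max_i\sum_{j\ne i}\M_p(i,j)^2=O_{P,\bx}(p/n)$ is precisely Lemma~\ref{lem:alpha_n}, which the paper cites from the same source; so your proof establishes both lemmas in one pass.

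Two small points to pin down when writing out the details. First, Riesz--Thorin is usually stated over $\mathbb{C}$; for a real symmetric matrix this only affects constants, which are irrelevant for an $O_{P,\bx}$ claim, but it is cleaner to either pass to $|\M_p|$ entrywise (Schur test) or invoke the Riesz convexity theorem for real $\ell^p$. Second, as you anticipated, the union bound over $i$ in the control of $\max_i\sum_{j\ne i}\M_p(i,j)^2$ is where stray logs could enter; the saving grace is that you only need the max to stay within a constant multiple of the conditional mean $\approx p/n$, which is $\sqrt{p}$ conditional standard deviations, so the Bernstein tail is $e^{-c\sqrt{p}}$ and the union over $p$ indices is harmless. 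You should also confirm the uniform two-sided control $\|\mathbf{Z}_{\cdot,i}\|^2/n=1+O_{P,\bx}(\sqrt{\log p/n})$ before plugging it into the conditional variance proxy, but that is a one-line sub-exponential concentration plus union bound identical to the diagonal step in part (a). Overall no gaps.
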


\begin{lemma}\label{lem:alpha_n}
    Consider the same setting as in \cref{lem:L_p norm}, still with $p \le n$. %
    Then, we have $$\max_{i=1}^p \sumjn \M_p(i,j)^2 = O_{P,\bx}(p/n).$$
\end{lemma}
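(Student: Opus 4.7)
The plan is to deduce this bound directly from the operator-norm control already established in \cref{lem:L_p norm}, observing that $\M_p$ is a symmetric matrix. Indeed, $\mathbf{Z}^\top \mathbf{Z}$ is symmetric, and the operator $\Off(\cdot)$ only zeros out the diagonal, so $\M_p = \frac{1}{n}\Off(\mathbf{Z}^\top \mathbf{Z})$ is symmetric. Hence for each $i$,
\begin{equation*}
\sum_{j=1}^p \M_p(i,j)^2 \;=\; \sum_{j=1}^p \M_p(j,i)^2 \;=\; \|\M_p e_i\|^2 \;=\; e_i^\top \M_p^2 e_i,
\end{equation*}
where $e_i \in \mathbb{R}^p$ is the $i$-th standard basis vector.

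The variational characterization of the operator norm gives $e_i^\top \M_p^2 e_i \le \|\M_p^2\|\, \|e_i\|^2 = \|\M_p\|^2$, uniformly in $i$. Taking the maximum over $i \in \{1,\ldots,p\}$ therefore yields
\begin{equation*}
\max_{i=1}^p \sum_{j=1}^p \M_p(i,j)^2 \;\le\; \|\M_p\|^2.
\end{equation*}
Applying \cref{lem:L_p norm}, which asserts $\|\M_p\| = O_{P,\bx}(\sqrt{p/n})$, squaring gives $\|\M_p\|^2 = O_{P,\bx}(p/n)$ and the claim follows.

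There is no substantial obstacle: the lemma is essentially an immediate corollary of the spectral-norm bound in \cref{lem:L_p norm}. The only thing to check is the symmetry of $\M_p$, which is what allows one to pass from the row $\ell_2$-norm $\sum_j \M_p(i,j)^2$ to a diagonal entry of $\M_p^2$ and then dominate it by the operator norm. If a tighter concentration around the mean $(p-1)/n$ were needed, one would instead condition on $\mathbf{Z}_{-i}$, write $\sum_{j \neq i}\M_p(i,j)^2 = \tfrac{1}{n^2} Z_i^\top (\mathbf{Z}_{-i}\mathbf{Z}_{-i}^\top) Z_i$, and apply Hanson--Wright together with a union bound over $i$; however, for the stated $O_{P,\bx}(p/n)$ bound the operator-norm route is the most efficient.
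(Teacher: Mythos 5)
Your argument is correct, and it is genuinely different from what the paper does. The paper does not give an independent proof of this lemma at all: it simply states that both Lemma~\ref{lem:L_p norm} and Lemma~\ref{lem:alpha_n} follow from Lemmas 4.1 and 4.2 of \cite{lee2025rfim} (applied with the trivial dilution $\mathbf{G}_n(i,j)=\mathbbm{1}\{i\neq j\}$), so in that treatment the two bounds are obtained by two separate external lemmas. You instead observe that the second bound is a consequence of the first: since $\M_p$ is symmetric, $\sum_j \M_p(i,j)^2 = e_i^\top \M_p^2 e_i \le \|\M_p^2\| = \|\M_p\|^2$, and then squaring the $O_{P,\bx}(\sqrt{p/n})$ operator-norm bound finishes the job. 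Every step is sound — symmetry of $\M_p$, the identity $\sum_j \M_p(i,j)^2 = \|\M_p e_i\|^2$, the Rayleigh-quotient bound, and $\|\M_p^2\| = \|\M_p\|^2$ for a symmetric matrix — and the passage from $\|\M_p\| = O_{P,\bx}(\sqrt{p/n})$ to $\|\M_p\|^2 = O_{P,\bx}(p/n)$ is just the usual manipulation of high-probability events. Your route is more economical (it makes Lemma~\ref{lem:alpha_n} a one-line corollary of Lemma~\ref{lem:L_p norm}, saving an appeal to a second external concentration result), while the paper's citation-based route would generalize to diluted coupling matrices where the symmetric operator-norm shortcut might not give the sharpest constant; for the present setting yours is cleaner. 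Your closing remark about a Hanson--Wright plus union-bound argument for a concentration result around $(p-1)/n$ is also accurate as an alternative, though unnecessary for the stated order bound.
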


\noindent Lemmas \ref{lem:L_p norm} and \ref{lem:alpha_n} directly follow from the more general claims in [Lemmas 4.1 and 4.2, \cite{lee2025rfim}], by setting $\mathbf{G}_n(i,j) = I(i \neq j)$ there (i.e. considering no dilution in the coupling matrix). %

\begin{lemma}\label{lem:random design properties}
Let $\bx$ be the random design defined in \cref{cor:regression i.i.d design}. Recall the definition of $\mathfrak{F}_i$s from \eqref{eq:c_approx}. Then, for any deterministic vector $\bv, \bw \in \mathbb{R}^p$ (potentially depending on $\bbeta^\star$), the following holds:
\begin{enumerate}[(a)]
    \item $\EEX [\bv^\top \A_p \bw]^2 \lesssim \frac{\|\bv\|^2 \|\bw\|^2}{n}$
    \item For any $1\le i\le p$, $\EEX \Big[\sumjn \A_p(i,j) v_j \Big]^2 \lesssim \frac{\|\bv\|^2}{n}$ and in particular, $\EEX \mathfrak{F}_i^2 \lesssim \frac{p}{n}$
    \item For any $1\le i\le p$, $\EEX \Big[\sumjn \A_p(i,j) v_j \Big]^4 \lesssim \frac{\|\bv\|^4}{n^2}$ and in particular, $\EEX \mathfrak{F}_i^4 \lesssim \frac{p^2}{n^2}$
    \item For any $1\le i \le p$ and $\|\bv\|_{\infty} \le 1$, $\sumin \Big[\sumjn \A_p(i,j) \mathfrak{F}_j v_j \Big]^4 = O_{P, \bx} \Big(\frac{p^6}{n^4}\Big)$.\footnote{Here, we conjecture that the tight upper bound is actually $\frac{p^5}{n^4}$, but our weaker result is much simpler to prove and suffices for our purposes.}
\end{enumerate}
\end{lemma}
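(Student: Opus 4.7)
The plan is to prove \cref{lem:random design properties} by direct moment computations, using the representation
\[
\A_p(i,j) = -\frac{1}{n\sigma^2}\sum_{k=1}^n Z_{k,i}Z_{k,j}\,\mathbf{1}_{i\neq j},
\]
where $Z_{k,i} := \sqrt{n}\,X_{k,i}$ are i.i.d.\ mean-zero, variance-one sub-Gaussians. For (a), I would observe that $\bv^\top \A_p\bw$ is a mean-zero quadratic form in the $Z$'s and expand $\EEX[\bv^\top\A_p\bw]^2$ as a quadruple sum over $(k,k',i\neq j,i'\neq j')$. Independence across $k$ and across the column indices forces the nonzero contributions to come from pairings with $k=k'$ and either $(i',j')=(i,j)$ or $(i',j')=(j,i)$, yielding
\[
\EEX[\bv^\top\A_p\bw]^2 = \frac{1}{n\sigma^4}\Bigl(\sum_{i\neq j}v_i^2w_j^2 + \sum_{i\neq j}v_iw_iv_jw_j\Bigr) \lesssim \frac{\|\bv\|^2\|\bw\|^2}{n}
\]
by Cauchy--Schwarz. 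Part (b) is analogous but easier: for $\sum_j \A_p(i,j)v_j$, only the pairing $k=k'$, $j=j'$ survives and produces $\frac{1}{n\sigma^4}\sum_{j\neq i}v_j^2 \leq \|\bv\|^2/n$. The ``in particular'' bounds follow by taking $\bv=\bbst$ and using $\|\bbst\|^2 = O(p)$, which is implied by the assumption $\sum_i(\beta_i^\star)^4 = O(p)$ via Cauchy--Schwarz.

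For (c), the trick is to condition on the column $i$ and exploit independence. Write $\sum_j \A_p(i,j)v_j = -\frac{1}{n\sigma^2}\sum_k Z_{k,i}Y_k$ with $Y_k := \sum_{j\neq i}v_jZ_{k,j}$ independent of $Z_{k,i}$. Then the variables $W_k := Z_{k,i}Y_k$ are i.i.d.\ mean-zero across $k$, and sub-Gaussianity of $Y_k$ (with $\|Y_k\|_{\psi_2}\lesssim\|\bv\|$) gives $\EE W_k^2 \leq \|\bv\|^2$ and $\EE W_k^4 \lesssim \|\bv\|^4$. The standard fourth-moment estimate
\[
\EE\Bigl(\sum_{k=1}^n W_k\Bigr)^4 \lesssim n\,\EE W_1^4 + n^2(\EE W_1^2)^2 \lesssim n^2\|\bv\|^4
\]
then yields $\EEX[\sum_j \A_p(i,j)v_j]^4 \lesssim \|\bv\|^4/n^2$, with $\EEX\mathfrak{F}_i^4 \lesssim p^2/n^2$ as an immediate consequence.

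Part (d) is the main obstacle, because $\mathfrak{F}_j$ and $\A_p(i,j)$ share the same randomness $\bx$, so a naive entrywise fourth-moment expansion would be both messy and almost certainly suboptimal. My plan is to sidestep this by passing to an operator-norm estimate. Setting $\bw_j := v_j\mathfrak{F}_j$, one has $S_i := \sum_j \A_p(i,j)v_j\mathfrak{F}_j = (\A_p\bw)_i$ and hence
\[
\sum_{i=1}^p S_i^4 \leq \Bigl(\sum_{i=1}^p S_i^2\Bigr)^2 = \|\A_p\bw\|^4 \leq \|\A_p\|^4\|\bw\|^4.
\]
With $\|\bv\|_\infty\leq 1$, $\|\bw\|^2 \leq \sum_j \mathfrak{F}_j^2 = \|\A_p\bbst\|^2 \leq \|\A_p\|^2\|\bbst\|^2$, so $\sum_i S_i^4 \leq \|\A_p\|^8\|\bbst\|^4$. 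Combining the bound $\|\A_p\| = O_{P,\bx}(\sqrt{p/n})$ from \cref{lem:L_p norm} with $\|\bbst\|^2 = O(p)$ then gives $O_{P,\bx}((p/n)^4 \cdot p^2) = O_{P,\bx}(p^6/n^4)$, matching the stated rate. The key conceptual point is that the operator-norm inequality $\|\A_p\bw\|^2 \leq \|\A_p\|^2\|\bw\|^2$ cleanly separates the two copies of $\A_p$ appearing in $S_i$, at the price of the conjectured-but-acceptable slack of one factor of $p$.
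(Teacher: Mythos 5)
Your parts (a) and (b) coincide with the paper's proof (direct expansion of the moment, independence forcing the pairings $k=k'$, $\{i,j\}=\{i',j'\}$). You also correctly observe that the ``in particular'' bounds for $\mathfrak{F}_i$ should come from $\|\bbst\|^2=O(p)$ (which follows from $\sum_i(\beta_i^\star)^4=O(p)$ by Cauchy--Schwarz) rather than from any entrywise bound on $\bbst$; this is cleaner than the paper's appeal to $|\beta_j^\star|\le 1$, which is not actually an assumption in Theorem~\ref{cor:regression i.i.d design}.

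For part (c) you take a genuinely different route. The paper expands the fourth moment as an eight-fold sum and counts the surviving index pairings directly ($k_1=k_2, k_3=k_4$ and $j_1=j_2, j_3=j_4$ up to permutation). You instead condition on column $i$, write $\sum_j \A_p(i,j)v_j = -(n\sigma^2)^{-1}\sum_k Z_{k,i}Y_k$ with $Y_k=\sum_{j\ne i}v_j Z_{k,j}$ independent of $Z_{k,i}$, observe that $W_k := Z_{k,i}Y_k$ are i.i.d.\ mean-zero with $\EE W_k^2\le\|\bv\|^2$ and $\EE W_k^4\lesssim\|\bv\|^4$ (by sub-Gaussianity of $Y_k$), and invoke the standard bound $\EE(\sum_k W_k)^4\lesssim n\,\EE W_1^4 + n^2(\EE W_1^2)^2$. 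Both give the stated $\|\bv\|^4/n^2$; your argument trades the combinatorial bookkeeping for a sub-Gaussian concentration estimate, which scales more gracefully to higher moments if needed.

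For part (d), your argument is shorter and cleaner than the paper's and achieves the same rate. The paper applies Cauchy--Schwarz to the fourth power to peel off $(\sum_j\mathfrak{F}_j^2)(\max_i\sum_j\A_p(i,j)^2)$, then bounds $\EE_{\bx}\sum_i[\sum_j\A_p(i,j)\mathfrak{F}_j v_j]^2\lesssim p^3/n^2$ via a careful six-fold index expansion (this is Eq.~(B.19) in the paper). You instead bound $\sum_i S_i^4\le(\sum_i S_i^2)^2=\|\A_p\bw\|^4\le\|\A_p\|^4\|\bw\|^4$, and with $\|\bv\|_\infty\le1$ note $\|\bw\|^2\le\sum_j\mathfrak{F}_j^2=\|\A_p\bbst\|^2\le\|\A_p\|^2\|\bbst\|^2$, so $\sum_i S_i^4\le\|\A_p\|^8\|\bbst\|^4=O_{P,\bx}((p/n)^4\cdot p^2)=O_{P,\bx}(p^6/n^4)$, using Lemma~\ref{lem:L_p norm}. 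The key conceptual gain is that the operator-norm inequality decouples the two copies of $\A_p$ in $S_i$ in one step, avoiding any need to expand a high-degree polynomial in the $Z_{k,i}$'s. The cost is the same factor of $p$ slack (both arguments deliver $p^6/n^4$ rather than the conjectured $p^5/n^4$), so nothing is lost. Overall the proposal is correct and, for parts (c) and (d), more streamlined than the paper's proof.
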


The following lemma simplifies the expectations in \eqref{eq:lincombound}, under the random design.
\begin{lemma}\label{lem:lincomb expectation random design}
Let $\bx$ be the random design defined in \cref{cor:regression i.i.d design}. Suppose Assumptions \ref{assn:ht} and \ref{assn:mf} hold.
Then, for any deterministic vector $\boldsymbol{\gamma} = (\gamma_1,\ldots ,\gamma_p) \in \mathbb{R}^p$ (potentially depending on $\bbeta^\star$) and $\mathcal{C}^2_b$ function $\Phi:\R \to [-1,1]$, the following holds:
    \begin{align}\label{eq:lincomb moment bound}
        \EE_{\bx} \Big[\Big[\sumin \gamma_j \big(\Phi(d_0\beta_j^\star - \mathfrak{F}_j) - \Phi(d_0\beta_j^\star) \big) \Big]^2 \Big] \lesssim \|\bgamma\|^2 \Big(\frac{p}{n} + \frac{p^3}{n^2}\Big).
    \end{align}
    In particular, under the asymptotic regime $p \ll n^{2/3}$ and $a = 1,2$, we have
    \begin{align}\label{eq:lincomb expectation}
        \EE_{\bx} \Big[\sumjn \gamma_j \Big(\EE [\psi_0^{(a)}(c^{\star}_j)|\bx,\bbst] - \phi_a (d_0 \beta_j^\star) \Big) \Big]^2 = o(\|\bgamma\|^2).
    \end{align}
\end{lemma}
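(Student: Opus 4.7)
The plan is a first-order Taylor expansion of $\Phi$ around $d_0 \beta_j^\star$. I would decompose
\begin{align*}
\mathcal{S} := \sum_{j=1}^p \gamma_j \big(\Phi(d_0 \beta_j^\star - \mathfrak{F}_j) - \Phi(d_0 \beta_j^\star)\big) = \mathcal{S}_1 + \mathcal{S}_2,
\end{align*}
with $\mathcal{S}_1 := -\sum_j \gamma_j \Phi'(d_0 \beta_j^\star) \mathfrak{F}_j$ the linear term and $\mathcal{S}_2 := \tfrac{1}{2}\sum_j \gamma_j \Phi''(\xi_j) \mathfrak{F}_j^2$ the quadratic remainder, where $\xi_j$ lies between $d_0 \beta_j^\star - \mathfrak{F}_j$ and $d_0 \beta_j^\star$. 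Using $(a+b)^2 \le 2a^2 + 2b^2$, it suffices to establish $\EE_{\bx} \mathcal{S}_1^2 \lesssim \|\bgamma\|^2 p/n$ and $\EE_{\bx} \mathcal{S}_2^2 \lesssim \|\bgamma\|^2 p^3/n^2$ separately.

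For the linear term, I would rewrite $\mathcal{S}_1 = -\widetilde{\bgamma}^\top \A_p \bbst$ with $\widetilde{\gamma}_j := \gamma_j \Phi'(d_0 \beta_j^\star)$, so that $\|\widetilde{\bgamma}\| \le \|\Phi'\|_\infty \|\bgamma\| \lesssim \|\bgamma\|$. Then \cref{lem:random design properties}(a) with $\bv = \widetilde{\bgamma}, \bw = \bbst$ gives $\EE_{\bx} \mathcal{S}_1^2 \lesssim \|\widetilde{\bgamma}\|^2 \|\bbst\|^2 / n$, and the standing assumption $\sum_i (\beta_i^\star)^4 = O(p)$ from \cref{cor:regression i.i.d design} together with Cauchy--Schwarz ($\|\bbst\|^2 \le \sqrt{p}(\sum_i(\beta_i^\star)^4)^{1/2}$) yields $\|\bbst\|^2 \lesssim p$, completing the linear bound. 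For the quadratic remainder, $\|\Phi''\|_\infty \lesssim 1$ and Cauchy--Schwarz in the form $(\sum_j |\gamma_j| \mathfrak{F}_j^2)^2 \le \|\bgamma\|^2 \sum_j \mathfrak{F}_j^4$ give $\mathcal{S}_2^2 \lesssim \|\bgamma\|^2 \sum_j \mathfrak{F}_j^4$; now \cref{lem:random design properties}(c) applied to $\bv = \bbst$ supplies $\EE_{\bx} \mathfrak{F}_j^4 \lesssim \|\bbst\|^4 / n^2 \lesssim p^2/n^2$ for each $j$, so summing over $j$ closes the estimate.

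To deduce the second claim \eqref{eq:lincomb expectation}, I would specialize $\Phi = \phi_a$ and invoke the identity $\EE[\psi_0^{(a)}(c_j^\star) \mid \bx, \bbst] = \phi_a(d_0 \beta_j^\star - \mathfrak{F}_j)$ recorded in \eqref{eq:tocite}. The one small verification is that $\phi_a$ belongs to $\mathcal{C}^2_b$ up to a universal constant: since $\phi_a(m) = \EE \psi_0^{(a)}(m + \sqrt{d_0}\,U)$ for $U \sim \mathcal{N}(0,1)$, differentiating under the integral yields $\phi_a^{(k)} = \phi_{a+k}$, and the cumulants $\psi_0^{(k)}$ are uniformly bounded because $\mu_0$ is supported on $[-1,1]$. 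Under $p \ll n^{2/3}$, both $p/n$ and $p^3/n^2$ are $o(1)$, so the first claim forces \eqref{eq:lincomb expectation} to be $o(\|\bgamma\|^2)$. The main obstacle is simply identifying the right Taylor decomposition and checking that the constants (in particular $\|\bbst\|^2 \lesssim p$ and the boundedness of $\phi_a'$, $\phi_a''$) do not smuggle in any extra $p$-dependence; beyond that the proof is straightforward bookkeeping on \cref{lem:random design properties}.
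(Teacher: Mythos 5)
Your proposal is correct and follows essentially the same route as the paper: the same first-order Taylor expansion of $\Phi$ around $d_0\beta_j^\star$, the same rewriting of the linear term as $\widetilde{\bgamma}^\top\A_p\bbst$ to invoke \cref{lem:random design properties}(a), the same Cauchy--Schwarz reduction of the quadratic remainder to $\|\bgamma\|^2\sum_j\mathfrak{F}_j^4$ and appeal to \cref{lem:random design properties}(c), and the specialization $\Phi=\phi_a$ via \eqref{eq:tocite}. The only minor difference is that you derive $\EE_{\bx}\mathfrak{F}_j^4\lesssim p^2/n^2$ from the general form of part (c) with $\bv=\bbst$ and $\|\bbst\|^4\lesssim p^2$ (a consequence of $\sum_i(\beta_i^\star)^4=O(p)$), which is a cleaner justification than relying on the ``in particular'' clause, and you spell out explicitly why $\phi_a$ and its first two derivatives are uniformly bounded; both are bookkeeping points the paper leaves implicit.
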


\begin{proof}[Proof of \cref{cor:regression i.i.d design}]
  Recall the definition of $\upsilon$ from \eqref{eq:regression i.i.d convergence assumption upsilon}, and $\A_p$ from \eqref{eq:posterior}. 

\noindent (a) 
We prove the two limiting distributions in \eqref{eq:iid clt 1} by applying Lemmas \ref{thm:clt} and \ref{thm:CLT for u} with $\lambda_p = 0$. The limit of $\upsilon_p$ will be argued along the way (see the last term in \eqref{eq:suffice1}).
To ensure that both RHS in the Lemmas are $o_P(1)$, it suffices to show the following with high probability (with respect to $\bx$): 
\begin{equation}\label{eq:basicbounds1}
\lVert \A_p\rVert_4 =\OPX(1), \quad \alpha_p = \max_{1\le j\le p} \sumin \A_p(i,j)^2 = \OPX\Big(\frac{1}{\sqrt{p}}\Big), 
\end{equation}
and 
\begin{equation}
\begin{aligned}\label{eq:suffice1}
 \upsilon_p - \upsilon = \OPX(1), \,\, &R_{1p}=\OPX(1), \,\, \an R_{2p} =\OPX(1), \,\, R_{3p}=\OPX(1), \\ R_{2p} \|\boldsymbol{\epsilon}\|^2 = \OPX(1), \,\, &R_{1p}R_{2p}=\OPX(1), \,\, R_{4p}=\OPX(1), \,\, |{\bm \epsilon}^\top \Psi'({\bf c})|=\OPX(1).
\end{aligned}
\end{equation}
Here, the first and second line of \eqref{eq:suffice1} come from Lemmas \ref{thm:clt} and \ref{thm:CLT for u}, respectively.

 The first condition (high-temperature) in \eqref{eq:basicbounds1} follows from \cref{lem:L_p norm}, and the second (Strong Mean-Field) from \cref{lem:alpha_n}. 
 
Now, we prove each bound in \eqref{eq:suffice1}. For this purpose, we use the bounds 
\begin{align}\label{eq:newcall}
\mathfrak{E}_1 = \sumin (d_i-d_0)^2 = O_{P,\bx}\Big(\frac{p}{n}\Big), \quad \mbox{and} \quad \|\Sigman\| = O_{P,\bx}(1), 
\end{align}
multiple times. The above claim follows by noting that $d_i = \frac{1}{\sigma^2} \sum_{k=1}^n X_{k,i}^2$, $d_0 = \frac{1}{\sigma^2}$ and computing the expectation of $\mathfrak{E}_1$:
\begin{equation}\label{eq:newcall1}
\EE_{\bx} \mathfrak{E}_1 = \EE_{\bx} \Big[\sumin (d_i - d_0)^2\Big] = \frac{p}{\sigma^4} \Var_{\bx} \left(\sum_{k=1}^n X_{k,1}^2 \right) \lesssim \frac{p}{n}.
\end{equation}
Also, $\lVert\Sigman\rVert\le \lVert\A_p\rVert+\max_{i=1}^p d_i=\BOPX(1)$ from the above display and the fact that $\|\A_p\| = O_{P,\bx}\Big(\sqrt{\frac{p}{n}}\Big)$, invoking \cref{lem:L_p norm}. We are now in position to prove \eqref{eq:suffice1}.

\begin{itemize}
    \item To show $\EE(\upsilon_p- \upsilon)^2 =\OPX(1)$, we observe that by applying part (b) of \cref{lem:lincombound} with $\gamma_j = q_j^2, U = \upsilon$, we have
        \begin{align}
            \EE \Big[\sumjn q_j^2 \psi_j''(c_j) - \upsilon \Big]^2  &\lesssim \sumjn q_j^4 (\|\Sigman\| + \mathfrak{E}_1) + \Big[\sumjn q_j^2 \EE\psi_0''(c_j^{\star}) - \upsilon \Big]^2 = \OPX(1). \notag
        \end{align}
        Here, the first error term is $\OPX(1)$ by \eqref{eq:newcall} and the fact that $\sumin q_i^4\le \lVert \bq\rVert_{\infty}^2=o(1)$. For the second error term, with $\{\phi_a(\cdot)\}_{a\in \{1,2\}}$ as in \cref{def:allnot}, note that 
        $$\Big[\sumjn q_j^2 \EE\psi_0''(c_j^{\star}) - \upsilon \Big]^2 \lesssim \Big[\sumjn q_j^2(\EE\psi_0''(c_j^{\star})-\phi_2(d_0\beta_j^{\star}))\Big]^2+\big(\upsilon-\sumjn q_j^2 \phi_2(d_0\beta_j^{\star})\big)^2.$$
        The first term is $\OPX(1)$ by applying \cref{lem:lincomb expectation random design} (see \eqref{eq:lincomb expectation}) with $a=2$ and the second term is $\OPX(1)$ from the definition of $\upsilon$ in \eqref{eq:regression i.i.d convergence assumption upsilon} (as $\phi_2(d_0\beta_j^{\star})=\EE\psi_0''(d_0\beta_j^{\star}+W)$ with $W\sim \mcn(0,\sigma^{-2})$.

    \item The bound for $R_{1p}$ follows by defining $\tilde{\bq}$ as $\tilde{q}_i := q_i (\psi_i''(c_i)-\upsilon_p)$ and noting that $|\tilde{q}_i|\le |q_i|$, which gives
    \begin{align*}
        R_{1p} = \|\A_p \tilde{\bq}\|^2 \le \|\A_p\|^2 \|\tilde{\bq}\|^2 \le \|\A_p\|^2 = \BOPX\left( \frac{p}{n}\right).
    \end{align*}
    
    \item The bound for $R_{2p}$ follows similarly, as
    \begin{align*}
        R_{2p} = \|\A_p \Psi'(\bc)\|^2 \le \|\A_p\|^2 \|\Psi'(\bc)\|^2 = \BOPX\left(\frac{p^2}{n}\right).
    \end{align*} 
    Here, recall the notation $\Psi'(\bc)$ from \cref{thm:CLT for u}. In both the above displays, we have also used $\lVert \A_p\rVert=\BOPX(\sqrt{p/n})$ which follows from \cref{lem:L_p norm}. 

    \item We note that 
    \begin{align}\label{eq:epbd}
    \|\boldsymbol{\epsilon}\|=\|\A_p{\bf q}\|\le \|\A_p\|=\BOPX(\sqrt{p/n})
    \end{align}
    by \cref{lem:L_p norm}. The bounds for $R_{1p}R_{2p}$, $\an R_{2p}$, and $R_{2p}\|\boldsymbol{\epsilon}\|^2$ in \eqref{eq:suffice1} now follow from the individual bounds for $R_{1p}$, $R_{2p}$, $\an$, and $\|\boldsymbol{\epsilon}\|$ obtained above.

    \item To bound $R_{3p}$, we use part (a) of \cref{lem:R_{1p} to R_{4p} bound}, along with \eqref{eq:basicbounds1} and \eqref{eq:newcall} to get
    \begin{align*}
        \EE R_{3p} &\lesssim \sumin \Big(\bigg[\sumjn \A_p(i,j) \EE\psi_0'(c^{\star}_j)\bigg]\Big)^4 + \OPX(1).
        \end{align*}
        Using \eqref{eq:tocite} followed by a second order Taylor's expansion we get
        \begin{align}\label{eq:twotermder}
            \EE\psi_0'(c^{\star}_j)=\phi_1(d_0\beta_j^\star-\mathfrak{F}_j)=\phi_1(d_0\beta_j^\star)-\mathfrak{F}_j\phi_1'(d_0\beta_j^\star)+O(\mathfrak{F}_j^2).
        \end{align}
        Using this, the first term in the bound for $\EE R_{3p}$ can be bounded, upto constants, by
        \begin{align}\label{eq:r_3 i.i.d expression}
        \notag& \sumin \Big(\sumjn \A_p(i,j) \phi_1(d_0 \beta_j^\star) \Big)^4 + \sumin \Big(\sumjn \A_p(i,j) \mathfrak{F}_j \phi_1'(d_0 \beta_j^\star) \Big)^4 + \\ & \quad O\Big(\sumin \Big(\sumjn |\A_p(i,j)| \mathfrak{F}_j^2 \Big)^4 \Big).
    \end{align}
    It thus suffices to show that all three terms in the RHS of \eqref{eq:r_3 i.i.d expression} are $\OPX(1)$.
    For the first term in \eqref{eq:r_3 i.i.d expression}, applying part (c) of \cref{lem:random design properties} gives the bound $O_{P,\bx}(\frac{p^3}{n^2})$. The second term is $O_{P,\bx}(\frac{p^6}{n^4})$ by applying part (d) of the same lemma. Here, we use the uniform bounds for $\phi_1(\cdot), \phi'_1(\cdot)$ and $\phi''_1(\cdot)$. The third term can be bounded by using Cauchy-Schwartz to write
    $$\sumin \Big(\sumjn |\A_p(i,j)| \mathfrak{F}_j^2 \Big)^4 \le \sumin \Big(\sumjn \A_p(i,j)^2 \Big)^2 \Big(\sumjn \mathfrak{F}_j^4 \Big)^2 \le p\alpha_p^2 \Big(\sumjn \mathfrak{F}_j^4 \Big)^2.$$
    By combining the above four displays, we get:
    $$\EE R_{3p}=\BOPX\left[\frac{p^3}{n^2}+\frac{p^6}{n^4}+p\alpha_p^2 \left(\sumjn \mathfrak{F}_j^4\right)^2\right]+\OPX(1)=\OPX(1),$$
    where the last equality follows by noting that $p \alpha_p^2 = \OPX(1)$ (see \eqref{eq:basicbounds1}) and $\sumjn \mathfrak{F}_j^4 = \BOPX(p^3/n^2)=\OPX(1)$ (see part (c) of \cref{lem:random design properties}).

    \item To show $R_{4p} \xp 0$, we apply part (b) of \cref{lem:R_{1p} to R_{4p} bound} with $U=\upsilon$ and bound each summand separately. First, by the earlier bounds for $\upsilon_p - \upsilon, R_{1p}, R_{3p}$, we have $\EE(\upsilon_p - \upsilon)^2 = \OPX(1)$ and $\EE R_{1p} + \EE \sqrt{R_{3p}} = \OPX(1).$ By assumption on $\bbst$, we have $\|\bbst\|^2 = O(p)$. 
    It remains to show
    \begin{align*}
        \sumij \A_p(i,j) q_i (\EE \psi_0''(c_i^\star) - \upsilon) \EE \psi_0'(c^{\star}_j) &= \OPX(1), \quad \sumin q_i^2 \Big[\sumjn \A_p(i,j) \EE \psi_0'(c^{\star}_j) \Big]^2 = \OPX(1), \\
        \sumjn \Big[\sumin \A_p(i,j)q_i(\EE \psi_0''(c^{\star}_i) -\upsilon)\Big]^2 &= \OPX(1), 
        \quad \Big[\sumij \A_p(i,j) q_i \EE \psi_0'(c^{\star}_j)\Big]^2 = O_{P,\bx}(1). 
    \end{align*}
    We only prove the first bound in the above display, and the others follow by similar calculations. To show the first bound in the above display, we use the identities in \eqref{eq:tocite} and the following Taylor expansions   
    \begin{align}\label{eq:taylor phi_1}
        \EE \psi_0'(c^{\star}_j) = \phi_1(d_0\beta_j^\star-\mathfrak{F}_j) = \phi_1(d_0\beta_j^\star) - \mathfrak{F}_j \phi'_1(\xi_j), 
    \end{align}
    \begin{align}\label{eq:taylor phi_2}
        \EE \psi_0''(c^{\star}_i) = \phi_2(d_0\beta_i^\star-\mathfrak{F}_i) = \phi_2(d_0\beta_i^\star) - \mathfrak{F}_i \phi'_2(\eta_i)
    \end{align}
    to write
\begin{align*}
     &\sumij \A_p(i,j) q_i (\phi_2(d_0\beta_i^\star-\mathfrak{F}_i) - \upsilon) \phi_1(d_0\beta_j^\star-\mathfrak{F}_j) \\
     = & \sumij \A_p(i,j) q_i(\phi_2(d_0\beta_i^\star)-\upsilon) \phi_1(d_0\beta_j^\star) \\
    \quad - & \sumij \A_p(i,j) q_i \Big(\mathfrak{F}_i \phi'_2(\eta_i) \phi_1(d_0 \beta_j^\star) + {\phi}_2(d_0 \beta_i^\star) \mathfrak{F}_j \phi'_1(\xi_j) - \mathfrak{F}_i \phi'_2(\eta_i) \mathfrak{F}_j \phi'_1(\xi_j) \Big). 
\end{align*}

The RHS is $\OPX(1)$ by (i) applying part (a) of \cref{lem:random design properties} to bound
$$\EE_{\bx} \Big[\sumij \A_p(i,j) q_i(\phi_2(d_0\beta_i^\star)-\upsilon) \phi_1(d_0\beta_j^\star)\Big]^2 = \BOPX\left(\frac{p}{n}\right),$$ and (ii) using $\|\A_p\| = O_{P,\bx}\Big(\sqrt{\frac{p}{n}}\Big)$ (from \cref{lem:L_p norm}) and $\EE_{\bx} \mathfrak{F}_i^2\lesssim \frac{p}{n}$ (from \cref{lem:random design properties} part (b)), to bound the quadratic form
$$\Big[\sumij \A_p(i,j) q_i \mathfrak{F}_i \phi'_2(\eta_i) \phi_1(d_0 \beta_j^\star) \Big]^2 \lesssim p \|\A_p\|^2 \sumin q_i^2 \mathfrak{F}_i^2 = O_{P,\bx} \Big(\frac{p^3}{n^2} \Big) = \OPX(1).$$
This complets the bound of the first term, as promised. The other quadratic forms in the long display above can be bounded by $O_{P,\bx} \Big(\frac{p^3}{n^2} \Big)$ via similar calculations.

\item Finally, we prove that $\boldsymbol{\epsilon}^\top \Psi'(\bc) = \sumin \epsilon_i \psi_i'(c_i) = \OPX(1)$, where $\boldsymbol{\epsilon} = \A_p \bq$. To begin, note that $\|\boldsymbol{\epsilon}\|=\BOPX\Big(\sqrt{\frac{p}{n}}\Big)$ as proved in \eqref{eq:epbd}, a fact we use several times below.
By consecutively applying part (a) of \cref{lem:lincombound}, \eqref{eq:lincomb expectation} from \cref{lem:lincomb expectation random design} (with $\boldsymbol{\gamma} = \boldsymbol{\epsilon}, ~a = 1$), and part (a) of \cref{lem:random design properties}, we have
\begin{align*}
    \EE \Big[\sumin \epsilon_i \psi_i'(c_i)\Big]^2 &= \Big(\sumin \epsilon_i \EE \psi_0'(c^{\star}_i) \Big)^2 + \BOPX \Big(\frac{p}{n}\Big) \\
    &= \Big(\sumin \epsilon_i \phi_1(d_0 \beta_i^\star) \Big)^2 + \BOPX \Big(\frac{p}{n}\Big) = \BOPX \Big(\frac{p}{n}\Big)=\OPX(1).
\end{align*}
\end{itemize}

(b) %
We prove \eqref{eq:iid clt 2}. %
Under the given assumptions, the error term in \cref{lem:remove centering two} is $\OPX(1)$, using the bounds
$$\|{\bf q}\|_\infty=o(1),\quad  \mathfrak{E}_1=\BOPX(1) ~~ (c.f.~\eqref{eq:newcall}),\quad    p\alpha_p^2=\OPX(1) ~~(c.f.~\eqref{eq:basicbounds1}), \quad \sumin (\beta_i^\star)^4 = O(p).$$
In particular, we recall $d_i = \sumkn X_{k,i}^2$ and use the bound $\EEX(d_i-d_0)^2\lesssim n^{-1}$ (see \eqref{eq:newcall1}),  to bound
$$\EEX\sumin (d_i-d_0)^2 (\beta_i^\star)^2 \lesssim \frac{1}{n}\sumin (\beta_i^\star)^2 = \BOPX\left(\frac{p}{n}\right).$$
By using the bounds in \eqref{eq:suffice1}, the error term in \cref{lem:remove centering} also vanishes and we have
\begin{align*}
    &\;\;\;\;d_{BL}\left((1-\lambda_p\upsilon_p)\delta(\by,\bx)-\EE[\sumin q_i \psi_i'(c_i)|\bx,\bbst], ~\mcn(0, \varsigma_p^2) \mid \bx, \bbst \right)\nonumber = \OPX(1).
\end{align*}
Now, the convergence in distribution \eqref{eq:iid clt 2} is immediate if the following holds:
\begin{equation*}
    \EE \big[\sumin q_i \psi_i'(c_i)\big] - \sumin q_i \phi_1(d_0\beta_i^\star) = \OPX(1), \quad \varsigma^2_p - \varsigma^2 = \OPX(1).
\end{equation*}

Here, recall from \cref{def:allnot} that $\phi_1(d_0\beta_i^{\star})=\EE\psi_0'(d_0\beta_i^{\star}+W)$ with $W\sim \mcn(0,\sigma^{-2})$.
The statement for the mean directly follows by combining the expectation bounds in (i) \cref{lem:lincombound} (with $\gamma_i = q_i$), and (ii) \eqref{eq:lincomb expectation} from \cref{lem:lincomb expectation random design} (with $a=1$).

For the second statement, recalling the definition of $\varsigma_p^2$ from \eqref{eq:vthetan}, it suffices to show
\begin{equation*}
\sumin q_i^2 \Var(\psi_0'(c^{\star}_i)) = \varsigma^2 + \OPX(1),~ \sumij \A_p(i,j) q_i q_j \EE\psi_0''(c^{\star}_i) \EE\psi_0''(c^{\star}_j) =\OPX(1).
\end{equation*}
The first conclusion follows by using \eqref{eq:lincomb moment bound} from \cref{lem:lincomb expectation random design} with $\Phi(x) := \Var\Big[\psi_0'(x+W) \Big]$ and $\gamma_j = q_j^2$, which gives
$$\sumin q_i^2 \Var(\psi_0'(c^{\star}_i)) = \sumin q_i^2 \Phi(d_0 \beta_i^\star) + \OPX\bigg(\sqrt{\sumin q_i^4}\bigg) = \varsigma^2 + \OPX(1).$$
Also, the second conclusion is immediate by viewing the LHS as a quadratic form and bounding it by $\|\A_p\|_2 = \BOPX(\sqrt{p/n})$ (where we use \cref{lem:L_p norm}). This completes the proof.
\end{proof}

Next, we prove \cref{prop:iid mean-field clt}, which directly follow from the CLT for independent random variables combined with previous bounds.
\begin{proof}[Proof of \cref{prop:iid mean-field clt}]
    (a) To begin, note that $\sumin q_i(u_i - \psi_i'(c_i)) = o_P(1)$ by part (a) of \cref{thm:CLT for u}, as the error bounds of \cref{thm:CLT for u} converge to $0$, as shown in the proof of \cref{cor:regression i.i.d design}. It thus suffices to verify the first conclusion of \eqref{eq:iid clt 1}. Recall that each $Q_i^{\text{prod}}$ has mean $u_i$ and variance $\psi_i''(s_i + c_i)$. Hence, the standard Lindberg-Feller CLT for independent random variables imply that
    $$\frac{\bq^\top (\bbeta - \bu)}{\sqrt{\sumin q_i^2 \psi_i''(s_i + c_i)}} \mid \by, \bx \to \mcn(0,1),$$
    under $\bbeta \sim \pQ$.
    
    The second last equation in \citep[Step 2 in the proof of Theorem 2.4,][]{lee2025rfim} implies that
    $$\sumin q_i^2 \psi_i''(s_i + c_i) = \sumin q_i^2 \psi_i''(c_i) + O(\|\bq\|_\infty + \sqrt{R_{3p}}) = \upsilon_p + o_P(1).$$
    Here, the simplification of the error term $\sqrt{R_{3p}}$ follows from the computations in \cref{cor:regression i.i.d design}. Also, recall from \cref{cor:regression i.i.d design} that $\upsilon_p = \upsilon_p(\by,\bx) \mid \bx, \bbst \xp \upsilon$, which imply $\upsilon_p \mid \bx \xp \upsilon.$
    Now, the desired conclusion follows by Slutsky's theorem.

    (b) This directly follows by noting that $\delta(\by, \bx) = \bq^\top \bu + o_P(1)$, which was proved in \eqref{eq:optmf} in \cref{prop:regret}.
\end{proof}

Next, we move on to the proof of \cref{cor:sparse}. As $\bbst$ is now a random vector (independent of $\bx$), we additionally let $\EEB$ denote the expectation over $\bbeta^\star$. 
\begin{proof}[Proof of \cref{cor:sparse}]
   Define $\Phi(x):=\Var\psi_0'(x+W)$. Recall the $\phi_1(\cdot)$ notation from \cref{def:allnot} and that $d_0=\sigma^{-2}$. By~\cref{cor:regression i.i.d design} part (b), we get
   $$\delta(\by,\bx)-\sumin q_i\phi_1(d_0\beta_i^{\star})\stackrel{d}{\to}N(0, \varsigma^2),$$
   provided the limit
   $$\varsigma^2=\lim_{p \to \infty}\sum_{i=1}^p q_i^2 \Phi(d_0\beta_i^*)$$
   introduced in \eqref{eq:regression i.i.d convergence assumption sigma} exists. Thus, to understand the limiting distribution of $\delta(\by,\bx)$, it suffices to study the mean and variance of the limiting Gaussian distribution above.
Focusing on the variance, note that
   $$\EE_{\bbst}\sumin q_i^2\Phi(d_0\beta_i^\star)=\EE_{r_p,\otb}[(1-r_p)\Phi(0)+r_p \Phi(d_0\otb)]=\Phi(0)+o(1).$$ 
 For the mean, we claim
   \begin{align}\label{eq:claim_mean}
   \sumin q_i\EE_{\beta_i^{\star}}\phi_1(d_0\beta_i^{\star}) = \begin{cases} 0 & \mbox{if}\, \EE\phi_1(d_0 \otb)=0 \\ \zeta \EE\phi_1(d_0\beta_i^{\star}) + o(1) & \mbox{if}\, \zeta\in\R,\,\, \,\EE\phi_1(d_0\beta_1^{\star})\ne 0 \\ \infty & \mbox{if}\, \zeta=\infty.\end{cases}
   \end{align}
   Finally, we claim that both the mean and the variance concentrates around the expectations:
   \begin{align}\label{eq:claim_conc}
   \mbox{Var} \left(\sumin q_i\phi_1(d_0\beta_i^\star)\right)=o(1), \quad  \mbox{Var} \left(\sumin q_i^2\Phi(d_0\beta_i^\star) \right)=o(1).
   \end{align}
   Combining the above two claims \eqref{eq:claim_mean} and \eqref{eq:claim_conc}, the desired conclusions of the Corollary follow. Focusing on verifying claim \eqref{eq:claim_mean},
  note that $\phi_1(0)=0$ and hence the conclusion follows from the following: 
    $$\EE_{\bbst} \sumin q_i \phi_1(d_0\beta_i^\star) = \EE \phi_1(d_0\otb)\frac{1}{1+p^u}\sumin q_i = p^{-u}q^{(p)}_{tot} \EE \phi_1(d_0\otb)(1+o(1)).$$
    Proceeding to verify \eqref{eq:claim_conc}, we only prove the first display as the second display can be proved similarly. Note that
       \begin{align*}
        &\;\;\;\Var\Big(\sumin q_i \phi_1(d_0\beta_i^\star)\Big)\\ &= \Var_{r_p}\EE_{\bbst}\Big[\sumin q_i \phi_1(d_0\beta_i^\star)\,\mid \, r_p\Big] \, + \, \EE_{r_p} \Var_{\bbst}\Big[\sumin q_i \phi_1(d_0\beta_i^\star)\, \mid \, r_p\Big] \\ &=\Var_{r_p}(r_p(\EE_{\otb} \phi_1(d_0\otb)-\phi_1(0)))+\EE_{r_p}\Big[r_p\EE_{\otb}(\phi_1(d_0\otb)-\phi_1(0))^2\Big] \\ & \quad + \EE_{r_p}\Big[r_p^2(\EE _{\otb}\phi_1(\otb)-\phi_1(0))^2\Big]\\ &=O(p^{-2u})+O(p^{-u}) = o(1).
    \end{align*}
  
    This completes the proof.
\end{proof}

\subsection{CLTs under true Bayesian model}\label{sec:pftrueBayes}

Throughout this section, the expectation $\EE$ computes the expectation under the randomness of $\bc \mid \bx, \bbeta^\star$, and $\EEB$ takes account the randomness of $\bbeta^\star$ while still conditioning on $\bx$. We write $\OPB(1)$ and $\BOPB(1)$ to capture the randomness of $\bbst$. Throughout section \ref{sec:pftrueBayes}, all implied constants depend only on 
$d_0$, (an upper bound of) $\mathfrak{E}_1, \mathfrak{E}_2, \sigma$, and the fourth moment of the probability measure $\mu^\star$. Recall the definition of $\upsilon$ from \cref{cor:regression Bayesian truth} part (a). 

We begin with a couple of preparatory lemmas that control moments of appropriate functions under the law of $\bbst$, which are respective analogs of Lemmas \ref{lem:random design properties} and \ref{lem:lincomb expectation random design} that were used in the previous subsection. Throughout this Section, we will use the symmetry of $\mu^\star$ around $0$ and the fact that $\EEB\phi_1(d_0\beta_1^\star)=0$ and $\EEB\phi_2(d_0\beta_1^\star)=\upsilon$ (recall $\phi_a(\cdot)$  from \cref{def:allnot}).

\begin{lemma}\label{lem:beta concentration}
Let $\mu^\star$ be a symmetric measure with finite fourth moment, and suppose $\beta_1^{\star},\ldots ,\beta_p^{\star}\overset{i.i.d}{\sim}\mu^{\star}$. Suppose that $\|\A_p\| =O(1)$. Recall the definition of $\mathfrak{F}_i$s from \eqref{eq:c_approx}. 
Then, for any deterministic vector $\boldsymbol{\gamma} \in \mathbb{R}^p$ and bounded function $\Phi:\R \to [-1,1]$, the following holds:
\begin{enumerate}[(a)]
    \item If $\EE_{\bbst} \Phi(\beta_1^{\star})=0$, $\EEB \left[\sumjn \gamma_j \Phi(\beta^\star_j)\right]^2 \le \|\bgamma\|^2$. In particular, 
    $\max_{i=1}^p \EEB \mathfrak{F}_i^2 \le \an$. 
    \item $\EEB \Big[\sumjn \gamma_j \mathfrak{F}_j {\Phi}(\beta^\star_j)\Big]^2 \lesssim \|\bgamma\|^2$. In particular, if $\EE_{\bbst} \Phi(\beta_1^{\star}) = 0$, the above bound improves to $\an \|\bgamma \|^2$.
    
    \item If $\EEB \Phi(\beta_1^{\star})=0$, then $ \EEB \Big[\sumjn \gamma_j \Phi(\beta_j^\star) \Big]^4 \lesssim \|\bgamma\|^4$. Consequently, $\max_{i=1}^p \EEB \mathfrak{F}_i^4 \lesssim  \alpha_p^2$ and $\EEB \Big[\sumjn |\gamma_j| \mathfrak{F}_j^2 \Big]^2 \lesssim p\alpha_p^2 \|\bgamma\|^2.$ 
    \item $\EEB \Big[\sumjn \gamma_j \mathfrak{F}_j \Phi(\beta_j^\star) \Big]^4 \lesssim (1+\alpha_p^2)\|\bgamma\|^4$.
\end{enumerate}
\end{lemma}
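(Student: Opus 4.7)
All four parts hinge on three structural facts: (i) the $\beta_i^\star$'s are i.i.d.\ and symmetric, so odd moments vanish and cross-index expectations factor over distinct indices; (ii) $\A_p$ is symmetric with zero diagonal, which makes $\mathfrak{F}_j = \sumin \A_p(j,i)\beta_i^\star$ a centered function of $\{\beta_i^\star : i \ne j\}$ and hence independent of $\beta_j^\star$; and (iii) the row-sum bound $\sumjn \A_p(i,j)^2 \le \an$ together with $\|\A_p\|=O(1)$ collapses the various quadratic and quartic forms that appear. I would dispatch the ``diagonal'' parts (a) and (c) first, since they are standard moment computations for linear combinations of i.i.d.\ mean-zero variables, then turn to the coupled parts (b) and (d).

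For (a), independence and $|\Phi|\le 1$ give $\EEB[\sumjn \gamma_j\Phi(\beta_j^\star)]^2 = \sumjn\gamma_j^2\Var\Phi(\beta_1^\star)\le\|\bgamma\|^2$; the $\mathfrak{F}_i$ bound follows by specializing $\gamma_j = \A_p(i,j)$ and $\Phi(x)=x$ (centered by symmetry of $\mu^\star$, with finite second moment). For (c), expand the fourth power: when $\EEB\Phi=0$, independence forces every distinct index in a nonvanishing 4-tuple to appear at least twice, so only the ``all four equal'' and ``two disjoint pairs'' patterns contribute, each bounded by $\|\bgamma\|^4$ up to moments of $\Phi$. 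The ``in particular'' bound on $\EEB\mathfrak{F}_i^4$ reuses this with $\Phi(x)=x$, finite fourth moment of $\mu^\star$, and $\gamma_j=\A_p(i,j)$. The composite estimate on $\EEB[\sumjn|\gamma_j|\mathfrak{F}_j^2]^2$ then follows by Cauchy--Schwarz, $[\sumjn|\gamma_j|\mathfrak{F}_j^2]^2\le\|\bgamma\|^2\sumjn\mathfrak{F}_j^4$, and summing $\EEB\mathfrak{F}_j^4\lesssim\an^2$ over $j$.

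For (b), I would rewrite $\sumjn\gamma_j\mathfrak{F}_j\Phi(\beta_j^\star)=\sumin\beta_i^\star V_i$ with $V_i:=\sum_{j\ne i}\gamma_j\A_p(j,i)\Phi(\beta_j^\star)$, using the zero diagonal of $\A_p$ to make $V_i$ independent of $\beta_i^\star$. To analyse the cross term $i_1\ne i_2$, I would peel off the $\beta_{i_2}^\star$-dependence of $V_{i_1}$ (and symmetrically), writing $V_{i_1}=\gamma_{i_2}\A_p(i_2,i_1)\Phi(\beta_{i_2}^\star)+V_{i_1}^{(i_2)}$ where $V_{i_1}^{(i_2)}$ depends only on $\{\beta_j^\star : j\notin\{i_1,i_2\}\}$. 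Using $\EEB\beta_1^\star=0$, only one of the four cross products survives in $\EEB[\beta_{i_1}^\star\beta_{i_2}^\star V_{i_1}V_{i_2}]$, giving a term proportional to $\sum_{i_1\ne i_2}\gamma_{i_1}\gamma_{i_2}\A_p(i_1,i_2)^2$, which symmetrizes via $|\gamma_{i_1}\gamma_{i_2}|\le(\gamma_{i_1}^2+\gamma_{i_2}^2)/2$ to $\lesssim\an\|\bgamma\|^2$. The diagonal $\EEB(\beta_i^\star)^2V_i^2$ splits into an aligned-indices term $\lesssim\an\|\bgamma\|^2$ and (only when $m:=\EEB\Phi\ne0$) a mean-squared term $\lesssim m^2\|\A_p\bgamma\|^2\lesssim\|\bgamma\|^2$. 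This cleanly explains why the centered case $\EEB\Phi=0$ improves the overall rate to $\an\|\bgamma\|^2$.

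Part (d) is where I anticipate the main obstacle, since no centering is imposed on $\Phi$. My strategy is to decompose $\Phi=m+\tilde\Phi$ with $m=\EEB\Phi(\beta_1^\star)$, splitting $S=S_1+S_2$ with $S_1 = m(\A_p\bgamma)^\top\bbst$ purely linear in $\bbst$ and $S_2=\sum_{j\ne k}\gamma_j\A_p(j,k)\tilde\Phi(\beta_j^\star)\beta_k^\star$ centered in both slots. By convexity $\EEB S^4\le 8(\EEB S_1^4+\EEB S_2^4)$; the linear part yields $\EEB S_1^4\lesssim\|\A_p\bgamma\|^4\lesssim\|\bgamma\|^4$ by Marcinkiewicz--Zygmund on i.i.d.\ centered entries with finite fourth moment. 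For $S_2$, I would expand $S_2^4$ as a sum over ordered 4-tuples of edges $(j_\ell,k_\ell)$ with $j_\ell\ne k_\ell$, classify each tuple by the partition the 8 slots induce on $\{1,\ldots,p\}$, and use centering of $\tilde\Phi$ and $\beta^\star$ plus independence to kill every pattern in which some index appears in exactly one slot. The surviving patterns split broadly into ``matched'' skeletons contributing $\lesssim\|\bgamma\|^4$ and ``shared-endpoint'' skeletons, each of which picks up an extra factor $\sumjn\A_p(i,j)^2\le\an$ from the coincidence, contributing $\lesssim\an^2\|\bgamma\|^4$; summing gives the target bound $(1+\an^2)\|\bgamma\|^4$. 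The real bottleneck is the combinatorial bookkeeping for $S_2^4$: because $\mu^\star$ is not Gaussian, hypercontractivity is unavailable, so the enumeration of index patterns (bounded by quantities like $\sum_{a\ne b}\gamma_a^2\A_p(a,b)^2\le\an\|\bgamma\|^2$ or $\tr((\diag(\bgamma)\A_p)^4)\lesssim\|\A_p\|^2\an\|\bgamma\|^2$) must be carried out by hand, with the finite fourth moment of $\mu^\star$ bounding each surviving contribution.
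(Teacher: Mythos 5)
Your parts (a)--(c) coincide with the paper's argument, and your version of (b) is the same computation organized slightly differently: the paper expands the four-index sum directly and classifies by which pairs of indices coincide, while you write the statistic as $\sum_i \beta_i^\star V_i$ with $V_i$ independent of $\beta_i^\star$ and peel off the shared dependence; the surviving cross term $\gamma_{i_1}\gamma_{i_2}\A_p(i_1,i_2)^2\EEB[\beta\Phi(\beta)]^2$ and the diagonal split $\sum_i\EEB V_i^2 \lesssim \alpha_p\|\bgamma\|^2 + m^2\|\A_p\bgamma\|^2$ are exactly the two pieces the paper gets, and your observation that the $m^2$ contribution dies when $\EEB\Phi=0$ is the same improvement. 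Part (d) is where you genuinely diverge: the paper applies the generalized Efron--Stein inequality \citep[Theorem~2]{Boucheron2005}, controlling $\big(\sum_i(S_p-S_p^{(i)})^2\big)^2$ via the pointwise perturbation bound $|S_p-S_p^{(i)}|\le\big|(\tilde\beta_i^\star-\beta_i^\star)\sum_j\gamma_j\A_p(j,i)\Phi(\beta_j^\star)\big|+2|\gamma_i\mathfrak{F}_i|$, which immediately reduces the problem to second-moment estimates and avoids any pattern enumeration; you instead split $\Phi=m+\tilde\Phi$ so that $S=S_1+S_2$ with $S_1=m(\A_p\bgamma)^\top\bbst$ giving the ``$1$'' in $(1+\alpha_p^2)$ and the doubly-centered $S_2$ giving the ``$\alpha_p^2$'', then expand $\EEB S_2^4$ combinatorially. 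Your decomposition has the pedagogical virtue of explaining where each piece of the bound comes from, and the contributing patterns you exhibit (matched pairs $\lesssim\alpha_p^2\|\bgamma\|^4$; the cycle $\tr\big((\diag(\bgamma)\A_p)^4\big)\lesssim\alpha_p\|\bgamma\|^4$) are consistent with the target. The trade-off is that you have not actually closed the enumeration --- with eight slots and the constraint that each index repeats, there are partitions of type $3{+}3{+}2$, $4{+}2{+}2$, $5{+}3$, etc., each requiring its own Cauchy--Schwarz to land below $(1+\alpha_p^2)\|\bgamma\|^4$ --- whereas Efron--Stein sidesteps this entirely and is both shorter and less error-prone. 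Both routes are valid, but to make yours a proof you would need to carry out the full case analysis that you currently flag as the bottleneck.
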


\begin{lemma}\label{lem:lincomb expectation Bayes}
Let $\mu^\star$ be a symmetric measure with finite fourth moment, and suppose $\beta_1^{\star},\ldots ,\beta_p^{\star}\overset{i.i.d}{\sim}\mu^{\star}$. Suppose $\|\A_p\|=O(1)$ and Assumptions \ref{assn:mf},\ref{assn:homogeneous diagonals} hold.
Then, for any deterministic vector $\boldsymbol{\gamma} = (\gamma_1,\ldots ,\gamma_p) \in \mathbb{R}^p$ (potentially depending on $\bx$) and $\mathcal{C}^2_b$  function $\Phi:\R \to [-1,1]$ with $\EE_{\bbst} \Phi(d_0\beta_1^\star) = 0$, the following holds:
    \begin{align}\label{eq:lincomb moment bound general}
        \EEB \Big[\sumin \gamma_j \Phi(d_0\beta_j^\star - \mathfrak{F}_j) \Big]^2\lesssim \|\bgamma\|^2.
    \end{align}
    In particular, we have
    \begin{align}\label{eq:lincomb expectation Bayes}
         \EEB \Big[\sumjn\gamma_j \EE [\psi_0'(c^{\star}_j)] \Big]^2 \lesssim \|\bgamma\|^2, \quad \EEB \Big[\sumjn \gamma_j \big(\EE [\psi_0''(c^{\star}_j)]-\upsilon \big)\Big]^2 \lesssim \|\bgamma\|^2,
    \end{align}
    and
     \begin{align}\label{eq:remeq}
        \EEB \EE\Big[\Big[\sumjn \gamma_j \psi_j'(c_j) \Big]^2\Big] \lesssim \|\bgamma\|^2, \quad 
        \EEB \EE\Big[\Big[\sumjn \gamma_j \big(\psi_j''(c_j) - \upsilon\big) \Big]^2\Big]  \lesssim \|\bgamma\|^2.
    \end{align}
\end{lemma}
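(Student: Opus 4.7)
The plan is to establish \eqref{eq:lincomb moment bound general} via a second-order Taylor expansion, then deduce \eqref{eq:lincomb expectation Bayes} and \eqref{eq:remeq} as corollaries. Writing
$$\Phi(d_0\beta_j^\star - \mathfrak{F}_j) = \Phi(d_0\beta_j^\star) - \mathfrak{F}_j\,\Phi'(d_0\beta_j^\star) + R_j,$$
with Lagrange remainder $|R_j| \le \tfrac{1}{2}\|\Phi''\|_\infty\,\mathfrak{F}_j^2 \lesssim \mathfrak{F}_j^2$ (since $\Phi\in\mathcal{C}^2_b$), I would split $\sum_j \gamma_j \Phi(d_0\beta_j^\star - \mathfrak{F}_j)$ into three pieces and control their $L^2(\EEB)$ norms separately. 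The leading term uses the mean-zero hypothesis $\EEB\Phi(d_0\beta_1^\star)=0$ and independence of the $\beta_j^\star$'s: $\EEB[\sum_j \gamma_j \Phi(d_0\beta_j^\star)]^2 \le \|\Phi\|_\infty^2\|\bgamma\|^2 \lesssim \|\bgamma\|^2$. The linear-in-$\mathfrak{F}_j$ term is bounded by applying \cref{lem:beta concentration}(b) to the bounded function $x\mapsto \Phi'(d_0 x)$, giving $\EEB[\sum_j \gamma_j \mathfrak{F}_j\,\Phi'(d_0\beta_j^\star)]^2 \lesssim \|\bgamma\|^2$. The remainder is handled via Cauchy--Schwarz and \cref{lem:beta concentration}(c):
$$\EEB\Big[\sum_j|\gamma_j|\mathfrak{F}_j^2\Big]^2 \le \|\bgamma\|^2\,\EEB\sum_j\mathfrak{F}_j^4 \lesssim p\alpha_p^2\|\bgamma\|^2 = o(\|\bgamma\|^2),$$
where the last equality uses $\alpha_p = o(p^{-1/2})$ from the SMF \cref{assn:mf}. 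Combining via $(a+b+c)^2\lesssim a^2+b^2+c^2$ yields \eqref{eq:lincomb moment bound general}.

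To obtain \eqref{eq:lincomb expectation Bayes}, I would apply \eqref{eq:lincomb moment bound general} with $\Phi=\phi_1$ and (a constant rescaling of) $\Phi=\phi_2-\upsilon$. Both functions lie in $\mathcal{C}^2_b$ up to a harmless multiplicative constant, because compactness of $\mathrm{supp}(\mu_0)\subseteq[-1,1]$ forces every derivative of $\psi_0$ to be uniformly bounded, so the same holds for $\phi_a(m)=\EE\psi_0^{(a)}(m+W)$ upon differentiating under the integral. The required mean-zero property $\EEB\phi_1(d_0\beta_1^\star)=0$ is where the symmetry hypotheses of \cref{cor:regression Bayesian truth} enter: symmetry of $\mu$ makes $\mu_0$ symmetric, hence $\psi_0$ even and $\psi_0'$ odd; convolving with the symmetric $\mcn(0,d_0)$ preserves parity, so $\phi_1$ is odd; and symmetry of $\mu^\star$ then annihilates the expectation. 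For $\phi_2-\upsilon$, evenness of $\psi_0''$ makes $\phi_2$ even, and subtracting $\upsilon=\EEB\phi_2(d_0\beta_1^\star)$ is the minimal correction needed to enforce mean zero. Using the identities $\EE\psi_0'(c_j^\star)=\phi_1(d_0\beta_j^\star-\mathfrak{F}_j)$ and $\EE\psi_0''(c_j^\star)=\phi_2(d_0\beta_j^\star-\mathfrak{F}_j)$ from \eqref{eq:tocite} then immediately yields \eqref{eq:lincomb expectation Bayes}.

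Finally, for \eqref{eq:remeq}, I would invoke \cref{lem:lincombound}(a),(b) (the latter with $U=\upsilon$) to write, for the first case,
$$\EE\Big[\sum_j \gamma_j \psi_j'(c_j)\Big]^2 = \Big[\sum_j \gamma_j \EE\psi_0'(c_j^\star)\Big]^2 + O\big(\|\bgamma\|^2(\mathfrak{E}_1+\|\Sigman\|)\big),$$
and analogously for the $\psi_j''-\upsilon$ case. Taking $\EEB$, bounding the leading squared term by \eqref{eq:lincomb expectation Bayes}, and using $\mathfrak{E}_1=O(1)$ from \cref{assn:homogeneous diagonals} together with $\|\Sigman\|\le\|\A_p\|+\max_i d_i = O(1)$ (by the hypothesis $\|\A_p\|=O(1)$ and \cref{assn:homogeneous diagonals}) completes both bounds in \eqref{eq:remeq}. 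The main obstacle is controlling the Taylor remainder, which is exactly where the SMF condition $p\alpha_p^2=o(1)$ is indispensable: without it, the quadratic-in-$\mathfrak{F}_j$ contribution could be of order $\sqrt{p\alpha_p^2}\,\|\bgamma\|$, large enough to spoil the target bound.
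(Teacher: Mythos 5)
Your proposal is correct and follows essentially the same route as the paper's proof: the same two-term Taylor expansion, the same reliance on \cref{lem:beta concentration} parts (a), (b), (c) (with the remainder controlled via Cauchy--Schwarz and $p\alpha_p^2 = o(1)$), the same substitutions $\Phi = \phi_1$ and $\Phi = \phi_2 - \upsilon$, and the same appeal to \cref{lem:lincombound} for \eqref{eq:remeq}. The only difference is that you spell out the parity/mean-zero verification for $\phi_1$ and $\phi_2 - \upsilon$ and the $\mathcal{C}^2_b$ membership in more detail than the paper, which states these as given.
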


Now, we are ready to prove Theorem \ref{cor:regression Bayesian truth}.
\begin{proof}[Proof of \cref{cor:regression Bayesian truth}]
\emph{(a).} Positivity of $\upsilon>0$ follows from positivity of $\psi''(\cdot)$. For verifying parts (a)(i) and (a)(ii), following the proof of \cref{cor:regression i.i.d design}, we need to show that the RHS in Lemmas \ref{thm:clt} and \ref{thm:CLT for u} respectively, are $o_P(1)$. Since the analogue of \eqref{eq:basicbounds1} %
is part of the hypothesis (see Assumptions \ref{assn:ht} and \ref{assn:mf}), it suffices to show \eqref{eq:suffice1} by replacing $\OPX, \BOPX$-bounds to $\OPB, \BOPB$, where the randomness arises from the true coefficients $\bbeta^\star$. Noting that part (ii) of the statement requires a stronger eigenpair assumption \eqref{eq:stronger eigenpair}, we verify the first line of \eqref{eq:suffice1} under the assumptions in part (i) and the second line additionally assuming \eqref{eq:stronger eigenpair}.
We will repeatedly use the bounds
\begin{align}\label{eq:citelater}
\mathfrak{E}_1 = \sumin (d_i - d_0)^2 \lesssim 1, \quad \|\A_p\| < 1, \quad \|\Sigman\| \le d_0 + \sqrt{\mathfrak{E}_1} + 1\lesssim 1,
\end{align}
which follow from Assumptions \ref{assn:ht} and \ref{assn:homogeneous diagonals}.

    \begin{itemize}[itemsep = 1em]
        \item We first show 
        \begin{align}\label{eq:newcall5}
        \EEB \EE\left[(\upsilon_p - \upsilon)^2\right] \lesssim  \sumin q_i^4 = o(1).
        \end{align}
        This follows by recalling that $\upsilon_p = \sumin q_i^2 \psi_i''(c_i)$ (see \cref{def:upsilon_p}) and applying Equation \eqref{eq:remeq} of  \cref{lem:lincomb expectation Bayes} with $\gamma_j = q_j^2$.

        \item To control $R_{1p}$, we upper bound its expectation. Define $$S_{1i} := \sumjn \A_p(i,j) q_j(\psi_j''(c_j) -\upsilon_p), \text{ and }\tilde{S}_{1i} := \sumjn \A_p(i,j) q_j(\psi_j''(c_j) -\upsilon),$$ so that $R_{1p} = \sumin S_{1i}^2$. %
        By \eqref{eq:remeq} of \cref{lem:lincomb expectation Bayes} with $\gamma_j = \A_p(i,j) q_j$, we have $$\EEB \EE [\tilde{S}_{1i}^2] \lesssim \sumjn \A_p(i,j)^2 q_j^2.$$
        Hence, using the fact that $S_{1i}-\tilde{S}_{1i}=(\upsilon-\upsilon_p)\sumjn \A_p(i,j)q_j$ and the above display, we have
        \begin{align*}
            \EEB\EE [S_{1i}^2 ] &\le 2 \EEB\EE[\tilde{S}_{1i}^2] + 2\Big[\sumjn \A_p(i,j) q_j \Big]^2 \EEB\EE [(\upsilon -\upsilon_p)^2]
            \\
            &\lesssim \sumjn \A_p(i,j)^2 q_j^2 + [(\A_p \bq)_i]^2\EE [(\upsilon -\upsilon_p)^2].
        \end{align*}
        Now, by summing the above over all $i$, we get
        \begin{align*}
            \EEB\EE [R_{1p}] &= \sumin \EEB\EE [S_{1i}^2] 
            \lesssim  \Big( \sumij \A_p(i,j)^2 q_j^2 \Big) + \|\A_p\bq\|^2 \EEB \EE[(\upsilon - \upsilon_p)^2].
        \end{align*}
        By using the bound $\|\A_p \bq\|^2=\|\boldsymbol{\epsilon}+\lambda_p \bq\|^2\le 2\|\boldsymbol{\epsilon}\|^2+2\lambda_p^2$ and \eqref{eq:newcall5}, we observe that 
    \begin{align*}
        \lVert \A_p \bq \rVert^2 \EEB\EE[(\upsilon - \upsilon_p)^2] & \lesssim\lVert\boldsymbol{\epsilon}\rVert^2\sumin q_i^4 + \sumin (\lambda_p q_i)^2 q_i^2 \nonumber \\ &  \lesssim \lVert \boldsymbol{\epsilon}\rVert^2 + \sumin \epsilon_i^2 q_i^2 + \sumin \left(\sumjn \A_p(i,j)q_j\right)^2 q_i^2 \nonumber \\ & \lesssim \lVert \boldsymbol{\epsilon}\rVert^2+\an,
    \end{align*}
    where the last step uses Cauchy-Schwarz inequality to get
    $$\Big(\sum_{j=1}^p\A_p(i,j)q_j\Big)^2\le \sum_{j=1}^p \A_p(i,j)^2 \sum_{j=1}^p q_j^2\le \alpha_p.$$
        As $\sumij \A_p(i,j)^2q_j^2\le \an$, the above bounds yield 
        \begin{align}\label{eq:newcall101}
            \EEB\EE[R_{1p}] \lesssim \an+\lVert \boldsymbol{\epsilon}\rVert^2.
        \end{align}

        \item Next, we show that $\EEB\EE [R_{2p}] \lesssim p \an$. To this effect, we recall that $$R_{2p} = \sumin t_i^2,\quad \text{ for }\quad t_i=\sumjn \A_p(i,j) \psi_j'(c_j).$$ By \eqref{eq:remeq} of \cref{lem:lincomb expectation Bayes} with $\gamma_j = \A_p(i,j)$, we have $$\EEB \EE [t_i^2] \lesssim \sum_{j=1}^p\A_p(i,j)^2\le \an.$$
        The proof is complete by summing over $i$ to get
        \begin{align}\label{eq:newcall102}
        \EEB\EE [R_{2p}] = \sumin \EEB\EE [t_i^2] \lesssim \sumij \A_p(i,j)^2 \le p\an.
        \end{align}
        
        \item Now, we combine the high probability bounds
        $$R_{1p} = O_p(\an + \|\boldsymbol{\epsilon}\|^2), \quad R_{2p} = O_p(p \an)$$ to check the related conclusions in \eqref{eq:suffice1}. Under the eigenpair assumption $\|\boldsymbol{\epsilon}\| = o(1)$ in \eqref{eq:approximate eigenpair}, the bounds for $R_{1p}, \alpha_p R_{2p}$ hold. Next, under the stronger eigenpair assumption in \eqref{eq:stronger eigenpair}, the bounds $R_{1p}R_{2p}, R_{2p}\|\boldsymbol{\epsilon}\|^2 = o_{P,\bbst}(1)$ follow.
        
        \item To bound $R_{3p}=\sumin t_i^4=\OPB(1)$, we apply part (a) of \cref{lem:R_{1p} to R_{4p} bound} and \eqref{eq:citelater} to get 
        \begin{align}\label{eq:combcall}
        \EE R_{3p}\lesssim p\alpha_p^2+\sumin \Big[\sumjn \A_p(i,j) \EE [\psi_0'(c^{\star}_j)]\Big]^4.
        \end{align}
        Proceeding to control the second term above, 
        by doing a two term Taylor expansion of $\EE [\psi_0'(c^{\star}_j)] = \phi_1(d_0 \beta_j^\star - \mathfrak{F}_j)$ around $d_0 \beta_j^\star$ (see \eqref{eq:tocite} and \eqref{eq:twotermder}), it suffices to bound uniformly over $1\le i\le p$: 
        {\small
        $$\Big[\sumjn \A_p(i,j) \phi_1(d_0 \beta_j^\star)\Big]^4,~ \Big[\sumjn \A_p(i,j) \mathfrak{F}_j \phi'_1(d_0 \beta_j^\star)\Big]^4, ~ \Big[\sumjn |\A_p(i,j)| \mathfrak{F}_j^2 \Big]^4.$$}
        The first and second terms are $\BOPB(\alpha_p^2)$ by applying part (c) and (d) of \cref{lem:beta concentration} with $\gamma_j := \A_p(i,j)$, respectively. 
        For the third term, we use the Cauchy-Schwartz inequality to get 
        $$\Big[\sumjn |\A_p(i,j)| \mathfrak{F}_j^2 \Big]^4 \le \Big(\sumjn \A_p(i,j)^2 \sumjn \mathfrak{F}_j^4 \Big)^2 \le \alpha_p^2 \left(\sumjn \mathfrak{F}_j^4\right)^2.$$
        As $\EEB[\sumjn \mathfrak{F}_j^4] \lesssim p \alpha_p^2$ by \cref{lem:beta concentration}, part (c), invoking \eqref{eq:combcall}, we get: 
        \begin{align}\label{eq:newcall103}
           \EE [R_{3p}] = \BOPB(p\alpha_p^2 + p^3 \alpha_p^6)=\OPB(1). 
        \end{align}        
        
        \item To show $R_{4p}=\OPB(1)$ via a first moment bound, we apply \eqref{eq:citelater}, $\lVert \bbeta^\star\rVert^2=\BOPB(p)$, and part (b) of \cref{lem:R_{1p} to R_{4p} bound} with $U=\upsilon$, %
        to get
        \begin{small}
        \begin{align*}
          &\;\;\;\;\big[\EE  R_{4p}\big]^2 \\ &\lesssim \Big[\sumij \A_p(i,j) q_i (\EE \psi_0''(c_i^\star) - \upsilon) \EE \psi_0'(c_j^\star)\Big]^2 + \BOPB(p\alpha_p^2) +\Bigg(\sumin q_i^2 \Big[\sumjn \A_p(i,j) \EE \psi_0'(c^{\star}_j) \Big]^2 \\ &+ \sumjn \Big[\sumin \A_p(i,j)q_i(\EE \psi_0''(c^{\star}_i) -\upsilon)\Big]^2 \Bigg)+ \OPB(1)\Bigg(\Big[\sumij \A_p(i,j) q_i \EE \psi_0'(c^{\star}_j) \Big]^2+1\Bigg)\\
          &=\bigg[\sumin \tilde{\Gamma}_i(\EE\psi_0''(c_i^\star)-\upsilon)\bigg]^2+\lVert \tilde{\boldsymbol{\Gamma}}\rVert^2+\lVert \boldsymbol{\Gamma}\rVert^2+\OPB(1)\bigg(1+\EEB\bigg[\sumin \tilde{\Gamma}_i\bigg]^2\bigg),
        \end{align*}
        \end{small}
        where 
        \begin{align}\label{eq:gamma tilde}
        \tilde{\Gamma}_i := \sumjn \A_p(i,j) q_i \EE[\psi_0'(c^{\star}_j)] \quad \mbox{and}\quad \Gamma_j := \sumin \A_p(i,j) q_i (\EE [\psi_0''(c^\star_i) ] - \upsilon).
        \end{align}
        Here, the first inequality used previous bounds for $\upsilon_p - \upsilon, R_{1p}, R_{3p}$ to simplify the RHS of \cref{lem:R_{1p} to R_{4p} bound} (see \eqref{eq:newcall5}, \eqref{eq:newcall101},  \eqref{eq:newcall103}, part (a) of \cref{lem:beta concentration}).
        It then suffices to show the following:
        \begin{small}
        \begin{align}\label{eq:R_4 suffice}
        \EEB\left(\Big[\sumin \tilde{\Gamma}_i(\EE [\psi_0''(c^\star_i)] - \upsilon)\Big]^2+\|\boldsymbol{\Gamma}\|^2+\|\tilde{\boldsymbol{\Gamma}}\|^2\right) = o(1),\,\,\,
        \EEB \Big[\sumin \tilde{\Gamma}_i\Big]^2 = O(1).
        \end{align}
        \end{small}
       The last conclusion of \eqref{eq:R_4 suffice} follows on  using the definition of $\tilde{\boldsymbol{\Gamma}}$ which yields
        \begin{align*}
           \sum_{i=1}^p\tilde{\Gamma}_i=\sum_{j=1}^p \EE[\psi_0'(c_j^\star)] \sum_{i=1}^p \A_p(i,j)q_i=\sum_{j=1}^p\EE[\psi_0'(c_j^\star)] (\A_p\bq)_j,
        \end{align*}
       and invoking the first bound in \eqref{eq:lincomb expectation Bayes} with $\gamma_j = (\A_p \bq)_j$, along with the bound  $\|\A_p \bq\| \le 1$.

        Now, we move onto the first conclusion of \eqref{eq:R_4 suffice}, and argue the bounds $\EEB \|\boldsymbol{\Gamma}\|^2, \EEB \|\tilde{\boldsymbol{\Gamma}}\|^2 = O(\an)$. But this follows on using the first and second bounds in \eqref{eq:lincomb expectation Bayes} respectively, to get
        \begin{align}
        \begin{aligned}
        \EEB \tilde{\Gamma}_i^2=&\EEB \Big[\sum_{j=1}^p\A_p(i,j)q_i \EE[\psi_0'(c_j^\star)]\Big]^2 \lesssim \sumjn \A_p(i,j)^2 q_i^2,\label{eq:Gammtil}\\
         \EEB {\Gamma}_j^2 = &\EEB \Big[\sumin\A_p(i,j)q_i(\EE[\psi_0''(c_i^\star)]-\upsilon)\Big]^2\lesssim \sumin \A_p(i,j)^2 q_i^2.
         \end{aligned}
        \end{align}
        and summing over $i,j$.
        Finally, to bound the first term in \eqref{eq:R_4 suffice}, we use the Taylor expansion \eqref{eq:taylor phi_2} to write
        \begin{align}\label{eq:beta quadratic form}
            &\;\;\;\;\;\EEB\bigg[\sumin \tilde{\Gamma}_i(\EE [\psi_0''(c^\star_i)] - \upsilon)\bigg]^2\notag  \\ & = \EEB \Big[\sumin \tilde{\Gamma}_i \big(\phi_2(d_0 \beta_i^\star) - \upsilon- \mathfrak{F}_i \phi_2'(\eta_i) \big)\Big]^2 \notag \\
            &\le 2 \EEB \Big[\sumin \tilde{\Gamma}_i \big(\phi_2(d_0 \beta_i^\star) - \upsilon\big)\Big]^2 + 2 \EEB \Big[\sumin \tilde{\Gamma}_i \mathfrak{F}_i \phi_2'(\eta_i) \Big]^2 \notag \\
            &\lesssim \EEB \Big[\sumin \tilde{\Gamma}_i \big(\phi_2(d_0 \beta_i^\star) - \upsilon\big)\Big]^2 + \EEB \|\tilde{\boldsymbol{\Gamma}}\|^2 \EEB \sumin \mathfrak{F}_i^2,
        \end{align}
        where the last step uses Cauchy-Schwartz inequality.
In \eqref{eq:beta quadratic form} above, the second term is $O(p\alpha_p^2)$ by combining individual expectation bounds  $\EEB\|\tilde{\boldsymbol{\Gamma}}\|^2\lesssim \alpha_p$ (obtained above in \eqref{eq:Gammtil}), and $\max_{1\le i\le p}\EEB[\mathfrak{F}_i^2]\lesssim \alpha_p$ (see part (a) of \cref{lem:beta concentration}).

        \noindent To bound the first term in \eqref{eq:beta quadratic form}, define $$\bar{\Gamma}_j := \sumin \A_p(i,j) q_i(\phi_2(d_0 \beta_i^\star) - \upsilon)$$ where the second equality uses \eqref{eq:tocite}.  Note that 
        \begin{align*}
            \sumin \tilde{\Gamma}_i \big(\phi_2(d_0 \beta_i^\star) - \upsilon\big) =& \sumij \A_p(i,j)q_i\EE\psi_0'(c_j^{\star})(\phi_2(d_0\beta_i^{\star})-\upsilon))\\=&\sumjn \bar{\Gamma}_j \EE \psi_0'(c_j^\star)
            =\sumjn \bar{\Gamma}_j \phi_1(d_0 \beta_j^\star - \mathfrak{F}_j),
            \end{align*}
        where the last equality uses \eqref{eq:tocite}. 
        Further, using arguments similar to the derivation of \eqref{eq:Gammtil}, we get  $\EE\lVert \bar{\boldsymbol{\Gamma}}\rVert^2\lesssim \sumin \A_p(i,j)^2 q_i^2\le \an$. Finally, 
        by using the Taylor expansion \eqref{eq:taylor phi_1}, repeating the arguments in the derivation of \eqref{eq:beta quadratic form} we get
        \begin{align*}
            &\;\;\;\;\;\EEB\Big[\sumjn  \bar{\Gamma}_j \phi_1(d_0 \beta_j^\star-\mathfrak{F}_j)\Big]^2 \\ &= \EEB\Big[\sumjn \bar{\Gamma}_j (\phi_1(d_0 \beta_j^\star)-\mathfrak{F}_j\phi_1'(\xi_j))\Big]^2 \\
            & \lesssim \EEB\Big[\sumij \A_p(i,j) q_i \big(\phi_2(d_0 \beta_i^\star) - \upsilon\big) \phi_1(d_0 \beta_j^\star) \Big]^2 + \EEB \lVert \bar{\boldsymbol{\Gamma}}\rVert^2\EEB\sumjn \mathfrak{F}_j^2 \\ &=\EEB\Big[\sumij \A_p(i,j) q_i  \big(\phi_2(d_0 \beta_i^\star) - \upsilon\big) \phi_1(d_0 \beta_j^\star) \Big]^2 + O(p\alpha_p^2),
        \end{align*}
        where the last bound uses $\EEB\|\bar{\boldsymbol{\Gamma}}\|^2\lesssim \alpha_p$ (obtained above), and $\max_{1\le i\le p}\EEB[\mathfrak{F}_i^2]\lesssim \alpha_p$ (see part (a) of \cref{lem:beta concentration}).
       Since $p\alpha_p^2\to 0$ by \cref{assn:mf}, it only remains to bound the first term in the above display. By using $\EE \phi_1(d_0 \beta_j^\star) = \EE \phi_2(d_0 \beta_j^\star) - \upsilon = 0$, a direct second moment computation along with the symmetry assumption for $\beta_i^\star$ yields 
        $$\EEB \Big[\sumij \A_p(i,j) q_i \phi_1(d_0 \beta_j^\star) \big(\phi_2(d_0 \beta_i^\star) - \upsilon\big) \Big]^2 \lesssim \sumij \A_p(i,j)^2 q_i^2 \le \an.$$
        This controls the first term in \eqref{eq:beta quadratic form}, and hence establishes \eqref{eq:R_4 suffice}.

        \item The bound for ${\bm \epsilon}^\top \Psi'({\bf c})$ follows by applying \eqref{eq:remeq} of \cref{lem:lincomb expectation Bayes} with $\gamma_i = \epsilon_i$, which gives
        $$\EEB \EE\left[\left({\boldsymbol \epsilon}^\top \Psi'({\bf c})\right)^2\right] \lesssim \|{\bm \epsilon}\|^2 = o(1).$$
        \end{itemize}

    \noindent\emph{(b)} 
    We first claim that it suffices to show that 
    \begin{align}\label{eq:sufpf}
    \left((1-\lambda_p\upsilon_p)\delta(\by,\bx) - \sumin q_i \big(\phi_1(d_0\beta_i^\star) - \lambda \upsilon \beta_i^\star \big)\right) \mid \bx, \bbst \xd \mcn(0,\tilde{\varsigma}^2),
    \end{align}
    where $\tilde{\varsigma}^2:=(1-\lambda \upsilon)^2\varsigma^2$, and $\phi_1(\cdot)$ is defined as in \eqref{eq:tocite}. 
    To see this, recall $\lambda_p\to \lambda \in (-1, 1)$ (by assumption), $\upsilon_p \mid \bx, \bbst \xp \upsilon \in (0,1)$ (using \eqref{eq:newcall5}), and that $1-\lambda \upsilon > 1-\upsilon > 0$. By Slutsky's theorem, \eqref{eq:sufpf} implies
    $$\delta(\by,\bx) - \frac{\sumin q_i \big(\phi_1(d_0\beta_i^\star) - \lambda \upsilon \beta_i^\star \big)}{1-\lambda_p \upsilon_p} \mid \bx, \bbst \xd \mcn(0, \varsigma^2).$$
    Now, as the symmetry of $\beta_i^\star \sim \mu^\star$ and $\mu$ implies $\EEB \phi_1(d_0\beta_i^\star) = \EEB \beta_i^\star = 0$, we have $\sumin q_i \big(\phi_1(d_0\beta_i^\star) - \lambda \upsilon \beta_i^\star \big) = \BOPB(1)$ by a second moment bound. Hence, we have
    $$\frac{\sumin q_i \big(\phi_1(d_0\beta_i^\star) - \lambda \upsilon \beta_i^\star \big)}{1-\lambda_p \upsilon_p} - \frac{\sumin q_i \big(\phi_1(d_0\beta_i^\star) - \lambda \upsilon \beta_i^\star \big)}{1-\lambda \upsilon} \mid \bx, \bbst \xp 0,$$
    and the desired claim in \eqref{eq:regression CLT annealed} follows by adding the above two displays.
    
    We postpone the argument for $\varsigma^2 > 0$ to the end of the proof.
    
    To begin, use~\cref{lem:remove centering} to get that
    \begin{align}\label{eq:clt42}\frac{(1-\lambda_p\upsilon_p)\delta(\by,\bx)-\sumin q_i\EE[\psi_i'(c_i)]}{\sqrt{\mbox{Var}\Big(\sumin q_i\psi_i'(c_i)|\bx,\bbst\Big)}} \mid \bx, \bbst  \xd \mcn(0,1).
    \end{align}
    Note that the error bounds in~\cref{lem:remove centering} goes to $0$, using individual bounds in part (a) of the current theorem (\cref{cor:regression Bayesian truth}). 
Also, with $\varsigma_p^2$ defined as in \eqref{eq:vthetan}, using the assumptions $\|\bq\|_\infty = o(1), p\alpha_p^2=o(1), \mathfrak{E}_1=o(1)$, and the bounds 
$$\|\bbst\|^2 = \BOPB(p), \quad \sumin (\beta_i^\star)^4 = \BOPB(p), \quad \sumin (d_i-d_0)^2 (\beta_i^\star)^2 = \BOPB(\mathfrak{E}_1),$$
\cref{lem:remove centering two} gives 
    $$\mbox{Var}\Big(\sumin q_i\psi_i'(c_i)|\bx,\bbst\Big)-\varsigma_p^2=\OPB(1).$$
Let $M_p\equiv M_p(\bbst) := \sumin q_i \EE [\psi_i'(c_i)]$ be the centering in the LHS of \eqref{eq:clt42}. Combining the above two displays with \cref{lem:clt marginalize}, the conclusion in \eqref{eq:sufpf} would follow if we separately show the following:
    \begin{align}\label{eq:conditional CLT terms}
        M_p = \sumin q_i \Big[\phi_1(d_0 \beta_i^\star) -\upsilon \lambda \beta_i^\star\Big] + \OPB(1), \quad \varsigma_p^2 = \tilde{\varsigma}^2 + \OPB(1).
    \end{align}
    
    \noindent \emph{Proof of the first claim in \eqref{eq:conditional CLT terms}.} Recall that we assume $\mathfrak{E}_1 = o(1)$ for part (c). Therefore, by using \eqref{eq:lincombound} in \cref{lem:lincombound}, we get: 
    $$\bigg|M_p-\sumin q_i\EE\psi_0'(c_i^\star)\bigg|=\bigg|\sumin q_i(\EE\psi_i'(c_i^\star)-\EE \psi_0'(c_i^\star))\bigg|\lesssim \sqrt{\mathfrak{E}_1}=o(1).$$
    Therefore by the Taylor series approximation in \eqref{eq:taylor phi_1}, we can write $M_p$ as 
    $$M_p=\sumin q_i\EE \psi_0'(c_i^\star)+o(1)=\sumin q_i \phi_1(d_0 \beta_i^\star) - \sumin q_i \mathfrak{F}_i \phi'_1(d_0 \beta_i^\star) + \BOPB\Big( \sumin |q_i| \mathfrak{F}_i^2\Big)+ o(1).$$
    By \cref{lem:beta concentration}, part (c), we have
    $$\sumin |q_i|\mathfrak{F}_i^2 \le \sqrt{\sumin \mathfrak{F}_i^4}=\BOPB(\sqrt{p}\an)=\OPB(1).$$
    Let $W_0\sim \mcn(0,d_0)$ as in the statement of the theorem and observe that by \cref{def:allnot}, $\phi_1(d_0\beta_1^\star)=\EE[\psi_0'(d_0\beta_1^\star+W_0)|\beta_1^\star]$, which on differentiating gives  $\EE_{\beta_1^\star}\phi_1'(d_0\beta_1^\star)=\EE\psi_0''(d_0\beta_1^\star+W_0)=\upsilon$. Therefore, an application of \cref{lem:beta concentration} part (b) with $\gamma_i=q_i$ and  $\Phi(\beta_i^\star)=\phi_1'(d_0\beta_i^\star)-\upsilon$ yields 
    $$\sumin q_i\mathfrak{F}_i(\phi_1'(d_0\beta_i^\star)-\upsilon)=\BOPB(\sqrt{\an})=\OPB(1).$$
    Next, by combining the three displays above, we get: 
    $$M_p=\sumin q_i\phi_1(d_0\beta_i^\star)-\upsilon\sumin q_i\mathfrak{F}_i+\OPB(1).$$
    As $\mathfrak{F}_i=\sumjn \A_p(i,j)\beta_j^\star$ (see \cref{def:c_approx}), a second moment argument gives: 
    $$\sumin q_i\mathfrak{F}_i-\lambda\sumjn q_j\beta_j^\star=\sumjn (\A_p\bq -\lambda\bq)_j \beta_j^\star = \BOPB(\lVert \A_p\bq-\lambda \bq\rVert)=\OPB(1).$$
    Combining the two displays above, we finally get: 
    $$M_p=\sumin q_i \Big[\phi_1(d_0 \beta_i^\star) -\upsilon \lambda \beta_i^\star\Big] + \OPB(1).$$

    \vspace{3mm}

    \noindent \emph{Proof of the second claim in \eqref{eq:conditional CLT terms}.}
    Recall from \eqref{eq:vthetan} that $$\varsigma^2_p = \sumin q_i^2 \Var(\psi_0'(c^{\star}_i)) - \sum_{i \neq j} \A_p(i,j) q_i q_j \EE \psi_0''(c^{\star}_i) \EE \psi_0''(c^{\star}_j)$$
    and from \eqref{eq:sufpf} that $$\tilde{\varsigma}^2 = (1-\lambda \upsilon)^2 \varsigma^2 = \EE \Big[\Var \Big(\psi_0'(d_0 \beta_1^\star + W_0 \mid \beta_1^\star \Big)\Big] - \lambda \upsilon^2.$$
    Thus, it suffices to show the individual limits:
    \begin{equation}\label{eq:suffices varsigma^2}
    \begin{aligned}
        \sumin q_i^2 \Var(\psi_0'(c^{\star}_i)) \mid \bbst &\xp \EE \Big[\Var \Big(\psi_0'(d_0 \beta_1^\star + W_0) \mid \beta_1^\star \Big)\Big],  \\
        \sum_{i \neq j} \A_p(i,j) q_i q_j \EE \psi_0''(c^{\star}_i) \EE \psi_0''(c^{\star}_j) \mid \bbst &\xp \lambda \upsilon^2. 
    \end{aligned}
    \end{equation}
    
    To show first line in \eqref{eq:suffices varsigma^2} we will use \eqref{eq:lincomb moment bound general}. To wit, define $$\widetilde{\Phi}(x) := \Var\Big[\psi_0'(x + W_0) \Big]
    \Rightarrow  \EE_{\beta_1^\star}\widetilde{\Phi}(d_0\beta_1^\star)= \EE_{\beta_1^\star} \mbox{Var}\Big[\psi_0'(d_0\beta_1^\star+W_0)|\beta_1^\star\Big] .$$ 
    Define $$\Phi(x):=\widetilde{\Phi}(x)-\EE_{\beta_1^\star}\widetilde{\Phi}(d_0\beta_1^\star), \implies \EE_{\beta_1^\star}\Phi(d_0\beta_1^\star)=0.$$
    By the definition of $c_i^\star$ from \eqref{eq:c_approx}, we have $$\mbox{Var}(\psi_0'(c_i^\star))=\mbox{Var}(\psi_0'(d_0\beta_i^\star-\mathfrak{F}_i+W_0) \mid \bbst)=\widetilde{\Phi}(d_0\beta_i^\star-\mathfrak{F}_i).$$ As a result, by \eqref{eq:lincomb moment bound general}, we get:
    
    \begin{align}\label{eq:first term varsigma Bayes}
        &\;\;\;\;\sumin q_i^2 \big(\Var(\psi_0'(c^{\star}_i)) - \EE_{\beta_1^\star} \widetilde{\Phi}(d_0 \beta_1^\star)\big) \nonumber \\ &= \sumin q_i^2 \Phi(d_0\beta_i^\star-\mathfrak{F}_i) = \BOPB(\|\bq\|_\infty)=\OPB(1),
    \end{align}
   
    which verifies the first limit in \eqref{eq:suffices varsigma^2}.
    
    To prove the second limit in \eqref{eq:suffices varsigma^2}, as in the proof of part (a), we set $$\Gamma_i = \sumjn \A_p(i,j) q_j (\EE [\psi_0''(c^\star_j) ] - \upsilon).$$
    Then, by adding and subtracting $\upsilon$ from each term $\EE \psi_0''(c^{\star}_i), \EE \psi_0''(c^{\star}_j)$ we can write 
    \begin{align}
    &\;\;\;\;\sum_{i \neq j} \A_p(i,j) q_i q_j \EE \psi_0''(c^{\star}_i) \EE \psi_0''(c^{\star}_j) \nonumber \\ &=\sum_{i\neq j} \A_p(i,j)q_iq_j(\EE\psi_0''(c_i^\star)-\upsilon)(\EE\psi_0''(c_i^\star)-\upsilon) + 2\upsilon \sum_{i\neq j} \A_p(i,j)q_iq_j(\EE\psi_0''(c_j^\star)-\upsilon) + \upsilon^2 \bq^\top \A_p \bq\nonumber \\ &= \sumin q_i \Gamma_i (\EE \psi_0''(c^{\star}_i) - \upsilon) + 2 \upsilon \sumin q_i \Gamma_i + \upsilon^2 \bq^\top \A_p \bq. \label{eq:toshow101}
    \end{align}
    To analyze the last term in the RHS of \eqref{eq:toshow101}, using $\A_p{\bf q}=\lambda_p {\bf q}+{\bm \epsilon}$ we get
    $$\bq^\top \A_p \bq = \lambda_p + \bq^{\top}\boldsymbol{\epsilon} = \lambda + o(1).$$
    
    \noindent Hence, it suffices to show that the first two terms in the RHS of \eqref{eq:toshow101} are $\OPB(1)$. For the middle term in the RHS of \eqref{eq:toshow101}, interchanging the sum and again using $\A_p{\bf q}=\lambda_p {\bf q}+{\bm \epsilon}$ we get  %
    \begin{align*}
        \sumin q_i\Gamma_i= \sum_{i\neq j} \A_p(i,j)q_iq_j(\EE\psi_0''(c_j^\star)-\upsilon)=\sumjn (\lambda_p q_j+\epsilon_j)q_j(\EE \psi_0''(c_j^\star)-\upsilon).
    \end{align*}
    Consequently, using \eqref{eq:lincomb expectation Bayes} gives
    \begin{align*}
        \EEB \Big[\sumin q_i \Gamma_i\Big]^2 &= \EEB \Big[\sumjn (\lambda_p q_j + \epsilon_j) q_j (\EE \psi_0''(c_j^\star) - \upsilon) \Big]^2 \\
        &\lesssim \sumjn (\lambda_p q_j+\epsilon_j)^2 q_j^2 \le \|\bq\|_\infty^2 (1 + \|\boldsymbol{\epsilon}\|^2)= o(1).
    \end{align*}
    Finally, we move on to the first term in the RHS of \eqref{eq:toshow101}. By similar computations as in \eqref{eq:beta quadratic form}, we get:
    \begin{align}\label{eq:rev1}
    &\;\;\;\;\EEB \Big[\sumin q_i  \Gamma_i \big(\EE \psi_0''(c^{\star}_i) - \upsilon\big) \Big]^2 \nonumber \\ &\lesssim \EEB \Big[\sumin q_i \Gamma_i \big(\phi_2(d_0 \beta_i^\star) - \upsilon\big)\Big]^2 + \EEB \left(\sumin q_i^2 \Gamma_i^2\right) \sumin \EEB \mathfrak{F}_i^2.
    \end{align}
    We first bound the second term in \eqref{eq:rev1}. Note that $\maxin \EEB \mathfrak{F}_i^2 = O(\an)$ (by \cref{lem:beta concentration}, part (a)). Further, the second bound in \eqref{eq:lincomb expectation Bayes} with $\gamma_j=\A_p(i,j)q_j$ yields that
    \begin{align}\label{eq:prevbound}
         \EEB [\Gamma_i^2] \lesssim \sumjn \A_p(i,j)^2  q_j^2\le \sumjn \A_p(i,j)^2 \sumjn q_j^2 \le \an.
    \end{align}
    Therefore, we have 
    $$\EEB \left(\sumin q_i^2 \Gamma_i^2\right) \sumin \EEB \mathfrak{F}_i^2 \lesssim \an \left(\sumin q_i^2\right) p\an = p\alpha_p^2 = o(1).$$

    \noindent We now move on to the first term of \eqref{eq:rev1}. Recall the definition of $\bar{\Gamma}_j$ from the proof of part (a) given by
    $$\bar{\Gamma}_j = \sumin \A_p(i,j) q_i(\phi_2(d_0 \beta_i^\star) - \upsilon).$$
    The first term of \eqref{eq:rev1} then simplifies to 
    $$\EEB \Big[\sumin q_i \Gamma_i \big(\phi_2(d_0 \beta_i^\star) - \upsilon\big)\Big]^2 = \EEB\Big[\sumjn q_j \bar{\Gamma}_j (\EE \psi_0''(c_j^\star)-\upsilon)\Big]^2.$$
    Once again repeating the same computation as in \eqref{eq:beta quadratic form}, we get: 
    \begin{align*}
        &\;\;\;\;\EEB\Big[\sumjn q_j \bar{\Gamma}_j (\EE \psi_0''(c_j^\star)-\upsilon)\Big]^2 \\ &\le \EEB\Big[\sumjn q_j \bar{\Gamma}_j (\phi_2(d_0\beta_j^\star)-\upsilon)\Big]^2 + \left(\EEB\sumjn q_j^2 \bar{\Gamma}_j^2\right)\EEB \sumjn \mathfrak{F}_j^2 \\ &=\EEB\Big[\sum_{i\neq j} \A_p(i,j)q_i q_j(\phi_2(d_0\beta_i^\star)-\upsilon)(\phi_2(d_0\beta_j^\star)-\upsilon)\Big]^2 + \left(\EEB\sumjn q_j^2 \bar{\Gamma}_j^2\right)\EEB \sumjn \mathfrak{F}_j^2.
    \end{align*}
    Once again, we note that $\maxjn \EEB \mathfrak{F}_j^2 = O(\an)$ (by \cref{lem:beta concentration}, part (a)). Also by a second moment computation using the independence of $\beta_i^\star$s, we have: 
    $$\EEB \bar{\Gamma}_j^2 \lesssim \sumin \A_p(i,j)^2q_i^2 \le \big(\sumin \A_p(i,j)^2\big) \sumin q_i^2 \le \an.$$
    Therefore, 
    $$\left(\EEB\sumjn q_j^2 \bar{\Gamma}_j^2\right)\EEB \sumjn \mathfrak{F}_j^2 \le \an \bigg(\sumin q_i^2\bigg)p\an = p\alpha_p^2=o(1).$$
    Finally, another second moment computation using the independence of $\beta_i^\star$s yields
    \begin{align*}
    \EEB\Big[\sum_{i\neq j} \A_p(i,j)q_i q_j(\phi_2(d_0\beta_i^\star)-\upsilon)(\phi_2(d_0\beta_j^\star)-\upsilon)\Big]^2 &\lesssim \sum_{i\neq j} q_i^2 q_j^2 \A_p(i,j)^2 \\ &\le \sum_{i} q_i^2 \sum_{j\neq i} \A_p(i,j)^2\le \an = o(1).
    \end{align*}

    \vspace{1mm}
    
    \noindent \emph{Positivity of $\varsigma^2$}. Let $\upsilon = \EE \psi_0''(d_0\beta_1^{\star}+W_0)$ as in \cref{cor:regression Bayesian truth}. Using the decomposition of $\Sigman$ in \eqref{eq:anbayes}, we can write
    $$0 \le \frac{\bq^\top \bx^\top \bx \bq}{\sigma^2} = \bq^\top \diag(\mathbf{d}) \bq - \bq^\top \A_p \bq.$$
    Under the assumption $\mathfrak{E}_1=\sum_{i=1}^p (d_i-d_0)^2=o(1)$ (see \cref{cor:regression Bayesian truth}, part (b)), we can simplify each term as
    $$\bq^\top \diag(\mathbf{d}) \bq = \sumin q_i^2 d_i = \sumin q_i^2 d_0 + \sumin q_i^2 (d_i - d_0) = d_0 + O(\|\bq\|_\infty \sqrt{\mathfrak{E}_1}) = d_0 +o(1)$$
    (the error term is bounded via Cauchy-Schwartz alongside $\sumin q_i^4 \le \|\bq\|_\infty^2$), and
    $$\bq^\top \A_p \bq = \bq^\top \lambda \bq + o(1) = \lambda + o(1).$$
    This computation implies $\lambda \le d_0$.
    
    Using $d_0$ as an upper bound for $\lambda$, it suffices to show that 
    \begin{align}\label{eq:varineqshow}
       \EE \Big[\Var\big(\psi_0'(d_0 \beta_1^\star+W_0) \big) \Big] > d_0 \big(\EE \psi_0''(d_0\beta_1^{\star}+W_0)\big)^2.
    \end{align}
    Here, recall that $W_0 \sim \mcn(0,d_0) \perp \beta_1^\star$.
    By using Stein's identity (while conditioning on $\beta_1^\star$), we get:
    $$d_0\EE [\psi_0''(d_0\beta_1^\star+W_0)\mid \beta_1^\star] = \EE [W_0 \psi_0'(d_0\beta_1^\star+W_0)\mid \beta_1^\star] = \Cov(W_0,\psi_0'(d_0\beta_1^\star+W_0)\mid \beta_1^\star).$$
    By taking another expectation and squaring the above display, using the Cauchy-Schwartz inequality twice (for the outer expectation and the inner covariance, respectively), we have 
    \begin{align*}
        \big(\EE \psi_0''(d_0\beta_1^{\star}+W_0)\big)^2 & = \left(\EE\left[\frac{\Cov(W_0,\psi_0'(d_0\beta_1^{\star}+W_0)\mid \beta_1^\star)}{d_0} \right]\right)^2 \\ 
        & \le \frac{\EE\left[\Cov(W_0,\psi_0'(d_0\beta_1^{\star}+W_0)\mid \beta_1^\star)^2\right]}{d_0^2}
        \\
        & \le \frac{\EE \Big[\Var\big(\psi_0'(d_0 \beta_1^\star+W_0) \mid \beta_1^\star\big)\Big]}{d_0} .
    \end{align*}
    The last inequality uses the fact that $\Var(W_0)=d_0$. This shows \eqref{eq:varineqshow} holds with a $\ge$ sign. For equality to hold in the above Cauchy-Schwartz, $\psi_0'$ must be linear which implies $\mu_0$ must be Gaussian, which is ruled out by our compact support assumption on $\mu$. This completes the proof. 
\end{proof}

\begin{proof}[Proof of \cref{prop:Bayes true model mean-field clt}]
    The proof is identical to \cref{prop:iid mean-field clt}, where we use the error bounds computed under \cref{cor:regression Bayesian truth}.
\end{proof}

\subsection{Application to credible intervals}\label{sec:pfBvmanalog}
To prove \cref{cor:coverage probability Bayes}, we additionally use the following technical lemma, which helps marginalize out the random parameters $\bbst$.

\begin{lemma}\label{lem:clt marginalize}
    Let $G_p = G_p(\by,\bx,\bbst)$, $H_p = H_p(\bbst)$ be random variables, and $\varsigma^2, \vartheta^2 > 0$ be deterministic values. If $G_p \mid\bx,\bbst \xd \mcn(0, \varsigma^2)$ and $H_p \xd \mcn(0,\vartheta^2)$, we have
        $G_p + H_p \mid \bx \xd \mcn(0, \varsigma^2+\vartheta^2).$
\end{lemma}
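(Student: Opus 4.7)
The natural strategy is to work with characteristic functions and invoke the conditional form of L\'{e}vy's continuity theorem: it suffices to show, for each fixed $t \in \R$, that $\EE\left[e^{it(G_p+H_p)} \mid \bx\right]$ converges in probability to $e^{-t^2(\varsigma^2 + \vartheta^2)/2}$. The key decomposition I would use applies the tower property together with the hypothesis $G_p \mid \bx, \bbst \xd \mcn(0, \varsigma^2)$ to write
\begin{align*}
    \EE\left[e^{it(G_p+H_p)} \mid \bx\right] = e^{-t^2\varsigma^2/2}\, \EE\left[e^{itH_p} \mid \bx\right] + \EE\left[e^{itH_p}\, R_p(t) \mid \bx\right],
\end{align*}
where the random remainder $R_p(t) := \EE[e^{itG_p} \mid \bx, \bbst] - e^{-t^2\varsigma^2/2}$ satisfies $R_p(t) \xp 0$ under the standard (in-probability) interpretation of the conditional CLT.

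For the main term, since $H_p$ depends on the data only through $\bbst$, and in the setting where this Lemma is invoked (the proof of \cref{cor:coverage probability Bayes}) $\bbst$ is sampled i.i.d.\ from $\mu^\star$ independently of the design $\bx$, one has $\EE[e^{itH_p} \mid \bx] = \EE[e^{itH_p}]$. The marginal hypothesis $H_p \xd \mcn(0, \vartheta^2)$ together with ordinary L\'{e}vy continuity then yields $\EE[e^{itH_p}] \to e^{-t^2\vartheta^2/2}$, so the main term tends to the target $e^{-t^2(\varsigma^2 + \vartheta^2)/2}$. For the error term, I would bound $|\EE[e^{itH_p} R_p(t) \mid \bx]| \le \EE[|R_p(t)| \mid \bx]$. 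Since $|R_p(t)| \le 2$ is uniformly bounded and $R_p(t) \xp 0$, bounded convergence yields $\EE|R_p(t)| \to 0$, and a conditional Markov inequality then gives $\EE[|R_p(t)| \mid \bx] \xp 0$, killing the error term.

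The main obstacle, rather than any deep computation, lies in precisely formalizing the conditional-convergence-in-distribution notation. I would read $G_p \mid \bx, \bbst \xd \mcn(0,\varsigma^2)$ as in-probability convergence of the associated conditional characteristic function at each $t$, consistent with how the Lemma is invoked downstream, and similarly interpret the conclusion $G_p + H_p \mid \bx \xd \mcn(0, \varsigma^2+\vartheta^2)$. Under this reading the argument goes through cleanly; the only piece that needs to be flagged explicitly is the independence $\bbst \perp \bx$, which is implicit from the data-generating setup in \cref{assn:dgp} but is not spelled out in the hypotheses of the Lemma itself.
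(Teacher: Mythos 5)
Your proof is correct and follows essentially the same characteristic-function argument as the paper: same decomposition of $\EE[e^{it(G_p+H_p)}\mid\bx]$ into a leading term $e^{-t^2\varsigma^2/2}\EE[e^{itH_p}\mid\bx]$ plus an error $\EE[(\tilde G_p - e^{-t^2\varsigma^2/2})e^{itH_p}\mid\bx]$, with the error killed via boundedness and $L^1$ convergence of the conditional characteristic function. Two points where you are more careful than the written proof: (1) you explicitly note that the leading term needs $\bbst\indep\bx$ to replace $\EE[e^{itH_p}\mid\bx]$ by the unconditional $\EE[e^{itH_p}]$ so that the marginal hypothesis on $H_p$ applies — this is indeed implicit from \cref{assn:dgp} but not stated in the lemma hypotheses; (2) you spell out the conditional-Markov step that turns $\EE|\tilde G_p - e^{-t^2\varsigma^2/2}|\to 0$ into $\EE[|\tilde G_p - e^{-t^2\varsigma^2/2}|\mid\bx]\xp 0$, which the paper states somewhat loosely. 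Neither changes the substance; the approach is the same.
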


\begin{proof}[Proof of \cref{cor:coverage probability Bayes}]

    (a) Using \eqref{eq:optmf} and the decomposition \eqref{eq:bias}, we can write the LHS of \eqref{eq:coverage clt modified} as
    $$\sumin q_i (u_{i} - \beta_i^\star) = \delta(\by, \bx)+o(1) - \sumin q_i \beta_i^\star = \text{LHS of \eqref{eq:regression CLT annealed}} + \text{bias}_p(\bbst)+o(1).$$
    where the notation $\text{bias}_p(\bbst)$ is defined in \eqref{eq:bias}.
    We separately control the two terms in the above expression. For notational convenience, define 
    \begin{align*}
        G_p(\by, \bx, \bbst) &:= \text{LHS of \eqref{eq:regression CLT annealed}} = \delta(\by, \bx) - \frac{1}{1-\lambda \upsilon} \sumin q_i\Big(\phi_1(d_0\beta_i^\star)-\lambda \upsilon \beta_i^\star\Big)%
    \end{align*}
    and  
    $$\vartheta^2 := \frac{\Var \left[\phi_{1}(d_0 \beta_1^\star) - \beta_1^\star \right]}{(1-\lambda \upsilon)^2},$$
    
    \noindent where $\phi_1(\cdot)$ is defined as in \cref{def:allnot}. 

    \noindent First, use part (b) of \cref{cor:regression Bayesian truth} to conclude
    $$G_p(\by, \bx, \bbst) \mid \bx, \bbst \xd \mcn(0, \varsigma^2).$$
   Proceeding to analyze $\text{bias}_{p}(\bbst)$, use \eqref{eq:bias} and the definition of $\phi_1(\cdot)$ from  \cref{def:allnot}, to get:
   $$\text{bias}_p(\bbst)=\frac{1}{1-\lambda\upsilon}\sumin q_i(\phi_1(d_0\beta_i^\star)-\beta_i^\star).$$
   We then use the symmetry of $\mu$ and $\mu^\star$ to conclude that $$\EEB \phi_{1}(d_0 \beta_1^\star) = \EEB \EE \psi_{0}'(\mcn(d_0 \beta_1^\star, d_0)) = 0 = \EEB \beta_1^\star.$$ Consequently, by the Lindberg-Feller CLT for independent random variables, we have
    $$\sumin q_i \Big(\phi_{1} (d_0 \beta_i^\star) - \beta_i^\star\Big) \xd \mcn(0, (1-\lambda \upsilon)^2 \vartheta^2).$$
    Now, by applying \cref{lem:clt marginalize} to combine the two limiting distributions, we have
    $$\sumin q_i\big(u_{i} - \beta_i^\star\big) = (G_p + \text{bias}_p) \mid \bx \xd \mcn\big(0, \vartheta^2 + \varsigma^2\big).$$

    Finally, it remains to prove that the above limiting variance matches the value $\tau^2$ in \eqref{eq:coverage clt modified}, i.e. $\vartheta^2+\varsigma^2 = \tau^2$. 
    For notational convenience, define random variables $B, C$ (given $\mu^\star, d_0$) by setting 
    \begin{align}\label{eq:B and C}
        B \sim \mu^\star, \quad C \mid B \sim \mcn(d_0 B, d_0).
    \end{align}
    Then, we can write $\phi_1(d_0 B) = \EE[ \psi_0'(C) \mid B]$ (see \cref{def:allnot}) so we have $\EE \phi_1(d_0 B) = \EE[ \psi_0'(C)] = 0$. Using this notation, we can write
    \begin{align*}
        (1-\lambda \upsilon)^2 \vartheta^2 &= \Var\Big[\phi_1(d_0 B) -B \Big] = \Var\Big[\EE[\psi_0'(C)  - B \mid B] \Big],\\
        (1-\lambda \upsilon)^2 \varsigma^2 &= \EE\Big[\Var[\psi_0'(C) \mid B]\Big] - \lambda \upsilon^2 = \EE\Big[\Var[\psi_0'(C) - B \mid B]\Big] - \lambda \upsilon^2.
    \end{align*}
    Hence, by the law of total variance,
    \begin{align}\label{eq:tau decomposition}
        \vartheta^2 + \varsigma^2 = \frac{\Var[\psi_0'(C) - B] - \lambda \upsilon^2}{(1-\lambda \upsilon)^2 }  = \tau^2.
    \end{align}
    
    (b) Note from \cref{def:upsilon_p} that $\limsup_p \upsilon_p\le 1$ and hence $\liminf_p (1-\lambda_p \upsilon_{p}) \ge  1-\lambda > 0$. By combining the CLT in part (a) of the current theorem and the limit $\upsilon_{p} \mid \bx \xp \upsilon > 0$  (also proved in part (a) of \cref{cor:regression Bayesian truth}),  Slutsky's theorem gives
    $$\frac{\sumin q_i(u_i - \beta_i^\star)}{\sqrt{\upsilon_{p}/(1-\lambda_p \upsilon_{p})}} \mid \bx \xd \mcn\Big(0, \frac{\tau^2}{\upsilon/(1-\lambda \upsilon)}\Big).$$
    Now, noting that 
    $$\text{AC}(\mathcal{I}(\by, \bx, \mu); \mu^\star) = \PP\left(\frac{\big|\sumin q_i(u_i - \beta_i^\star)\big|}{\sqrt{\upsilon_{p}/(1-\lambda_p \upsilon_{p})}} \le c_{\alpha/2} \mid \bx \right),$$
    the definition of convergence in distribution immediately implies \eqref{eq:coverage}.

    \vspace{2mm}
    It remains to show that the RHS of \eqref{eq:coverage} equals $1-\alpha$ when $\mu = \mu^\star$, for which it suffices to show that $\tau^2 =\upsilon/(1-\lambda \upsilon).$
    In the remainder of the proof, we assume $\mu = \mu^\star$ and write $\mu^\star_0$ as the quadratic tilt of $\mu^\star$ (see \cref{def:psi_0}) with density $\frac{d\mu^\star_0}{d\mu^\star}(b) \propto e^{-d_0 b^2/2}$. Also let $\psi_0(\theta) := \log \int e^{\theta b} d{\mu^\star_0}(b)$ denote the cumulant generating function of $\mu^\star_0$. We also assume that the constants $\upsilon, \tau^2$ are defined under $\mu = \mu^\star$.
    
    Using the notations $(B,C)$ from \eqref{eq:B and C} along with the formula  
    $$\tau^2 = \frac{\Var\Big(\beta_1^\star - \psi_{0}'(d_0 \beta_1^\star+W_0)\Big) - \lambda \upsilon^2}{(1-\lambda \upsilon)^2}=  \frac{\Var[\psi_{0}'(C) - B]-\lambda \upsilon^2}{(1-\lambda \upsilon)^2}$$
    from the theorem statement, it suffices to show that $\Var[\psi_{0}'(C) - B] = \upsilon$.
    We first use \eqref{eq:B and C} to make an observation that the conditional distribution of $B$ given $C$ is absolutely continuous with respect to $\mu^\star$ with derivative
    $$\frac{d\PP(B \mid C)}{d\mu^\star}(b) \propto \exp\Big[- \frac{d_0 b^2}{2} + Cb \Big].$$
    Hence, we can view $\PP(B \mid C)$ as $\mu^\star_{0,(C)}$ (this is the exponential tilt of $\mu^\star_0$; again see \cref{def:psi_0}):
    $$\frac{d\mu^\star_{0,(C)}}{d\mu^\star}(b) = \exp (C b - \psi_{0}(C)), \quad\text{where} \quad \psi_{0}(c)= \log \int e^{c b} d\mu^\star_0(b).$$
   Thus by the exponential tilt properties in \cref{def:expfam},
     \begin{align*}
         \psi_{0}'(C) = \EE[B\mid C], \quad \psi_{0}''(C) = \Var[B \mid C].
     \end{align*}
     Finally, using the law of total variance to expand $\Var[\psi_{0}'(C) - B]$ conditional on $C$, we can write
    \begin{align*}
        \Var[\psi_{0}'(C) - B] 
        &= \EE \Big[ \Var\big[B \mid C\big] \Big] + \Var \Big[ \EE\big[\psi_{0}'(C) - B \mid C\big] \Big] \\
        &= \EE \psi_{0}''(C) + 0 = \upsilon,
    \end{align*}
    so the proof is complete.
\end{proof}

\section{Proof of main lemmas from \cref{sec:pfbayeslinhigh}}\label{sec:pfpfblh}
Recall all the notation from \cref{sec:bayeslinhigh} and \cref{sec:pfbayeslinhigh}.

\subsection{Proof of common lemmas: \ref{lem:remove centering}--\ref{lem:remove centering two}}

First, we prove \cref{lem:remove centering}. To wit, we require the following result from \cite[Theorem 2.2]{chatterjee2009fluctuations}.

\begin{prop}[See \cite{chatterjee2009fluctuations}]\label{prop:Chatterjee clt}
	Let $\mathbf{Z} = (Z_1, \ldots, Z_p)$ be a Normal random vector with distribution $\mcn(\boldsymbol{\mu}, \boldsymbol\Sigma)$. Let $F:\mathbb{R}^p \to \mathbb{R}$ be a twice continuously differentiable  function. Suppose that the random variable $F(\mathbf{Z})$ has mean $0$, variance $\tau^2$, a finite fourth moment, and let $W\sim \mcn(0,\tau^2)$. Then,
	$$d_{BL}(F(\mathbf{Z}), W) \le \frac{2 \sqrt{5} \|\boldsymbol\Sigma \| \big(\EE \lVert \nabla F(\mathbf{Z}) \rVert^4 \EE \lVert \nabla^2 F(\mathbf{Z}) \rVert^4\big)^{1/4}}{\tau^2}.$$
\end{prop}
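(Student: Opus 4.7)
The proof is by Stein's method combined with Gaussian interpolation (Mehler's covariance formula) and a second application of the Gaussian Poincaré inequality, which is the mechanism by which the Hessian of $F$ enters the bound. By translation we may assume $\boldsymbol{\mu} = \mathbf{0}$, since only derivatives of $F$ appear in the final estimate.

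The first step of the plan is to reduce to a Stein functional. For any test function $g$ with $\|g\|_\infty \le 1$ and Lipschitz constant at most $1$, the Stein equation $\tau^2 \varphi'(w) - w\varphi(w) = g(w) - \EE g(W)$ admits a solution $\varphi = \varphi_g$ with the standard bounds $\|\varphi_g\|_\infty \lesssim \tau^{-1}$ and $\|\varphi_g'\|_\infty \lesssim \tau^{-2}$. Evaluating at $w = F(\mathbf{Z})$ and taking expectations gives
$$\EE g(F(\mathbf{Z})) - \EE g(W) = \EE\!\left[\tau^2 \varphi_g'(F(\mathbf{Z})) - F(\mathbf{Z})\varphi_g(F(\mathbf{Z}))\right],$$
so that taking the supremum over this class of $g$ recovers $d_{BL}(F(\mathbf{Z}), W)$.

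Next, I would rewrite the right-hand side via Gaussian integration by parts, using the Ornstein--Uhlenbeck interpolation. Let $\mathbf{Z}'$ be an independent copy of $\mathbf{Z}$ and set $\mathbf{Z}_t := e^{-t}\mathbf{Z} + \sqrt{1-e^{-2t}}\,\mathbf{Z}'$, so that $\mathbf{Z}_t \sim \mcn(\mathbf{0}, \boldsymbol\Sigma)$ marginally for every $t \ge 0$. Mehler's covariance identity together with the chain rule applied to $\varphi \circ F$ yields
$$\EE\!\left[F(\mathbf{Z})\varphi(F(\mathbf{Z}))\right] = \int_0^\infty e^{-t}\,\EE\!\left[\varphi'(F(\mathbf{Z}_t))\,\nabla F(\mathbf{Z})^{\top}\boldsymbol\Sigma\,\nabla F(\mathbf{Z}_t)\right] dt.$$
Since $\int_0^\infty e^{-t} dt = 1$ and $\mathbf{Z}_t \stackrel{d}{=}\mathbf{Z}$, one also has $\tau^2 \EE\varphi'(F(\mathbf{Z})) = \int_0^\infty e^{-t}\,\tau^2 \EE\varphi'(F(\mathbf{Z}_t))\,dt$, and subtracting yields the master identity
$$\EE\!\left[\tau^2 \varphi'(F(\mathbf{Z})) - F(\mathbf{Z})\varphi(F(\mathbf{Z}))\right] = \int_0^\infty e^{-t}\,\EE\!\left[\varphi'(F(\mathbf{Z}_t))\,\big(\tau^2 - \nabla F(\mathbf{Z})^{\top}\boldsymbol\Sigma\,\nabla F(\mathbf{Z}_t)\big)\right] dt.$$

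Finally, I would control this integrand by combining $\|\varphi'\|_\infty \lesssim \tau^{-2}$ with a Gaussian Poincaré bound on the variance of $A_t(\mathbf{z}, \mathbf{z}') := \nabla F(\mathbf{z})^{\top}\boldsymbol\Sigma\,\nabla F(e^{-t}\mathbf{z} + \sqrt{1-e^{-2t}}\,\mathbf{z}')$ applied to the joint Gaussian $(\mathbf{Z}, \mathbf{Z}')$. The gradient of $A_t$ produces a product of $\nabla^2 F$ and $\nabla F$, and two applications of Cauchy--Schwarz to $\EE[\|\nabla^2 F\|^2 \|\nabla F\|^2]$, together with an analogous bias estimate for $\tau^2 - \EE[\nabla F(\mathbf{Z})^{\top}\boldsymbol\Sigma\,\nabla F(\mathbf{Z}_t)]$, yield
$$\sqrt{\EE\!\left[\big(\tau^2 - \nabla F(\mathbf{Z})^{\top}\boldsymbol\Sigma\,\nabla F(\mathbf{Z}_t)\big)^2\right]} \lesssim \|\boldsymbol\Sigma\|\,(\EE\|\nabla F\|^4)^{1/4}(\EE\|\nabla^2 F\|^4)^{1/4}.$$
Integrating out the $e^{-t}$ factor and taking the supremum over $g$ delivers the claimed bound on $d_{BL}$. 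The main obstacle will be the careful bookkeeping of the $\boldsymbol\Sigma$ factors: a naive Poincaré plus gradient computation would produce $\|\boldsymbol\Sigma\|^{3/2}$, so obtaining the sharp exponent (and the numerical constant $2\sqrt{5}$) requires using Poincaré in its anisotropic form $\Var f(\mathbf{Z}) \le \EE[\nabla f^{\top}\boldsymbol\Sigma\,\nabla f]$ and preserving the $\boldsymbol\Sigma$-weighted inner products throughout the Cauchy--Schwarz chain rather than bounding by operator norms prematurely.
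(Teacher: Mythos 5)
The paper does not prove this proposition at all: it is imported verbatim from \cite[Theorem 2.2]{chatterjee2009fluctuations}, so there is no internal argument to compare against, and what you have written is effectively a reconstruction of Chatterjee's proof. Your skeleton is the right one — Stein's method for $\mcn(0,\tau^2)$, the Mehler/Ornstein--Uhlenbeck covariance identity to rewrite $\EE[F(\mathbf{Z})\varphi(F(\mathbf{Z}))]$, and a Gaussian Poincar\'e bound on the fluctuations of the gradient quadratic form, which is exactly how $\nabla^2F$ enters. Two structural remarks. First, it is cleaner to put $\varphi\circ F$ in the slot that is differentiated at the \emph{original} point, so that the identity reads $\EE[F(\mathbf{Z})\varphi(F(\mathbf{Z}))]=\EE[\varphi'(F(\mathbf{Z}))\,T]$ with $T:=\EE\big[\int_0^\infty e^{-t}\nabla F(\mathbf{Z})^{\top}\boldsymbol\Sigma\nabla F(\mathbf{Z}_t)\,dt\mid\mathbf{Z}\big]$; then $\EE T=\tau^2$ exactly, the whole error is $\|\varphi'\|_\infty\sqrt{\Var(T)}$, and the per-$t$ ``bias estimate'' you allude to (which is genuinely nonzero for fixed $t$ and would require further heat-semigroup differentiation to control) disappears. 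Second, the Stein-solution bounds $\|\varphi_g\|_\infty\lesssim\tau^{-1}$, $\|\varphi_g'\|_\infty\lesssim\tau^{-2}$ are the ones for the total-variation class $\|g\|_\infty\le1$, not the Lipschitz class; you should fix one test class and track the constants consistently if you want to recover $2\sqrt 5$.

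The genuine gap is the final power of $\|\boldsymbol\Sigma\|$. The ``naive'' computation you dismiss, which produces $\|\boldsymbol\Sigma\|^{3/2}$, is in fact the correct and unimprovable exponent: Chatterjee's Theorem 2.2 is stated with $\|\boldsymbol\Sigma\|^{3/2}$, as one sees by reducing to identity covariance via $\mathbf{Z}=\boldsymbol\Sigma^{1/2}\mathbf{Y}$, which costs $\|\boldsymbol\Sigma\|^{1/2}$ from $\nabla$ and $\|\boldsymbol\Sigma\|$ from $\nabla^2$. No bookkeeping can reduce this to $\|\boldsymbol\Sigma\|^{1}$: taking $\mathbf{Z}=c\mathbf{Y}$ and $F(\mathbf{z})=f(\mathbf{z}/c)$ leaves $F(\mathbf{Z})$, hence both $d_{BL}$ and $\tau^2$, unchanged, while the right-hand side with exponent $1$ scales as $c^{2}\cdot(c^{-4}\cdot c^{-8})^{1/4}=c^{-1}\to0$, so the claimed inequality with $\|\boldsymbol\Sigma\|^{1}$ is false in general (only the exponent $3/2$ is scale-invariant). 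Your proposed rescue — the anisotropic Poincar\'e inequality and $\boldsymbol\Sigma$-weighted inner products — does not help, because $\nabla_{\mathbf{Z}}A_t=\nabla^2F(\mathbf{Z})\boldsymbol\Sigma\nabla F(\mathbf{Z}_t)+\cdots$ and the $\boldsymbol\Sigma$-weighted square of this still carries three factors of $\boldsymbol\Sigma$. The exponent $1$ printed in the proposition is evidently a transcription slip relative to \cite{chatterjee2009fluctuations} (harmless where it is used, since $\lVert\Sigman\rVert=O(1)$ there), and the last step of your argument, which purports to prove it, cannot be made to work.
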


\begin{proof}[Proof of \cref{lem:remove centering}]

To begin, using the definition of the bounded Lipschitz metric, we get: 
\begin{align*}
    &\;\;\;\;d_{BL}\left((1-\lambda_p\upsilon_p)\delta(\mathbf{y},\mathbf{X})-\sum_{i=1}^p q_i \EE [\psi_i'(c_i)],\widetilde{W}_p\right)\\ &\le d_{BL}\left((1-\lambda_p\upsilon_p)\sumin q_i (\psi_i'(c_i) - \EE [\psi_i'(c_i)]),\widetilde{W}_p\right) + (1-\lambda_p \upsilon_p)\EE\bigg|\delta(\by,\bx)-\sumin q_i u_i\bigg| \\ &\qquad \qquad + \EE\bigg|\sumin q_i((1-\lambda_p\upsilon_p)u_i-\psi_i'(c_i))\bigg|.
\end{align*}
The bounds on the second and third terms in the above display follow from \eqref{eq:optmf} and \eqref{eq:CLT for u} respectively. Proceeding to bound the first term, it suffices to show that 
$$d_{BL}\left(\sumin q_i(\psi_i'(c_i)-\EE[\psi_i'(c_i)]),\widetilde{W}_p\right)\lesssim \lVert \bq\rVert_{\infty}.$$
Towards this direction, define $F(\bc):=\sumin q_i(\psi_i'(c_i)-\EE[\psi_i'(c_i)])$. Then, $\lVert \nabla F(\bc) \rVert^4 \lesssim \big(\sumin q_i^2 \big)^2 = 1$, and $$\lVert \nabla^2 F(\bc) \rVert^4 = \maxin q_i^4 \psi_i'''(c_i)^4 \lesssim \lVert \bq \rVert_{\infty}^4.$$
    An application of \cref{prop:Chatterjee clt} completes the proof.
    \end{proof}

Next, we prove lemmas \ref{lem:lincombound}--\ref{lem:remove centering two} using the following two technical lemmas. 
To approximate the expectations in \cref{lem:lincombound}, \cref{lem:exponential tilt expectation} quantifies the difference of two Gaussian expectations, and \cref{lem:quadratic tilt moment} provides a uniform bound for the difference in the derivatives of $\psi_i$ and $\psi_0$. We defer the proof of these lemmas to Appendix \ref{sec:pfauxlem}.

\begin{lemma}\label{lem:exponential tilt expectation}
    Let $g$ be any $C^1$ function with $\lVert g' \rVert_{\infty} < \infty$. Suppose $C \sim \mcn(m, d), C_0 \sim \mcn(m_0, d_0)$ for some constants $m, m_0$ and $d, d_0 > 0$. Then, we have
        $$|\EE g(C) - \EE g(C_0)| \lesssim \lVert g' \rVert_{\infty} (|m - m_0| + |d - d_0|).$$
        Here, the constant in $\lesssim$ only depends on $d_0$.
    \end{lemma}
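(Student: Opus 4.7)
The plan is a direct coupling argument. I would first realize both Gaussians on a common probability space by writing $C = m + \sqrt{d}\, Z$ and $C_0 = m_0 + \sqrt{d_0}\, Z$ for a single standard Gaussian $Z \sim \mcn(0,1)$. Then
\begin{equation*}
\EE g(C) - \EE g(C_0) = \EE\bigl[\, g(m + \sqrt{d}\, Z) - g(m_0 + \sqrt{d_0}\, Z)\,\bigr].
\end{equation*}
The integrand here is a difference that I can control pointwise in $Z$ using the mean value theorem. Since $g \in C^1$ with $\lVert g'\rVert_\infty < \infty$, for each realization of $Z$ we get
\begin{equation*}
\bigl| g(m + \sqrt{d}\, Z) - g(m_0 + \sqrt{d_0}\, Z) \bigr| \le \lVert g'\rVert_\infty \bigl( |m - m_0| + |\sqrt{d} - \sqrt{d_0}|\cdot |Z|\bigr).
\end{equation*}
Taking expectations and using $\EE |Z| = \sqrt{2/\pi}$ yields
\begin{equation*}
|\EE g(C) - \EE g(C_0)| \le \lVert g'\rVert_\infty \bigl(|m - m_0| + \sqrt{2/\pi}\, |\sqrt{d} - \sqrt{d_0}|\bigr).
\end{equation*}

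The only remaining step is to convert $|\sqrt{d} - \sqrt{d_0}|$ into $|d - d_0|$, which is where the $d_0$-dependence in the implicit constant arises. Using the identity $(\sqrt{d} - \sqrt{d_0})(\sqrt{d} + \sqrt{d_0}) = d - d_0$, I get
\begin{equation*}
|\sqrt{d} - \sqrt{d_0}| = \frac{|d - d_0|}{\sqrt{d} + \sqrt{d_0}} \le \frac{|d - d_0|}{\sqrt{d_0}},
\end{equation*}
which gives the claimed bound with an implicit constant of order $\max(1, 1/\sqrt{d_0})$, depending only on $d_0$. There is no real obstacle here; the only subtle point is recognizing that one must use the common-$Z$ coupling (rather than independent copies) so that the Lipschitz-in-mean and Lipschitz-in-standard-deviation contributions decouple cleanly.
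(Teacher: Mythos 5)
Your proof is correct and rests on the same underlying idea as the paper's: a change-of-variable/coupling argument combined with the Lipschitz bound from $\lVert g'\rVert_\infty$ and the identity $|\sqrt{d}-\sqrt{d_0}| = |d-d_0|/(\sqrt{d}+\sqrt{d_0})$. The only cosmetic difference is that the paper splits the argument into two cases (first perturbing the variance with the mean fixed, then the mean with the variance fixed, and implicitly combining via the triangle inequality), whereas you handle both perturbations simultaneously via a single common-$Z$ coupling, which is a mild streamlining of the same computation.
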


 \begin{lemma}\label{lem:quadratic tilt moment}
    Fix $p\ge 1$. For any $\theta$ and $i = 1, \ldots, p$, under \cref{assn:homogeneous diagonals}, %
    we have
    \begin{align*}
        |\psi'_i(\theta) - \psi'_0(\theta)| &= |\EE_{\mu_{i,\theta}} (V) - \EE_{\mu_{0,\theta}} (V)| \lesssim |d_i-d_0|, \\
        |\psi''_i(\theta) - \psi''_0(\theta)| &= |\Var_{\mu_{i,\theta}} (V) - \Var_{\mu_{0,\theta}} (V)| \lesssim |d_i-d_0|.
    \end{align*}
        Here, the constant in $\lesssim$ only depends on (an upper bound of) $\mathfrak{E}_1=\sumin (d_i-d_0)^2$ which is bounded under \cref{assn:homogeneous diagonals}.
    \end{lemma}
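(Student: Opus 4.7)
The plan is to interpolate linearly between the measures $\mu_{0,\theta}$ and $\mu_{i,\theta}$ and express the difference of expectations as an integral of a derivative. Concretely, recall that with the original prior $\mu$ supported on $[-1,1]$, both tilted measures have explicit densities
\[
\frac{d\mu_{i,\theta}}{d\mu}(z) \propto \exp\!\left(\theta z - \tfrac{d_i}{2} z^2\right), \qquad \frac{d\mu_{0,\theta}}{d\mu}(z) \propto \exp\!\left(\theta z - \tfrac{d_0}{2} z^2\right),
\]
so they only differ through the coefficient of the quadratic term. I would therefore define, for $t \in [0,1]$, the interpolating family $\nu_t$ with density $\frac{d\nu_t}{d\mu}(z) \propto \exp\!\left(\theta z - \tfrac{d(t)}{2} z^2\right)$ where $d(t) := d_0 + t(d_i - d_0)$, so that $\nu_0 = \mu_{0,\theta}$ and $\nu_1 = \mu_{i,\theta}$.

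For the first bound, set $f(t) := \EE_{\nu_t}[V]$. A standard exponential-family computation (differentiating under the integral sign and using $d'(t) = d_i - d_0$) yields
\[
f'(t) = -\tfrac{d_i - d_0}{2}\,\mathrm{Cov}_{\nu_t}(V, V^2).
\]
Since $\mu$ (and hence $\nu_t$) is supported in $[-1,1]$, both $V$ and $V^2$ take values in $[-1,1]$, so $|\mathrm{Cov}_{\nu_t}(V, V^2)| \le 1$ by Cauchy--Schwarz. Integrating from $0$ to $1$ then gives $|\psi_i'(\theta) - \psi_0'(\theta)| = |f(1) - f(0)| \le \tfrac{1}{2}|d_i - d_0|$, which is the first claim.

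For the second bound, apply the same interpolation to $g(t) := \mathrm{Var}_{\nu_t}[V] = \EE_{\nu_t}[V^2] - (\EE_{\nu_t}[V])^2$. Differentiating each term using the same covariance identity gives
\[
g'(t) = -\tfrac{d_i - d_0}{2}\,\mathrm{Var}_{\nu_t}(V^2) + (d_i - d_0)\,\EE_{\nu_t}[V]\,\mathrm{Cov}_{\nu_t}(V, V^2),
\]
and each factor involving moments of $V$ or $V^2$ is bounded by $1$ thanks to the compact support. Integrating yields $|\psi_i''(\theta) - \psi_0''(\theta)| \lesssim |d_i - d_0|$. The main (mild) obstacle is just bookkeeping the exponential-family differentiation correctly and verifying that the interchange of differentiation and integration is justified, but this is routine because the densities are smooth and bounded and the domain $[-1,1]$ is compact, so there are no integrability issues. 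The implicit constant depends only on an upper bound for $\mathfrak{E}_1$ (in fact is absolute here, since $|V|, |V^2| \le 1$), as claimed.
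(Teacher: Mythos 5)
Your proposal is correct, and it takes a genuinely different route from the paper. The paper compares the two tilted measures directly via the density ratio: writing $p_{i,\theta}(z)=e^{\theta z - d_i z^2/2}$, it bounds $\sup_{z\in[-1,1]}\big|e^{(d_0-d_i)z^2/2}-1\big|=:\delta_i$, deduces that $\big|\int f\,p_{i,\theta}\,d\mu-\int f\,p_{0,\theta}\,d\mu\big|\le M_0(\theta)\delta_i$ for any $f$ with $\|f\|_\infty\le 1$, and combines this with the bound on the ratio of normalizing constants to get $|\EE_{\mu_{i,\theta}}f-\EE_{\mu_{0,\theta}}f|\le 2\delta_i$; applying $f(z)=z$ and $f(z)=z^2$ and then linearizing $\delta_i\lesssim|d_i-d_0|$ (which requires a uniform bound on $|d_i-d_0|$, i.e.\ on $\mathfrak{E}_1$) finishes the argument. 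Your interpolation argument instead differentiates along the path $d(t)=d_0+t(d_i-d_0)$; the exponential-family identity $\frac{d}{dt}\EE_{\nu_t}[h(V)]=-\frac{d_i-d_0}{2}\,\mathrm{Cov}_{\nu_t}(h(V),V^2)$ makes the dependence on $|d_i-d_0|$ linear from the start, and the compact support gives the covariance bounds. Your approach buys a slightly cleaner constant (absolute, not depending on an upper bound for $\mathfrak{E}_1$), since no exponential $\delta_i$ ever appears and thus no linearization step is needed; the paper's approach has the advantage of handling all bounded test functions $f$ in one shot via the single change-of-measure inequality, which it reuses for both moment bounds. Both arguments are rigorous; the interchange of derivative and integral in yours is indeed routine by the compactness of $[-1,1]$ and smoothness of the integrand in $t$.
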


For the proofs of lemmas \ref{lem:lincombound}--\ref{lem:remove centering two}, all hidden constants depend on $d_0, \mathfrak{E}_2,$ and an upper bound of $\mathfrak{E}_1$. Further, all following expectations (and variances, covariances, probabilities, etc.) are conditional on $\bx, \bbeta^\star$.

\begin{proof}[Proof of \cref{lem:lincombound}]
    It suffices to separately show the variance bound and expectation bound in \eqref{eq:lincombound}, for any $a=1,2$. We start by bounding the variance.
    For simplicity of notation, define $F(\bc):=\sumjn \gamma_j \psi_j^{(a)}(c_j)$ where $\bc=(c_1,\ldots ,c_p)$. We note that $\bc\sim \mcn(\Sigman\bbeta^\star, \Sigman)$ and 
    $$\lVert \nabla F(\bc))\rVert^2 = \sumjn \gamma_j^2 \big(\psi_j^{(a+1)}(c_j)\big)^2\lesssim \lVert \bgamma\rVert^2.$$
   Using the Gaussian Poincar\'{e} inequality  (e.g. Corollary 2.27 in \cite{van2014probability}), this gives 
    $\Var F(\bc)\le \|\Sigman\| \lVert \bgamma\rVert^2 \lesssim  \lVert \bgamma\rVert^2$ (see \eqref{eq:citelater}).
    
    Now, we move on to proving the expectation bound. Note that 
    $\sumjn |\gamma_j| |d_j - d_0| \le \|\boldsymbol{\gamma}\| \sqrt{\mathfrak{E}_1},$
    and recall that $c_j \sim \mcn(d_j \beta_j^\star - \mathfrak{F}_j, d_j)$ and $c^{\star}_j \sim \mcn(d_0 \beta_j^\star - \mathfrak{F}_j, d_0)$ (see \eqref{eq:truec} and \eqref{eq:c_approx}).
    By consecutively applying \cref{lem:exponential tilt expectation} (with $g = \psi_j^{(a)}$) and \cref{lem:quadratic tilt moment}, we get 
    \begin{align*}
        &\EE\Big[\sumjn \gamma_j \psi_j^{(a)}(c_j)\Big] - \EE\Big[\sumjn \gamma_j\psi_0^{(a)}(c^{\star}_j)\Big] \\
        = &\EE\Big[\sumjn \gamma_j \psi_j^{(a)}(c^{\star}_j)\Big] - \EE\Big[\sumjn \gamma_j\psi_0^{(a)}(c^{\star}_j)\Big] + O(\|\boldsymbol{\gamma}\| \sqrt{\mathfrak{E}_1}) \\
        =& \EE\sumjn \gamma_j \Big[\psi_j^{(a)}(c^{\star}_j)-\psi_0^{(a)}(c^{\star}_j)\Big] + O(\|\boldsymbol{\gamma}\| \sqrt{\mathfrak{E}_1}) = O(\|\boldsymbol{\gamma}\| \sqrt{\mathfrak{E}_1}).
    \end{align*}
    This completes the proof.
\end{proof}

\begin{proof}[Proof of \cref{lem:R_{1p} to R_{4p} bound}]

\noindent \emph{(a)} Setting $S_{3i}(\bc):=\big(\sumjn \A_p(i,j) \psi_j' (c_j)\big)^2 = t_i(\bc)^2$, we have $R_{3p} = \sumin S_{3i}(\bc)^2.$ A direct computation gives
    $$\lVert \nabla S_{3i}(\bc)\rVert^2 = 4 S_{3i}(\bc)\sumjn \A_p(i,j)^2 \psi_j''(c_j)^2 \le 4 S_{3i} (\bc) \an.$$
    Hence, by applying the Gaussian Poincar\'{e} inequality to bound the variance, we have
    \begin{align}
        \EE S_{3i}^2(\bc) &= \Var [S_{3i}(\bc)] + [\EE S_{3i}(\bc)]^2 \lesssim \an \|\Sigman\|\EE[S_{3i}(\bc)] + [\EE S_{3i}(\bc)]^2 \notag \\
        &\lesssim \alpha_p^2 (\mathfrak{E}_1^2+\|\Sigman\|^2) + \Big[\sumjn \A_p(i,j) \EE \psi_0'(c^{\star}_j)\Big]^4. \label{eq:e_3i final}
    \end{align}
    Here, the inequality in \eqref{eq:e_3i final} follows by using part (a) of \cref{lem:lincombound} with $\gamma_j = \A_p(i,j)$ to get
    $$\big|\EE S_{3i}(\bc)\big| \lesssim \an (\mathfrak{E}_1+\|\Sigman\|) +  \Big[\sumjn \A_p(i,j) \EE \psi_0'(c^{\star}_j)\Big]^2.$$
    Now, the final conclusion follows by summing \eqref{eq:e_3i final} over $i$.

\vspace{3mm}
\noindent \emph{(b)} Define functions $S_1(\bc), S_2(\bc)$ by setting
$$S_1(\bc) := \sum_{i,j} \A_p(i,j) q_i (\psi_i''(c_i) - U) \psi'_j(c_j), \quad S_2(\bc) := \sum_{i,j} \A_p(i,j) q_i \psi_j'(c_j).$$
Then, we can write $R_{4p} = |S_1(\bc) + (U - \upsilon_p) S_2(\bc)|$. Since 
$$\EE R_{4p} \le \EE|S_1(\bc)| + \EE\big|(U - \upsilon_p) S_2(\bc)\big| \le \sqrt{\EE S_1(\bc)^2} + \sqrt{\EE(\upsilon_p - U)^2 \EE S_2(\bc)^2},$$
it suffices to show
\begin{small}
\begin{align}\label{eq:s_1}
    \EE &S_1(\bc)^2 \lesssim \Big[\sumij \A_p(i,j) q_i (\EE \psi_0''(c_i^\star) - U) \EE \psi_0'(c_j^\star)\Big]^2 
    + \|\Sigman\| \EE \Big[{R}_{1p} + \sqrt{R_{3p}} +  (\upsilon_p-U)^2\Big] \\ 
    &+ \alpha_p^2 \Big((1+\mathfrak{E}_1) \|\bbst\|^2 + p(1+\mathfrak{E}_1) \Big) \notag \\
    &+ \mathfrak{E}_1\Bigg(\sumin q_i^2 \Big[\sumjn \A_p(i,j) \EE \psi_0'(c^{\star}_j) \Big]^2 + \sumjn \Big[\sumin \A_p(i,j)q_i(\EE \psi_0''(c^{\star}_i) -U)\Big]^2 + \an \Bigg), \notag
\end{align}
\end{small}

\begin{align}\label{eq:s_2}
    \EE S_2(\bc)^2 &\le \Big[ \sumij \A_p(i,j)q_i \EE \psi_0'(c^{\star}_j) \Big]^2 + \mathfrak{E}_1 + \|\Sigman\|.
\end{align}

\noindent \emph{Proof of \eqref{eq:s_1}.} We write $\EE S_1(\bc)^2 = [\EE S_1(\bc)]^2 + \Var[S_1(\bc)]$ and separately bound the mean and variance.

We begin with bounding the variance term. Note that
    \begin{align*}
        \frac{\partial S_1}{\partial c_k} &= \sumin \A_p (i,k) q_i (\psi_i''(c_i) - U) \psi_k''(c_k) + \sumjn \A_p (k,j) q_k \psi_k'''(c_k) \psi_j'(c_j).
    \end{align*}
    Recalling the definition of $R_{1p}$ and $R_{3p}$ (see \eqref{eq:random field}), we can write
    \begin{align*}
         \left\lVert \frac{\partial S_1}{\partial \bc} \right\rVert^2 &= \sumkn \Big(\sumin \A_p (i,k) q_i (\psi_i''(c_i) - U) \psi_k''(c_k) + \sumjn \A_p (k,j) q_k \psi_k'''(c_k) \psi_j'(c_j) \Big)^2 \\
         &\lesssim \sumkn \Big(\sumin \A_p (i,k) q_i (\psi_i''(c_i) - U) \Big)^2 + \sumkn \Big( \sumjn \A_p (k,j) \psi_j'(c_j) \Big)^2 q_k^2 \\
         &\le R_{1p} + \|\bq\|_\infty \sqrt{R_{3p}} +  (\upsilon_p-U)^2.
    \end{align*}
    Now, the variance bound follows by applying the Gaussian Poincar\'{e} inequality:
    \begin{align}\label{eq:var R4}
        \Var[S_1(\bc)] &\lesssim \|\Sigman\| \EE\Big[R_{1p} + \|\bq\|_\infty \sqrt{R_{3p}} + (\upsilon_p-U)^2 \Big].
    \end{align} %

    \vspace{0.1in}
    
    \noindent Next, we upper bound $\EE S_1(\bc)$. To this extent, for all $i \neq j$, we define 
    \begin{align}\label{eq:onedell}
    \ct_j^i := c_j + \frac{\A_p(i,j)}{d_i} c_i.
    \end{align}
    By a Taylor expansion, we have
    \begin{align}\label{eq:psi_j' taylor}
        \psi_j'(c_j) = \psi_j'(\ct_j^i) + O\Big(|\A_p(i,j)| |c_i|\Big).
    \end{align}
    Since the Gaussianity of $\bc$ (see \eqref{eq:truec} with $\Sigman = \diag(\mathbf{d}) - \A_p$) implies $$\Cov(c_i, \ct_j^i) = \Cov(c_i, c_j) + \frac{\A_p(i,j)}{d_i} \Var(c_i)=-\A_p(i,j)+\A_p(i,j)= 0,$$ %
    $c_i$ and $\ct_j^i$ are independent. Using the above two displays, we factorize the expectation:
    \begin{align*}
        &\EE (\psi_i''(c_i) - U) \psi_j'(c_j) = (\EE \psi_i''(c_i) - U) \EE \psi_j'(\ct_j^i)+ O(|\A_p(i,j)| \EE |c_i|). 
    \end{align*}

   \noindent Using this, we get
    \begin{align}%
        \notag~~~\EE S_1(\bc) &=
        \sumij \A_p(i,j) q_i (\EE \psi_i''(c_i) - U) \EE \psi_j'(\ct_j^i)+ O\left(\sumij \A_p(i,j)^2 |q_i|\EE|c_i|\right) \notag \\
        &= \sumij \A_p(i,j) q_i (\EE \psi_i''(c_i) - U) \EE \psi_j'(c_j)+ O\left(\sumij \A_p(i,j)^2 |q_i| \EE|c_i|\right).\label{eq:expectation of R in R_{4p}}
    \end{align}

    The second term in the RHS of \eqref{eq:expectation of R in R_{4p}} can be bounded as
    \begin{align*}
        \sumij \A_p(i,j)^2 |q_i| \EE|c_i| \le &\an \sumin |q_i| \EE|c_i| \le \an \sqrt{\EE\sumin c_i^2} \\ 
        \le &\alpha_p\sqrt{(1+\mathfrak{E}_1)\|{\bm \beta}^\star\|^2+p(1+\sqrt{\mathfrak{E}_1})},
        \end{align*}
where in the last inequality we use the facts that $c_i\sim N(d_i\beta_i^\star-\mathfrak{F}_i,d_i)$ and $d_i\le d_0+\sqrt{\mathfrak{E}_1}$ to conclude that for $\ell=2,4$ we have
        \begin{align}\label{eq:expectation c_i}
       \notag \sum_{i=1}^p \EE c_i^\ell \lesssim &\sum_{i=1}^p \Big[d_i^\ell (\beta_i^\star)^\ell+\mathfrak{F}_i^\ell+d_i^{\ell/2}\Big] \le (d_0^\ell + \mathfrak{E}_1^{\ell/2})\|{\bm \beta^\star}\|_\ell^\ell+\sum_{i=1}^\ell \mathfrak{F}_i^\ell+p (d_0^{\ell/2}+\mathfrak{E}_1^{\ell/4})\\
        \lesssim &(1+\mathfrak{E}_1^{\ell/2})\lVert \bbeta^\star\rVert_{\ell}^{\ell}+p(1+\mathfrak{E}_1^{\ell/4}),
\end{align}
where in the last inequality above we use the fact that $\|\A_p\|_4\le 1$.

To analyze the first term in the RHS of \eqref{eq:expectation of R in R_{4p}},
define $\Gamma^\star_j := \sumin \A_p(i,j) q_i (\EE \psi_i''(c_i) - U)$ and  $\tilde{\Gamma}^\star_i := \sumjn \A_p(i,j) q_i \EE[\psi_0'(c^{\star}_j)]$ as in\eqref{eq:gamma tilde}. Note that $\Gamma^\star_j$s are analogs of $\Gamma_j$s defined in \eqref{eq:gamma tilde}, with $\psi_i''$ instead of $\psi_0''$ and $U$ instead of $\upsilon$. Also $\tilde{\Gamma}^\star_i$ is an analog of $\tilde{\Gamma}_i$ from \eqref{eq:gamma tilde} with $q_i$ and $q_j$ flipped. By applying \eqref{eq:lincombound} twice (for each index $j$ and $i$), we can write 
        \begin{align*}
            \sumij \A_p(i,j) q_i (\EE \psi_i''(c_i) - U) \EE \psi_j'(c_j) &= \sumjn \Gamma^\star_j \EE \psi_j'(c_j) = \sumjn \Gamma^\star_j \EE \psi_0'(c^{\star}_j)  + O\Big(\|\boldsymbol{\Gamma}^\star\| \sqrt{\mathfrak{E}_1} \Big) \\
            &= \sumin \tilde{\Gamma}^\star_i (\EE \psi_i''(c_i) - U) + O\Big(\|\boldsymbol{\Gamma}^\star\| \sqrt{\mathfrak{E}_1} \Big) \\
            &= \sumin \tilde{\Gamma}^\star_i (\EE \psi_0''(c^{\star}_i) - U) + O\Big((\|\boldsymbol{\Gamma}^\star\| + \|\tilde{\boldsymbol{\Gamma}}^\star \|) \sqrt{\mathfrak{E}_1} \Big).
        \end{align*}
        The error term $\|\boldsymbol{\Gamma}^\star\|$ can be further simplified by again using \eqref{eq:lincombound} to write
        $$\Gamma^\star_j = \sumin \A_p(i,j)q_i(\EE \psi_0''(c^{\star}_i) -U) + O\Bigg(\sqrt{\sumin \A_p(i,j)^2 q_i^2}\Bigg),$$
        which gives
        $\|\boldsymbol{\Gamma}^\star\|^2 \le 2 \sumjn \Big[\sumin \A_p(i,j)q_i(\EE \psi_0''(c^{\star}_i) -U)\Big]^2 + 2 \an.$
        Consequently, by spelling out the $\tilde{\Gamma}_i$s,
        we can write
        \begin{align*}
            [\EE &S_1(\bc)]^2 = \Big[\sumij \A_p(i,j) q_i (\EE \psi_0''(c^\star_i) - U) \EE \psi_0'(c^\star_j) \Big]^2 + O(p\alpha_p^2) \\
            +& O\Big(\big(\sumin q_i^2 \Big[\sumjn \A_p(i,j) \EE \psi_0'(c^{\star}_j) \Big]^2 + \sumjn \Big[\sumin \A_p(i,j)q_i(\EE \psi_0''(c^{\star}_i) -U)\Big]^2 + \an \big) \mathfrak{E}_1\Big).
        \end{align*}
    The final bound in \eqref{eq:s_1} is immediate by combining the bounds for the variance in \eqref{eq:var R4} and bias squared in the above line.

    \noindent \emph{Proof of \eqref{eq:s_2}.} To bound $\EE S_2(\bc)^2$, define $\boldsymbol{\gamma} := \A_p \bq$ and write $S_2(\bc) = \sumjn \gamma_j \psi_j'(c_j)$. By part (a) of \cref{lem:lincombound} and using $\|\boldsymbol{\gamma}\| \lesssim 1$, we get
    $$\EE S_2(\bc)^2 - \Big[ \sumjn \gamma_j \EE \psi_0'(c^{\star}_j) \Big]^2 \lesssim \mathfrak{E}_1 + \|\Sigman\|.$$
    This completes the proof.
\end{proof}

\begin{proof}[Proof of \cref{lem:remove centering two}]
With $\ct_j^i=c_j+\frac{\A_p(i,j)}{d_i}c_i$ as in \eqref{eq:onedell}, recall that $c_i$ and $\ct_j^i$ are independent. By using a second order Taylor expansion of $\psi_j'(\cdot)$ around $\ct_j^i$, we get:
  \begin{align}\label{eq:variance of sum}
      &\;\;\;\;\Var\bigg(\sumin q_i \psi_i'(c_i)\bigg) \nonumber \\ &= \sumin q_i^2 \Var(\psi_i'(c_i)) + \sum_{i \neq j} q_i q_j \Cov\bigg(\psi_i'(c_i), \psi_j'\bigg(\ct_j^i - \frac{\A_p(i,j) c_i}{d_i}\bigg)\bigg) \notag \\ 
      &=  \sumin q_i^2 \Var(\psi_i'(c_i)) - \sum_{i \neq j} q_i q_j \Cov\bigg(\psi_i'(c_i), \frac{\A_p(i,j) c_i}{d_i} \psi_j''(\ct_j^i)\bigg) \\
      & \quad + O \bigg(\sumij |q_i| |q_j| |\A_p(i,j)|^2 \EE c_i^2 \bigg). \notag
\end{align}
We now simplify the three terms in \eqref{eq:variance of sum} one at a time. For the first term, we note that $\lVert \psi_i'-\psi_0'\rVert_{\infty}\lesssim |d_i-d_0|$ and $\lVert (\psi_i')^2 - (\psi_0')^2\rVert_{\infty}\lesssim |d_i-d_0|$ by \cref{lem:quadratic tilt moment}. As a result, an application of \cref{lem:exponential tilt expectation} yields: 
\begin{align*}
    &\;\;\; \big|\Var(\psi_i'(c_i)) - \Var(\psi_0'(c^{\star}_i))\big|\lesssim |d_i-d_0|.
\end{align*}
Consequently, we have 
\begin{align}\label{eq:t1}
&\;\;\;\;\Big|\sumin q_i^2 \big[\Var(\psi_i'(c_i)) - \Var (\psi_0'(c^{\star}_i)) \big] \Big| \lesssim \sumin q_i^2 |d_i-d_0| \le \sqrt{\sumin q_i^4}\sqrt{\mathfrak{E}_1} \le \|\bq\|_\infty \sqrt{\mathfrak{E}_1}.
\end{align}

For the second term of \eqref{eq:variance of sum}, by expanding the covariance, using the independence of $c_i$ and $\tilde{c}_j^i$, and using Stein's lemma (see \cite{stein1981estimation} and recall that $\Var(c_i) = d_i$), we get %

\begin{align*}
    \Cov\left(\psi_i'(c_i), \frac{\A_p(i,j) c_i}{d_i} \psi_j''(\ct_j^i)\right) &= \frac{\A_p(i,j)}{d_i} \EE \Big[\big(\psi_i'(c_i) - \EE \psi_i'(c_i)\big) c_i \psi_j''(\ct_j^i) \Big] \\
    &= \frac{\A_p(i,j)}{d_i} \Cov(\psi_i'(c_i), c_i) \EE \Big[ \psi_j''(\ct_j^i) \Big] \\
    &= \A_p(i,j) \EE \psi_i''(c_i) \EE \psi_j''(\ct_j^i).
\end{align*}

Hence, by the triangle inequality, we can write
\begin{align}
&\;\;\;\;\bigg|\sum_{i\neq j} q_i q_j \Cov\left(\psi_i'(c_i), \frac{\A_p(i,j) c_i}{d_i} \psi_j''(\ct_j^i)\right)-  \sum_{i\neq j} \A_p(i,j)q_iq_j\EE \psi_0''(c^{\star}_i)\EE\psi_0''(c^{\star}_j)\bigg| \notag \\ 
&\lesssim \bigg|\sumij \A_p(i,j) q_i q_j \EE \psi_i''(c_i) \Big(\EE \psi_j''(\ct_j^i) - \EE \psi_j''(c_j) \Big) \bigg| \label{eq:aux1} \\
&+ \bigg|\sumij \A_p(i,j) q_i q_j \EE \psi_i''(c_i) \Big(\EE \psi_j''(c_j) - \EE \psi_0''(c_j^\star) \Big) \bigg| \label{eq:aux2} \\
&+ \bigg|\sumij \A_p(i,j) q_i q_j \EE \psi_0''(c_j^\star) \Big(\EE \psi_i''(c_i) - \EE \psi_0''(c_i^\star) \Big) \bigg|. \label{eq:aux3}
\end{align}
We separately bound each summand. To control \eqref{eq:aux1}, we first use \cref{lem:exponential tilt expectation} and the naive bound $d_i \le d_0 + \sqrt{\mathfrak{E}_1}$ and $|\A_p(i,j)|\le 1$, to write 
\begin{align*}
|\EE \psi_j''(c_j) - \EE \psi_j''(\ct_j^i)|&\lesssim |\A_p(i,j)||d_i \beta_i^\star - \mathfrak{F}_i| + \A_p(i,j)^2 \lesssim |\A_p(i,j)|\Big((d_0+\sqrt{\mathfrak{E}_1}) |\beta_i^\star| + |\mathfrak{F}_i| + 1\Big),
\end{align*}
since using \eqref{eq:onedell} along with the independence of $c_i$ and $\ct_j^i$ we have $$ |\EE c_j -  \EE \ct_j^i |= \frac{\A_p(i,j)}{d_i}(d_i\beta_i^\star - \mathfrak{F}_i),\quad \Var c_j - \Var \ct_j^i = \frac{\A_p(i,j)^2}{d_i}.$$
Hence, 
\begin{align}
    \eqref{eq:aux1} &\le \sum_{i\neq j} |q_i| |q_j| |\A_p(i,j)|^2 \big((d_0+\sqrt{\mathfrak{E}_1}) |\beta_i^\star| + |\mathfrak{F}_i| + 1\big) \nonumber \\
    &\lesssim \an \sqrt{\sumin q_i^2((d_0+\sqrt{\mathfrak{E}_1})^2 |\beta_i^\star|^2 + |\mathfrak{F}_i|^2+1)} \nonumber \\
    &\lesssim \an   (1+\sqrt{\mathfrak{E}_1}) \big(\|\bq\|_\infty \|\bbst\| + 1\big)\label{eq:t21}
\end{align}
The second inequality uses the following bound for the entry-wise squared matrix $\mathbf{B}_p(i,j) = \A_p(i,j)^2$:
    \begin{align}\label{eq:b_n matrix}
        \lVert \mathbf{B}_p\rVert \le \an.
    \end{align}
The last inequality follows from the fact that  $\sumin \mathfrak{F}_i^2\le \Vert \bbeta^\star\rVert^2$ (as $\|\A_p\|\le 1$).

Next, we control \eqref{eq:aux2}. 
We use Lemmas \ref{lem:exponential tilt expectation}, \ref{lem:quadratic tilt moment} to bound
$$|\EE \psi_j''(c_j) - \EE \psi_0''(c^{\star}_j)| \lesssim (1+|\beta_j^\star|) |d_j - d_0|$$ since $\EE c_j - \EE c_j^\star = (d_j - d_0)\beta_j^\star, ~ \Var c_j - \Var c_j^\star = d_j - d_0$. Now, viewing \eqref{eq:aux2} as a quadratic form, we have
\begin{align}
    \eqref{eq:aux2} &\lesssim \|\A_p\| \sqrt{\sumin q_i^2 [\EE \psi_i''(c_i)]^2} \sqrt{\sumjn q_j^2 (d_j-d_0)^2 (1+|\beta_j^\star|^2) } \nonumber \\
    &\lesssim \|\bq\|_\infty \sqrt{\mathfrak{E}_1 + \sumjn (d_j-d_0)^2 (\beta_j^\star)^2}.\label{eq:t22}
\end{align}

\eqref{eq:aux3} can be bounded via the exact same argument as for \eqref{eq:aux2}, which leads to an identical bound. 

\vspace{2mm}
Finally, to control the last term (error term) in \eqref{eq:variance of sum}, we again use \eqref{eq:b_n matrix} followed by \eqref{eq:expectation c_i}:
\begin{align}
    \sumij |q_i||q_j| &\A_p(i,j)^2 \EE c_i^2 \le \an \sqrt{\sumin q_i^2 (\EE c_i^2)^2} \lesssim \an \sqrt{\sumin q_i^2 \Big( (d_0^2 +\mathfrak{E}_1)^2 \big((\beta_i^\star)^4 + 1\big) + \mathfrak{F}_i^4\Big)} \nonumber \\
    &\lesssim \an (1 + \mathfrak{E}_1) \Big(\sqrt{\sumin  q_i^2 (\beta_i^\star)^4} + 1\Big) + \an \sqrt{\sumin q_i^2 \mathfrak{F}_i^4} \nonumber\\
    &\lesssim \an  (1 + \mathfrak{E}_1) \Big(\|\bq\|_\infty \sqrt{\sumin  (\beta_i^\star)^4} + 1\Big) + \alpha_p^2 \|\bbst\|^2. \label{eq:t3}
\end{align}
Here, the last inequality follows from a Cauchy-Schwartz to bound $$ \sqrt{\sumin q_i^2 \mathfrak{F}_i^4} \le \maxin \mathfrak{F}_i^2 = \maxin \Big[\sumjn \A_p(i,j) \beta_j^\star\Big]^2 \le \an \|\bbst\|^2.$$ Now, the proof is complete by combining the bounds from \eqref{eq:t1}, \eqref{eq:t21}, \eqref{eq:t22}, \eqref{eq:t3}, and applying them to \eqref{eq:variance of sum}, we get:
\begin{align*}
    &\;\;\;\;\bigg|\mbox{Var}\left(\sumin q_i\psi_i'(c_i)\right)-\sumin q_i^2\mbox{Var}(\psi_0'(c_i))+\sum_{i\neq j}\A_p(i,j)q_iq_j\EE\psi_0''(c_i^\star)\EE\psi_0''(c_j^\star)\bigg| \\ &\lesssim \lVert \bq\rVert_{\infty}\bigg(\sqrt{\mathfrak{E}_1}+\an(1+\sqrt{\mathfrak{E}_1}) \lVert \bbst\rVert + \sqrt{\sumin (d_i-d_0)^2(\beta_i^\star)^2}+\an(1+\mathfrak{E}_1)\sqrt{\sumin (\beta_i^\star)^4}\bigg) \\ & \qquad +\an\big(1+\sqrt{\mathfrak{E}_1}+\mathfrak{E}_1+\an\lVert\bbst\rVert^2\big).
\end{align*}
.%

\end{proof}

\subsection{Proof of Lemmas used for Theorem \ref{cor:regression i.i.d design}}
We prove Lemmas \ref{lem:random design properties}, \ref{lem:lincomb expectation random design} in this section. Recall that all $\lesssim$ and $O(\cdot)$ notations hide constants that depend on $\sigma$ and the sub-Gaussian norm of the $X_{k,i}$s.

\begin{proof}[Proof of \cref{lem:random design properties}]
Note that $Z_{k,i} = \sqrt{n} X_{k,i}$ are iid sub-Gaussian random variables with mean 0 and variance 1. Recall that $\A_p(i,j) = - \frac{1}{n \sigma^2}\sumkn Z_{k,i} Z_{k,j}$. For simplicity, here we work under the assumption that $\sigma^2 = 1$. 
\begin{enumerate}[(a)]
    \item By direct expansion, we can write the LHS as
    \begin{align*}
        \EEX [\bv^\top \A_p \bw]^2 &= \frac{1}{n^2} \sum_{1\le i\ne j\le p} \sum_{ 1\le i' \neq j'\le p} \sum_{ 1\le k, k'\le n} \EEX \big[Z_{k,i} Z_{k,j} Z_{k',i'} Z_{k',j'}\big] v_i w_j v_{i'} w_{j'} \\
        &= \frac{1}{n} \sum_{1\le i\ne j\le n} \big[v_i^2 w_j^2 + v_i v_j w_i w_j \big] \le \frac{2\|\bv\|^2 \|\bw\|^2}{n}.
    \end{align*}
    The equality in the second line follows as the inner expectation is zero unless $k = k'$ and $\{i,j\} = \{i',j'\}$.

    \item Similarly, we expand the LHS as
    \begin{align*}
        \EEX \Big[\sum_{j\ne i} \A_p(i,j) v_j \Big]^2 &= \frac{1}{n^2} \sum_{1\le j \neq i, j' \neq i\le p} \sum_{1\le k, k' \le n} \EEX \big[Z_{k,i} Z_{k,j} Z_{k',i} Z_{k',j'}\big] v_j v_{j'} \lesssim  \frac{1}{n} \sum_{j \neq i} v_j^2.
    \end{align*}
    The inequality follows as the inner expectation is zero unless $k = k'$ and $j=j'$. The inner expectation is always bounded by a universal constant, due to the sub-Gaussian assumption.
    Now, the claim for $\mathfrak{F}_i=(\mathbf{A}{\bm \beta}^\star)_i$ follows on recalling that $|\beta_j^\star| \le 1$ for all $1\le j\le p$.

    \item We expand the LHS as
    \begin{align*}
        &\EEX \Big[\sum_{j \neq i} \A_p(i,j) v_j \Big]^4 \\
        =& \frac{1}{n^4} \sum_{1\le j_1, \ldots, j_4 \neq i\le p} \sum_{1\le k_1, \ldots, k_4 \le n} \EEX \big[Z_{k_1,i} Z_{k_1,j_1} \ldots Z_{k_4,i} Z_{k_4,j_4} \big] v_{j_1} v_{j_2} v_{j_3} v_{j_4} \\
        \lesssim& \frac{1}{n^2} \sum_{j_1, j_3 \neq i} v_{j_1}^2 v_{j_3}^2 \le 
        \frac{\|\bv\|^4}{n^2}.
    \end{align*}
    The second line follows as the inner expectation is zero unless $k_1 = k_2, k_3 = k_4$ and $j_1 = j_2, j_3 = j_4$, upto a permutation of the subscripts. The claim for $\mathfrak{F}_i$ follows as $|\beta_j^\star| \lesssim 1$ for all $j$.

    \item 
    By an application of the Cauchy-Schwartz inequality, we have
    \begin{align*}
        \Big[\sum_{j \neq i} \A_p(i,j) \mathfrak{F}_j v_j \Big]^4 &\le 
       \Big[\sum_{j \neq i} \A_p(i,j) \mathfrak{F}_j v_j \Big]^2\left( \sumjn \mathfrak{F}_j^2\right) \left(\max_{1\le i\le p} \sumjn \A_p(i,j)^2\right).
    \end{align*}
    Recall from \cref{lem:alpha_n} that $\max_{1\le i\le p} \sumjn \A_p(i,j)^2=\BOPX(p/n)$ and from \cref{lem:random design properties}, part (b) that $\sumjn \mathfrak{F}_j^2=\BOPX(p^2/n)$. As a result, the above display implies 
     \begin{align*}
        \Big[\sum_{j \neq i} \A_p(i,j) \mathfrak{F}_j v_j \Big]^4 &\le 
       \Big[\sum_{j \neq i} \A_p(i,j) \mathfrak{F}_j v_j \Big]^2 \BOPX(p^3/n^2).
    \end{align*}
    It therefore suffices to show that 
    \begin{align}\label{eq:newcall4}
    \EE_{\bx} \sumin\Big[\sumjn \A_p(i,j) \mathfrak{F}_j v_j\Big]^2  \lesssim \frac{p^3}{n^2}.
    \end{align}
    By expansion of the LHS of \eqref{eq:newcall}, we get
    $$\frac{1}{p^4}\sumin \sum_{\substack{j_1 \neq i, ~\ell_1 \neq j_1, \\ j_2 \neq i, ~\ell_2 \neq j_2}} \sum_{k_1,k_2,s_1,s_2} \EE_{\bx} \Big[Z_{k_1,i} Z_{k_1,j_1} Z_{s_1,j_1} Z_{s_1,\ell_1} Z_{k_2,i} Z_{k_2,j_2} Z_{s_2,j_2} \Big] \beta_{\ell_1}^\star \beta_{\ell_2}^\star v_{j_1} v_{j_2}.$$
    Here, the indices take values $1\le j_1,j_2, \ell_1, \ell_2 \le p$ and $1\le k_1,s_1,k_2,s_2 \le n$. Using $Z_{k,i}$s have mean 0 and variance 1, the inner expectation is zero unless the indices $(k_1,s_1,k_2,s_2)$ consist of one or two distinct values, e.g. $k_1=k_2, ~s_1=s_2$. For simplicity, consider indices such that $k_1=k_2\neq s_1=s_2$. Then, the inner expectation is nonzero only if $j_1 = j_2, ~ \ell_1 = \ell_2$. Summing over these indices and bounding $\lVert \bbst\rVert_{\infty}\le 1$ and $\lVert\mathbf{v}\rVert_{\infty}\le 1$ gives
    $$\frac{1}{n^2}\sumin \sum_{j_1 \neq \ell_1, k_1 \ne s_1} \EE \Big[Z_{k,i}^2 Z_{k,j_1}^2 Z_{s,j}^2 Z_{s,\ell}^2 \Big] (\beta^{\star}_{\ell_1})^2 v_{j_1}^2 \le \frac{p^3}{n^2}.$$
    Other choices of $(k_1,k_2,s_1,s_2)$ also give the same or smaller bound. We omit the details for brevity. This completes the proof of \eqref{eq:newcall4}. 
\end{enumerate}
\end{proof}

\begin{proof}[Proof of \cref{lem:lincomb expectation random design}]
    We first show \eqref{eq:lincomb moment bound}. By a Taylor expansion, we can write
    \begin{align}\label{eq:phi taylor}
        \sumjn \gamma_j \Phi (d_0 \beta_j^\star - \mathfrak{F}_j) &= \sumjn \gamma_j \Phi (d_0 \beta_j^\star) - \sumjn \gamma_j \mathfrak{F}_j {\Phi}' (d_0 \beta_j^\star) + O(\sumjn |\gamma_j| \mathfrak{F}_j^2).
    \end{align}
    The second moment of the second term is $O(\|\bgamma\|^2 p /n)$ by part (a) of \cref{lem:random design properties}, since $$\sumjn \gamma_j \mathfrak{F}_j {\Phi}' (d_0 \beta_j^\star) = \sumij \A_p(i,j) \big[\gamma_j \Phi' (d_0 \beta_j^\star)\big] \beta_i^\star.$$ Also, the second moment of the third term is $O(\|\bgamma\|^2 p^3 /n^2)$ by Cauchy-Schwartz and part (c) of the same lemma. This verifies \eqref{eq:lincomb moment bound}. Finally, both conclusions in \eqref{eq:lincomb expectation} follow by taking $\Phi = \phi_a$ for each $a = 1,2$.
\end{proof}

\subsection{Proof of Lemmas used for Theorem \ref{cor:regression Bayesian truth}}
Now, we prove Lemmas \ref{lem:beta concentration}, \ref{lem:lincomb expectation Bayes}. In the following proofs, the hidden constants only depend on the fourth moment of $\mu^\star$.
\begin{proof}[Proof of \cref{lem:beta concentration}]
We will repeatedly use the symmetry of the law $\mu^{\star}$ around $0$, and the assumption that its fourth (and lower) moments are bounded.
\begin{enumerate}[(a)]
    \item This is immediate by computing the second moment. The conclusion about $\mathfrak{F}_i=\sumjn \A_p(i,j)\beta^{\star}_j$ follows on taking $\gamma_j=\A_p(i,j)$ and $\Phi(x)=x$.

    \item By direct expansion, we can write the LHS as
    \begin{align}
        &\;\;\;\;\EEB \Big[\sumjn \gamma_j \mathfrak{F}_j \Phi(\beta^\star_j)\Big]^2 \notag \\ &= \sum_{1\le i,j \le p} \sum_{1 \le i',j' \le p} \A_p(i,j) \A_p(i',j') \gamma_j \gamma_{j'} \EEB \Big[\beta_i^\star \beta_{i'}^\star \Phi(\beta_j^\star) \Phi(\beta_{j'}^\star) \Big] \notag \\
        & = \EEB[(\beta_1^{\star})^2]\sum_{1\le i, j, j' \le p} \A_p(i,j) \A_p(i,j') \gamma_j \gamma_{j'} \EEB[\Phi(\beta_j^{\star})\Phi(\beta_{j'}^{\star})] \notag \\ &\quad + (\EEB[\beta_1^{\star}\Phi(\beta_1^{\star})])^2\sum_{1\le i,j \le p} \A_p(i,j)^2 \gamma_i \gamma_j. \label{eq:quadratic form expansion}
    \end{align}
    \eqref{eq:quadratic form expansion} follows as the inner expectation is zero unless (i) $i=i'$ or (ii) $i=j'\neq j=i'$. The first term above can be further split into two terms corresponding to the cases $j=j'$ and $j\ne j'$. With that in mind the first term in \eqref{eq:quadratic form expansion} reduces to
    \begin{small}
    \begin{align*}
    &\;\;\;\EEB[(\beta_1^{\star})^2](\EE[\Phi^2(\beta_1^{\star})])\sumij \A_p(i,j)^2\gamma_j^2+\EEB[(\beta_1^{\star})^2](\EEB\Phi(\beta_1^{\star}))^2 \sum_{1\le i,j,j'\le p, j\ne j'} \A_p(i,j)\A_p(i,j')\gamma_j\gamma_j' \\ &\le \EEB[(\beta_1^{\star})^2](\EEB[\Phi^2(\beta_1^{\star})])\an \lVert \bgamma\rVert^2+\EEB[(\beta_1^{\star})^2](\EEB\Phi(\beta_1^{\star}))^2\bigg|\sum_{1\le j\ne j'\le p} (\A_p^2)(j,j')\gamma_j\gamma_j'\bigg| \\
    &\lesssim \alpha_p\|\bgamma\|^2+(\EEB\Phi(\beta_1^{\star}))^2\bgamma^\top \A_p^2 \bgamma\lesssim \lVert \bgamma\rVert^2. 
    \end{align*}
    \end{small}
    Here we have used $\|\A_p\|=O(1)$ to conclude that $\max(\alpha_p,\|\A_p^2\|)\lesssim 1$.
    The second term in \eqref{eq:quadratic form expansion} is bounded above by $\an \|\bgamma\|^2$ up to constants due to \cref{eq:b_n matrix}.

    The statement under $\EEB \Phi(\beta_1^{\star}) = 0$ is immediate from the display above, since the second term in the bound on the last line is zero.

    \item By direct expansion, 
    \begin{align*}
        \EEB \Big[\sumjn \gamma_j \Phi(\beta_j^\star) \Big]^4 &= \sum_{j \neq j'} \gamma_j^2 \gamma_{j'}^2 \EE[\Phi(\beta_j^\star)^2] \EE[\Phi(\beta_{j'}^\star)^2] + \sumjn \gamma_j^4 \EE[\Phi(\beta_j^\star)^4] \le \Big(\sumjn \gamma_j^2\Big)^2.
    \end{align*}
    In particular, by taking $\gamma_j := \A_p(i,j)$ and $\Phi(\cdot)$ to be the identity function alongside the bounded fourth moment assumption of $\mu^\star$, we have $\EEB \mathfrak{F}_i^4 \lesssim \alpha_p^2$ for all $i$. The last claim of part (c) follows by a Cauchy-Schwartz:
    $$\EEB \Big[\sumjn \gamma_j \mathfrak{F}_j^2 \Big]^2 \le \|\bgamma\|^2 \sumjn \EEB \mathfrak{F}_j^4 \lesssim p\alpha_p^2 \|\bgamma\|^2.$$

    \item     
   For part (d) we use the generalized Efron-Stein inequality from \cite[Theorem 2]{Boucheron2005}. To this effect, sample $\tilde{\beta}_1^{\star},\ldots ,\tilde{\beta}_p^{\star} \overset{i.i.d}{\sim}\mu^{\star}$, independent of $\bbst$. Setting $S_p:=\sum_{j,k=1}^p \gamma_j\A_p(j,k)\beta_k^{\star}\Phi(\beta_j^{\star})$ and
    \begin{align*}
        S_p^{(i)}:=\sum_{j\neq i,k\neq i}\gamma_j\A_p(j,k)\beta_k^{\star}\Phi(\beta_j^{\star}) + \tilde{\beta}_i^{\star}\sumjn \gamma_j\A_p(j,i)\Phi(\beta_j^{\star})+\gamma_i\Phi(\tilde{\beta}_i^{\star})\sumkn \A_p(i,k)\beta_k^{\star},
    \end{align*}
    we note that 
    \begin{align*}
        |S_p-S_p^{(i)}|&\le \big|(\tilde{\beta}_i^{\star}-\beta_i^{\star})\sumjn \gamma_j\A_p(j,i)\Phi(\beta_j^{\star})\big|+\big|\gamma_i(\Phi(\tilde{\beta}_i^{\star})-\Phi(\beta_i^{\star}))\sumkn \A_p(i,k)\beta_k^{\star}\big| \\ &\le \big|(\tilde{\beta}_i^{\star}-\beta_i^{\star}) \sumjn \gamma_j\A_p(j,i)\Phi(\beta_j^{\star})\big|+2\big|\gamma_i\mathfrak{F}_i\big|.
    \end{align*}
    Therefore, 
    \begin{align*}
        \Big(\sumin (S_p-S_p^{(i)})^2 \Big)^2\lesssim \left(\sumin (\tilde{\beta}_i^{\star}-\beta_i^{\star})^2 \left(\sumjn \gamma_j\A_p(j,i) \Phi(\beta_j^{\star}) \right)^2\right)^2+\left(\sumin \gamma_i^2\mathfrak{F}_i^2\right)^2.
    \end{align*}
    First we note that
    \begin{align}\label{eq:obs1}
    \EEB\left(\sumin \gamma_i^2 \mathfrak{F}_i^2\right)^2 = \EEB\sumij \gamma_i^2\gamma_j^2 \mathfrak{F}_i^2\mathfrak{F}_j^2\lesssim \alpha_p^2\lVert \bgamma\rVert^4, 
    \end{align}
    where the last line follows from the Cauchy-Schwartz inequality to note that  $\EEB[\mathfrak{F}_i^2\mathfrak{F}_j^2]\le \max_{i=1}^p \EEB \mathfrak{F}_i^4\lesssim \alpha_p^2$ by part (c). 

    Next define $\mathfrak{g}_{i}:=\sumjn \gamma_j\A_p(i,j)\Phi(\beta_j^{\star})$ and $\mathfrak{g}_i^{(k)}:=\sum_{j=1,j\neq k}^p \gamma_j\A_p(i,j)\Phi(\beta_j^{\star})$. As $\mathfrak{g}_i=\mathfrak{g}_i^{(k)}+\gamma_k\A_p(i,k)\Phi(\beta_k^{\star})$, we get 
    
    \begin{align*}
    \mathfrak{g}_i^2\lesssim (\mathfrak{g}_i^{(k)})^2+\gamma_k^2\A_p(k,i)^2\,,\, (\mathfrak{g}_i^{(k)})^2 \lesssim \mathfrak{g}_i^2+\gamma_k^2 \A_p(k,i)^2.
    \end{align*}
    Using this gives
    \begin{align}\label{eq:gi1}
    \mathfrak{g}_i^2+(\mathfrak{g}_i^{(k)})^2 \lesssim \mathfrak{g}_i^2+\gamma_k^2\A_p(i,k)^2\le \lVert \bgamma\rVert^2\an,
    \end{align}
    where the last step uses the Cauchy-Schwartz inequality. Moreover, for $i\neq k$, we have: 
    \begin{align}\label{eq:gi2}
        \mathfrak{g}_i^2 \mathfrak{g}_k^2 &\le 2 \mathfrak{g}_i^2(\mathfrak{g}_k^{(i)})^2+2\mathfrak{g}_i^2 \gamma_i^2\A_p(i,k)^2\nonumber  \\ &\le 4(\mathfrak{g}_k^{(i)})^2\big((\mathfrak{g}_i^{(k)})^2+\gamma_k^2 \A_p(i,k)^2\big)+2\mathfrak{g}_i^2\gamma_i^2\A_p(i,k)^2 \nonumber \\ &\le  4(\mathfrak{g}_k^{(i)}\mathfrak{g}_i^{(k)})^2 + 4\lVert \boldsymbol{\gamma}\rVert^2\an\A_p(i,k)^2(\gamma_i^2+\gamma_k^2).
    \end{align}
    Here the last inequality uses \eqref{eq:gi1}. Similarly,
    \begin{align}\label{eq:gi3}
        (\mathfrak{g}_i^{(k)} \mathfrak{g}_k^{(i)})^2 \le 4\mathfrak{g}_i^2 \mathfrak{g}_k^2 + 4\lVert \boldsymbol{\gamma}\rVert^2 \an \A_p(i,k)^2 (\gamma_i^2+\gamma_k^2).
    \end{align}
    Finally observe that
    \begin{align}\label{eq:gi4}
        \sumin \mathfrak{g}_i^2=\sum_{j_1,j_2=1}^p \gamma_{j_1}\gamma_{j_2}\Phi(\beta_{j_1}^{\star})\Phi(\beta_{j_2}^{\star})\sumin \A_p(i,j_1)\A_p(i,j_2) \lesssim \lVert \bgamma\rVert^2,
    \end{align}
   since $\|\A_p^2\|\le \|\A_p\|^2 \le 1$. We will now use the above observations to complete the proof. Note that:
    \begin{align}\label{eq:obs4}
        &\;\;\;\;\EEB\left(\sumin (\tilde{\beta}_i^{\star}-\beta_i^{\star})^2\mathfrak{g}_i^2\right)^2 \nonumber \\ &=\EEB\sumin (\tilde{\beta}_i^{\star}-\beta_i^{\star})^4 \mathfrak{g}_i^4+\EEB\sum_{i\ne k} (\tilde{\beta}_i^{\star}-\beta_i^{\star})^2 (\tilde{\beta}_k^{\star}-\beta_k^{\star})^2 \mathfrak{g}_i^2\mathfrak{g}_k^2 \nonumber \\ &\lesssim \EEB\sumin (\tilde{\beta}_i^{\star}-\beta_i^{\star})^4\mathfrak{g}_i^4+\EEB\sum_{i\ne k} (\tilde{\beta}_i^{\star}-\beta_i^{\star})^2 (\tilde{\beta}_k^{\star}-\beta_k^{\star})^2(\mathfrak{g}_i^{(k)})^2(\mathfrak{g}_k^{(i)})^2 \nonumber\\ &\quad +\lVert \boldsymbol{\gamma}\rVert^2 \an \EEB\sumik \A_p(i,k)^2 (\tilde{\beta}_i^{\star}-\beta_i^{\star})^2 (\tilde{\beta}_k^{\star}-\beta_k^{\star})^2(\gamma_i^2+\gamma_k^2).
    \end{align}
    The last inequality here follows from \eqref{eq:gi2}. Next we observe that $\tilde{\beta}_i^{\star}-\beta_i^{\star}$ and $\mathfrak{g}_i$ are independent. Also $(\tilde{\beta}_i^{\star}-\beta_i^{\star}) (\tilde{\beta}_k^{\star}-\beta_k^{\star})$ and $\mathfrak{g}_i^{(k)}\mathfrak{g}_k^{(i)}$ are independent, for $i\neq k$. Therefore, as $\EEB(\tilde{\beta}_i^{\star}-\beta_i^{\star})^4<\infty$ and $\EEB[(\tilde{\beta}_i^{\star}-\beta_i^{\star})^2 (\tilde{\beta}_k^{\star}-\beta_k^{\star})^2]<\infty$ uniformly in $i,k$, \eqref{eq:obs4} yields
    \begin{align*}
        &\;\;\;\;\EEB\left(\sumin (\tilde{\beta}_i^{\star}-\beta_i^{\star})^2\mathfrak{g}_i^2\right)^2 \\ &\lesssim \EEB\sumin \mathfrak{g}_i^4 + \EEB\sum_{i\neq k} (\mathfrak{g}_i^{(k)})^2 (\mathfrak{g}_k^{(i)})^2+\lVert \boldsymbol{\gamma}\rVert^2\an\sum_{i,k=1}^p \A_p(i,k)^2(\gamma_i^2+\gamma_k^2) \\ &\le \EEB\sumin \mathfrak{g}_i^4 + \EEB\sum_{i\neq k} (\mathfrak{g}_i^{(k)})^2 (\mathfrak{g}_k^{(i)})^2+2\lVert \boldsymbol{\gamma}\rVert^4\alpha_p^2.
    \end{align*}
    Using \eqref{eq:gi3}, we get:
    \begin{align*}
        \EEB\left(\sumin (\tilde{\beta}_i^{\star}-\beta_i^{\star})^2\mathfrak{g}_i^2\right)^2 & \lesssim \EEB\sumin \mathfrak{g}_i^4+\EEB\sum_{i\ne k}\mathfrak{g}_i^2\mathfrak{g}_k^2+\lVert\boldsymbol{\gamma}\rVert^2\an \sum_{i,k=1}^p \A_p(i,k)^2(\gamma_i^2+\gamma_k^2)\\ &\lesssim \EEB\sumin \mathfrak{g}_i^4+\EEB\sum_{i\ne k}\mathfrak{g}_i^2\mathfrak{g}_k^2+\lVert \boldsymbol{\gamma}\rVert^4\alpha_p^2 \\ &=\EEB\left(\sumin \mathfrak{g}_i^2\right)^2+\lVert\boldsymbol{\gamma}\rVert^4\alpha_p^2 \le (1+\alpha_p^2)\lVert\boldsymbol{\gamma}\rVert^4.
    \end{align*}
    
    The last inequality here follows from \eqref{eq:gi4}. Next, by applying the generalized Efron-Stein inequality (see \cite[Theorem 2]{Boucheron2005}), coupled with \eqref{eq:obs1} and \eqref{eq:obs4}, we get: 
    \begin{align*}
        \EEB S_p^4\lesssim \EE\left(\sumin (S_p-S_p^{(i)})^2\right)^2\lesssim (1+\alpha_p^2)\lVert \bgamma\rVert^4.
    \end{align*}
 This completes the proof.   
\end{enumerate}
\end{proof}

\begin{proof}[Proof of \cref{lem:lincomb expectation Bayes}]
    We first show \eqref{eq:lincomb moment bound general}, using the same Taylor expansion \eqref{eq:phi taylor} from the proof of \cref{lem:lincomb expectation random design}, which gives 
    \begin{align*}
        \sumjn \gamma_j \Phi (d_0 \beta_j^\star - \mathfrak{F}_j) &= \sumjn \gamma_j \Phi (d_0 \beta_j^\star) - \sumjn \gamma_j \mathfrak{F}_j {\Phi}' (d_0 \beta_j^\star) + O(\sumjn |\gamma_j| \mathfrak{F}_j^2).
    \end{align*}
    We bound each summand above. 
    The second moments of the first and second term are both $O(\|\bgamma\|^2)$ by parts (a) and (b) of \cref{lem:beta concentration}. Also, the second moment of the third term is $p \alpha_p^2 \|\bgamma\|^2$ by part (c) of \cref{lem:beta concentration}. Now, \eqref{eq:lincomb moment bound general} follows as we assume the SMF condition $\sqrt{p}\an = o(1)$.

    Now, both conclusions in \eqref{eq:lincomb expectation Bayes} follow by taking $\Phi = \phi_1$ and $\phi_2 - \upsilon$ respectively, where $\phi_1(\cdot)=\EE\psi_0'(\mcn(\cdot,d_0^{\star}))$ and $\phi_2(\cdot)=\EE\psi_0''(\mcn(\cdot,d_0^{\star}))$ are defined as in \cref{def:allnot}.
    The bounds in \eqref{eq:remeq} directly follow by summing up the bounds in \cref{lem:lincombound} (parts (a), (b) with $U = \upsilon$) and \eqref{eq:lincomb expectation Bayes}. Note that the RHS of parts (a) and (b) in \cref{lem:lincombound} is $O(\|\bgamma\|^2)$ since we have $\mathfrak{E}_1 = O(1)$ by \cref{assn:homogeneous diagonals}, and $\|\Sigman\| = O(1)$ from \eqref{eq:citelater}.
\end{proof}

\section{Proof of \cref{prop:regret} and auxiliary lemmas}\label{sec:pfauxlem}
In this section, we prove \cref{prop:regret} and auxiliary lemmas that were used in previous proofs, namely Lemmas \ref{lem:clt marginalize}, \ref{lem:exponential tilt expectation}, \ref{lem:quadratic tilt moment}. First, we prove \cref{prop:regret} using the following moment bound.
\begin{lemma}[Lemma 3.2(b) in \cite{lee2025rfim}]\label{lem:m-n contraction}
    Suppose \eqref{assn:ht} holds. For each $1\le i \le p$, define the local averages under the posterior as
    $m_i := \sumjn \A_p(i,j) \beta_j$. Also recall the definition of $s_i$ from \eqref{eq:fpeq}. Then, we have $$\EE_{\nu_{\by,\bx}} \left[\sumin (m_i - s_i)^4\right] \lesssim p\alpha_p^2,$$
    where the implied constant depends only on $\rho$ (in \cref{assn:ht}). %
\end{lemma}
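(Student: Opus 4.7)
My plan is to decompose the fluctuation $m_i - s_i$ into a ``centered'' part and a ``bias'' part, i.e.
\[
m_i - s_i = \bigl(m_i - \EE_{\nu_{\by,\bx}} m_i\bigr) + \bigl(\EE_{\nu_{\by,\bx}} m_i - s_i\bigr),
\]
and use $(a+b)^4 \le 8(a^4+b^4)$ to treat the two contributions separately. The centered part $m_i - \EE_{\nu_{\by,\bx}} m_i$ should be controlled by concentration of Lipschitz linear statistics under the posterior. The bias part $\EE_{\nu_{\by,\bx}} m_i - s_i$ should be controlled by iterating the Stein/integration-by-parts identity against the fixed-point equation \eqref{eq:fpeq}.

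For the centered part I would invoke the high-temperature regime: since $\nabla^2 \log \nu_{\by,\bx} = \A_p$ and $\|\A_p\|_4\le \rho<1$, a standard Bakry--Emery / Dobrushin-type argument (or the specialized spectral-gap bounds for Random Field Ising models on $[-1,1]^p$ used in \cite{lee2025rfim}) gives a Poincar\'e--and in fact a log-Sobolev--inequality for $\nu_{\by,\bx}$ with constant depending only on $\rho$. The function $\bbeta\mapsto m_i$ has gradient $\A_p \be_i$ of squared norm $\sumjn \A_p(i,j)^2\le \alpha_p$, so Herbst-type subgaussian concentration yields
\[
\EE_{\nu_{\by,\bx}} \bigl(m_i-\EE_{\nu_{\by,\bx}} m_i\bigr)^4 \;\lesssim\; \alpha_p^2,
\]
which sums to the desired $O(p\alpha_p^2)$.

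For the bias part I would use the identity $\EE_{\nu_{\by,\bx}}[\beta_j]=\EE_{\nu_{\by,\bx}}[\psi_j'(m_j+c_j)]$ (which follows from writing the posterior in its exponential-tilt form \eqref{eq:posterior_2}). Subtracting $u_j=\psi_j'(s_j+c_j)$ from \eqref{eq:fpeq} and Taylor-expanding yields
\[
\EE_{\nu_{\by,\bx}}[\beta_j] - u_j \;=\; \psi_j''(s_j+c_j)\,\EE_{\nu_{\by,\bx}}[m_j-s_j] \;+\; O\bigl(\EE_{\nu_{\by,\bx}}(m_j-s_j)^2\bigr),
\]
and substituting into $\EE_{\nu_{\by,\bx}} m_i - s_i = \sumjn \A_p(i,j)(\EE_{\nu_{\by,\bx}}[\beta_j]-u_j)$ produces the self-consistent system $\bv = D\A_p\bv + \bE$, where $\bv_j:=\EE_{\nu_{\by,\bx}}[\beta_j]-u_j$, $D=\diag(\psi_j''(s_j+c_j))$ has entries in $[0,1]$, and $\|\bE\|^2 \lesssim \sumjn \EE_{\nu_{\by,\bx}}(m_j-s_j)^4$ by Cauchy--Schwarz on the second-order remainder. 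Since $\|D\A_p\|_4\le \|\A_p\|_4\le \rho<1$, the operator $I-D\A_p$ is invertible with bounded inverse, giving $\|\bv\|^2 \lesssim \|\bE\|^2$, and hence $\sumin (\EE_{\nu_{\by,\bx}} m_i-s_i)^4 \le \|\A_p\|^2\sumin (\EE_{\nu_{\by,\bx}} m_i-s_i)^2 \cdot \max_i(\EE_{\nu_{\by,\bx}} m_i-s_i)^2$ is negligible compared to the centered contribution.

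The main obstacle is rigorously closing the fourth-moment self-consistent estimate: the remainder term $\|\bE\|^2$ is itself controlled by $\sumjn\EE_{\nu_{\by,\bx}}(m_j-s_j)^4$, so naively one obtains a circular bound. The resolution is to first establish the \emph{second-moment} bound $\sumjn\EE_{\nu_{\by,\bx}}(m_j-s_j)^2\lesssim p\alpha_p$ via the Poincar\'e inequality (which is unconditional, and bootstraps $\|\bv\|^2$ directly), and then use this $L^2$ bound inside the Taylor-remainder estimate to show the bias contribution to the $L^4$ quantity is of strictly smaller order than $p\alpha_p^2$. The subgaussian concentration of Part~A then supplies the tight $p\alpha_p^2$ rate.
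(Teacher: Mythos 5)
First, a point of reference: the paper does not prove this lemma at all — it imports it verbatim as Lemma 3.2(b) of \cite{lee2025rfim} — so your proposal has to stand on its own. Your skeleton (split $m_i-s_i$ into a centered fluctuation plus a bias, control the fluctuation by concentration, control the bias through the fixed-point equation) is the right kind of argument, but two steps have genuine gaps.

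\textbf{The functional inequality.} You justify the Poincar\'e/LSI via Bakry--\'Emery (``$\nabla^2\log\nu_{\by,\bx}=\A_p$'') or a Dobrushin-type argument. Neither applies under \cref{assn:ht}. The base measures $\mu_i$ may have atoms (spike-and-slab is an advertised example), so there is no Lebesgue density and no curvature bound; and Dobrushin requires control of $\|\A_p\|_\infty$, which is exactly what the paper refuses to assume — the entire point of the $(4,4)$-norm condition is that $\|\A_p\|_\infty$ may be of order $p/\sqrt{n}\gg 1$. A spectral gap for bounded-spin Gibbs measures under an operator-norm condition $\|\A_p\|_2\le\rho<1$ does exist in the recent literature, but it is a substantial theorem, not a routine step; if you lean on it you must say which version and verify the constant depends only on $\rho$. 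Granting it, your fourth-moment bound for the centered part is fine (apply Poincar\'e to $(m_i-\EE_{\nu_{\by,\bx}}m_i)^2$ using $\sum_j\A_p(i,j)^2\le\alpha_p$).

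\textbf{The circularity is not actually closed.} With the second-order Taylor expansion, your remainder satisfies $E_j\lesssim \EE_{\nu_{\by,\bx}}(m_j-s_j)^2$, and bootstrapping from the $L^2$ bound $\sum_j\EE_{\nu_{\by,\bx}}(m_j-s_j)^2\lesssim p\alpha_p$ only yields $\|\bv\|_2^2\lesssim p\alpha_p^2(1+p\alpha_p)$, hence a quartic bias contribution of order $p^2\alpha_p^4(1+p\alpha_p)^2$. This is $\lesssim p\alpha_p^2$ only when $p\alpha_p^2=O(1)$, i.e.\ under the SMF condition — which this lemma does not assume. The fix is to abandon the second-order expansion: since $0\le\psi_j''\le 1$, the mean value theorem gives $|v_j|=|\EE_{\nu_{\by,\bx}}\psi_j'(m_j+c_j)-\psi_j'(s_j+c_j)|\le \EE_{\nu_{\by,\bx}}|m_j-s_j|$, so that
\[
v_j^4\;\le\;\Big(\Var_{\nu_{\by,\bx}}(m_j)+(\A_p\bv)_j^2\Big)^2\;\le\;(1+\tfrac{1}{\epsilon})\Var_{\nu_{\by,\bx}}(m_j)^2+(1+\epsilon)(\A_p\bv)_j^4 .
\]
Summing over $j$ and using $\|\A_p\bv\|_4\le\|\A_p\|_4\,\|\bv\|_4\le\rho\|\bv\|_4$ gives $\|\bv\|_4^4\le C_\epsilon\, p\alpha_p^2+(1+\epsilon)\rho^4\|\bv\|_4^4$; choosing $\epsilon$ with $(1+\epsilon)\rho^4<1$ (legitimate since $\|\bv\|_4^4\le 16p<\infty$ a priori) closes the system unconditionally and yields $\sum_i(\EE_{\nu_{\by,\bx}}m_i-s_i)^4=\|\A_p\bv\|_4^4\lesssim p\alpha_p^2$. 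Note that this is precisely where the $(4,4)$-norm hypothesis earns its keep — the quartic self-consistent system contracts in $\ell^4$ — whereas your argument only uses $\|\A_p\|_4$ for invertibility of $I-D\A_p$ on $\ell^2$ and never exploits the quartic contraction.
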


\begin{proof}[Proof of \cref{prop:regret}]
    We only prove \eqref{eq:optmf}, as the regret bound directly follows by the bias-variance decomposition, i.e.,
    $$\EE_{\nu_{\by,\bx}}(\bq^{\top}\bbeta-\bq^{\top}\bu)^2 = \EE_{\nu_{\by,\bx}}(\bq^{\top}\bbeta-\delta(\by,\bx))^2+(\delta(\by,\bx)-\bq^{\top}\bu)^2.$$
    We first introduce additional notations from \cite{lee2025rfim}.
    For non-negative integers $\ell \ge 0$ and $1\le j\le n$, recursively define $\bq^{(\ell)}$ by $\bq^{(0)} = \bq$ and $\bq^{(\ell)} := \C_p \bq^{(\ell-1)},$
    where $\C_p$ is a $p \times p$ matrix defined as $ \C_p(i,j) := \A_p(i,j) \psi_j''( s_j + c_j)$.
    Define 
    $$S_{(\ell)}:= \EE_{\nu_{\by,\bx}}\big[(\bq^{(\ell)})^{\top} \bbeta\big] - (\bq^{(\ell)})^{\top}\bu = \sumin q^{(\ell)}_i(\EE_{\nu_{\by,\bx}} \psi_i'(m_i+c_i) - u_i),$$
    for $\ell\ge 0$. By a standard Taylor expansion and the fact that $u_i=\psi_i'(s_i+c_i)$ (see \eqref{eq:fpeq}), we get:
    \begin{align*}
    S_{(\ell)} & = \EE_{\nu_{\by,\bx}}\left[\sumin q_i^{(\ell)}(m_i-s_i)\psi_i''(s_i+c_i)\right] + \frac{1}{2}\EE_{\nu_{\by,\bx}}\left[\sumin q_i^{(\ell)}(m_i-s_i)^2\psi_i'''(\xi_i+c_i)\right] \\ &=\EE_{\nu_{\by,\bx}}\left[\sumij q_i^{(\ell)}\A_p(i,j)(\beta_j-u_j)\psi_i''(s_i+c_i)\right] + \frac{1}{2}\EE_{\nu_{\by,\bx}}\left[\sumin q_i^{(\ell)}(m_i-s_i)^2\psi_i'''(\xi_i+c_i)\right] \\ &=\EE_{\nu_{\by,\bx}}\left[\sumin q_i^{(\ell+1)}(\beta_i-u_i)\right] + \frac{1}{2}\EE_{\nu_{\by,\bx}}\left[\sumin q_i^{(\ell)}(m_i-s_i)^2\psi_i'''(\xi_i+c_i)\right] \\ &=S_{(\ell+1)} + \frac{1}{2}\EE_{\nu_{\by,\bx}}\left[\sumin q_i^{(\ell)}(m_i-s_i)^2\psi_i'''(\xi_i+c_i)\right], 
    \end{align*}
    for some $\{\xi_i\}_{1\le i\le p}$ such that $\xi_i \in (m_i, s_i)$. 

    Note that under the high-temperature \cref{assn:ht}, we have $\|\C_p\| \le \|\A_p\| \le \rho < 1$. This implies that $$|S_{(\ell)}| \le \sqrt{p} \|\bq^{(\ell)}\| \le \sqrt{p} \|\mathbf{C}_p\|^\ell \|{\mathbf q}\| \le \sqrt{p}\|\A_p\|^\ell \le \sqrt{p}\rho^\ell  \stackrel{\ell\to\infty}{\to} 0.$$
    By a telescoping argument, we get
    \begin{align*}
        \bigg|\sumin q_i(\EE_{\nu_{\by,\bx}}[\beta_i]-u_i)\bigg|& \le \sum_{\ell=0}^{\infty} |S_{(\ell)}-S_{(\ell+1)}| \\ &\lesssim \sum_{\ell=0}^{\infty} \EE_{\nu_{\by,\bx}}\left[\sumin q_i^{(\ell)}(m_i-s_i)^2\right]\\ &\lesssim  \EE_{\nu_{\by,\bx}}\lVert \mathbf{m}-\mathbf{s}\rVert_4^2\sum_{\ell=0}^{\infty}\rho^{\ell}\lesssim \sqrt{p}\an. 
    \end{align*}
    Here, we have used the fact that $|\psi_i'''|$ is bounded, and \cref{lem:m-n contraction} alongside $\lVert \bq^{(\ell)}\rVert\le \rho^{\ell}$. This completes the proof of \eqref{eq:optmf}.

    \vspace{0.1in}

\end{proof}

Now, we prove the remaining lemmas.

    \begin{proof}[Proof of \cref{lem:clt marginalize}]
    The proof is straightforward by using characteristic functions. For any $t$, $G_p \mid \bx, \bbst \xd \mcn(0, \varsigma^2)$ implies
    $$\tilde{G}_p = \tilde{G}_p(\bx, \bbst) := \EE\Big(e^{it G_p} \mid \bx, \bbst \Big) \xp e^{- t^2 \varsigma^2/2},$$
    so $\EE[|\tilde{G}_p - e^{- t^2 \varsigma^2/2}| \mid \bx] \to 0.$
    Using the tower property and the above $L^1$ convergence, we have
    \begin{align*}
        \EE\Big[e^{it (G_p+H_p) } \mid \bx \Big] &= \EE \Big[\EE \Big(e^{it G_p } \mid \bx, \bbst \Big) e^{it H_p} \mid \bx \Big] = \EE \Big[\tilde{G}_p e^{it H_p} \mid \bx \Big]\\
        &= e^{-t^2\varsigma^2/2} \EE [e^{it H_p} \mid \bx] + \EE\Big[(\tilde{G}_p- e^{- t^2 \varsigma^2/2}) e^{it H_p} \mid \bx \Big]
        \to 
        e^{-t^2(\vartheta^2+\varsigma^2)/2}.
    \end{align*}
    Hence, the claim holds.
\end{proof}

\begin{proof}[Proof of \cref{lem:exponential tilt expectation}]
It suffices to prove the claim under two cases: (i) $m = m_0$, (ii) $d = d_0$. Hence, we present the proof by considering each case separately.
\begin{enumerate}[(i)]
    \item Suppose $m = m_0$. Without the loss of generality, we may let $m = m_0 = 0$.
        Let $p_0(c)$ be the density of $C_0$, and note that $C$ has the same distribution as $\sqrt{\frac{d}{d_0}} C_0$. Then,
        \begin{align*}
            |\EE g(C) - \EE g(C_0)| &= \bigg|\EE g\left(\sqrt{\frac{d}{d_0}} C_0\right) - \EE g(C_0)\bigg| \\
            &\le \int_{-\infty}^{\infty} \lVert g' \rVert_{\infty} \left|\sqrt{\frac{d}{d_0}} - 1 \right| |c| p_0(c) dc \\
            &\le \lVert g' \rVert_{\infty} \EE|C_0| \frac{|d - d_0|}{\sqrt{d_0} (\sqrt{d} + \sqrt{d_0})} \lesssim\lVert g' \rVert_{\infty} |d - d_0|.
        \end{align*}

    \item Now, suppose $d = d_0$.
    Noting that $C$ has the same distribution as $C_0 + m - m_0$, we have
    \begin{align*}
        \left|\EE g(C) - \EE g(C_0) \right| &\le \int_{-\infty}^\infty |g(c + m - m_0) - g(c)| p_0(c) dc \\
        &\le \lVert g' \rVert_{\infty} |m - m_0| \int_{-\infty}^\infty p_0(c) dc = \lVert g' \rVert_{\infty} |m - m_0|.
    \end{align*}
\end{enumerate}
    \end{proof}

\begin{proof}[Proof of \cref{lem:quadratic tilt moment}]
    For $0\le i\le p$, let $\delta_i:= \left|e^{\frac{d_0 - d_i}{2}} - 1 \right|,$ and let $p_{i,\theta}(z) := e^{\theta z - \frac{d_i z^2}{2}}$ and $M_i(\theta) := \int_{[-1,1]}  p_{i,\theta}(z) \mu(dz) $. %
    
    We first claim that for any measurable function $f:[-1,1]\to [-1,1]$, 
    \begin{equation}\label{eq:integral difference}
        \left|\int_{[-1,1]} f(z) p_{i,\theta}(z) \mu(dz) - \int_{[-1,1]} f(z) p_{0,\theta}(z) \mu(dz)\right| \le M_0(\theta) \delta_i.
    \end{equation}
    The proof of the claim is deferred to the end. First, let us assume the claim and complete the proof of the original lemma. 

    Towards this direction, note that by taking $f\equiv 1$, we have 
    \begin{equation}\label{eq:m_i - m_0 bound}
        |M_i(\theta) - M_0(\theta)| \le M_0(\theta) \delta_i.
    \end{equation}

   Further, for any bounded function $f$, we have
    \begin{align*}
        &|\EE_{\mu_{i,\theta}} f(Z) - \EE_{\mu_{0,\theta}} f(Z)| \\
        =& \left|\frac{\int_{[-1,1]} f(z) p_{i,\theta}(z) \mu(dz)}{M_i(\theta)} - \frac{\int_{[-1,1]} f(z) p_{0,\theta}(z) \mu(dz)}{M_0(\theta)} \right| \\
        \le& \frac{|M_0(\theta) - M_i(\theta)|}{M_i(\theta) M_0(\theta)} \int_{[-1,1]} |f(z)| p_{i,\theta}(z) \mu(dz)+ \frac{1}{M_0(\theta)}\bigg|\int_{[-1,1]} f(z) (p_{i,\theta}(z) - p_{0,\theta}(z)) \mu(dz)\bigg|  \\
        \le & \frac{\delta_i}{M_i(\theta)}\int_{[-1,1]}  p_{i,\theta}(z) \mu(dz) + \delta_i = 2\delta_i.
    \end{align*}
    The inequality in the last line follows by applying \eqref{eq:m_i - m_0 bound} and \eqref{eq:integral difference} respectively. 
    
    \noindent The bound on $|\psi_i'-\psi_0'|$ follows now by taking $f(z)=z$. For the bound on $|\psi_i''-\psi_0''|$, we choose $f(z) = z^2$ and note that
    \begin{align*}
        |\psi_i''(\theta)-\psi_0''(\theta)| =&|\Var_{\mu_{i,\theta}}(Z) - \Var_{\mu_{0,\theta}}(Z)| \\
        \le& \big|\EE_{\mu_{i,\theta}} [Z^2] - \EE_{\mu_{0,\theta}} [Z^2]\big| + \big|\EE_{\mu_{i,\theta}}[Z] - \EE_{\mu_{0,\theta}}[Z]\big|~ \big|\EE_{\mu_{i,\theta}}[Z] + \EE_{\mu_{0,\theta}}[Z]\big| \le 6\delta_i.
    \end{align*}
    Now, the proof is complete by noting that
    $\delta_i = \left|e^{\frac{d_0 - d_i}{2}} - 1 \right| \lesssim |d_i - d_0|$ which holds under \cref{assn:homogeneous diagonals}.
    
    \vspace{0.1in}
    
    \emph{Proof of \eqref{eq:integral difference}.} This follows from a standard change of measure argument:
    \begin{align*}
        &\left|\int_{[-1,1]} f(z) p_{i,\theta}(z) \mu(dz) - \int_{[-1,1]} f(z) p_{0,\theta}(z) \mu(dz)\right| \\
        =& \left|\int_{[-1,1]} f(z) p_{0,\theta}(z) \left( e^{\frac{(d_0-d_i) z^2}{2}} -1 \right) \mu(dz) \right| \\
        \le& \lVert f \rVert_{\infty} \left(\int_{[-1,1]}p_{0,\theta}(z) \mu(dz)\right) \sup_{z \in [-1,1]} \left| e^{\frac{(d_0-d_i) z^2}{2}} -1 \right| \\
        = & \lVert f\rVert_{\infty} M_0(\theta) \delta_i.
    \end{align*}
    The last line follows from the definition of $M_0(\theta)$, and observing that the supremum is attained at $z = \pm 1$ regardless of the sign of $d_0 - d_i$.
    \end{proof}

\end{appendices}

\end{document}